\setlist{itemsep=4pt}
\numberwithin{equation}{subsection}
\theoremstyle{plain}
\newtheorem{thm}[equation]{Theorem}
\newtheorem{prop}[equation]{Proposition}
\newtheorem{lem}[equation]{Lemma}
\newtheorem{cor}[equation]{Corollary}
\newtheorem{conj}[equation]{Conjecture}
\theoremstyle{definition}
\newtheorem{defn}[equation]{Definition}
\newtheorem{rmk}[equation]{Remark}
\newtheorem{remark}[equation]{Remark}
\newtheorem{exm}[equation]{Example}
\newtheorem{example}[equation]{Example}
\newtheorem{ques}[equation]{Question}
\newcommand{\arb}{{\textsf{Arb}}}
\newcommand{\magma}{{\textsf{Magma}}}
\newcommand{\pari}{{\textsf{Pari/GP}}}
\newcommand{\sage}{{\textsf{SageMath}}}
\newcommand{\Q}{\mathbb{Q}}
\newcommand{\Z}{\mathbb{Z}}
\newcommand{\R}{\mathbb{R}}
\newcommand{\C}{\mathbb{C}}
\newcommand{\F}{\mathbb{F}}
\newcommand{\PP}{\mathbb{P}}
\newcommand{\TT}{\mathcal{T}}
\newcommand{\Qbar}{\Q^{\textup{al}}}
\newcommand{\frakl}{\mathfrak{l}}
\newcommand{\eps}{\varepsilon}
\newcommand{\order}{\mathcal{O}}
\newcommand{\calO}{\order}
\newcommand{\lambdanew}{\lambda_{\mathrm{new}}}
\newcommand{\defi}[1]{\textsf{#1}} 	
\newcommand{\legen}[2]{\left(\frac{#1}{#2}\right)}
\newcommand{\newformlink}[1]{\href{https://www.lmfdb.org/ModularForm/GL2/Q/holomorphic/#1}{\textsf{#1}}}
\newcommand{\dircharlink}[1]{\href{https://www.lmfdb.org/Character/Dirichlet/#1}{\textsf{#1}}}
\newcommand{\psmod}[1]{~(\textup{\text{mod}}~{#1})}
\DeclareMathOperator{\Aut}{Aut}
\DeclareMathOperator{\cond}{cond}
\DeclareMathOperator{\End}{End}
\DeclareMathOperator{\Gal}{Gal}
\DeclareMathOperator{\lcm}{lcm}
\DeclareMathOperator{\disc}{disc}
\DeclareMathOperator{\GL}{GL}
\DeclareMathOperator{\PGL}{PGL}
\DeclareMathOperator{\id}{id}
\DeclareMathOperator{\SL}{SL}
\DeclareMathOperator{\Hom}{Hom}
\DeclareMathOperator{\Frob}{Frob}
\DeclareMathOperator{\Cls}{Cls}
\DeclareMathOperator{\nrd}{nrd}
\DeclareMathOperator{\Tr}{Tr}
\DeclareMathOperator{\Nm}{Nm}
\DeclareMathOperator{\Span}{span}
\DeclareMathOperator{\Sturm}{Sturm}
\DeclareMathOperator{\ord}{ord}
\DeclareMathOperator{\impart}{Im}
\DeclareMathOperator{\ModSym}{ModSym}
\DeclareMathOperator{\Div}{Div}
\newcommand{\tx}{\tilde{x}}
\newcommand{\tl}{\tilde{\lambda}}
\newcommand{\tP}{\tilde{P}}
\newcommand{\Sk}{S_k}
\newcommand{\Eknew}[1]{E_{#1}^\mathrm{new}}
\newcommand{\Sknew}[1]{S_{#1}^\mathrm{new}}
\newcommand{\calH}{\mathcal{H}}
\newcommand{\trnew}[1]{\Tr(T_{#1}\,|\,\Sknew{k}(N,\chi))}
\newcommand{\softO}{\widetilde{O}}
\newcommand{\secref}[1]{\S\ref{#1}}
\newcommand{\weightone}{weight one}
\newenvironment{enumalph}
{\begin{enumerate}}
{\end{enumerate}}
\newenvironment{enumroman}
{\begin{enumerate}}
{\end{enumerate}}
\definecolor{pastelred}{rgb}{0.741176, 0., 0.14902}
\definecolor{darkspringgreen}{rgb}{0.09, 0.45, 0.27}
\definecolor{pastelyellow}{rgb}{0.61, 0.61, 0.0}
\definecolor{darkmidnightblue}{rgb}{0.0, 0.2, 0.4}
\DeclareMathOperator{\InnTw}{InnTw}
\DeclareMathOperator{\SelfTw}{SelfTw}
\begin{document}

\title{Computing classical modular forms}

\author{Alex J.\ Best}
\address{Department of Mathematics \& Statistics, Boston University, 111 Cummington Mall, Boston, MA 02215, USA}
\email{alexjbest@gmail.com}
\urladdr{\url{https://alexjbest.github.io/}}

\author{Jonathan Bober}
\address{School of Mathematics, University of Bristol, Bristol, BS8 1TW, UK, and the Heilbronn Institute for Mathematical Research, Bristol, UK}
\email{j.bober@bristol.ac.uk}
\urladdr{\url{https://people.maths.bris.ac.uk/~jb12407/}}

\author{Andrew R. Booker}
\address{School of Mathematics, University of Bristol, Woodland Road, Bristol, BS8 1UG, UK}
\email{andrew.booker@bristol.ac.uk}
\urladdr{\url{http://people.maths.bris.ac.uk/~maarb/}}

\author{Edgar Costa}
\address{Department of Mathematics, Massachusetts Institute of Technology, 77 Massachusetts Avenue, Cambridge, MA 02139, USA}
\email{edgarc@mit.edu}
\urladdr{\url{https://edgarcosta.org}}

\author{John Cremona}
\address{Mathematics Institute,
         University of Warwick,
         Coventry CV4 7AL,
         United Kingdom}
\email{j.e.cremona@warwick.ac.uk}

\author{Maarten Derickx}
\address{Department of Mathematics, Massachusetts Institute of Technology, 77 Massachusetts Avenue, Cambridge, MA 02139, USA}
\email{maarten@mderickx.nl}
\urladdr{\url{http://www.maartenderickx.nl/}}

\author{Min Lee}
\address{School of Mathematics, University of Bristol, Woodland Road, Bristol, BS8 1UG, UK}
\email{min.lee@bristol.ac.uk}
\urladdr{\url{https://people.maths.bris.ac.uk/~ml14850/}}

\author{David Lowry-Duda}
\address{Institute of Computational and Experimental Research in Mathematics, 121 South Main Street, Box E, 11th Floor, Providence, RI 02903, USA}
\email{david@lowryduda.com}
\urladdr{\url{https://davidlowryduda.com}}

\author{David Roe}
\address{Department of Mathematics, Massachusetts Institute of Technology, 77 Massachusetts Avenue, Cambridge, MA 02139, USA}
\email{roed@mit.edu}
\urladdr{\url{http://math.mit.edu/~roed/}}

\author{Andrew V. Sutherland}
\address{Department of Mathematics, Massachusetts Institute of Technology, 77 Massachusetts Avenue, Cambridge, MA \ 02139, USA}
\email{drew@math.mit.edu}
\urladdr{\url{http://math.mit.edu/~drew/}}

\author{John Voight}
\address{Department of Mathematics, Dartmouth College, 6188 Kemeny Hall, Hanover, NH 03755, USA}
\email{jvoight@gmail.com}
\urladdr{\url{http://www.math.dartmouth.edu/~jvoight/}}

\date{\today}

\hypersetup{pdftitle={Computing classical modular forms},
pdfauthor={}}

\begin{abstract}
We discuss practical and some theoretical aspects of computing a database of classical modular forms in the $L$-functions and Modular Forms Database (LMFDB).  
\end{abstract}

\maketitle
\tableofcontents

\section{Introduction}

\subsection{Motivation}

Databases of classical modular forms have been used for a variety of mathematical purposes and have almost a 50 year history (see \secref{sec:history}).  In this article, we report on a recent effort in this direction in the $L$-functions and Modular Forms Database (LMFDB \cite{LMFDB}, \url{https://lmfdb.org}); for more on the LMFDB, see the overview by Cremona \cite{LMFDB:Cremona}.

\subsection{Organization}

The paper is organized as follows.  In \secref{sec:history}, we begin with a short history, and we follow this in \secref{sec:chars} with a preliminary discussion of Dirichlet characters.  Next, in \secref{sec:compmf} we make more explicit what we mean by computing (spaces of) modular forms, and then in section \secref{sec:algs} we give a short overview of the many existing algorithmic approaches to computing modular forms.  We pause in \secref{sec:technical} to prove two technical results.  In \secref{sec:implementations}, we sample the available implementations and make some comparisons.  Next, in \secref{sec:issues} we discuss some computational, theoretical, and practical issues that arose in our efforts and in \secref{sec:Lfunctions} we explain how we (rigorously) computed the $L$-functions attached to modular newforms.  Turning to our main effort, in \secref{sec:overview} we provide an overview of the computations we performed, make some remarks on the data obtained, and explain some of the features of our database.  Finally, in \secref{sec:twists} and \secref{sec:weight1} we treat twists and issues specific to modular forms of weight $1$.  

As is clear from this organization, we consider the algorithmic problem of computing modular forms from a variety of perspectives, so this paper need not be read linearly.  For the convenience of readers, we draw attention  here to a number of highlights:
\begin{itemize}
    \item In  \secref{sec:history}, we survey the rather interesting history of computing databases of modular forms.
    \item In \secref{subsec:conrey}, we exhibit a labeling scheme for Dirichlet characters, due to Conrey. 
    \item In Theorem \ref{thm:thmeisen}, we record formulas for the new, old, and total dimensions of spaces of Eisenstein series of arbitrary integer weight $k \geq 2$, level, and character, obtained from work of Cohen--Oesterl\'e and Buzzard.  (Such formulas are not available for weight $k=1$.)
    \item In Corollary \ref{cor:tnonnew}, we compute an Eichler--Selberg trace formula restricted to the space of newforms; this was used by Belabas--Cohen \cite{BelabasCohen} in their implementation in \pari{}.
    \item In Tables \ref{tab:timings1} and \ref{tab:timings2}, we compare the implementations of \magma{} and \pari{}; in Table \ref{tab:hardspaces} we note some computationally challenging newspaces.
    \item In \secref{sec:LLLbasis}, we show that by writing Hecke eigenvalues in terms of an LLL-reduced basis of the Hecke order, we can drastically reduce their total size.
    \item In \secref{sec:analyticrank}, we certify analytic ranks of $L$-functions of modular forms and remark on the ranks occurring in our dataset.
    \item In \secref{sec:chowla}, we numerically verify a generalization of Chowla's conjecture for central values of non-self-dual modular form $L$-functions.  
    \item In \secref{sec:statistics}, we present statistics on our data, and in \secref{sec:interestingandextreme} we note some interesting and extreme behavior that we observed in our dataset.  
    \item In Theorems \ref{thm:innertwist} and \ref{thm:justcheckcoprime}, we exhibit simple and effectively computable criteria for rigorously certifying that a modular form has an inner twist. 
    \item In section \ref{sec:wt1cool}, we highlight some interesting and extreme behavior found among weight $1$ modular forms in our database.
\end{itemize}

\subsection{Acknowledgments}

The authors would like to thank Eran Assaf, Karim Belabas, Henri Cohen, Alan Lauder, David Loeffler, David Platt, Mark Watkins, and the anonymous referees for their comments.  This research was undertaken as part of the \emph{Simons Collaboration on Arithmetic Geometry, Number Theory, and Computation}, with the support of Simons Collaboration Grants: 546235, to Brendan Hassett, supporting Lowry-Duda; 550023, to Jennifer Balakrishnan, supporting Best; 550029, to John Voight; and 550033, to Bjorn Poonen and Andrew V.~Sutherland, supporting Costa, Derickx, and Roe.  Additional support was provided by a Programme Grant from the UK Engineering and Physical Sciences Research Council (EPSRC) \emph{LMF: L-functions and modular forms}, EPSRC reference EP/K034383/1.

\section{History} \label{sec:history}

In this section, we survey the history of computing tables of modular forms; for a broader but still computationally-oriented history, see Kilford \cite[Section 7.1]{Kilford}.

\begin{itemize}
    \item Perhaps the first systematic tabulation of modular forms was performed by Wada \cite{Wada1,Wada2}.  As early as 1971, he used the Eichler--Selberg trace formula to compute a factorization of the characteristic polynomial of the Hecke operator $T_p$ on $S_2(\Gamma_0(q),\chi)$ for $q \equiv 1 \pmod{4}$ prime where $\chi$ was either trivial or the quadratic character of conductor $q$.  The total computation time was reported to be about 300 hours on a TOSBAC-3000.  

\item The next major step was made in the famous \emph{Antwerp IV} tables \cite{AntwerpIV} (published in 1975), motivated by the study of modularity of elliptic curves.  V\'elu and Stephens--V\'elu computed all newforms in $S_2(\Gamma_0(N))$ with $N \leq 200$ using modular symbols \cite[Table 3]{AntwerpIV} and these forms were matched with isogeny classes of elliptic curves over $\Q$ found by Swinnerton-Dyer. Tingley \cite{Tingley:thesis} computed the complete splitting into Hecke eigenspaces of  $S_2(\Gamma_0(N))$ for $N \leq 300$, extending an earlier table due to Atkin.  In particular he found the dimensions of the Atkin-Lehner eigenspaces, and computed the actual eigenvalues as floating point numbers, numerically matching conjugate newforms.  By integrating differentials, he also computed elliptic curves from the newforms with integer eigenvalues.   In some cases, this computation revealed the existence of elliptic curves not previously found by search.  (According to Birch, this was the case for the elliptic curve with Antwerp label 78A and Cremona label \href{http://www.lmfdb.org/EllipticCurve/Q/78a1/}{\textsf{78a1}}; the curves in its isogeny class have rather large coefficients.) 

\item Extending the Antwerp IV tables, Cremona \cite{Cremona:mec} (first edition published in 1992) computed a database of newforms in $S_2(\Gamma_0(N))$ with rational coefficients for $N \leq 1000$, providing also a wealth of data on the corresponding (modular) elliptic curves.  In the second edition and in later computations, this data was considerably extended.  A more recent report \cite{Cremona:database} was made on the elliptic curve tables to conductor $\numprint{130000}$, later extended to conductor \numprint{500000} and rank at most $3$.  By 2016 this database had reached conductor $\numprint{400000}$, and in July 2019 Cremona and Sutherland extended it to conductor $\numprint{500000}$.  In this range there are $\numprint{2164260}$ rational newforms, and the same number of isogeny classes of elliptic curves.

\item Miyake \cite{Miyake} published some numerical tables of modular forms as appendices in his book on modular forms; these were computed using the trace formula.  These tables included dimensions of $S_k(\Gamma_0(N))$ for $k \geq 2$ even and small values of $N$, eigenvalues and characteristic polynomials of Hecke operators on $S_2(\Gamma_0(N))$ for small prime values of $N$, and Fourier coefficients of a primitive form in $S_2(\Gamma_0(N),\chi_N)$ for $N=29,37$.

\item In the 1990s, Cohen, Skoruppa, and Zagier compiled tables of eigenforms in weights $2$ through $12$, levels up to $1000$ in weight $2$ and with a smaller range in higher weight; also some tables of eigenforms with non-trivial character.  Their method followed a paper by Skoruppa and Zagier on the trace formula \cite{SkoruppaZagier}, but these tables were not published.  

\item In the early 2000s, Stein created an online modular forms database \cite{Stein:Tables}, computed primarily using a modular symbols package \cite{Stein} he implemented in \magma{} \cite{Magma} starting in the late 1990s.  The data was computed using a rack of six custom-built machines and a Sun V480; it was stored in a PostgreSQL database (more than 10 GB), and a (Python-based) web interface to the data was provided.  These tables included dimensions, characteristic polynomials, and $q$-expansions in a variety of weights and levels.  

\item Using this \magma{} implementation, Meyer \cite{Meyer1,Meyer2} computed a table of newforms for $\Gamma_0(N)$ with rational coefficients: in weight $k=2$ he went to $N \leq 3000$ and for $k=4$ to $N \leq 2000$.

\item Prior to our work, the LMFDB had a database of classical modular forms computed by  Ehlen and Str\"omberg~\cite{LMFDB10}, which used the \sage{} \cite{Sage} implementation of modular symbols.  This dataset included partial information on $S_k(\Gamma_0(N))$ for $(k,N)$ in the ranges $[2,12] \times [1,100]$ and $[2,40] \times [1,25]$, and on $S_k(\Gamma_1(N))$ in the ranges $[2,10] \times [1,50]$ and $[2,20] \times [1,16]$.  
\end{itemize}

The scope of our modular forms database includes all of the ranges mentioned above (and more), with the exception of Cremona's tables of elliptic curves; see \secref{sec:dataextent} for details.

\section{Characters} \label{sec:chars}

Our database of modular forms is organized into subspaces identified by a \defi{level} $N\in \Z_{\ge 1}$, a \defi{weight} $k\in \Z_{\ge 1}$, and a \defi{character} $\chi\colon \Z\to \C$ taking values in the cyclotomic field $\Q(\zeta_N)$.  In order to identify these subspaces and the modular forms they contain, we adopt a standard convention for identifying Dirichlet characters that is well suited to computation, the \defi{Conrey labels} recalled in \S\ref{subsec:conrey} below.  We also introduce a convention for identifying Galois orbits of Dirichlet characters that will be used to identify the \defi{newform subspaces} and \defi{newform orbits} defined in \S\ref{sec:compmf}.

\subsection{Definitions}
For $N \in \Z_{\geq 1}$, a \defi{Dirichlet character of modulus $N$} is a pair $(\chi,N)$ where $\chi\colon \Z \to \C$ is a periodic function modulo $N$ that is the extension of a group homomorphism $(\Z/N\Z)^\times \to \C^\times$ by zero (defining $\chi(n)=0$ whenever $\gcd(n,N) \neq 1$)---in particular, $\chi$ is totally multiplicative.  The \defi{degree} of a Dirichlet character $\chi$ is the degree of the cyclotomic subfield $\Q(\chi) \subseteq \C$ generated by the values of $\chi$.  

Given two Dirichlet characters $\chi,\chi'$ of moduli $N,N'$, we define their product $\chi\chi'$ to be the Dirichlet character of modulus $\lcm(N,N')$ defined by $(\chi\chi')(n)=\chi(n)\chi'(n)$.  Under this definition, the set of Dirichlet characters of a fixed modulus $N$ has the structure of a finite abelian group, with identity the \defi{principal} (or \defi{trivial}) character with $\chi(n)=1$ if $\gcd(n,N)=1$ and $\chi(n)=0$ otherwise.  The \defi{order} $\ord(\chi)$ of a Dirichlet character $\chi$ is its order in this group,  i.e., the smallest $m \in \Z_{\geq 1}$ such that $\chi^m$ is the principal character. 

Let $\chi$ be a Dirichlet character of modulus $N$.  Given a multiple $N'$ of $N$, we may \defi{induce} $\chi$ to a Dirichlet character $\chi'$ of modulus $N'$ by $\chi'(n) \colonequals \chi(n \bmod N)$ whenever $\gcd(n,N') = 1$ and $\chi'(n) = 0$ otherwise.  Consequently, there is a well-defined \emph{minimal} modulus $M \colonequals \cond(\chi) \mid N$, called the \defi{conductor} of $\chi$, such that  $\chi$ is induced from a Dirichlet character of modulus $M$.
If $\cond(\chi)=N$, i.e., the conductor of $\chi$ is equal to its modulus, then we say that $\chi$ is a \defi{primitive} character.  

It is sometimes convenient to think about Dirichlet characters \emph{without} a modulus, remembering only a periodic, totally multiplicative arithmetic function $\chi$.  In our context, Dirichlet characters arise from modular forms with level structure, so there should be little chance for confusion.

\subsection{Conrey labels}\label{subsec:conrey}

We briefly describe a scheme, due to Brian Conrey, for labeling and computing with Dirichlet characters. 
Our labeling scheme can be thought of as a choice of an explicit isomorphism between two finite abelian groups: the multiplicative group $(\Z/N\Z)^\times$ and the group of Dirichlet characters modulo $N$.  In particular, our Dirichlet characters by definition take values in the complex numbers, so implicit in our choice of labels is a choice of embedding $\Q^\mathrm{ab} \hookrightarrow \C$.  

For each $N \in \Z_{\ge1}$, we will construct a function 
\begin{equation}
\chi_N \colon (\Z/N\Z)^\times \times (\Z/N\Z)^\times \to \C^\times
\end{equation}
satisfying the following three properties:
\begin{itemize}
\item $\chi_N$ is multiplicative in each variable (separately);
\item $\chi_N$ is symmetric (i.e., $\chi_N(m,n)=\chi_N(n, m)$ for all $m,n \in (\Z/N\Z)^\times$); and
\item $\chi_N$ is nondegenerate (i.e., if $\chi_N(m,n)=1$ for all $m \in (\Z/N\Z)^\times$, then $n \equiv 1 \pmod{N}$).
\end{itemize}
Moreover, $\chi_N$ will be multiplicative in $N$, and hence it is sufficient to define it for prime powers $p^e$ and then extend $\chi_N(m, n)$ to general $N$ by multiplicativity:
\[\chi_N(m, n)=\prod_{\substack{p^e \| N}} \chi_{p^e} (m, n).\]
We use the notation $p^e \| N$ to mean that $p^e \mid N$ but $p^{e+1} \nmid N$.
On the left side, $m$ and $n$ denote elements of $(\Z/N\Z)^\times$, while on the right they denote the images of these in $(\Z/p^e\Z)^\times$.
We then extend $\chi_N$ to a multiplicative, periodic function on $\Z\times \Z$ by setting $\chi_N(m, n)=0$ whenever $\gcd(mn,N)>1$.

Under these conditions, fixing one input to $\chi_N$ defines a Dirichlet character modulo $N$ and conversely every Dirichlet character arises in this way.  Thus each Dirichlet character is given a unique name of the form $\chi_N(m, \cdot)$ for $m\in (\Z/N\Z)^\times$. 
In particular, by symmetry, we see that $\chi_N(1, \cdot)$ is the trivial character modulo~$N$, and $\chi_N(m, \cdot)$ is a quadratic character when $m\not\equiv1\pmod{N}$ but $m^2\equiv 1\pmod{N}$.  (More generally, the order of the character~$\chi_N(m,\cdot)$ is the multiplicative order of $m$ modulo~$N$.)

We now describe the construction of $\chi_N$. 
\medskip

\noindent
\textbf{Odd prime powers}: Let $p$ be an odd prime.  Let $g$ be the smallest positive integer that is a primitive root mod $p^e$ for all $e \geq 1$. (This is almost always the same as the smallest primitive root mod $p$, but may not be; the only odd prime under one million for which these differ is 40487.)
For $m \in (\Z/p^e\Z)^\times$, we define $\log_g(m) \in \Z/\phi(p^e)\Z$ by the condition
\begin{equation} 
m\equiv g^{\log_g(m)} \pmod{p^e}, 
\end{equation}
so that $\log_g \colon (\Z/p^e\Z)^\times \to \Z/\phi(p^e)\Z$ is an isomorphism of groups.

For $m,n \in (\Z/p^e\Z)^\times$, we then define
\begin{equation}
\chi_{p^e}(m, n) \colonequals \exp\left(2\pi i \frac{\log_g(m)\log_g(n)}{\varphi(p^e)}\right).
\end{equation}
Then $\chi_{p^e}$ clearly satisfies the three required conditions (multiplicative, symmetric, and nondegenerate).

\smallskip

\noindent
\textbf{Powers of 2}:  We define $\chi_2$ to be the trivial map (so $\chi_2(1,1)=1$), and define
\begin{equation}
\chi_4(m,n) = (-1)^{(m-1)(n-1)/2}
\end{equation}
for $m,n \in (\Z/4\Z)^\times$.  Let $e \geq 3$.  The group $(\Z/2^e\Z)^\times$ is generated by $5$ and $-1$.
For $m \in (\Z/2^e\Z)^\times$, we define $\epsilon(m)\in \{0, 1\}$ and $\log_5(m)\in \Z/2^{e-2}\Z$ by 
\begin{equation}
m\equiv (-1)^{\epsilon(m)} 5^{\log_5(m)}\pmod{2^e}
\end{equation}
so that now $(\epsilon,\log_5) \colon (\Z/2^e\Z)^\times \to \Z/2\Z \times \Z/2^{e-2}\Z$ is an isomorphism.  For $m,n \in (\Z/2^e\Z)^\times$, we then define
\begin{equation}
\chi_{2^e}(m, n) \colonequals \exp\left(2\pi i \frac{\epsilon(m)\epsilon(n)}{2} + 2\pi i \frac{\log_5(m)\log_5(n)}{2^{e-2}}\right).
\end{equation}
As for the case of odd prime power modulus, this function satisfies the required properties.

In this article, as in the LMFDB, the Conrey label of the character $\chi_N(m,\cdot)$ has the form \textsf{N.m}. For example, the Conrey label of $\chi_7(6,\cdot)$, the unique quadratic character of modulus 7, is \href{http://www.lmfdb.org/Character/Dirichlet/7/6}{\textsf{7.6}}.

\subsection{Orbit labels}

There is an action of the absolute Galois group $\Gal_\Q \colonequals \Gal(\Qbar\,|\,\Q)$ of $\Q$ on the set of Dirichlet characters of modulus $N$, defined by
\begin{equation}
    (\sigma \chi)(n) \colonequals \sigma(\chi(n)) 
\end{equation}
for $\sigma \in \Gal_\Q$ and $n \in \Z$.  

It is natural to organize characters by Galois orbits, and indeed we will also want to work with modular forms defined without an embedding into the complex numbers, specified up to the action of Galois (see \secref{sec:galoisdigress}).  So we also assign an \defi{orbit label} to each Galois orbit of Dirichlet characters, as follows.  To choose this label we lexicographically order the sequences
\[
    \ord(\chi), \Tr{\chi(1)}, \Tr{\chi(2)}, \Tr{\chi(3)}, \Tr{\chi(4)}, \ldots
\]
of integers, where $\Tr \colon \Q(\chi) \to \Q$ is the absolute trace; we then
assign the label written in base $26$ using the letters of the alphabet, so
\begin{center}
\textsf{a}, \textsf{b}, \dots, \textsf{z}, \textsf{ba}, \textsf{bb}, \dots, \textsf{bz}, \textsf{ca}, \dots, \textsf{zz}, \textsf{baa}, \dots.
\end{center}

For every modulus $N\ge 1$, the Dirichlet character orbit \textsf{N.a} is the trivial character, since it is the unique character with (smallest) order $1$.

\begin{exm}
The table below lists the Conrey labels of the eight Dirichlet characters of modulus 20, their values on the generators $11$ and $17$ of $(\Z/20\Z)^\times$, their orders, the absolute traces of their values the first five positive integers coprime to 20 (note $\Tr(\chi(n))=0$ if $\gcd(20,n)\ne 1$), and the labels of the six Galois orbits in which they lie.
\smallskip

\begin{center}
\small
\setlength{\tabcolsep}{4.5pt}
\begin{tabular}{lccccccccl}
Conrey label  & $\chi(11)$ & $\chi(17)$ & $\ord(\chi)$ & $\Tr(\chi(1))$ & $\Tr(\chi(3))$ & $\Tr(\chi(7))$ & $\Tr(\chi(11))$ & $\Tr(\chi(13))$& orbit label\\\midrule
\href{https://www.lmfdb.org/Character/Dirichlet/20/1}{\textsf{20.1\phantom{1}}} & 1 & 1 & 1 & 1 & 1 & 1 & 1 & 1 & \href{https://www.lmfdb.org/Character/Dirichlet/20/a}{\textsf{20.a}} \\
\href{https://www.lmfdb.org/Character/Dirichlet/20/11}{\textsf{20.11}} & -1 & 1 & 2 & 1 & -1 & -1 & 1 & -1 & \href{https://www.lmfdb.org/Character/Dirichlet/20/b}{\texttt{20.b}} \\
\href{https://www.lmfdb.org/Character/Dirichlet/20/9}{\textsf{20.9\phantom{1}}} & 1 & -1 & 2 & 1 & -1 & -1 & 1 & 1 & \href{https://www.lmfdb.org/Character/Dirichlet/20/c}{\textsf{20.c}} \\
\href{https://www.lmfdb.org/Character/Dirichlet/20/19}{\textsf{20.19}} & -1 & -1 & 2 & 1 & 1 & 1 & 1 & -1 & \href{https://www.lmfdb.org/Character/Dirichlet/20/d}{\texttt{20.d}} \\
\href{https://www.lmfdb.org/Character/Dirichlet/20/3}{\textsf{20.3\phantom{1}}} & -1 & $-i$ & 4 & 2 & 0 & 0 & -2 & -2 & \href{https://www.lmfdb.org/Character/Dirichlet/20/e}{\textsf{20.e}} \\
\href{https://www.lmfdb.org/Character/Dirichlet/20/7}{\textsf{20.7\phantom{1}}} & -1 & $i$ & 4 & 2 & 0 & 0 & -2 & -2 & \href{https://www.lmfdb.org/Character/Dirichlet/20/e}{\textsf{20.e}} \\
\href{https://www.lmfdb.org/Character/Dirichlet/20/13}{\textsf{20.13}} & 1 & $-i$ & 4 & 2 & 0 & 0 & -2 & 2 & \href{https://www.lmfdb.org/Character/Dirichlet/20/f}{\textsf{20.f}} \\
\href{https://www.lmfdb.org/Character/Dirichlet/20/17}{\textsf{20.17}} & 1 & $i$ & 4 & 2 & 0 & 0 & -2 & 2 & \href{https://www.lmfdb.org/Character/Dirichlet/20/f}{\textsf{20.f}} \\\bottomrule
\end{tabular}
\end{center}
\smallskip

\end{exm}

\begin{remark}
The field $\Q(\chi)$ is contained in the coefficient field $\Q(f)$ of a newform $f$ with character $\chi$.  When the dimension of $\Q(f)$ is large it may be difficult to compute a complex embedding $\Q(f)\to\C$, and we often need to distinguish embeddings that are compatible with the Hecke action, which means we must know the image of $\Q(\chi)$ under embeddings of $\Q(f)$.  Matching up roots of unity of large order can be surprisingly nontrivial!  So when computing the coefficient field (as an abstract field, not necessarily embedded in the complex numbers), we compute the values of $\chi$ on generators for $(\Z/N\Z)^\times$ as elements of the coefficient field.  In this way, we may organize embeddings of the coefficient field according to a desired embedding of $\Q(\chi)$.

We could instead keep track of the coefficient field as an extension of $\Q(\chi)$, but that approach creates headaches when comparing results across implementations, it shifts the problem to a different place when working with forms in a Galois orbit, and it does not allow us to represent eigenvalues in terms of a nice LLL-reduced basis (see \secref{sec:LLLbasis}). 
\end{remark}

\section{Computing modular forms} \label{sec:compmf}

In this section, we make precise what it means to \emph{compute modular forms}.  For background, we refer to the wealth of references available, for example Cohen--Str\"omberg \cite{CohenStroemberg}, Diamond--Shurman \cite{DiamondShurman}, Serre \cite[Chapter VII]{Serre:course}, and Stein \cite{Stein}.

\subsection{Setup}

The group $\SL_2(\R)$ acts (on the left) by linear fractional transformations on the upper half-plane $\calH \colonequals \{z \in \C : \impart z > 0\}$.  For $N \in \Z_{\geq 1}$, define the congruence subgroups
\begin{equation}
    \begin{aligned}
        \Gamma_0(N) &\colonequals \left\{ \gamma \in \SL_2(\Z) : \gamma \equiv \begin{pmatrix} * & * \\ 0 & * \end{pmatrix} \psmod{N} \right\}\text, \\
    \Gamma_1(N) &\colonequals \left\{ \gamma \in \SL_2(\Z) : \gamma \equiv \begin{pmatrix} 1 & * \\ 0 & 1 \end{pmatrix} \psmod{N} \right\}.
    \end{aligned}
\end{equation}
For $\Gamma \leq \SL_2(\Z)$ a congruence subgroup, the quotient $Y(\Gamma) \colonequals \Gamma \backslash \calH$ can be compactified to $X(\Gamma)$ by adding finitely many cusps, identified with the orbits of $\Gamma$ on $\PP^1(\Q)$.  As usual, we write $X_0(N),X_1(N)$ for the quotients $X(\Gamma)$ with $\Gamma=\Gamma_0(N),\Gamma_1(N)$.

For $k,N \in \Z_{\geq 1}$, a \defi{modular form of weight $k$ and level $N$} is a holomorphic function $f \colon \mathcal{H} \to \C$ that is bounded in vertical strips and satisfies
\begin{equation} 
f\left(\frac{az+b}{cz+d}\right)=(cz+d)^k f(z) 
\end{equation}
for all $\gamma=\begin{pmatrix} a & b \\ c & d \end{pmatrix} \in \Gamma_1(N)$; the $\C$-vector space of such forms is denoted $M_k(\Gamma_1(N))$.  

Modular forms are organized by character, as follows.  The space $M_k(\Gamma_1(N))$ decomposes according to the action of diamond operators as 
\begin{equation} \label{eqn:Mk1chi}
M_k(\Gamma_1(N)) = \bigoplus_{\chi} M_k(\Gamma_0(N),\chi),
\end{equation}
the sum being over all Dirichlet characters $\chi \colon \Z/N\Z \to \C$ of modulus $N$, where $M_k(\Gamma_0(N),\chi)$ is the subspace of modular forms \defi{with (Nebentypus) character $\chi$} consisting of those forms $f$ satisfying
\begin{equation} 
f\left(\frac{az+b}{cz+d}\right)=\chi(d) (cz+d)^k f(z) 
\end{equation}
for all $\gamma \in \Gamma_0(N)$.  Throughout, we will abbreviate $M_k(\Gamma_0(N),\chi)$ to $M_k(N,\chi)$ and when $\chi$ is trivial, write simply $M_k(N)$.  

In order to handle character values with some finesse (as explained above in \secref{sec:chars} and below in \secref{sec:galoisdigress}), we work in the absolute situation (relative to $\Q$) and consider the entire Galois orbit $[\chi]$ of $\chi$, and so we write
\begin{equation} \label{eqn:Mkdecompchi}
    M_k(\Gamma_0(N),[\chi])\colonequals \bigoplus_{\chi' \in [\chi]} M_k(\Gamma_0(N),\chi'),
\end{equation}
so that from \eqref{eqn:Mk1chi} we have
\[ M_k(\Gamma_1(N)) = \bigoplus_{[\chi]} M_k(\Gamma_0(N),[\chi]), \]
where the direct sum is over Galois orbits of characters $[\chi]$.  We similarly abbreviate $M_k(\Gamma_0(N),[\chi])$ to just $M_k(N,[\chi])$.

Every such modular form $f$ has a $q$-expansion (i.e., Fourier expansion at $\infty$)
\begin{equation} \label{eqn:qexp}
    f(z) = \sum_{n=0}^{\infty} a_n q^n \in \C[[q]],
\end{equation}
where $q=\exp(2\pi i z)$ and $z \in \calH$.  We call $a_n \in \C$ the \defi{coefficients} of $f$, and we write $\Z[\{a_n\}_n]$ for the \defi{coefficient ring} and $\Q(\{a_n\}_n)$ for the \defi{coefficient field} of $f$, the subring and subfield of $\C$ generated by its coefficients, respectively. 

A modular form $f$ is a \defi{cusp form} if $f$ vanishes at the cusps of $X_1(N)$.  The subspace of cusp forms is denoted $S_k(\Gamma_1(N)) \subseteq M_k(\Gamma_1(N))$, and similarly $S_k(\Gamma_0(N),\chi) \subseteq M_k(\Gamma_0(N),\chi)$.  In particular, a cusp form vanishes at the cusp $\infty$, so that the coefficient $a_0$ of its $q$-expansion is zero.

The Petersson inner product provides an orthogonal decomposition
\begin{equation} 
M_k(\Gamma_0(N),\chi) = S_k(\Gamma_0(N),\chi) \oplus E_k(\Gamma_0(N),\chi)
\end{equation}
where $E_k(\Gamma_0(N),\chi)$ is the space spanned by Eisenstein series, obtained in an explicit way using characters (see \secref{sub:eisenstein}).  Each of the spaces above can further be decomposed into old and new subspaces, and we denote the new subspace by $\Sknew{k}(\Gamma_1(N))$, etc. 

The above spaces can be equipped with an action of \emph{Hecke operators} $T_n$ indexed by $n \in \Z_{\geq 1}$.  The operators $T_n$ are normal and pairwise commute for $\gcd(n,N)=1$, so there is a common normalized ($a_1=1$) basis for the action of the Hecke operators, called \defi{eigenforms}; for such forms, $T_n f = a_n f$ for $f$ as in \eqref{eqn:qexp}.  A new cuspidal eigenform is called an \defi{(embedded) newform}.  The coefficients of a newform are algebraic integers and the coefficient field is a number field.  When $\chi$ is trivial, this coefficient field is totally real. When $\chi$ is trivial, we also have Atkin--Lehner involutions $W_p$ for $p \mid N$, and the Fricke involution $W_N \colonequals \prod_{p \mid N} W_p$.  (See subsection~\ref{SS:AtkinLehner} below.)  

For a subring $A \subseteq \C$, we write $M_k(\Gamma_1(N);A) \subseteq M_k(\Gamma_1(N))$ for the $A$-submodule of modular forms whose $q$-expansions have coefficients in $A$, and similarly with the other decorated spaces.  

From now on, we suppose we are given the input of a weight $k \in \Z_{\geq 1}$, a level $N \in \Z_{\geq 1}$, and an orbit of Dirichlet characters $\chi$ of modulus $N$ and orbit label $\textsf{N.s}$; we encode this data of a space of modular forms in the label \textsf{N.k.s}.  

\begin{example}
For $N=280$, $k=2$, and trivial character $\chi$ having label \textsf{280.a}, the space $M_2(280) =M_2(\Gamma_0(280))$ has label
\href{http://www.lmfdb.org/ModularForm/GL2/Q/holomorphic/280/2/a/}{\textsf{280.2.a}}.
\end{example}

\begin{rmk}
We restrict ourselves to integral weight forms in this article.  For forms of half-integral weight, the algorithms, applications, and issues that arise are quite different.  
\end{rmk}

\subsection{Galois digression} \label{sec:galoisdigress}

As is usual in Galois theory, it is convenient to work both with abstract objects as well as embedded objects.  To this end, we call the $\Aut(\C)$-orbit of an embedded newform $f$ a \defi{newform orbit}, and write $[f]$ for this orbit.  We call a $\Q$-subspace of $\Sknew{k}(\Gamma_0(N),[\chi];\Q)$ that is irreducible under the action of the Hecke operators a \defi{newform subspace}.  

For an eigenform $f$ in a newform subspace, we obtain an embedded newform by a choice of embedding of its coefficient field into $\C$, and all such embeddings are conjugate under $\Aut(\C)$.  Conversely, given an embedded newform $f \in \Sknew{k}(\Gamma_0(N),\chi)$, the $\C$-subspace of $S_k(\Gamma_0(N),[\chi])$ spanned by $\sigma(f) \colonequals \sum_{n} \sigma(a_n(f)) q^n$ for $\sigma \in \Aut(\C)$ descends to a newform subspace $V_f \subseteq S_k(\Gamma_0(N),[\chi];\Q)$, visibly depending only on the $\Aut(\C)$-orbit of $f$.  In other words, there is a bijection between newform subspaces $V \subseteq \Sknew{k}(\Gamma_1(N))$ and newform orbits $[f]$ of embedded newforms $f$ of weight $k$ and level $N$.  

The coefficient field $K$ of a newform subspace, defined to be the coefficient field of any eigenform in the subspace, is well-defined as an abstract number field.  The expansion \eqref{eqn:qexp} considered in $K$, is similarly well-defined.  

\subsection{Dimensions}

The first thing one may ask to compute for a space of modular forms is just dimensions of the subspaces as defined above: the total dimension $\dim_\C M_k(\Gamma_0(N),[\chi])$, the dimension of the Eisenstein subspace $\dim_\C E_k(\Gamma_0(N),[\chi])$, and the dimension of the cuspidal subspace $\dim_\C S_k(\Gamma_0(N),[\chi])$, as well as the old and new subspaces of each of these.  Since these subspaces are naturally vector spaces over $\Q$, we have
\[ \dim_\C M_k(\Gamma_0(N),[\chi]) = \dim_\Q M_k(\Gamma_0(N),[\chi];\Q); \]
moreover, an individual space $M_k(\Gamma_0(N),\chi)$ is a vector space over $\Q(\chi)$ and each summand in \eqref{eqn:Mkdecompchi} has the same dimension, so these absolute dimensions are the product of their relative dimension by the degree $d=[\Q(\chi):\Q]$ of $\chi$, i.e., we also have
\[ \dim_\C M_k(\Gamma_0(N),[\chi]) = \dim_{\Q(\chi)} M_k(\Gamma_0(N),\chi;\Q). \]

\begin{rmk}
To avoid errors, to compare across packages, and to store data conveniently, we found it essential to compute in the absolute setting (over $\Q$) rather than the relative setting (over $\Q(\chi)$).
\end{rmk}

For weight $k \geq 2$, these dimensions can be computed using the valence formula, the Riemann--Roch theorem, or the trace formula---they are given explicitly e.g.\ by Cohen--Str\"omberg \cite[Theorem 7.4.1]{CohenStroemberg}.  Unfortunately, no formula is known for these dimensions when $k=1$.

Because they can be understood explicitly in terms of Dirichlet characters, there are separately given formulas for the Eisenstein dimension as well as the dimension of the new and old subspaces in all weights $k \geq 1$: see Cohen--Str\"omberg \cite[Propositions 8.5.15 and 8.5.21]{CohenStroemberg} for the full dimension, with the new dimension worked out by Buzzard \cite{Buzzard} using a formula of Cohen--Oesterl\'e \cite[Theorem 1]{CohenOesterle} as follows.  Lacking a reference for these formulas, we record them here.  

For $r,s,p \in \Z$ with $p$ prime and $r > 0$ and $s \leq r$, define
\begin{equation}
    \lambda(r,s,p) \colonequals 
    \begin{cases}
    p^{r'} + p^{r'-1}, & \text{ if $2s \leq r=2r'$;} \\
    2p^{r'}, & \text{ if $2s \leq r=2r'+1$;} \\
    2p^{r-s}, & \text{ if $2s>r$;}
    \end{cases}
\end{equation}
and
\begin{equation}
    \lambdanew(r,s,p) \colonequals 
    \left\{
    \begin{aligned}&
    \left.
\begin{aligned}
   & 2                 \\
   & 2p-4              \\
   & 2(p-1)^2p^{r-s-2} \\
\end{aligned}\right\} && \text{ if } 2s > r \text{ and } \begin{cases}
 r=s\text;      \\
 r=s+1\text;      \\
 r \geq s+2\text; \\
\end{cases}\\ &
    \left.
\begin{aligned}
   & 0                 \\
   & p-3               \\
   & (p-2)(p-1)p^{s-2} \\
\end{aligned}\right\} && \text{ if } 2s = r \text{ and }\begin{cases}
p=2\text;        \\
r=2\text{ and }p \geq 3\text; \\
r \geq 4\text;           \\
\end{cases}\\ &
    \left.
\begin{aligned}
   & 0                \\
   & p-2              \\
   & (p-1)^2p^{r/2-2} \\
\end{aligned}\right\} && \text{ if } 2s < r \text{ and } \begin{cases}
2 \nmid r\text;              \\
r=2\text;                        \\
r \geq 4\text{ and }2 \mid r\text.\\
\end{cases}
    \end{aligned}\right.
\end{equation}

\begin{thm}[Cohen--Oesterl\'e, Buzzard] \label{thm:thmeisen}
Let $N,k \in \Z_{\geq 1}$ and let $\chi$ be a character of modulus $N$ and conductor $M \mid N$.  Then the following statements hold:
\begin{enumalph}
\item If $\chi(-1) \neq (-1)^k$, then $\dim_\C E_k(N,\chi)=\dim_\C \Eknew{k}(N,\chi)=0$.
\item For $N=1$, we have 
\begin{equation}
\dim_\C E_k(1)=\dim_\C \Eknew{k}(1)=\begin{cases} 
1, & \text{ if $k \geq 4$ and $2 \mid k$;} \\
0, & \text{ otherwise.}
\end{cases}
\end{equation}
\end{enumalph}

Suppose further that $N \geq 2$ and $\chi(-1)=(-1)^k$, and let
\begin{equation} \label{eqn:eek}
    \begin{aligned}
e &\colonequals \prod_{p \mid N} \lambda(\ord_p(N),\ord_p(M),p) \\
e_{\textup{new}} &\colonequals \prod_{p \mid N} \lambdanew(\ord_p(N),\ord_p(M),p). \\
\end{aligned}
\end{equation} 
Then the following hold:
\begin{enumalph} \addtocounter{enumi}{2}
\item We have
\begin{equation}
     \dim_\C E_k(N,\chi) = 
     \begin{cases}
     e-1 & \text{ if $k=2$ and $\chi$ is trivial;} \\
     e/2 & \text{ if $k=1$;} \\
     e & \text{ otherwise.}
     \end{cases}
\end{equation}
\item We have
\begin{equation}
     \dim_\C \Eknew{k}(N,\chi) = 
     \begin{cases}
     e_{\textup{new}}+1 & \text{ if $k=2$ and $\chi$ is trivial and $N$ is prime;} \\
     e_{\textup{new}}/2 & \text{ if $k=1$;} \\
     e_{\textup{new}} & \text{ otherwise.}
     \end{cases}
\end{equation}
\item We have 
\[ \dim_\C E_k(N,[\chi]) = d \dim_\C E_k(N,\chi) \] 
where $d=[\Q(\chi):\Q]$ is the degree of $\chi$, and similarly with $\dim_\C \Eknew{k}(N,[\chi])$.
\end{enumalph}
\end{thm}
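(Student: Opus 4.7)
My plan is to dispose of the structural parts (a), (b), and (e) first, and then to focus on the combinatorial content of parts (c) and (d). For part (a), the observation is that $-I \in \Gamma_0(N)$, so the transformation law with $\gamma = -I$ forces $\chi(-1)(-1)^k f = f$ on every $f \in M_k(N,\chi)$; if $\chi(-1) \neq (-1)^k$ then the entire space, and a fortiori its Eisenstein and new-Eisenstein subspaces, vanishes. Part (b) is the classical description of $M_k(\SL_2(\Z))$: the Eisenstein subspace is one-dimensional, spanned by the normalized series $E_k$, precisely when $k \geq 4$ is even; weight $k=2$ is excluded because $E_2$ is only quasi-modular, and odd $k$ is killed by (a). Part (e) is immediate from the decomposition \eqref{eqn:Mkdecompchi}: the group $\Gal_\Q$ acts transitively on the $d = [\Q(\chi):\Q]$ characters in the orbit $[\chi]$, the Eisenstein construction is Galois-equivariant, and so each complex summand has the same dimension.

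For part (c), my approach is to invoke Hecke's explicit construction: to each factorization $\chi = \chi_1 \chi_2$ into primitive Dirichlet characters of conductors $M_1$ and $M_2$ with $M_1 M_2 = L$, and to each $t \in \Z_{\geq 1}$ with $tL \mid N$, one associates an Eisenstein series $E_k^{\chi_1,\chi_2}(tz) \in E_k(N,\chi)$. These span $E_k(N,\chi)$ with only two sources of linear dependence: a single relation involving $E_2$ in the weight $2$ trivial-character case (costing one dimension), and the identification $(\chi_1,\chi_2) \leftrightarrow (\chi_2,\chi_1)$ in weight $1$ (halving the count). The enumeration of admissible triples is multiplicative over primes $p \mid N$, and at each prime reduces to counting pairs of local conductor exponents $(r_1,r_2)$ with $\max(r_1,r_2) = \ord_p(M) = s$ and $r_1 + r_2 \leq \ord_p(N) = r$, each weighted by the number of primitive local characters of that conductor. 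Splitting into the three regions $2s \leq r = 2r'$, $2s \leq r = 2r' + 1$, and $2s > r$ and summing the relevant geometric series produces exactly the piecewise definition of $\lambda(r,s,p)$, and multiplicativity over primes assembles the global formula.

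For part (d), the cleanest route is to cite Buzzard's refinement \cite{Buzzard} of the Cohen--Oesterl\'e formula \cite{CohenOesterle}, which carves out the new Eisenstein subspace via inclusion-exclusion over the two standard old maps $f(z) \mapsto f(z), f(pz)$ from lower levels. Equivalently, at each prime $p$ one asks which of the local triples above are genuinely new at level $p^r$ rather than induced from level $p^{r-1}$. The three regimes in the definition of $\lambdanew(r,s,p)$ ($2s > r$, $2s = r$, and $2s < r$) correspond to whether the local primitivity lies strictly above, exactly at, or strictly below the midpoint of the local level, each with its own multiplicity count; the further splitting for small $r$ and for $p=2$ absorbs boundary behavior. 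The $+1$ correction at prime level with trivial character and $k=2$ again traces back to the $E_2$ phenomenon: at level $N = p$ the new Eisenstein subspace picks up exactly the combination $E_2(z) - p\,E_2(pz)$ that is newly modular only at that level.

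The principal obstacle I anticipate is bookkeeping rather than concept: verifying the piecewise function $\lambdanew$ across all its boundary cases, especially the special treatment of $p=2$ when $2s = r$ and the very small values $r = s$ and $r = s+1$. I would organize this as an induction on $r$ at a fixed prime $p$, using the relation
\[
    \lambda(r,s,p) = \sum_{r' = s}^{r} (r - r' + 1)\,\lambdanew(r',s,p),
\]
which expresses the total local count as a sum of new contributions weighted by the number of oldform embeddings from level $p^{r'}$ up to $p^r$. Solving this upper-triangular recursion (or manipulating the geometric sums directly) yields the closed form for $\lambdanew$; multiplying over $p \mid N$ then delivers parts (c) and (d) simultaneously, with the exceptional additive terms for weight $2$ trivial character tracked separately in the cases $N = 1$ and $N$ prime.
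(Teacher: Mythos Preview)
The paper's own proof of this theorem is a single sentence: ``The proof is an elaborate and rather tedious exercise in counting characters using the trace formula.'' So your outline already contains substantially more detail than what the paper provides. Your route via the explicit Hecke basis of Eisenstein series (as in Cohen--Str\"omberg \cite[\S 8.5]{CohenStroemberg} or Diamond--Shurman \cite[\S 4.5]{DiamondShurman}) together with Buzzard's inclusion--exclusion \cite{Buzzard} is the standard one, and the overall architecture of your plan---disposing of (a), (b), (e) structurally, counting triples $(\chi_1,\chi_2,t)$ for (c), and M\"obius-inverting the divisor recursion for (d)---is correct.

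One local detail needs correction. At a prime $p$ the admissible pairs are \emph{not} characterized by $\max(r_1,r_2)=s$. Fixing the primitive local component $\chi_p$ of conductor $p^s$, the second factor is determined by the first via $\psi_2=\chi_p\overline{\psi_1}$; if $\psi_1$ has conductor $p^{r_1}$ with $r_1>s$, then $\psi_2$ also has conductor $p^{r_1}$, so pairs with $r_1=r_2>s$ genuinely occur. These are exactly what fill out the regime $2s<r$ in the definition of $\lambda$ (for instance, with $s=0$, $r=2$, $p$ odd, the count $\lambda(2,0,p)=p+1$ comes from the trivial pair with three choices of $t$ plus the $p-2$ pairs $(\psi,\overline\psi)$ with $\psi$ primitive of conductor $p$). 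The correct local enumeration is simply over all primitive $\psi_1$ of conductor $p^{r_1}$, with $r_2$ then forced and the constraint $r_1+r_2\le r$, each weighted by $r-r_1-r_2+1$ from the $t$-parameter. With that amendment your geometric sums and the upper-triangular recursion you wrote down both go through, and the remaining work is exactly the case-by-case bookkeeping you anticipated.
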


\begin{proof}
The proof is an elaborate and rather tedious exercise in counting characters using the trace formula.
\end{proof}

We organize this dimension data in a table, as follows.

\begin{example}
We consider the space $M_3(560,[\chi])$ with label \href{http://www.lmfdb.org/ModularForm/GL2/Q/holomorphic/560/3/bt/}{\textsf{560.3.bt}}; a character $\chi$ in this orbit has label \textsf{560.bt}, order $6$, and degree $2$.  We then compute dimensions as in Table \ref{tab:dims}.
\begin{equation} \label{tab:dims}\addtocounter{equation}{1} \notag
\begin{gathered}
{\renewcommand{\arraystretch}{1.2}
\begin{tabular}{c|ccc}
& Total & New & Old \\
\hline
Modular forms & 408 & 96 & 312 \\
Cusp forms & 360 & 96 & 264 \\
Eisenstein series & 48 & 0 & 48
\end{tabular}
} \\[4pt]
\text{Table \ref{tab:dims}: Dimensions for subspaces of $M_3(560,[\chi])$}\\[4pt]
\end{gathered}
\end{equation}
\end{example}

One can also ask for the \defi{full trace form}
    \begin{equation} \label{eqn:fulltraceform}
    \sum_{n=1}^{\infty} \Tr(T_n\,|\,S_k(N,[\chi])) q^n \in S_k(N,[\chi];\Z) 
    \end{equation}
    on $S_k(N,[\chi])$ to some ($q$-adic) precision, with analogous definitions for the other subspaces considered above; see also \eqref{eqn:traceformjustone} below.

\subsection{Eisenstein series} \label{sub:eisenstein}

Beyond dimensions, we may next ask for further information about the decomposition of the space $M_k(N,\chi)$.  Of course the first step is the decomposition of the Eisenstein subspaces $E_k(N,\chi)$---for this purpose, explicit bases are given by Cohen--Str\"omberg \cite[Theorems 8.5.17, 8.5.22, and 8.5.23]{CohenStroemberg}.

\begin{rmk}
We do not currently display an Eisenstein basis in the LMFDB\@.
\end{rmk}

\subsection{Decomposition of newspaces into Hecke orbits} \label{sub:decomposition}

With the Eisenstein subspace described explicitly above, we now turn to the cuspidal subspace.  By the newform theory of Atkin--Lehner \cite{AtkinLehner} and Li \cite{Li}, the multiplicity of the space $\Sknew{k}(M,\chi_M)$ in $S_k(N,\chi)$, is equal to the number of divisors of $N/M$ (so depends only on the conductor and level).  While it suffices to study the new subspace, it may be computationally expensive to determine $\Sknew{k}(N,\chi)$ as a subspace of $S_k(N,\chi)$; one way to do this is via projection operators called \emph{degeneracy maps}, one for each prime divisor of~$N$.  

At this stage, for each \defi{newspace} $\Sknew{k}(N,[\chi])$ we may first ask for just the dimensions of its newform subspaces $V$ or \defi{Hecke orbits}---see \secref{sec:irred} below for a discussion of decomposition and irreducibility.  When $\chi$ is trivial, we may also ask for the decomposition of the space under the Atkin--Lehner involutions and the Fricke involution.  

\begin{example}
The space $\Sknew{2}(3111)$, with trivial character, has dimension $159$; it decomposes into newspaces of dimensions $1+2+3+3+7+13+14+14+21+24+28+29=159$, and we have the following decomposition into subspaces under Atkin--Lehner operators:

\begin{equation} \label{tab:ALdimex}\addtocounter{equation}{1} \notag
\begin{gathered}
{\renewcommand{\arraystretch}{1.2}
\begin{tabular}{ccc|c|cc}
$3$ & $17$ & $61$ & Fricke & dimension & decomposition \\
\hline
$+$ & $+$ & $+$ & $+$ & $13$ & $1+2+3+7$ \\
$+$ & $+$ & $-$ & $-$ & $29$ & $29$ \\
$+$ & $-$ & $+$ & $-$ & $24$ & $3+21$ \\
$+$ & $-$ & $-$ & $+$ & $14$ & $14$ \\
$-$ & $+$ & $+$ & $-$ & $24$ & $24$ \\
$-$ & $+$ & $-$ & $+$ & $14$ & $14$ \\
$-$ & $-$ & $+$ & $+$ & $13$ & $13$ \\
$-$ & $-$ & $-$ & $-$ & $28$ & $28$ 
\end{tabular}
} \\[4pt]
\text{Table \ref{tab:ALdimex}: Dimensions for subspaces of $\Sknew{2}(3111)$}\\[4pt]
\end{gathered}
\end{equation}
\end{example}

In practice, one computes this decomposition as follows.  We first compute a $\Q(\chi)$-basis for $\Sknew{k}(N,\chi)$ in some manner, and then we compute the matrix of $T_p$ on this basis for $p \nmid N$ for a few primes $p$ in such a way that a small (finite) $\Z$-linear combination $\sum_p c_p T_p$ has squarefree characteristic polynomial.  Therefore, the $\Q$-dimension decomposition is simply the degrees of the irreducible factors each multiplied by $[\Q(\chi):\Q]$.  There seems to be no problem in practice finding such a small linear combination, but the best thing that we can say rigorously involves the Sturm bound and appears to be far from optimal.  Already at this point engineering concerns enter: for example, the time to compute such a characteristic polynomial may be faster in certain implementations if done over $\Q$ instead.  

With this basic decomposition data in hand, we may continue.  For each newform orbit $[f] \leftrightarrow V$ (cf.\ \secref{sec:galoisdigress}) we wish to compute the following:
\begin{enumerate}
    \item The \defi{trace form} 
    \begin{equation} \label{eqn:traceformjustone}
    \Tr(f)(q) \colonequals \sum_{n=1}^{\infty} \Tr_{K|\Q}(a_n(f)) q^n \in S_k(N,[\chi];\Z) 
    \end{equation}
    (well-defined on the Galois orbit $[f]$), where $K$ is the coefficient field of $f$, to precision $n$ up to the \emph{Sturm bound} (see \secref{sec:sturm}).  Equivalently, writing $\Tr(f)(q) = \sum_n t_n q^n \in \Z[[q]]$, we have $t_n=\Tr(T_n\,|\,V)$ as the trace of the Hecke operator $T_n$ restricted to $V$---see \secref{sec:traceform} for further discussion.
    \item A minimal polynomial for the coefficient field $K$ of $[f]$.
    \item A finite set of generators for the \defi{Hecke kernel} for $V$, the ideal in the Hecke algebra on $\Sknew{k}(N,\chi)$ that vanishes on $V$; i.e., a finite set of polynomials in $T_n$ such that the ideal generated by these polynomials cuts out exactly $V$.  (We use the Hecke kernel when computing inner twists: see \secref{sec:twists}.)  
\end{enumerate}

Although it is possible to compute coefficients of the trace form $\Tr(f)$ by computing coefficients of $f$ and taking traces, this is more expensive than other techniques and is not computationally feasible in many cases where it is feasible to compute the trace form (e.g., using the trace formula: see section \secref{sec:algorithmTF}).  The trace form conveniently records interesting information about the newform orbit, e.g., the coefficient $t_1$ of the trace form is equal to the dimension of the newform subspace.  

\begin{example}
Consider the space $S_2(1166,[\chi])$ with label \href{http://www.lmfdb.org/ModularForm/GL2/Q/holomorphic/1166/2/c/}{\textsf{1166.2.c}}, the character having order $2$ and conductor $53 \mid 1166$.  The old subspace decomposes as
\[ S_2^{\textup{old}}(1166,[\chi]) \simeq \Sknew{2}(53,[\chi])^{\oplus 4} \oplus 
\Sknew{2}(106,[\chi])^{\oplus 2} \oplus \Sknew{2}(583,[\chi])^{\oplus 2}. \]
The decomposition of the new space $\Sknew{2}(1166,[\chi])$ into irreducibles by $\Q$-dimension is $46 = 2 + 22 + 22$, giving rise to three newform orbits \newformlink{1166.2.c.a}, \newformlink{1166.2.c.b}, \newformlink{1166.2.c.c} with respective trace forms
\begin{equation}
\begin{aligned}
    \Tr(f_{\textsf{a}})(q) &= 2q-2q^4 + 2q^6 - 8q^7 + 4q^9 + O(q^{10}) \\
    \Tr(f_{\textsf{b}})(q) &= 22q-22q^4-6q^6-24q^9 + O(q^{10}) \\
    \Tr(f_{\textsf{c}})(q) &= 22q-22q^4+4q^6+8q^7-34q^9 + O(q^{10}).
\end{aligned}
\end{equation}
We computed the last two trace forms \emph{without} computing  coefficients of a constituent newform (belonging to a number field of degree $22$, or even determining what this number field is), which would have been much more time consuming.  For the newform orbit $[f_{\textsf{a}}]$, we determined that its coefficient field is $\Q(\sqrt{-1})$, that it can be constructed as the kernel of the linear operator ${T_3^2+1}$ acting on $\Sknew{2}(1166,[\chi])$, and then computed the first 1000 coefficients $a_n$ of its $q$-expansion $\sum a_nq^n$ as elements of $\Q(\sqrt{-1})$. 
\end{example}

\subsection{Hecke eigenvalues}

Finally, for a newform $f$, we can ask for the coefficients of $f$ up to (at least) the Sturm bound.  These coefficients can be represented either exactly or as complex numbers (approximately, e.g.\ using interval arithmetic).  

\begin{itemize}
    \item For exact coefficients, there are issues in representing them compactly: see \secref{sec:LLLbasis} for our approaches. 
    \item For the numerical (complex) coefficients $a_n$, the most useful for computing $L$-functions (see the next section), we ask for these coefficients for each embedded form in the newspace.  These coefficients are of size $O(n^{(k-1)/2+\epsilon})$ for all $\epsilon>0$, so in large weight we prefer to compute the \defi{normalized coefficients} $a_n/n^{(k-1)/2}$, which by the Ramanujan--Petersson bounds have absolute value of size $O(n^{\epsilon})$.  
\end{itemize}
For large degree coefficient fields, it is often practical to compute numerical coefficients even when storing exact coefficients would be impractical.   

Finally, when the character is trivial, for the signs of the Atkin--Lehner involutions.

\begin{example}
Consider the newform orbit $\newformlink{5355.2.a.bf}$ of dimension $3$, with coefficient field $\Q(\nu)$ (LMFDB label \href{http://www.lmfdb.org/NumberField/3.3.169.1}{\textsf{3.3.169.1}}) where $\nu$ is a root of the polynomial $x^3-x^2-4x-1$.  The $q$-expansion of a newform $f$ in this orbit, with coefficients in $\Q(\nu)$, is
\[ f(q)=q+(1-\beta_1)q^2+(2-\beta_1+\beta_2)q^4+q^5+q^7+(2-\beta_1+2\beta_2)q^8+O(q^{10}) \]
where $\beta_1=\nu$ and $\beta_2=\nu^2-\nu-3$.

The $3$ embedded newforms are labeled $\textsf{5355.2.a.bf.1.m}$ for $m=1,2,3$ encoding the three embeddings $\iota_m \colon \Q(\nu) \hookrightarrow \C$; the embedded coefficients to $6$ decimal digits are as follows:

\begin{equation} \label{tab:embedcoeffs}\addtocounter{equation}{1} \notag
\begin{gathered}
{\renewcommand{\arraystretch}{1.2}
\begin{tabular}{cc|ccccccc}
Label & $\iota_m(\nu)$ & $a_2$ & $a_3$ & $a_4$ & $a_5$ & $a_6$ & $a_7$ & $a_8$\\
\hline
\href{http://www.lmfdb.org/ModularForm/GL2/Q/holomorphic/5355/2/a/bf/1/1/}{\textsf{1.1}} & $2.65109$ & $-1.65109$ & 0 & $0.726109$ & $1.00000$ & 0 & $1.00000$ & $2.10331$ \\
\href{http://www.lmfdb.org/ModularForm/GL2/Q/holomorphic/5355/2/a/bf/1/2/}{\textsf{1.2}} & $-0.273891$ & $1.27389$ & 0 & $-0.377203$ & $1.00000$ & 0 & $1.00000$ & $-3.02830$ \\ 
\href{http://www.lmfdb.org/ModularForm/GL2/Q/holomorphic/5355/2/a/bf/1/3/}{\textsf{1.3}} & $-1.37720$ & $2.37720$ & 0 & $3.65109$ & $1.00000$ & 0 & $1.00000$ & $3.92498$
\end{tabular}
} \\[4pt]
\text{Table \ref{tab:embedcoeffs}: Embedded newforms for \newformlink{5355.2.a.bf}. }\\[4pt]
\end{gathered}
\end{equation}
\end{example}

\subsection{\texorpdfstring{$L$}{L}-functions}

We can also ask for computations related to (invariants of) $L$-functions of modular forms, including the sign of the functional equation, the first few zeros, and special values to some precision: see \secref{sec:Lfunctions} for more detail.

\section{Algorithms} \label{sec:algs}

In this section, we give a brief overview of different algorithmic methods to compute modular forms and indicate where they are currently implemented.  In our computations for the LMFDB, we only used the first two (modular symbols and the trace formula), but here we also survey the others.  Our goal is to give a flavor of what each method entails, referring to the references provided for details.  Throughout, we keep notation from the previous section.  

\subsection{Modular symbols}

The most well-known method to compute modular forms is the method of \emph{modular symbols}, introduced by Birch \cite{Birch} and developed by Manin \cite{Manin}, Merel \cite{Merel}, Stein \cite{Stein}, and many others.  For an extensive history, see Stein \cite[8.10.2]{Stein}, and for a gentle overview see Stein \cite{Stein2}.  This method was implemented in \magma{} \cite{Magma} by William Stein, with contributions by Steve Donnelly and Mark Watkins, and in \sage{} \cite{Sage} by William Stein, with contributions by David Loeffler, Craig Citro, Peter Bruin, Fr\'ed\'eric Chapoton, Alex Ghitza, and {many} others.

We now briefly introduce modular symbols.  Assume $k \geq 2$.  Integration gives a perfect pairing
\begin{equation} \label{eqn:SkH1}
\begin{aligned}
S_k(\Gamma_1(N)) \times H_1(X_1(N),\R[x,y]_{k-2}) &\to \C \\
(f, \upsilon \otimes P) &\mapsto \int_\upsilon f(z)\,P(z,1)\,\mathrm{d}z
\end{aligned}
\end{equation}
where $\R[x,y]_{k-2}$ denotes the $\R$-vector space of homogeneous polynomials of degree $k-2$.  
In a slogan, \eqref{eqn:SkH1} indicates that \emph{the homology of a modular curve is dual to its cusp forms}, and this is formalized as follows.  Let $\Div (\PP^1(\Q))$ be the free abelian group on symbols $[\alpha]$ for $\alpha \in \PP^1(\Q)$, and let $\Div^0(\PP^1(\Q)) \leq \Div (\PP^1(\Q))$ be the subgroup of degree zero elements under the natural degree map.  Then $\Div^0(\PP^1(\Q))$ is generated by elements $\{\alpha,\beta\} \colonequals [\alpha]-[\beta]$ for $\alpha,\beta \in \PP^1(\Q)$, written this way to suggest a path from $\alpha$ to $\beta$ in $\C$.  We define the space of \defi{modular symbols} of weight $k$ and level $N$ (with $\Q$-coefficients) to be the quotient
\[
\ModSym_k(\Gamma_1(N);\Q) \colonequals \frac{\Q[x,y]_{k-2} \otimes \Div^0(\PP^1(\Q))}{\langle P \otimes \{\alpha,\beta\}-\gamma (P \otimes \{\alpha,\beta\}) \rangle_{\substack{\alpha,\beta \in \PP^1(\Q), \gamma \in \Gamma_1(N)}}}
\] 
under the natural action of $\Gamma_1(N) \leq \SL_2(\Q)$.  The space $\ModSym_k(\Gamma_1(N);\Q)$ of modular symbols has moreover a natural action of Hecke operators and Atkin-Lehner operators.  

\begin{thm} \label{thm:modsym}
There is a Hecke-equivariant isomorphism
\[ \ModSym_k(\Gamma_1(N);\Q) \xrightarrow{\sim} M_k(\Gamma_1(N);\Q) \oplus \overline{S}_k(\Gamma_1(N);\Q) \]
where $\overline{S}_k(\Gamma_1(N);\Q)$ denotes the space of anti-holomorphic cusp forms, the image of $S_k(\Gamma_1(N);\Q)$ under complex conjugation.
\end{thm}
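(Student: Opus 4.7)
The plan is to base-change to $\C$, identify both sides with cohomology of the modular curve $Y_1(N) = \Gamma_1(N)\backslash \calH$, apply the classical Eichler--Shimura isomorphism, and then descend. Since both sides carry natural $\Q$-structures compatible with the Hecke action --- the left-hand side manifestly from its definition, the right-hand side via rational $q$-expansions --- it suffices to construct a Hecke-equivariant $\C$-linear isomorphism after extending scalars and to verify it respects these structures.

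First I would identify $\ModSym_k(\Gamma_1(N);\Q) \otimes_\Q \C$ with the relative cohomology $H^1(X_1(N), \mathrm{cusps}; \mathbb{V}_k)$ (equivalently, compactly supported cohomology $H^1_c(Y_1(N), \mathbb{V}_k)$), where $\mathbb{V}_k$ is the local system attached to the $\Gamma_1(N)$-representation $\C[x,y]_{k-2}$. This identification uses that $\Div^0(\PP^1(\Q))$ encodes relative $1$-chains in the extended upper half-plane between cusps, so that modular symbols modulo $\Gamma_1(N)$-invariance compute precisely the relative (co)homology. Under this identification the integration pairing \eqref{eqn:SkH1} becomes the period/cup-product pairing, making the resulting isomorphism canonical and Hecke-equivariant.

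Next I would invoke the Eichler--Shimura decomposition, which gives a Hecke-equivariant isomorphism
\[ H^1\bigl(Y_1(N), \mathbb{V}_k \otimes \C\bigr) \cong S_k(\Gamma_1(N)) \oplus \overline{S}_k(\Gamma_1(N)) \oplus \mathrm{Eis}_k, \]
with $\mathrm{Eis}_k \cong E_k(\Gamma_1(N))$ as a Hecke module. Combining this with the long exact sequence of the pair $(X_1(N), \mathrm{cusps})$ with coefficients in $\mathbb{V}_k$, together with the vanishing $H^1(\mathrm{cusps}, \mathbb{V}_k) = 0$, yields
\[ \ModSym_k(\Gamma_1(N); \C) \cong S_k(\Gamma_1(N)) \oplus \overline{S}_k(\Gamma_1(N)) \oplus E_k(\Gamma_1(N)) = M_k(\Gamma_1(N)) \oplus \overline{S}_k(\Gamma_1(N)). \]

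The main obstacle is the careful bookkeeping of the Eisenstein contribution: confirming that it appears exactly once (so one obtains $M_k \oplus \overline{S}_k$ rather than $M_k \oplus \overline{M}_k$ or merely $S_k \oplus \overline{S}_k$) requires a clean analysis of boundary cohomology --- essentially, the Eisenstein part of $H^1(Y_1(N), \mathbb{V}_k)$ is a single real copy of $E_k$ arising from $H^0$ of the cusps via the connecting map. Descent from $\C$ to $\Q$ then follows from the Manin--Drinfeld theorem, which guarantees that the Eisenstein subspace is cut out rationally by the Hecke algebra, together with the fact that the integration pairing respects the two rational structures; thus the $\C$-linear isomorphism restricts to a $\Q$-linear isomorphism on the given $\Q$-structures, as required.
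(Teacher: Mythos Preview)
The paper does not give its own proof: it simply cites Manin, Merel, and Stein~\S8.5. Your outline via cohomology of $Y_1(N)$ with coefficients in $\mathbb{V}_k$ and the Eichler--Shimura isomorphism is exactly the standard argument found in those references, and your handling of the Eisenstein contribution (it appears once, coming from boundary cohomology) is correct.

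There is, however, a genuine issue in your final descent step. You assert that ``the integration pairing respects the two rational structures'' and hence that ``the $\C$-linear isomorphism restricts to a $\Q$-linear isomorphism on the given $\Q$-structures.'' This is false: the Eichler--Shimura isomorphism is transcendental. A rational modular symbol is sent, under the period map, to a $\C$-linear combination of eigenforms whose coefficients are periods, not rational numbers; conversely, a newform with rational $q$-expansion does not correspond to a rational homology class. So the specific $\C$-isomorphism you construct does \emph{not} descend.

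The fix is to argue more abstractly. You have shown that $\ModSym_k(\Gamma_1(N);\Q)\otimes_\Q\C$ and $\bigl(M_k(\Gamma_1(N);\Q)\oplus\overline{S}_k(\Gamma_1(N);\Q)\bigr)\otimes_\Q\C$ are isomorphic as modules over the Hecke algebra $\TT$. Since $\TT$ acts on each side through a commutative $\Q$-algebra, and since two finite-dimensional $\Q$-vector spaces with commuting operators become isomorphic after base change to $\C$ if and only if they were already isomorphic over $\Q$ (compare characteristic polynomials, or use that the Hecke algebra acts semisimply with algebraic eigenvalues), the desired Hecke-equivariant $\Q$-isomorphism exists---it is just not the period map. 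Manin--Drinfeld is not needed here; it is relevant if you want a canonical splitting of cuspidal from Eisenstein parts over $\Q$, but mere existence of a Hecke-equivariant isomorphism follows from the module-theoretic argument.
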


\begin{proof}
See Manin \cite{Manin}, Merel \cite{Merel}, or Stein \cite[\S 8.5]{Stein}.
\end{proof}

Theorem \ref{thm:modsym} has many variants: one may restrict to $\Gamma_0(N)$, work with the (appropriately defined) space of \emph{cuspidal} modular symbols as the kernel of a certain boundary map, carve out just $M_k(\Gamma_1(N);\Q)$ as the $+$-space for a natural action of complex conjugation, and so on.  

\begin{example}
For $\Gamma_0(N)$, the space of modular symbols has a convenient description in terms of \emph{Manin symbols} as follows: $\ModSym_k(\Gamma_0(N);\Q)$ is the $\Q$-vector space generated by the set $\Delta$ of elements $\delta=(x^i y^{k-2-i}, (c:d))$ for $i=0,\dots,k-2$ and $(c:d) \in \PP^1(\Z/N\Z)$, modulo the subspace
\[ \langle \delta  + \delta S, \delta + \delta R + \delta R^2 \rangle_{\delta \in \Delta} \]
where $S=\begin{pmatrix} 0 & 1 \\ -1 & 0 \end{pmatrix}$ and $R=\begin{pmatrix} 0 & 1 \\ -1 & 1 \end{pmatrix}$.  The Hecke operators do not preserve Manin symbols, but there is an efficient procedure (arising from the Euclidean algorithm) for reducing an arbitrary element of $\ModSym_k(\Gamma_0(N);\Q)$ to a linear combination of Manin symbols.  
\end{example}

One feature of modular symbols is that they are especially well-suited for certain applications, including arithmetic invariants of elliptic curve quotients \cite{Cremona:database} (and more generally modular abelian varieties) as well as $L$-values of modular forms (see e.g.\ \secref{sec:analyticrank} below for an application).  Moreover, modular symbols can be employed for arbitrary congruence subgroups (see \cite{BanwaitCremona} for an example).  

In practice, it is quite efficient to compute the space of modular symbols with its Hecke action.  It is a matter of sparse linear algebra to compute a basis of modular symbols, a negligible contribution.  The number of field operations to compute the action of the Hecke operator $T_n$ on this basis is $\widetilde{O}(nd)$, where $d$ is the $\Q(\chi)$-dimension of the space under consideration: for each of the $d$ basis elements, we sum the action of $\sigma_1(n) \colonequals \sum_{d\mid n} d = \widetilde{O}(n)$ cosets and reduce to the basis in time polynomial in $\log n$ using continued fractions.  In this way, we may compute the $q$-expansions of a basis to precision $O(q^r)$ using $\softO(dr^2)$ field operations, and thereby also the trace form.

The most difficult engineering effort that goes into a working implementation of modular symbols is the careful handling of linear algebra aspects: we apply degeneracy operators to obtain precisely the subspace $\Sknew{k}(N,\chi)$, and once the matrices $[T_n]$ representing the Hecke operators are computed on this space, we compute its decomposition into newform subspaces, etc.  Indeed, in the preceding paragraphs, the actual \emph{time} complexity of this method may depend on the output desired and the meaning of ``arithmetic operation''. If we wish for exact results, which is the approach taken by \magma{} and \pari, then we need to do exact arithmetic with elements of cyclotomic fields, and the larger the order of the corresponding Dirichlet character, the more expensive the computation. Similarly, the coefficients of the newforms themselves may live in a large extension of the field of character values, and the larger this extension is, the harder the computation.  

\begin{remark} \label{rmk:coefffield}
As alternatives, we may do all of the computations described using floating point approximations to complex numbers, for example using \emph{complex ball arithmetic} to compute rigorous error bounds for all of the output.  In this case, the degree of the field of coefficients of the modular form is irrelevant, and the time complexity matches the estimates above; this is  particularly attractive if our application is to the computation of Dirichlet coefficients for input into $L$-function computations.  Similar comments apply by doing computations over a finite field, for example working with coefficients over a finite field with prime cardinality congruent to $1$ modulo the order of $\chi$---in this case, we can do all computations over $\F_p$.  In both cases, we must do some reconstruction to obtain exact results in characteristic zero.
\end{remark}

The above description requires weight $k \geq 2$.  For weight $1$, there are two approaches that reduce the problem to higher weight.  In the approach originated by Buhler \cite{BuhlerMono}, further developed by Buzzard \cite{BuzzardWtOne}, and carried out to scale by Buzzard--Lauder \cite{BuzzardLauder}, we choose nonzero $f \in M_k(\Gamma_1(N))$ and consider $S_1(\Gamma_1(N)) \subseteq f^{-1} M_{k+1}(\Gamma_1(N))$. Intersecting the spaces obtained for many choices of~$f$, we quickly obtain an upper bound for the space $S_1(\Gamma_1(N))$ that can then be matched with a lower bound.  Using Buzzard's code, this method was  implemented in \magma{} by Steve Donnelly.  (Currently, \magma{} can provide a basis for the cuspidal subspace, but it does not decompose the space into the old and new subspace and does not provide the action of the Hecke operators; this was implemented by Buzzard--Lauder, but has not yet been incorporated into \magma{}.)  A second related approach is to use the \emph{Hecke stability} method of Schaeffer \cite{Schaeffer}, instead computing the largest subspace of $f^{-1} M_{k+1}(\Gamma_1(N))$ that is stable under the Hecke operators; this has been implemented in \sage{} by Schaeffer and Loeffler, and in \pari{} by Belabas and Cohen \cite[\S 4]{BelabasCohen}.

\subsection{Trace formula} \label{sec:algorithmTF}

Perhaps the earliest method to compute modular forms used the \emph{trace formula}.  The trace formula is an explicit formula for the trace of a Hecke operator acting on a space of modular forms, and it was pioneered by Selberg \cite{Selberg} and later developed by Eichler \cite{Eichler}, Hijikata \cite{Hijikata}, and Cohen--Oesterl\'e \cite{CohenOesterle}.  A comprehensive treatment with references is the book of Knightly--Li \cite{KnightlyLi}, and a tidy presentation is given by Schoof--van der Vlugt \cite[Theorem 2.2]{SV91}. Proofs of the trace formula from different vantage points continue to be developed, see e.g.\ Popa \cite{Popa}.  This method has been implemented in \pari{} \cite{Pari} by Belabas--Cohen \cite{BelabasCohen} and in a standalone implementation by Bober, described in \secref{sec:bober}.  

We again assume $k \geq 2$.  An explicit version of the trace formula for $\Tr(T_n\,|\,S_k(\Gamma_0(N),\chi)) \in \Q(\chi)$ is too complicated to give here.  Aside from easily computed terms, it can be naively understood as a weighted sum of (Hurwitz) class numbers of imaginary quadratic fields: for a precise statement, see e.g.\ Belabas--Cohen \cite[Theorem 4]{BelabasCohen}.  We obtain  $\Tr(T_n\,|\,\Sknew{k}(N,\chi))$ from a nontrivial application of the M\"obius inversion formula, proven in Corollary \ref{cor:tnonnew} below.  

Let $d \colonequals \dim_{\Q(\chi)} \Sknew{k}(N, \chi) = O(kN)$.  The computation of $\trnew{n}$ requires computing class numbers of imaginary quadratic fields with absolute discriminant up to $O(n)$, and one can compute all of these at once in time complexity $\softO(n^{3/2})$.  For the purposes of a large-scale computation, these class numbers are cached and may be assumed to be precomputed (their cost amortized over many computations, thereby negligible).  Under this assumption, and given factorizations of $n$ and $N$, to compute $\trnew{n}$ we sum $O(\sqrt{n})$ terms giving a complexity of $O(\sqrt{n}N^\epsilon)$ field operations for any $\epsilon>0$; computing all traces up to $n>d$ then takes $\softO(n^{3/2})$ field operations.

In this manner, we compute the relative trace form on the new cuspidal subspace
    \begin{equation} 
    t(q) \colonequals \sum_{n=1}^{\infty} \trnew{n} q^n \in \Sknew{k}(N,\chi;\Z[\chi]),
    \end{equation}
    and from this we quickly compute the full trace form \eqref{eqn:fulltraceform} in $\Sknew{k}(\Gamma_1(N);\Z)$.   
    In particular, using the trace formula method we can compute either trace form to precision $O(q^r)$ using $\softO(r^{3/2}N^\epsilon)$ field operations, which for $r>d$ becomes $\softO(r^{3/2})$ as in the previous paragraph.
    
By multiplicity one theorems, and since the Hecke operators act semisimply on the new\-space, the images of $t$ under the Hecke operators span $\Sknew{k}(N,\chi)$.  Explicitly, applying $T_m$ to $t$, we obtain
\begin{equation}
   (T_m t)(q) = \sum_{n=1}^\infty \Tr(T_mT_n\,|\,\Sknew{k}(N,\chi)) q^n,
\end{equation}
and the forms $T_1 t, T_2 t, \dots$ span $\Sknew{k}(N, \chi)$. (We recall that $T_mT_n=T_{mn}$ when $\gcd(m,n)=1$, and more generally a recursion for the Hecke operators applies.  Therefore, these coefficients can again be expressed in terms of traces of Hecke operators.)  Once we have a spanning set, we can extract a basis and apply Hecke operators to that basis.

Typically (in practice) we need $O(d)$ forms to span and $O(d)$ coefficients of each form to get a full rank matrix.  Thus writing down a basis typically requires the first $O(d^2)$ values of $\trnew{n}$, which can be computed using $O(d^{3})$ field operations.  Finding this basis---and the $q$-expansion to precision $\widetilde{O}(d)$ for each form---is standard linear algebra, accomplished using $\widetilde{O}(d^3)$ field operations.  To compute the matrix of the Hecke operator $T_n$ on this basis requires traces up to $O(nd)$ and so $\softO(n^{3/2}d^{3/2})$ operations.  Finally and similarly, to compute a basis of $q$-expansions to precision $O(q^r)$ with $r > d$, we compute traces up to $O(rd)$ and apply a change of basis, for a total of $\softO(d^{3/2} r^{3/2})$ arithmetic operations.

We summarize the estimated complexity of these two approaches in Table \ref{tab:heuristicO}, where again $d$ is the $\Q(\chi)$-dimension of the space under consideration and we suppose precision $r>d$.

\begin{equation} \label{tab:heuristicO}\addtocounter{equation}{1} \notag
\begin{gathered}
{\renewcommand{\arraystretch}{1.2}
\begin{tabular}{l|cc}
    Task & Modular symbols & Trace formula \\ \hline
    Full trace form to precision $O(q^r)$, $d=O(r)$\phantom{\rule[0pt]{0pt}{15pt}} & $\softO(dr^2)$ & $\softO(r^{3/2})$ \\ 
    $[T_n]$ on a basis & $\softO(dn)$ & $\softO(d^{3/2}n^{3/2} + d^3)$ \\
    Characteristic polynomial of $T_n$ on a basis & $\softO(dn + d^3)$ & $\softO(d^{3/2}n^{3/2} + d^3)$ \\
    Basis of $q$-expansions to precision $O(q^r)$, $d=O(r)$ & $\softO(dr^2)$ & $\softO(d^{3/2}r^{3/2})$ \\
    Hecke decomposition & $\softO(d^3)$ & $\softO(d^3)$ \\
    Minimal polynomials for newspace coefficient fields & $\softO(d^3)$ & $\softO(d^3)$ \\
    \end{tabular}
} \\[6pt]
\text{Table \ref{tab:heuristicO}: Heuristic complexity of modular form computations}\\[4pt]
\end{gathered}
\end{equation}

So although linear algebra eventually dominates both approaches, neither modular symbols nor the trace formula seems to be a winner for all tasks: it seems to be much better to use modular symbols to get information about a small number of Hecke operators, while it is much better to use the trace formula to get a large number of coefficients of a basis of newforms.  This heuristic analysis matches our practical experience in the course of our computations.  

Similar comments with reference to weight $1$ forms apply as in the previous section.  The same is true for the issue of time complexity and the coefficient field (see e.g.\ Remark \ref{rmk:coefffield}), with the caveat that the matrices representing Hecke operators using modular symbols tend to be much sparser in comparison to those using the trace formula.  In particular, one expects that taking advantage of sparsity will allow a more efficient implementation of the linear algebra aspects for modular symbols.

\begin{remark}
In some circumstances, it can be more convenient to work with a basis that is in echelon form with respect to $q$-expansions (sometimes called a \emph{Victor Miller basis}) in the trace formula method.  With such a basis, going back and forth between an action on $q$-expansions and the matrix form for various linear operators one can see some gains in efficiency.
\end{remark}

\subsection{Definite methods}

In both of the previous algorithms, we work (either explicitly or implicitly) on the modular curve.  In this section, we indicate another class of algorithms that compute systems of Hecke eigenvalues using a different underlying object.  

Going back at least to Jacobi, surely the first modular forms studied were theta series.  Let $Q(x)=Q(x_1,\dots,x_d) \in \Z[x_1,\dots,x_d]$ be a positive definite integral quadratic form in $d=2k \in 2\Z_{\ge1}$ variables with discriminant $N$.  Let $P(x)$ be a (nonzero) spherical polynomial of degree $m \geq 0$ with respect to $Q$, for example $P(x)=1$.  We form the generating series for representations of $n \in \Z_{\geq 0}$ by $Q$ weighted by $P$, a \defi{theta series} of $Q$, by
\begin{equation} 
\theta_{Q,P}(q) \colonequals \sum_{x \in \Z^d} P(x) q^{Q(x)}.
\end{equation}
For example, if $P(x)=1$, then 
\begin{equation} 
\theta_{Q,1}(q) = \sum_{n=0}^{\infty} r_Q(n) q^n \in \Z[[q]] 
\end{equation}
where $r_Q(n)=\#\{x \in \Z^d : Q(x)=n\}$ counts the number of representations of $n$ by $Q$.  By letting $q=e^{2\pi iz}$ for $z \in \calH$ as usual, we obtain a holomorphic function $\theta_Q\colon \calH \to \C$.  Further, by an application of the Poisson summation formula (see e.g.\ Miyake \cite[Corollary 4.9.5]{Miyake}), we find that $\theta_{P,Q} \in M_{k+m}(\Gamma_0(2N),\chi_N)$ is a classical modular form of weight $k+m$, level $2N$, and character $\chi_N(a) \colonequals \displaystyle{\legen{N}{a}}$ of order at most 2.  

Turning this around, we can use theta series to compute spaces of classical modular forms.  Perhaps the most convenient source of such theta series is to work with quaternary ($d=4$) quadratic forms of square discriminant coming from quaternion algebras---this method goes by the name \emph{Brandt matrices} as it came about from early work of Brandt.  Building on work of Eichler \cite{Eichler:basis}, Hijikata--Pizer--Shemanske \cite{HPS} proved that linear combinations of such theta series span the space of cusp forms, up to twists.  (See also Martin \cite{Martin} for a more recent development.)  The coefficients of theta series can then be reformulated in terms of classes of right ideals of specified reduced norm in a quaternion order.  This method was first developed in an algorithmic context by Pizer \cite{Pizer}; it has been implemented in \magma{} by David Kohel and in \sage{} by Bober, Alia Hamieh, Victoria de Quehen, William Stein, and Gonzalo Tornar\'{i}a.  

In a little more detail, the method of Brandt matrices runs as follows.  Let $B$ be a definite quaternion algebra of discriminant $D \colonequals \disc B$, a squarefree product of the primes that ramify in $B$.  Let $\calO \subseteq B$ be an Eichler order of level $M$ with $\gcd(D,M)=1$, and let $N \colonequals DM$.  Let $\Cls \calO$ be the set of locally principal (equivalently, invertible) fractional right $\calO$-ideals up to isomorphism (given by left multiplication by an element of $B^\times$).  Then $\Cls \calO$ is a finite set, so let $\Cls \calO=\{[I_1],[I_2],\dots,[I_h]\}$ with $h \colonequals \#\Cls \calO$.  Let $\calO_{\textup{L}}(I_i)$ be the left order of $I_i$, and let $\Gamma_i \colonequals \calO_L(I_i)^\times$ be its unit group with $\#\Gamma_i < \infty$.  Let $q_i \colonequals \nrd(I_i)$.  For $n \in \Z_{\geq 1}$, define
\[ \Theta(n)_{i,j} \colonequals \Gamma_i \backslash \{ \alpha \in I_j I_i^{-1} : \nrd(\alpha)q_iq_j^{-1} = n\}. \]
We have $\alpha \in \Theta(n)_{i,j}$ if and only if $\alpha I_i \subseteq I_j$ with index $n^2$.  To connect this with the previous paragraph, we have
\begin{equation} 
\begin{aligned}
 Q_{ij}&\colon I_j I_i^{-1} \to \Z \\
 Q_{ij}&(\alpha) = \nrd(\alpha)\frac{q_i}{q_j} 
 \end{aligned}
 \end{equation}
 is a positive definite integral quaternary quadratic form of discriminant $N^2$ whose theta series descends to a modular form of level $N$---in the notation above, we have $r_{Q_{ij}}(n) = \#\Theta(n)_{i,j}$.  In this way, we can compute a matrix for the Hecke operator $[T_n]$ acting on $S_k(\Gamma_0(N),\chi)$ by quaternionic arithmetic: for weight $k=2$, the matrix $[T_n]$ is the adjacency matrix of the directed graph with vertex set $\Cls \calO$ and directed edges between $[I_i]$ and $[I_j]$ with multiplicity $\#\Theta(n)_{i,j}$.  
 
The method of Brandt matrices has several advantages.  First, the forms computed this way are necessarily new at all primes $p \mid D$, so linear algebra with degeneracy operators can be minimized.  Second, the matrices $[T_n]$ of Hecke operators are sparse: for example, in weight $k=2$ they have nonnegative integer coefficients whose columns sum to $\sigma(n)$.  Accordingly, linear algebra steps have an improved complexity both in theory and in practice.  Third, Brandt matrices also carry useful arithmetic information about the reduction of modular curves at primes of bad reduction.  Fourth, the set $\Theta(n)_{i,j}$ is independent of the weight $k$ and so may be reused.  Despite these advantages, the main limitation of Brandt matrices seems to be that it works most simply when there exists a prime $p$ that exactly divides the level $N$ (so that an Eichler order of reduced discriminant $N$ exists); otherwise, we must work with non-Eichler orders.  Hence current implementations focus on this case.  

The Brandt graph is an expander graph by the Ramanujan--Peterson bound, so with short vector computations one can compute a set of representatives for $\Cls \calO$ and a spanning set for $S_k(\Gamma_0(N),\chi)$ using $\softO(h^2)$ operations; computing a basis from this is a matter of sparse linear algebra and can be considered to be negligible.  To compute a single matrix $[T_n]$, in principle we could use Minkowski reduction (together with some awkward corner cases) on $h\sigma(n)$ right ideals using $\softO(hn) = \softO(dn)$ operations.  To compute a basis of $q$-expansions to precision $O(q^r)$ with $d=O(r)$, for each of the $h$ classes we can enumerate elements of small reduced norm using the Fincke--Pohst algorithm in time proportional to the volume so $\softO(dr^2)$, performing reduction with the same complexity.  These heuristics match the running time of modular symbols with linear algebra again eventually dominating--- however, it is here where sparse linear algebra may ultimately in practice give the Brandt matrix method an edge.  

A method that shares much in common with Brandt matrices is the \emph{method of graphs} due to Mestre \cite{Mestre} and Oesterl\'e.  We suppose that $p \parallel N$ and work with the quaternion algebra $B$ of discriminant $D=p$.  We recall that there is an equivalence of categories between supersingular elliptic curves over an algebraic closure of $\F_p$ under isogenies and invertible right (or left) $\calO$-modules under homomorphisms.  So to compute a matrix for the Hecke operator, in place of $\Cls \calO$ we can compute the set of isomorphism classes of pairs $(E,C)$ where $E$ is a supersingular elliptic curve in characteristic $p$ and $C$ is a cyclic subgroup of order $M = N/p$, and in place of the sets $\Theta(n)_{ij}$ we can enumerate cyclic isogenies between these points up to a natural equivalence.  

Finally, a related method of Birch \cite{Birch:ternary} (who sought to generalize the method of graphs beyond discriminant $D=p$) uses \emph{ternary} quadratic forms instead.  This method captures all of the advantages above, with an additional feature: work in progress by Hein--Tornar\'ia--Voight shows that one can carve out not just a new subspace but moreover one can specify the Atkin--Lehner eigenvalue, reducing the total dimension and thereby the complexity of linear algebra operations.  

\subsection{Other methods}

We conclude by briefly indicating two other methods in addition to the above.  
\begin{itemize}
\item \emph{Multiplying forms of lower weight}.  We compute a presentation for the graded ring of modular forms of level $N$
\[ M(\Gamma_1(N)) \colonequals \bigoplus_{k=0}^{\infty} M_k(\Gamma_1(N)) \]
(or the same for $\Gamma_0(N)$) in terms of a finite set of generators and a Gr\"obner basis for the ideal of relations among them; see work of Voight--Zureick-Brown \cite{VZB} for an explicit description of this graded ring in terms of the genus and number of cusps for $\Gamma_1(N)$ (and more generally in terms of the signature of the uniformizing Fuchsian group) as well as further references and discussion.  From this, one can compute for each weight $k$ a set of (leading) monomials in the generators that are a $\Q$-basis for $M_k(\Gamma_1(N))$.  Using fast Fourier techniques, the multiplication of these $q$-expansions allows the computation of a basis for large weights $k$ (and fixed level $N$) quite efficiently in comparison to any of the approaches above.  
\item \emph{Polynomial-time algorithms}.  By work of Edixhoven--Couveignes \cite{EdixhovenCouveignes}, Bruin \cite{Bruin}, and Mascot~\cite{Mascot}, one can compute coefficients of modular forms of level $1$ in polynomial time: for example, for the modular discriminant $\Delta(q) = \sum_n \tau(n)q^n \in S_{12}(1)$, the value $\tau(p)$ for a prime $p$ can be computed in time bounded by a fixed power of $\log p$.  
\end{itemize}

\section{Two technical ingredients} \label{sec:technical}

In this section, we consider two technical results that are needed in the above algorithmic description.

\subsection{Eichler--Selberg trace formula for newforms} \label{sec:eichsel}

We first prove a technical result that is used by Belabas--Cohen \cite{BelabasCohen} in the computation of modular forms in \pari{} \cite{Pari}, as explained above: we describe the trace of Hecke operators on the new subspace in terms of the trace on the total space.  

Let $\chi$ be a primitive character of conductor $Q\mid N$ and $k$ a positive
integer satisfying $\chi(-1)=(-1)^k$; we take these to be fixed and
suppress their dependence from the notation.

For any positive integer $n$, the $n$th Hecke
operator $T_n\colon \Sk(N,\chi)\to\Sk(N,\chi)$ may
be defined by
\[(T_nf)(z)
= \frac1n\sum_{\substack{ad=n\\\gcd(a,N)=1}}\chi(a)a^k\sum_{b\bmod{d}} 
f\!\left(\frac{az+b}{d}\right).\]
Then 
\[(T_nf)(z)=\sum_{m=1}^\infty \left(\sum_{\substack{d\mid (m, n) \\ (d, N)=1}}
\chi(d) d^{k-1} a_{\frac{mn}{d^2}}\right)e(mz),\]
where $f(z)=\sum_{m=1}^\infty a_me(mz)$. 
This operator stabilizes the subspace $\Sknew{k}(N,\chi)$.

Let $\{f_{N, j}\}_{j=1}^{s_N}$ be a basis of normalized newforms
for $\Sknew{k}(N,\chi)$ and write
\[f_{N,j}(z)=\sum_{m=1}^\infty a_{N,j}(m)e(mz).\]
We assume that each $f_{N, j}$ is an eigenfunction of $T_n$ of eigenvalue $a_{N, j}(n)$ and define 
\[g_n = \sum_{j=1}^{s_N} a_{N, j}(n) f_{N, j}
= \sum_{m=1}^\infty e(mz)\Tr \big(T_n T_m|_{\Sknew{k}(N, \chi)}\big).\]

We parameterize the basis of $\Sk(N, \chi)$: 
for $M_1,M_2\in\Z_{\ge1}$ with $Q\mid M_1$ and $M_1M_2\mid N$, let
\[f_{M_1,j}^{M_2}(z) := f_{M_1,j}(M_2z).\]
Then
\[\bigl\{f_{M_1,j}^{M_2}\;:\; M_1,M_2\in\Z_{\ge1},\,Q\mid M_1,\,M_1M_2\mid N\bigr\}\]
is a basis for $\Sk(N,\chi)$.
Let us extend the definition of $a_{N,j}(n)$ to $\Q_{>0}$ by
writing $a_{N,j}(x)=0$ if $x\notin\Z_{\ge1}$.

If $\gcd(n,N)=1$, then
\begin{align*}
	T_nf_{M_1,j}^{M_2}
	&=
	\sum_{m=1}^\infty\sum_{\substack{d\mid(m,n) \\ (d,N)=1}}
	\chi(d)d^{k-1}a_{M_1,j}\!\left(\frac{mn}{d^2M_2}\right)e(mz)
	\\ &=
	\sum_{m=1}^\infty
	a_{M_1,j}\!\left(\frac{m}{M_2}\right)
	a_{M_1,j}(n)e(mz)
	=
	a_{M_1,j}(n)f_{M_1,j}^{M_2},
\end{align*}
so each $f_{M_1,j}^{M_2}$ is an eigenfunction of $T_n$.
To compute the action of $T_n$ when $\gcd(n,N)>1$, we need the following
theorem.  

\begin{thm}
Let $p \mid N$ be prime, let $\alpha\in \Z_{\geq 0}$, and let $r \colonequals \ord_pM_2$.  Let $\chi_{M_1}$ be the character modulo $M_1$ induced from $\chi$.  
Then
\begin{equation}\label{e:Tpalpha_fM1M2}
T_{p^{\alpha}} f_{M_1, j}^{M_2} 
= \begin{cases}
f_{M_1, j}^{M_2p^{-\alpha}},
& \text{ if } \alpha -r \leq 0; \\
a_{M_1, j}(p^{\alpha-r}) f_{M_1, j}^{M_2p^{-r}}
- \chi_{M_1} (p) p^{k-1} a_{M_1, j}(p^{\alpha-r-1})
f_{M_1, j}^{M_2p^{-r+1}},
& \text{ if } \alpha -r > 0.
\end{cases}
\end{equation}
\end{thm}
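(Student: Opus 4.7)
Since $p\mid N$, the condition $\gcd(a,N)=1$ in the definition of $T_{p^\alpha}$ forces $a=1$ and $d=p^\alpha$, so $T_{p^\alpha}$ acts on any $q$-expansion by the simple rule $\sum_m a_m e(mz)\mapsto\sum_m a_{mp^\alpha}e(mz)$. The plan is to compute both sides of \eqref{e:Tpalpha_fM1M2} as Fourier series and match them coefficient by coefficient. The Fourier expansion of $f_{M_1,j}^{M_2}$ is $\sum_m a_{M_1,j}(m/M_2)e(mz)$ (using $a_{M_1,j}(x)=0$ for $x\notin\Z_{\ge1}$), so applying the operator yields
\[
(T_{p^\alpha}f_{M_1,j}^{M_2})(z)=\sum_{m\ge1}a_{M_1,j}\!\left(\frac{mp^\alpha}{M_2}\right)e(mz).
\]

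Writing $M_2=p^rM_2'$ with $p\nmid M_2'$, I would split according to the sign of $\alpha-r$. When $\alpha\le r$, the index $mp^\alpha/M_2=m/(p^{r-\alpha}M_2')$ is a positive integer iff $M_2p^{-\alpha}\mid m$; re-indexing $m=(M_2p^{-\alpha})m''$ recovers exactly $f_{M_1,j}(M_2p^{-\alpha}z)=f_{M_1,j}^{M_2p^{-\alpha}}(z)$, giving the first case of the formula. When $\alpha>r$, integrality requires $M_2'\mid m$, and setting $m=M_2'm''$ produces
\[
(T_{p^\alpha}f_{M_1,j}^{M_2})(z)=\sum_{m''\ge1}a_{M_1,j}(m''p^{\alpha-r})\,e(M_2'm''z).
\]
Reading off the $q$-coefficients of $f_{M_1,j}^{M_2p^{-r}}=f_{M_1,j}^{M_2'}$ and $f_{M_1,j}^{M_2p^{-r+1}}=f_{M_1,j}^{pM_2'}$, the desired identity reduces to the single Hecke relation
\[
a_{M_1,j}(m''p^{\alpha-r})=a_{M_1,j}(p^{\alpha-r})\,a_{M_1,j}(m'')-\chi_{M_1}(p)\,p^{k-1}\,a_{M_1,j}(p^{\alpha-r-1})\,a_{M_1,j}(m''/p).
\]

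To prove this last identity, write $m''=p^\beta m'''$ with $p\nmid m'''$; multiplicativity $a_{M_1,j}(p^sm''')=a_{M_1,j}(p^s)a_{M_1,j}(m''')$ collapses it to the two-exponent recursion
\[
a_{M_1,j}(p^{\alpha-r+\beta})=a_{M_1,j}(p^{\alpha-r})a_{M_1,j}(p^\beta)-\chi_{M_1}(p)p^{k-1}a_{M_1,j}(p^{\alpha-r-1})a_{M_1,j}(p^{\beta-1}),
\]
which follows for $p\nmid M_1$ by induction on $\beta$ from the standard three-term recursion $a(p^{s+1})=a(p)a(p^s)-\chi_{M_1}(p)p^{k-1}a(p^{s-1})$ for a newform of level $M_1$.

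The main delicate point is that the recursion must hold uniformly in whether or not $p\mid M_1$. For $p\mid M_1$, Atkin--Lehner theory gives $a_{M_1,j}(p^s)=a_{M_1,j}(p)^s$ and $\chi_{M_1}(p)=0$, so the recursion degenerates to pure multiplicativity and is still correct. A related subtlety: the term $f_{M_1,j}^{M_2p^{-r+1}}$ need not lie in $\Sk(N,\chi)$ when $r=0$ and $p\mid M_1$ (since $M_1M_2p$ may fail to divide $N$), but precisely in that regime $\chi_{M_1}(p)=0$ kills the offending term. Finally, one should check the boundary case $\alpha=r$: the first branch gives $f_{M_1,j}^{M_2'}$, while the second branch's correction vanishes via $a_{M_1,j}(p^{-1})=0$ and $a_{M_1,j}(1)=1$, so both branches agree.
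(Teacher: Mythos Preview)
Your proof is correct, and it is genuinely cleaner than the paper's. Both proofs start identically: simplify $T_{p^\alpha}$ using $p\mid N$, obtain the Fourier expansion $\sum_m a_{M_1,j}(mp^\alpha/M_2)e(mz)$, and handle the case $\alpha\le r$ by direct re-indexing. The divergence is in the case $\alpha>r$. The paper separates $p\mid M_1$ from $p\nmid M_1$, then within the latter treats $\alpha-r=1$ by hand and $\alpha-r\ge2$ via a lengthy telescoping computation: it writes out the full Hecke multiplicativity sum $\sum_{e}(\chi(p)p^{k-1})^e a(mp^{\alpha-2e-r}/M_2')$, subtracts the analogous sum with $\alpha$ replaced by $\alpha-2$, splits into several subsums indexed by $\ell$, and finally collapses everything using the three-term recursion applied termwise.

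Your route bypasses all of this by reducing directly to the single bilinear identity
\[
a(p^{s+t})=a(p^{s})a(p^{t})-\chi_{M_1}(p)p^{k-1}a(p^{s-1})a(p^{t-1}),
\]
which you then prove by a two-line induction on $t$ from the standard recursion. This is the right abstraction: it absorbs both the case distinction $p\mid M_1$ versus $p\nmid M_1$ (the former via $\chi_{M_1}(p)=0$ and $a(p^s)=a(p)^s$) and the artificial split $\alpha-r=1$ versus $\alpha-r\ge2$ into one uniform statement. Your handling of the boundary issues (agreement of the two branches at $\alpha=r$ via $a(p^{-1})=0$, and the potentially ill-defined term $f_{M_1,j}^{M_2p}$ being killed by $\chi_{M_1}(p)=0$ exactly when needed) is also correct and matches what the paper observes in passing. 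The paper's telescoping argument is essentially an unrolled version of your induction; your packaging is preferable.
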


\begin{proof}
By the definition of the Hecke operator, we get
\[T_{p^{\alpha}} f_{M_1, j}^{M_2} 
= \sum_{m=1}^\infty 
\sum_{\substack{d\mid (m, p^{\alpha}), \\ (d, N)=1}}
\chi(d) d^{k-1} a_{M_1, j}\left(\frac{m p^{\alpha}}{M_2 d^2}\right) e(mz)
= \sum_{m=1}^\infty 
a_{M_1, j}\left(\frac{m p^{\alpha}}{M_2}\right) e(mz).\]
If $\alpha-r\leq 0$, then 
\[T_{p^{\alpha}} f_{M_1, j}^{M_2} 
= \sum_{m=1}^\infty 
a_{M_1, j}\left(\frac{m}{M_2p^{-r} \cdot p^{-\alpha+r}}\right) e(mz)
= f_{M_1, j}^{M_2p^{-\alpha}}. \]

Assume that $\alpha-r > 0$. 
Since $f_{M_1, j}$ is a normalized newform for $\Gamma_0(M_1)$, we get
\begin{multline}\label{eqn:p_Hecke}
	a_{M_1, j}\left(\frac{m}{M_2p^{-r}}\right)
	\cdot 
	a_{M_1, j}\left(p^{\alpha-r}\right)
	\\ = \begin{cases}
	a_{M_1, j}\left(\frac{mp^{\alpha}}{M_2}\right), & \text{ if } p\mid M_1, \\
	\sum_{e=0}^{\min\{ \ord_p(m), \alpha-r\}} 
	(\chi(p) p^{(k-1)})^e a_{M_1, j} \left(\frac{mp^{\alpha-2e-r}}{M_2p^{-r}}\right), & \text{ if } p\nmid M_1. 
	\end{cases}
\end{multline}
Then, if $p\mid M_1$, we have
$$
	T_{p^{\alpha}} f_{M_1, j}^{M_2} 
	=
	a_{M_1, j}(p)^{\alpha-r} \cdot f_{M_1, j}^{M_2p^{-r}}. 
$$

We now assume that $\alpha-r>0$ and $p\nmid M_1$. 
If $\alpha-r = 1$, we have
$$
	a_{M_1, j}\left(\frac{mp^\alpha}{M_2}\right)
	=
	a_{M_1, j}\left(\frac{m}{M_2p^{-r}}\right)
	\cdot 
	a_{M_1, j}\left(p^{\alpha-r}\right)
	-
	\delta_{\ord_p(m)\geq 1}
	\chi(p) p^{(k-1)} a_{M_1, j} \left(\frac{mp^{\alpha-2}}{M_2}\right).	
$$
By taking the summation over $m\in \Z_{\ge1}$, we get:
\begin{align*}
	T_{p^{\alpha}} f_{M_1, j}^{M_2} 
	&=
	a_{M_1, j}(p^{\alpha-r}) f_{M_1, j}^{M_2p^{-r}}
	-
	\chi(p)p^{k-1}
	\sum_{m=1}^\infty 
	a_{M_1, j}\left(\frac{mp^{\alpha-1}}{M_2}\right) e(mpz)
	\\
	&=
	a_{M_1, j}(p^{\alpha-r}) f_{M_1, j}^{M_2p^{-r}}
	-
	\chi(p)p^{k-1}
	f_{M_1, j}^{M_2p^{-r+1}}
	.
\end{align*}
Note that when $r=0$ we have $M_1M_2p\mid N$, since $p\mid N$. 

If $\alpha-r-2\geq 0$, by changing $\alpha$ to $\alpha-2$, we get
\[\chi(p) p^{k-1}
a_{M_1, j}\left(\frac{m}{M_2p^{-r}}\right)
\cdot a_{M_1, j}\left(p^{\alpha-2-r}\right)
= \sum_{e=1}^{\min\{ \ord_p(m), \alpha-2-r\}+1} 
(\chi(p)p^{(k-1)})^e a_{M_1, j} \left(\frac{mp^{\alpha-2e-r}}{M_2p^{-r}}\right).\]
By subtracting from \eqref{eqn:p_Hecke}, we get
\begin{multline*}
	\left\{a_{M_1, j}(p^{\alpha-r})
	- \chi(p)p^{k-1} a_{M_1, j}(p^{\alpha-r-2})\right\}
	a_{M_1, j}\left(\frac{m}{M_2p^{-r}}\right)
	=
	a_{M_1, j}\left(\frac{mp^{\alpha}}{M_2}\right)
	\\+
	\begin{cases}
	- \left(\chi(p)p^{k-1}\right)^{\ord_p(m)+1}
	a_{M_1, j}\left(\frac{mp^{\alpha-2(\ord_p(m)+1)-r}}{M_2p^{-r}}\right), 
	& \text{ if } 0\leq \ord_p(m) \leq \alpha-2-r, \\
	\left(\chi(p)p^{k-1}\right)^{\alpha-r}
	a_{M_1, j}\left(\frac{mp^{-(\alpha-r)}}{M_2p^{-r}}\right), 
	& \text{ if } \ord_p(m) \geq \alpha-r, \\
	0, & \text{ otherwise.} 
	\end{cases}
\end{multline*}

After taking the summation over $m\in \Z_{\ge1}$ on both sides, we get
\begin{align*}
	\sum_{m=1}^\infty a_{M_1, j}\left(\frac{mp^{\alpha}}{M_2}\right) e(mz)
	&=
	T_\alpha f_{M_1, j}^{M_2}(z)
	\\
	&=
	\left\{
	a_{M_1, j}(p^{\alpha-r}) - \chi(p)p^{k-1} a_{M_1, j}(p^{\alpha-2-r})\right\}
	f_{M_1, j}^{M_2p^{-r}}
	\\
	&\qquad+
	\sum_{\ell=0}^{\alpha-2-r} 
	(\chi(p)p^{k-1})^{\ell+1}
	\sum_{\substack{m=1, \\ p\nmid m}}^\infty 
	a_{M_1, j}\left(\frac{mp^{\alpha-2-r-\ell}}{M_2p^{-r}}\right) e(mp^\ell z)
	\\
	&\qquad -
	(\chi(p)p^{k-1})^{\alpha-r} 
	\sum_{m=1}^\infty a_{M_1,j}\left(\frac{m}{M_2p^{-r}}\right) e(mp^{\alpha-r} z)
	.
\end{align*}

For the last piece, we have 
\[\sum_{m=1}^\infty a_{M_1,j}\left(\frac{m}{M_2p^{-r}}\right) e(mp^{\alpha-r} z)
= f_{M_1, j}^{M_2p^{\alpha-2r}}(z). \]

Now consider 
\[\sum_{\substack{m=1, \\ p\nmid m}}^\infty 
a_{M_1, j}\left(\frac{m}{M_2p^{-r}}\right)e(mz)
= f_{M_1, j}^{M_2p^{-r}}(z)
- \sum_{m=1}^\infty a_{M_1, j}\left(\frac{mp}{M_2p^{-r}}\right) e(mpz).\]
Since
\[a_{M_1, j}(p) \cdot f_{M_1, j}^{M_2p^{-r}}(z)
= \sum_{m=1}^\infty a_{M_1, j}\left(\frac{mp}{M_2p^{-r}}\right) e(mz)
+\chi(p)p^{k-1} f_{M_1, j}^{M_2p^{-r+1}}(z), \]
we get
\[\sum_{\substack{m=1, \\ p\nmid m}}^\infty 
a_{M_1, j}\left(\frac{m}{M_2p^{-r}}\right) e(mz)
= f_{M_1, j}^{M_2p^{-r}}(z)
- a_{M_1, j}(p) \cdot f_{M_1, j}^{M_2p^{-r+1}}(z)
+\chi(p)p^{k-1} f_{M_1, j}^{M_2p^{-r+2}}(z).\]
For each $0\leq \ell \leq \alpha-2-r$, we get
\begin{multline*}
	\sum_{\substack{m=1, \\ p\nmid m}}^\infty 
	a_{M_1, j}\left(\frac{mp^{\alpha-2-r-\ell}}{M_2p^{-r}}\right) e(mp^\ell z)
	\\
	=
	a_{M_1, j}\left(p^{\alpha-2-r-\ell}\right)
	\cdot 
	\left\{
	f_{M_1, j}^{M_2p^{-r+\ell}}(z)
	- 
	a_{M_1, j}(p) \cdot f_{M_1, j}^{M_2p^{-r+1+\ell}}(z)
	+
	\chi(p)p^{k-1} f_{M_1, j}^{M_2p^{-r+2+\ell}}(z)
	\right\}.
\end{multline*}
So we finally get
\begin{align*}
	T_\alpha f_{M_1, j}^{M_2}
	&=
	\left\{
	a_{M_1, j}(p^{\alpha-r}) - \chi(p)p^{k-1} a_{M_1, j}(p^{\alpha-2-r})\right\}
	f_{M_1, j}^{M_2p^{-r}}
	\\
	&\qquad +
	\sum_{\ell=0}^{\alpha-2-r} 
	(\chi(p)p^{k-1})^{\ell+1}
	a_{M_1, j}\left(p^{\alpha-2-r-\ell}\right) \\
	&\qquad\qquad\qquad\qquad \cdot 
	\left\{
	f_{M_1, j}^{M_2p^{-r+\ell}} 
	- 
	a_{M_1, j}(p) \cdot f_{M_1, j}^{M_2p^{-r+1+\ell}}
	+
	\chi(p)p^{k-1} f_{M_1, j}^{M_2p^{-r+2+\ell}}
	\right\}
	\\
	&\qquad -
	(\chi(p)p^{k-1})^{\alpha-r} 
	f_{M_1, j}^{M_2p^{\alpha-2r}}
	.
\end{align*}

For $s\in \Z_{\geq 0}$, we have
\[a_{M_1, j}(p^s)\cdot a_{M_1, j}(p)
= a_{M_1, j}(p^{s+1})
+ \chi(p)p^{k-1} a_{M_1, j}(p^{s-1}), \]
so we get
\begin{align*}
	&\sum_{\ell=0}^{\alpha-2-r} 
	(\chi(p)p^{k-1})^{\ell+1}
	a_{M_1, j}\left(p^{\alpha-2-r-\ell}\right)
	a_{M_1, j}(p) \cdot f_{M_1, j}^{M_2p^{-r+1+\ell}}
	\\
	&\qquad =
	\sum_{\ell=0}^{\alpha-2-r} 
	(\chi(p)p^{k-1})^{\ell+1}
	a_{M_1, j}(p^{\alpha-1-r-\ell})
	\cdot f_{M_1, j}^{M_2p^{-r+1+\ell}} \\
	&\qquad\qquad
	+
	\sum_{\ell=0}^{\alpha-2-r} 
	(\chi(p)p^{k-1})^{\ell+2}
	a_{M_1, j}\left(p^{\alpha-3-r-\ell}\right)
	\cdot f_{M_1, j}^{M_2p^{-r+1+\ell}}
	\\
	&\qquad =
	\sum_{\ell=1}^{\alpha-1-r} 
	(\chi(p)p^{k-1})^{\ell}
	a_{M_1, j}(p^{\alpha-r-\ell})
	\cdot f_{M_1, j}^{M_2p^{-r+\ell}} \\
	&\qquad\qquad +
	\sum_{\ell=1}^{\alpha-1-r} 
	(\chi(p)p^{k-1})^{\ell+2}
	a_{M_1, j}\left(p^{\alpha-2-r-\ell}\right)
	\cdot f_{M_1, j}^{M_2p^{-r+\ell}}
	.
\end{align*}
Then we have
\begin{align*}
	T_\alpha f_{M_1, j}^{M_2}&=
	\left\{
	a_{M_1, j}(p^{\alpha-r}) - \chi(p)p^{k-1} a_{M_1, j}(p^{\alpha-2-r})\right\}
	f_{M_1, j}^{M_2p^{-r}}
	\\
	&\qquad+
	\sum_{\ell=0}^{\alpha-2-r} 
	(\chi(p)p^{k-1})^{\ell+1}
	a_{M_1, j}\left(p^{\alpha-2-r-\ell}\right)
	f_{M_1, j}^{M_2p^{-r+\ell}} \\
	&\qquad+
	\sum_{\ell=2}^{\alpha-r} 
	(\chi(p)p^{k-1})^{\ell}
	a_{M_1, j}\left(p^{\alpha-r-\ell}\right)
	f_{M_1, j}^{M_2p^{-r+\ell}}
	\\
	&\qquad-
	\sum_{\ell=1}^{\alpha-1-r} 
	(\chi(p)p^{k-1})^{\ell}
	a_{M_1, j}(p^{\alpha-r-\ell})
	\cdot f_{M_1, j}^{M_2p^{-r+\ell}} \\
	&\qquad-
	\sum_{\ell=1}^{\alpha-1-r} 
	(\chi(p)p^{k-1})^{\ell+2}
	a_{M_1, j}\left(p^{\alpha-2-r-\ell}\right)
	\cdot f_{M_1, j}^{M_2p^{-r+\ell}}
	\\
	&\qquad-
	(\chi(p)p^{k-1})^{\alpha-r} 
	f_{M_1, j}^{M_2p^{\alpha-2r}}
	\\
	&=
	a_{M_1, j}(p^{\alpha-r}) 
	f_{M_1, j}^{M_2p^{-r}}
	-
	(\chi(p)p^{k-1})
	a_{M_1, j}(p^{\alpha-r-1})
	f_{M_1, j}^{M_2p^{-r+1}}
	.
\end{align*}
Combining, we obtain \eqref{e:Tpalpha_fM1M2}.
\end{proof}

For $n,N \in \Z_{>0}$, we write $\gcd(n,N^\infty)$ for the largest positive integer $d$ such that $d \mid n$ and $d \mid N^k$ for some $k \in \Z_{\geq 1}$, i.e.,
\begin{equation}
\gcd(n, N^\infty)=\prod_{p \mid \gcd(n, N)} p^{\ord_p(n)}.
\end{equation}
The following corollary is then immediate.

\begin{cor}	\label{cor:tnonnew}
With notation as above, we have
$$
	\Tr(T_n\,|\, \Sk (N, \chi))
	=
	\sum_{\substack{ M\in \Z_{\ge1} \\ M\mid N \\ \cond(\chi) \mid M}}
	d\left(\frac{N/M}{\gcd(N/M, n^\infty)}\right)
	\sum_{\substack{b^2\mid\gcd(n,N^\infty)\\\gcd(b,M)=1}}
	\mu(b)
	\chi(b) b^{k-1}
	\Tr(T_{\frac{n}{b^2}}\,|\,\Sknew{k}(M, \chi))
	.
$$
\end{cor}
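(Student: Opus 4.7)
The plan is to compute $\Tr(T_n\,|\,\Sk(N,\chi))$ directly on the explicit basis $\{f_{M_1,j}^{M_2}\}$ of $\Sk(N,\chi)$ and then reorganize the diagonal sum to match the right-hand side. First, write $T_n = \prod_p T_{p^{\alpha_p}}$ with $\alpha_p \colonequals \ord_p n$, using commutativity of the Hecke operators. Each $T_{p^{\alpha_p}}$ modifies only the $p$-part of $M_2$ (holding $M_1$ and $j$ fixed), so the diagonal coefficient of $T_n$ at $f_{M_1,j}^{M_2}$ factors as a product over $p\mid n$ of local coefficients $c_p$. Reading \eqref{e:Tpalpha_fM1M2} with $r_p \colonequals \ord_p M_2$, one finds $c_p = a_{M_1,j}(p^{\alpha_p})$ when $r_p = 0$, $c_p = -\chi_{M_1}(p) p^{k-1} a_{M_1,j}(p^{\alpha_p-2})$ when $r_p = 1$ and $\alpha_p \geq 2$, and $c_p = 0$ in all remaining cases (since $f_{M_1,j}^{M_2 p^{-r_p+1}} = f_{M_1,j}^{M_2}$ forces $r_p = 1$ and $f_{M_1,j}^{M_2 p^{-r_p}} = f_{M_1,j}^{M_2}$ forces $r_p = 0$).

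Next, fix $M_1 = M$ and sum the diagonal coefficient over $j$ and over $M_2 \mid N/M$. Decomposing $M_2 = M_2^{(n)} M_2^{(n^c)}$ into its parts supported on primes of $n$ and coprime to $n$, the sum over $M_2^{(n^c)}$ contributes an overall factor $d\bigl((N/M)/\gcd(N/M, n^\infty)\bigr)$, while the sum over $M_2^{(n)}$ factors across primes $p \mid n$ as $\prod_{p\mid n}\bigl[\sum_{r_p} c_p\bigr]$. Each local factor evaluates to $a_{M,j}(p^{\alpha_p}) - \chi_M(p) p^{k-1} a_{M,j}(p^{\alpha_p-2})$, where the correction term vanishes automatically when $p \mid M$ (via $\chi_M(p) = 0$), when $p \nmid N/M$ (so $r_p = 1$ is unreachable), or when $\alpha_p < 2$.

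Expand the resulting product by inclusion-exclusion over the subset $S \subseteq \{p \mid n\}$ recording the primes where one selects the correction term; setting $b \colonequals \prod_{p \in S} p$ (automatically squarefree), and using multiplicativity of newform coefficients to combine $\prod_p a_{M,j}(p^{\alpha_p - 2[p \in S]}) = a_{M,j}(n/b^2)$, the inner sum becomes
\begin{equation*}
\sum_{\substack{b^2 \mid \gcd(n,N^\infty)\\ \gcd(b,M)=1}} \mu(b)\, \chi(b)\, b^{k-1}\, a_{M,j}(n/b^2),
\end{equation*}
the constraint $\gcd(b,M)=1$ being forced by $\chi_M(b) \neq 0$ and $\chi_M$ agreeing with $\chi$ on inputs coprime to $M$. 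Finally, summing over $j$ replaces $a_{M,j}(n/b^2)$ by $\Tr(T_{n/b^2}\,|\,\Sknew{k}(M,\chi))$, and summing over $M \mid N$ with $\cond(\chi) \mid M$ (the condition for the newspace to be parametrized in the basis) yields the stated identity. The main obstacle lies in the first step: carefully enumerating the contributions of \eqref{e:Tpalpha_fM1M2} to the diagonal and ruling out the non-contributing cases, particularly the edge cases $\alpha_p = 1$ and the interaction of $p \mid M$ with $\chi_M(p) = 0$; the subsequent reorganization is then a routine combinatorial rearrangement.
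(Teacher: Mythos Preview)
Your argument is correct and is exactly the approach the paper intends: the paper's proof reads ``The following corollary is then immediate,'' and what you have done is spell out that immediate deduction by computing diagonal entries of $T_n$ on the basis $\{f_{M_1,j}^{M_2}\}$ via \eqref{e:Tpalpha_fM1M2} and reorganizing. One small expository wrinkle: when you say the correction term ``vanishes automatically when $p\nmid N/M$,'' for primes $p\nmid N$ the expression $-\chi_M(p)p^{k-1}a_{M,j}(p^{\alpha_p-2})$ is not literally zero; rather, the $r_p=1$ summand is absent, so the local factor is just $a_{M,j}(p^{\alpha_p})$---but since your final summation over $b$ carries the constraint $b^2\mid\gcd(n,N^\infty)$, such primes are correctly excluded and the argument goes through.
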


\subsection{Certifying generalized eigenvalues} \label{sec:certifyingeigenvalues}

Second, we show how to certify generalized eigenvalues.  Consider the generalized eigensystem
\begin{equation}
Ax=\lambda{}Bx,
\end{equation}
where $A$ and $B$ are real symmetric $n\times n$ matrices, with $B$ positive
definite. Choosing $R$ such that $B=R^TR$ and making the change of
variables $x=R^{-1}y$, this becomes
\begin{equation}
A'y=\lambda{y},
\end{equation}
where $A'=(R^{-1})^TAR^{-1}$.
Note that $A'$ is again symmetric, so there is an orthonormal basis
$\{y_1,\ldots,y_n\}$ with $A'y_j=\lambda_jy_j$. We set $x_j=R^{-1}y_j$,
so that the $x_j$ are orthonormal with respect to the inner product
defined by $B$.

Suppose that we have found approximate eigenvalues $\tl_j$ and
eigenvectors $\tx_j$, i.e.\ so that $e_j=(A-\tl_jB)\tx_j$
is small. Let
$$
\tx_j=\sum_{k=1}^n c_{jk}x_k
$$
be the expansion of $\tx_j$ in terms of the eigenbasis.
For any $\varepsilon>0$, define
\begin{equation}
V_{j,\varepsilon}=\Span\{x_k:|\lambda_k-\tl_j|<\varepsilon\},
\end{equation}
and let
\begin{equation}
v_{j,\varepsilon}=\sum_{\substack{k \\ |\lambda_k-\tl_j|<\varepsilon}}c_{jk}x_k
\end{equation}
be the orthogonal projection (with respect to the inner product defined
by $B$) of $\tx_j$ onto $V_{j,\varepsilon}$.  Then we have
\begin{equation}
\begin{aligned}
v_{j,\varepsilon}^TBv_{j,\varepsilon}
&=\tx_j^TB\tx_j-\sum_{\{k:|\lambda_k-\tl_j|\ge\varepsilon\}}c_{jk}^2
\ge
\tx_j^TB\tx_j-\varepsilon^{-2}\sum_{k=1}^n c_{jk}^2(\lambda_k-\tl_j)^2\\
&=\tx_j^TB\tx_j
-\varepsilon^{-2}[(B^{-1}A-\tl_j)\tx_j]^TB[(B^{-1}A-\tl_j)\tx_j]\\
&=\tx_j^TB\tx_j
-\varepsilon^{-2}e_j^TB^{-1}e_j
\ge \tx_j^TB\tx_j-\varepsilon^{-2}b^{-1}|e_j|^2,
\end{aligned}
\end{equation}
where $b>0$ is the smallest eigenvalue of $B$.
Note that this is positive if
$$
\varepsilon>\varepsilon_j:=\frac{|e_j|}{\sqrt{b\tx_j^TB\tx_j}},
$$
and thus $V_{j,\varepsilon}$ is non-zero. Hence, there is an eigenvalue
$\lambda_k$ in the interval $I_j=[\tl_j-\varepsilon_j,\tl_j+\varepsilon_j]$.

Suppose that we are in the favorable situation that the $I_j$ are
pairwise disjoint. Then our system has distinct eigenvalues, and we may
assume without loss of generality that $\lambda_j\in I_j$.  Next, let
$\delta_j=\min\bigl\{|\lambda-\tl_j|:\lambda\in\bigcup_{k\ne j}I_k\bigr\}$,
so that $(\tl_j-\delta_j,\tl_j+\delta_j)$ contains $\lambda_j$ and no
other eigenvalues. Put $\Delta_j=\tx_j-v_{j,\delta_j}$, where
$V_{j,\varepsilon}$ and $v_{j,\varepsilon}$ are as above, so that
$\tx_j-\Delta_j$ is an eigenvector with eigenvalue $\lambda_j$.
To use this in practice, we bound the coordinates of $\Delta_j$ from above
and add them as small error intervals onto the coordinates of $\tx_j$.
(The resulting vector must then be renormalized in interval arithmetic,
according to whatever convention we use, e.g.\ first Fourier coefficient
$1$.) To that end, we have
\begin{equation}
\begin{aligned}
|R\Delta_j|^2=\Delta_j^TB\Delta_j&=\sum_{\{k:|\lambda_k-\tl_j|\ge\delta_j\}}c_{jk}^2
\le\delta_j^{-2}\sum_{k=1}^n c_{jk}^2(\lambda_k-\tl_j)^2\\
&=\delta_j^{-2}[(B^{-1}A-\tl_j)\tx_j]^TB[(B^{-1}A-\tl_j)\tx_j]
=\delta_j^{-2}e_j^TB^{-1}e_j,
\end{aligned}
\end{equation}
and thus
\begin{equation}
|\Delta_j|\le\delta_j^{-1}\sqrt{b^{-1}e_j^TB^{-1}e_j}
\le\frac{|e_j|}{b\delta_j}.
\end{equation}

Finally, to estimate $b$ we first compute a double-precision approximation
$\tP$ to the orthogonal matrix which diagonalizes $B$. We then compute
in interval arithmetic the matrices
$$
S=(s_{jk})=\tP^TB\tP\quad\mbox{and}\quad
T=(t_{jk})=\tP^T\tP.
$$
By Sylvester's law of inertia, we have $b>\lambda$ for any $\lambda$
such that $S-\lambda{T}$ is positive definite. In turn, by the
Gershgorin circle theorem, this holds if the diagonal entries $s_{jj}$
and $t_{jj}$ are strictly positive and
\begin{equation}
\lambda > \lambda^* \colonequals
\min_j\frac{2s_{jj}-\sum_k|s_{jk}|}{\sum_k|t_{jk}|}.
\end{equation}
Hence $b\ge\lambda^*$.

\section{A sample of the implementations} \label{sec:implementations}

\subsection{Comparison of methods}

In the course of our computations we made extensive use of the modular forms functionality included in both \pari{} \cite{Pari} and \magma{} \cite{Magma}.  In this section we compare the performance of the two implementations on a small but representative subset of the modular forms we computed: all newforms of weight $k$ and level $N$ with $Nk^2 \le 1000$ and $k>1$.  We exclude the case $k=1$ from this comparison because it is not fully supported in \magma{} and the algorithms used to compute \weightone{} forms are substantially different.  For modular forms of weight $k>1$ the \magma{} implementation is based on the modular symbols approach, while the \pari{} implementation uses the explicit trace formula.

For each level $N$ in our chosen range we fix a representative Dirichlet character $\chi$ for each Galois orbit $[\chi]$ of modulus $N$, and for each newspace $\Sknew{k}(N,\chi)$ with $k>1$ and $Nk^2\le 2000$ we carried out the following computations in both \pari{} and \magma{}:

\begin{enumerate}
\item Determine the dimensions of the irreducible subspaces of $\Sknew{k}(N,[\chi])$ (the newform orbits).
\item For each newform orbit $[f]$, compute the first 1000 integer coefficients $t_n$ of the trace form $\Tr(f)=\sum_{n\ge 1} t_nq^n$.
\item For each newform orbit $[f]$ of (absolute) dimension $d\le 20$, compute a (reasonably nice) defining polynomial for its coefficient field $K$ and the first 1000 algebraic integer coefficients $a_n(f) \in K$ for a constituent newform $f$.
\item For each newform orbit $[f]$ of dimension $d\le 20$, compute an LLL-optimized basis for its coefficient ring and express the first 1000 coefficients $a_n(f)$ in this basis.
\end{enumerate}

\begin{equation} \label{tab:timings1}\addtocounter{equation}{1} \notag
\begin{gathered}
{\renewcommand{\arraystretch}{1.2}
    \begin{tabular}{l|rrr|rr|rr}
    & & & & \multicolumn{2}{c}{split time (s)} & \multicolumn{2}{c}{total time (s)}\\
    $Nk^2$ & num $S$ & num $f$ & $\sum \dim(S)$ & \magma{} & \pari{}  & \magma{} & \pari{} \\\hline
    $[1,200]$ &\numprint{183} & \numprint{214} & \numprint{897} & \numprint{0.4} & \numprint{1.1} & \numprint{73.8} & \numprint{18.2}\\
    $[201,400]$ &\numprint{453} & \numprint{709} & \numprint{7560} & \numprint{3.5} & \numprint{17.2} & \numprint{302.4} & \numprint{116.6}\\
    $[401,600]$ &\numprint{574} & \numprint{1050} & \numprint{21452} & \numprint{22.2} & \numprint{50.2} & \numprint{643.4} & \numprint{220.1}\\
    $[601,800]$ &\numprint{677} & \numprint{1326} & \numprint{43515} & \numprint{132.1} & \numprint{70.8} & \numprint{2444.8} & \numprint{300.6}\\
    $[801,1000]$ &\numprint{764} & \numprint{1542} & \numprint{71358} & \numprint{751.3} & \numprint{322.3} & \numprint{9216.4} & \numprint{728.2}\\
    $[1001,1200]$ &\numprint{879} & \numprint{1805} & \numprint{109570} & \numprint{2653.1} & \numprint{1253.3} & \numprint{36940.0} & \numprint{2347.6}\\
    $[1201,1400]$ &\numprint{905} & \numprint{2001} & \numprint{152344} & \numprint{8889.0} & \numprint{5517.0} & \numprint{161327.7} & \numprint{11855.8}\\
    $[1401,1600]$ &\numprint{995} & \numprint{2284} & \numprint{203492} & \numprint{24841.1} & \numprint{21256.5} & \numprint{349656.8} & \numprint{67233.4}\\
    $[1601,1800]$ &\numprint{1032} & \numprint{2420} & \numprint{264506} & \numprint{63476.2} & \numprint{59392.6} & \numprint{952669.0} & \numprint{194405.8}\\
    $[1801,2000]$ &\numprint{1157} & \numprint{2378} & \numprint{331348} & \numprint{79307.2} & \numprint{175890.1} & \numprint{1752685.4} & \numprint{596779.2}\\
    \hline
    & \numprint{7621} & \numprint{15731} & \numprint{1206658} & \numprint{180089.5} & \numprint{263771.8} & \numprint{3266135.9} & \numprint{874006.0}\\
    \end{tabular}
    }
\\[6pt]
\text{\parbox{\textwidth}{\centering Table \ref{tab:timings1}: \magma{} 2.24-7 vs.\ \pari{} 2.12.1 (Intel Xeon W-2155 3.3GHz);\\
timings for newspaces $S$ of level $N\ge 1$, weight $k>1$ by $Nk^2$ range}}\\[4pt]
\end{gathered}
\end{equation}

\begin{equation} \label{tab:timings2}\addtocounter{equation}{1} \notag
\begin{gathered}
{\renewcommand{\arraystretch}{1.2}
    \begin{tabular}{rl|rrr|rr|rr}
    & & & & & \multicolumn{2}{c}{split time (s)} & \multicolumn{2}{c}{total time (s)}\\
    $\#S$ & $\max\dim(f)$ & num $S$ & num $f$ & $\sum \dim(S)$ & \magma{} & \pari{}  & \magma{} & \pari{} \\\hline
    $ 1$ & $[1,200]$ & \numprint{2859} & \numprint{2859} & \numprint{161375} & \numprint{423.8} & \numprint{529.9} & \numprint{11967.2} & \numprint{818.0}\\
    $ 1$ & $[201,2000]$ & \numprint{1027} & \numprint{1027} & \numprint{544092} & \numprint{26060.6} & \numprint{55272.8} & \numprint{701094.2} & \numprint{53727.6}\\
    $ 1$ & $[2001,\infty]$ & \numprint{65} & \numprint{65} & \numprint{215016} & \numprint{146044.1} & \numprint{170751.3} & \numprint{2226371.4} & \numprint{163789.9}\\
    $ 2$ & $[1,200]$ & \numprint{1703} & \numprint{3406} & \numprint{100080} & \numprint{278.7} & \numprint{660.9} & \numprint{4233.8} & \numprint{30837.0}\\
    $ 2$ & $[201,2000]$ & \numprint{145} & \numprint{290} & \numprint{95704} & \numprint{4192.1} & \numprint{8188.7} & \numprint{188745.7} & \numprint{576764.6}\\
    $ 2$ & $[2001,\infty]$ & \numprint{4} & \numprint{8} & \numprint{10870} & \numprint{2636.5} & \numprint{26821.1} & \numprint{97329.8} & \numprint{24785.1}\\
    $\ge 3$ & $[1,20]$ & \numprint{873} & \numprint{4785} & \numprint{19282} & \numprint{46.2} & \numprint{64.1} & \numprint{1596.2} & \numprint{1197.9}\\
    $\ge 3$ & $[21,200]$ & \numprint{235} & \numprint{1155} & \numprint{23135} & \numprint{160.8} & \numprint{275.7} & \numprint{1228.8} & \numprint{5255.3}\\
    $\ge 3$ & $[201,\infty]$ & \numprint{3} & \numprint{15} & \numprint{1024} & \numprint{12.0} & \numprint{347.7} & \numprint{1364.5} & \numprint{357.4}\\
    \hline
    & & \numprint{7621} & \numprint{15731} & \numprint{1206658} & \numprint{180089.5} & \numprint{263771.8} & \numprint{3266135.9} & \numprint{874006.0}\\
    \end{tabular}
}\\[6pt]
\text{\parbox{\textwidth}{\centering Table \ref{tab:timings2}: \magma{} 2.24-7 vs.\ \pari{} 2.12.1 (Intel Xeon W-2155 3.3GHz); \\ timings for newspaces $S$ of level $N\ge 1$, weight $k>1$, $Nk^2\le 2000$ by $\#S\colonequals \#\{f\in S\}$.}}\\[4pt]
\end{gathered}
\end{equation}

As can be seen in Tables~\ref{tab:timings1} and ~\ref{tab:timings2}, the explicit trace formula approach used by \pari{} is faster overall than the modular symbol method implemented in \magma{}, especially for spaces that consists of a single Galois orbit, but for newspaces that split into multiple Galois orbits it is typically slower, and in general \magma{} is able to decompose newspaces into Galois orbits more quickly than \pari{}.  The large advantage \pari{} has on irreducible spaces is due to the fact that in this situation we can use \texttt{mfsplit} to determine that the space is irreducible without actually computing any eigenforms, and then use
\texttt{mftraceform} to compute the trace form for the entire space.

In Table~\ref{tab:hardspaces} we list the 10 newspaces in our chosen range that were the computationally most difficult for either \magma{} or \pari{}.
In each case, the 10 most time consuming newspaces accounted for approximately half of the total time to process the 7621 nonzero newspaces in our test range.

Notably, only two newspaces (\newformlink{467.2.c} and \newformlink{497.2.c}) were among the computationally most difficult for both methods (these are the two newspaces of largest dimension in our chosen range).  Most of the newspaces listed in Table~\ref{tab:hardspaces} were computationally much more difficult for one of the two methods: on the largest irreducible spaces in our test range \pari{} is typically at least ten times as fast as \magma{}, but for newspaces that split into two large Galois orbits  \magma{} is faster than \pari{} by a similar (or even larger) factor.  This suggests that the optimal approach is to use the explicit trace formula and modular symbol methods in combination.  Indeed, a hybrid approach that uses \magma{} to decompose the space, and then delegates the computation to \pari{} whenever the newspace contains a Galois orbit of dimension at least 2/3 the dimension of the newspace, takes a total of \numprint{264726} seconds; this is more than 3 times faster than using \pari{} alone and more than 10 times faster than using \magma{} alone.

\begin{equation} \label{tab:hardspaces}\addtocounter{equation}{1} \notag
\begin{gathered}
{\renewcommand{\arraystretch}{1.2}
    \begin{tabular}{l|rc|rr}
    newspace & $[\Q(\chi):\Q]$ & decomposition & \magma{}(s) & \pari{}(s)\\\hline
    \href{http://www.lmfdb.org/ModularForm/GL2/Q/holomorphic/413/2/i/}{\textsf{413.2.i}} & 28 & $420+420$ & \numprint{68.91} & \numprint{23711.71}\\
    \href{http://www.lmfdb.org/ModularForm/GL2/Q/holomorphic/419/2/g/}{\textsf{419.2.g}} & 180 & $6120$ & \numprint{79654.08} & \numprint{5175.04}\\
    \href{http://www.lmfdb.org/ModularForm/GL2/Q/holomorphic/424/2/v/}{\textsf{424.2.v}} & 24 & $1248$ & \numprint{60111.92} & \numprint{150.62}\\
    \href{http://www.lmfdb.org/ModularForm/GL2/Q/holomorphic/431/2/g/}{\textsf{431.2.g}} & 168 & $5880$ & \numprint{82907.51} & \numprint{5333.59}\\
    \href{http://www.lmfdb.org/ModularForm/GL2/Q/holomorphic/435/2/bf/}{\textsf{435.2.bf}} & 12 & $240+240$ & \numprint{39.63} & \numprint{25272.26}\\
    \href{http://www.lmfdb.org/ModularForm/GL2/Q/holomorphic/443/2/g/}{\textsf{443.2.g}} & 192 & $6912$ & \numprint{180453.61} & \numprint{8134.21}\\
    \href{http://www.lmfdb.org/ModularForm/GL2/Q/holomorphic/454/2/c/}{\textsf{454.2.c}} & 112 & $1008+1120$ & \numprint{1197.47} & \numprint{44216.52}\\
    \href{http://www.lmfdb.org/ModularForm/GL2/Q/holomorphic/467/2/c/}{\textsf{467.2.c}} & 232 & $8816$ & \numprint{370791.77} & \numprint{22719.24}\\
    \href{http://www.lmfdb.org/ModularForm/GL2/Q/holomorphic/472/2/l/}{\textsf{472.2.l}} & 28 & $56+1568$ & \numprint{103117.42} & \numprint{562.40}\\
    \href{http://www.lmfdb.org/ModularForm/GL2/Q/holomorphic/478/2/g/}{\textsf{478.2.g}} & 96 & $960+960$ & \numprint{861.98} & \numprint{87147.90}\\
    \href{http://www.lmfdb.org/ModularForm/GL2/Q/holomorphic/479/2/c/}{\textsf{479.2.c}} & 238 & $9282$ & \numprint{363002.59} & \numprint{26148.67}\\
    \href{http://www.lmfdb.org/ModularForm/GL2/Q/holomorphic/486/2/i/}{\textsf{486.2.i}} & 54 & $702+756$ & \numprint{351.60} & \numprint{139762.27}\\
    \href{http://www.lmfdb.org/ModularForm/GL2/Q/holomorphic/487/2/k/}{\textsf{487.2.k}} & 162 & $6480$ & \numprint{110903.14} & \numprint{6766.85}\\
    \href{http://www.lmfdb.org/ModularForm/GL2/Q/holomorphic/489/2/q/}{\textsf{489.2.q}} & 54 & $702+756$ & \numprint{202.91} & \numprint{38345.59}\\
    \href{http://www.lmfdb.org/ModularForm/GL2/Q/holomorphic/491/2/k/}{\textsf{491.2.k}} & 168 & $6720$ & \numprint{121405.39} & \numprint{8558.33}\\
    \href{http://www.lmfdb.org/ModularForm/GL2/Q/holomorphic/497/2/v/}{\textsf{497.2.v}} & 24 & $408+456$ & \numprint{99.20} & \numprint{18438.91}\\
    \href{http://www.lmfdb.org/ModularForm/GL2/Q/holomorphic/498/2/f/}{\textsf{498.2.f}} & 40 & $560+560$ & \numprint{269.01} & \numprint{48844.21}\\
    \href{http://www.lmfdb.org/ModularForm/GL2/Q/holomorphic/499/2/g/}{\textsf{499.2.g}} & 164 & $6724$ & \numprint{119807.20} & \numprint{12148.53}\\
    \end{tabular}
} \\[6pt]
\text{Table \ref{tab:hardspaces}: Some computationally challenging newspaces}\\[4pt]
\end{gathered}
\end{equation}

\subsection{A trace formula implementation with complex coefficients} \label{sec:bober}

In this section, we describe an implementation of the trace formula using ball arithmetic over the complex numbers due to Bober \cite{bober}.  This implementation follows the description given in \secref{sec:algorithmTF}. The main focus here is to compute a moderate number of coefficients for all of the newforms in a given space $\Sknew{k}(N, \chi)$ as approximate complex numbers, which enables the computation of modular form $L$-functions at small height, for example. These computations use \arb{}~\cite{arb}, a C library which implements arbitrary precision ball arithmetic, so that we can ensure that all of our computations come with rigorous error bounds. There is also some facility for computing with coefficients in a finite field $\F_\ell$, where $\ell$ is some prime congruent to $1$ modulo the order of $\chi$, which is used in the computation of characteristic polynomials of Hecke operators, for example, and in other auxiliary pieces. The package also contains some limited functionality to compute information about spaces of \weightone{} modular forms, which we do not discuss here.

We describe briefly some details of how this implementation works in practice.

To start a computation we first choose a prime and determine a set of trace forms which will give a full rank basis of the space of newforms modulo this prime, avoiding any issues of computing the rank of a matrix with floating point entries. Specifically, we find some matrix of coefficients $(\Tr(T(m_i)T(n_j)\,|\,\Sknew{k}(N,\chi)))_{1\le i,j \le d}$ that has full rank, and we also choose our $m_i$ and $n_j$ so that $\gcd(m_i n_j, N) = 1$, which will make later computations easier.
Once we know which computation will give us a full rank matrix, we do the computation again over the complex numbers, computing additional coefficients so that we will be able to compute the action of Hecke operators. At this point we find a sum of Hecke operators $T = \sum_n c_n T_n$ such that the characteristic polynomial of $T$ is squarefree (keeping $c_n = 0$ when $\gcd(n, N) \ne 1$).

The diagonalization of $T$ would in general be a computation over the complex numbers, but because we have chosen to only use Hecke operators coprime to the level, we can use knowledge of the arguments of the eigenvalues to turn this into a problem of diagonalizing a real symmetric matrix.  This problem is solved by an implementation of Jacobi's algorithm, certifying the result as described in \secref{sec:certifyingeigenvalues}. Once we have diagonalized, we obtain a change of basis matrix that takes our trace form basis to the newform basis, and we can compute as many coefficients of newforms as we like by evaluation of the trace formula.

Once we have computed all the embeddings of all of our newforms, we may also wish to compute the decomposition of the space into Hecke-irreducible subspaces. To do this we will compute the characteristic polynomial of a linear combination $T$ of Hecke operators (it is sufficient to find one that is squarefree). If we have enough precision in the Hecke eigenvalues we have computed, we can do this simply by forming the product $\prod_\lambda (x - \lambda)$, where $\lambda$ ranges over the eigenvalues of $T$. In general we will find that we do not have enough accuracy to uniquely identify a polynomial with coefficients in $\Z[x]$, however, and we refine the computation by computing this polynomial modulo $\ell$ for enough small primes $\ell$ to obtain the polynomial exactly.

The factorization of this Hecke polynomial gives the decomposition of $\Sknew{k}(N, \chi)$ into Hecke irreducible subspaces. However, there is still one more problem we may be faced with: namely, identifying which embeddings correspond to which subspaces. In this problem we have a set of polynomials $f_1, f_2, \ldots, f_n$ and a set of approximations of complex numbers $r_1, r_2, \ldots, r_d$, and all we need to do is determine which complex number is a root of which polynomial. This may seem like a relatively trivial problem, but in fact these polynomials may be enormous and obtaining enough precision in the roots to solve this by simple evaluation may not be feasible. 

\begin{example}
To give a moderately-sized example, we can consider the space 
\href{http://www.lmfdb.org/ModularForm/GL2/Q/holomorphic/766/2/c/}{\textsf{766.2.c}}. This space is only 32-dimensional over the field of definition of $\chi$, but there are 190 Galois conjugate characters to consider, so the full degree is 6080. The characteristic polynomial of $T_3$ acting on the space $\Sknew{2}(766, [\chi])$ is squarefree and factors into 2 irreducible factors of degree 3040; each factor contains more than 1.5 million decimal digits.
\end{example}

To make this problem tractable, we again make use of the arguments of the eigenvalues. Let $f(x)$ be one of these irreducible factors. We know that each root of $f$ can be written as $\zeta t$ for some root of unity $\zeta$ and some real number $t$, and we find that $t$ is a root of the greatest common divisor of $f(\zeta x)$ and $f(\zeta^{-1}x)$ in $\mathbb{Q}(\zeta)[x]$. In fact, as we prefer to work with real numbers, we compute 
\[ \gcd(f(\zeta x) + f(\zeta^{-1}x),i(f(\zeta x) - f(\zeta^{-1}x))) \in \Q(\zeta + \zeta^{-1})[x]. \]
In principle, computing this greatest common divisor when we have only floating point approximations available could be troublesome, but it is possible because we know what its degree is.

\section{Issues: computational, theoretical, and practical} \label{sec:issues}

\subsection{Analytic conductor}

Earlier efforts to tabulate modular forms have tended to compute all newforms
in particular boxes, where the weight and level each vary in a specified range.
This approach is easy to describe, but the computational complexity of finding
newforms with simultaneously large weight and level ensures that some newforms
of interest will be missed (either large weight or large level).
Instead of working with boxes, we organized our computation around a single
invariant which scales with the complexity of the newform.

Introduced by Iwaniec--Sarnak \cite[Eq. (31)]{IS} (see also Iwaniec--Kowalski \cite[(5.7)]{IK}), the analytic conductor of a newform $f \in S_k^{\mathrm{new}}(N,\chi)$ is the positive real number 
\begin{equation}
A \colonequals N\left(\frac{\exp(\psi(k/2))}{2\pi}\right)^2,
\end{equation}
where $\psi(x):=\Gamma'(x)/\Gamma(x)$ is the logarithmic derivative of the Gamma function.  The analytic conductor includes a factor that can be thought of as measuring the complexity at infinity.  We have $A \sim \displaystyle{\frac{Nk^2}{16\pi^2}}$ as $k \to \infty$, so for simplicity we organized our computations by specifying bounds on the quantity $Nk^2$.

\subsection{Sturm bound} \label{sec:sturm}

In this section, we elaborate upon bounds for truncations of $q$-expansions of modular forms that determine them uniquely.  The most well-known of these bounds is due to Hecke (and more generally to Sturm \cite[Theorem 1]{Sturm}).

\begin{thm}[Hecke, Sturm] \label{thm:heckesturm}
Let $\Gamma \leq \SL_2(\Z)$ be a congruence subgroup and let $f$ be a modular form of weight $k$ for $\Gamma$.  Then $f=0$ if and only if $a_n(f)=0$ for all $0 \leq n \leq k[\SL_2(\Z):\Gamma]/12$.  
\end{thm}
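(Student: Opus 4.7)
The forward implication is immediate; I would focus on the converse. The plan is to reduce to the classical valence formula (the ``$k/12$ formula'') for the full modular group $\SL_2(\Z)$ by a norm construction.

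Let $d \colonequals [\SL_2(\Z):\Gamma]$ and fix coset representatives $\gamma_1 = I, \gamma_2, \ldots, \gamma_d$ for $\Gamma \backslash \SL_2(\Z)$.  First I would form the \emph{norm}
\[
g \colonequals \prod_{i=1}^d f|_k \gamma_i,
\]
and verify that $g$ is a holomorphic modular form of weight $kd$ for the full group $\SL_2(\Z)$.  Right multiplication by an element of $\SL_2(\Z)$ permutes the $\gamma_i$ up to left multiplication by $\Gamma$, under which $f$ is invariant, so the product is fixed; and holomorphy of $g$ at the unique cusp of $\SL_2(\Z)$ follows since each factor $f|_k\gamma_i$ is holomorphic at $\infty$, being (up to normalization) the expansion of $f$ at the cusp $\gamma_i(\infty)$ of $\Gamma$, where $f$ is holomorphic by hypothesis.

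Next I would estimate the order of vanishing of $g$ at $\infty$.  Each $f|_k\gamma_i$ has $\ord_\infty(f|_k\gamma_i) \geq 0$, and the factor indexed by $\gamma_1 = I$ is $f$ itself, which by the hypothesis on its Fourier coefficients satisfies $\ord_\infty(f) \geq \lfloor kd/12\rfloor + 1 > kd/12$.  Summing the contributions,
\[
\ord_\infty(g) \;\geq\; \sum_{i=1}^d \ord_\infty(f|_k\gamma_i) \;\geq\; \ord_\infty(f) \;>\; \frac{kd}{12}.
\]
The classical valence formula asserts that any nonzero holomorphic modular form of weight $kd$ for $\SL_2(\Z)$ has total order of vanishing (with stabilizer weights) on the compactified quotient equal to $kd/12$, and in particular $\ord_\infty \leq kd/12$.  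Therefore $g = 0$, so some factor $f|_k\gamma_i$ vanishes identically, and applying $\gamma_i^{-1}$ gives $f = 0$.

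The main technical point to check carefully is the claim that $\ord_\infty(f|_k\gamma_i) \geq 0$ as a $q$-order, where $q = e^{2\pi i z}$: the cusp $\gamma_i(\infty)$ of $\Gamma$ may have width $h_i > 1$, so that $f|_k\gamma_i$ expands in powers of $q^{1/h_i}$.  One must verify that these fractional powers aggregate across the $d$ factors into an integral $q$-expansion for $g$ (as they must, since $g$ is modular for $\SL_2(\Z)$), and that the $q$-order of the product is at least the $q$-order of any individual factor.  This is a routine but slightly delicate bookkeeping with cusp widths, and is where all the care in the argument resides.
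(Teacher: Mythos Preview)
The paper does not actually prove this theorem; it is stated with attribution to Hecke and Sturm (with a citation to Sturm \cite{Sturm}) and then used as input to the proof of Proposition~\ref{prop:heckesturm}. Your norm construction is the standard argument (essentially Sturm's), and it is correct: passing to $g=\prod_i f|_k\gamma_i$ reduces to the level~$1$ valence formula, and the hypothesis on the $a_n$ forces $\ord_\infty(g)>kd/12$, contradicting the $k/12$ formula unless $g=0$. The bookkeeping with cusp widths that you flag is indeed the only delicate point, and it resolves as you indicate since the product over a full set of coset representatives must land in $M_{kd}(\SL_2(\Z))$ and hence have an integral $q$-expansion.
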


In fact, for modular forms with character as in our setting, one can apply a sharper bound (as though it was without character) as follows.  

\begin{defn}
For $k,N \in \Z_{\geq 1}$, define the \defi{(Hecke-)Sturm bound} 
\[ \Sturm(k,N) \colonequals \frac{k}{12} [\SL_2(\Z):\Gamma_0(N)] = \frac{Nk}{12} \prod_{p \mid N} \left(1+\frac{1}{p}\right). \]
\end{defn}

\begin{prop}[Hecke, Sturm] \label{prop:heckesturm}
Let $N,k \geq 1$ and let $\chi$ be a character of modulus $N$.  Let $\TT \subseteq \End_\C(S_k(N,\chi))$ be the $\Z$-subalgebra generated by the Hecke operators $T_n$ for all $n \in \Z_{\geq 1}$, and let $\Z[\chi] \subseteq \C$ be the $\Z$-subalgebra generated by the values of $\chi$.  Then there is a natural inclusion $\Z[\chi] \hookrightarrow \TT$, and the following statements hold.
\begin{enumalph}
\item If $f \in S_k(N,\chi)$ has $a_n(f)=0$ for all $n \leq \Sturm(k,N)$, then $f=0$.   
\item $\TT$ is generated as a $\Z[\chi]$-module by $T_n$ for all $n \leq \Sturm(k,N)$.
\item $\TT$ is generated as a $\Z[\chi]$-algebra by $T_1$ and $T_p$ for all primes $p \leq \Sturm(k,N)$.
\end{enumalph}
\end{prop}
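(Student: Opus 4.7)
The plan is to prove (a) by reducing to the classical Hecke--Sturm bound on $\Gamma_0(N)$ (Theorem \ref{thm:heckesturm}), then to derive (b) from (a) by a duality argument, and finally to obtain (c) from (b) via the Hecke multiplicativity relations.

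For (a), given $f \in S_k(N,\chi)$ with $a_n(f) = 0$ for $n \le B \colonequals \Sturm(k,N)$, I would introduce the arithmetic conjugate $f^c(z) \colonequals \overline{f(-\bar z)} = \sum_{n\ge 1}\overline{a_n(f)}\,q^n$. A short check with the transformation law shows $f^c \in S_k(N,\bar\chi)$, so the product $h \colonequals f\cdot f^c$ lies in $S_{2k}(\Gamma_0(N))$ since $\chi\bar\chi$ is principal modulo $N$. Because $f$ is a cusp form, the $n$th coefficient of $h$ is $\sum_{i=1}^{n-1}a_i(f)\,\overline{a_{n-i}(f)}$, which vanishes whenever for each $1\le i\le n-1$ one has $i\le B$ or $n-i \le B$, that is, whenever $n \le 2B = \Sturm(2k,N)$. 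Theorem \ref{thm:heckesturm} applied on $\Gamma_0(N)$ then forces $h = 0$, and since holomorphic functions on $\calH$ form an integral domain while $f^c=0$ if and only if $f=0$, we conclude $f=0$.

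For (b), I would first establish the inclusion $\Z[\chi]\hookrightarrow\TT$: the identity $T_p^2 - T_{p^2} = \chi(p)p^{k-1}\cdot\mathrm{id}$ for $p\nmid N$ gives $\chi(p)p^{k-1}\in\TT$, and applying B\'ezout to $p^{k-1}$ and $q^{k-1}$ for two distinct primes $p,q\nmid N$ in the same residue class mod $N$ (which exist by Dirichlet, so that $\chi(p)=\chi(q)$) extracts $\chi(p)$ itself; the case $k=1$ is immediate. Then consider the pairing $\TT\times S_k(N,\chi)\to\C$ given by $(T,f)\mapsto a_1(Tf)$, satisfying $a_1(T_n f) = a_n(f)$. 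Let $\TT'\subseteq\TT$ be the $\Z[\chi]$-submodule generated by $\{T_n : n\le B\}$. By (a), no nonzero $f$ is annihilated by $\TT'$ under the pairing; non-degeneracy (which follows from normalization of eigenforms and semisimplicity of the Hecke action) yields $\TT'\otimes\C = \TT\otimes\C$, and Sturm's integral refinement then upgrades this to $\TT'=\TT$. Part (c) follows immediately from (b) via $T_{mn}=T_mT_n$ for $\gcd(m,n)=1$ and $T_{p^{e+1}} = T_pT_{p^e} - \chi(p)p^{k-1}T_{p^{e-1}}$ (with a simpler recursion at $p\mid N$), which express every $T_n$ as a $\Z[\chi]$-polynomial in the $T_p$; so only the $T_p$ for primes $p\le B$ are needed to generate $\TT$ as a $\Z[\chi]$-algebra.

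The main obstacle is the integrality step in (b): moving from the clean identity $\TT'\otimes\C = \TT\otimes\C$ to an equality of $\Z[\chi]$-modules requires either verifying that the pairing is perfect integrally, or applying Sturm's mod-$\mathfrak{p}$ argument \cite{Sturm} uniformly across all primes $\mathfrak{p}$ of $\Z[\chi]$. The analytic content of the proposition is concentrated entirely in (a); the remaining parts are linear-algebra consequences, but their precise $\Z[\chi]$-integral formulation is what distinguishes Sturm's refinement from the purely complex-analytic Hecke bound.
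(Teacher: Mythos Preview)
Your argument for (a) is correct and takes a genuinely different route from the paper. The paper (following Buzzard) raises $f$ to the $d$th power where $d=\ord(\chi)$, landing in $S_{dk}(\Gamma_0(N))$, and uses that $f=O(q^{s+1})$ forces $f^d=O(q^{d(s+1)})$ past the Sturm bound $ds$ there. Your product $h=f\cdot f^c$ achieves the same reduction to trivial character but lands in weight $2k$ regardless of $\ord(\chi)$, which is arguably cleaner. Both tricks rely on the same underlying idea---kill the character by a multiplicative construction, then invoke Theorem~\ref{thm:heckesturm} on $\Gamma_0(N)$---and both yield exactly the bound $\Sturm(k,N)$.

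For (b) and the inclusion $\Z[\chi]\hookrightarrow\TT$, your outline matches the paper's. Your B\'ezout extraction of $\chi(p)$ from $\chi(p)p^{k-1}$ and $\chi(p)q^{k-1}$ is exactly what the paper does (it says ``CRT'' but means the same thing). You are also right that the integrality step is the crux. The paper resolves it by the first of your two suggested routes: it proves that the pairing $\TT\times S_k(N,\chi;\Z[\chi])\to\Z[\chi]$, $(T,f)\mapsto a_1(Tf)$, is \emph{perfect} (following Ribet), using that $\Z[\chi]$ is Dedekind so $\TT$ and $S$ are locally free, and comparing ranks via the injectivity of both induced maps. One then reruns the perfectness argument with $\TT_{\le B}$ in place of $\TT$, where part (a) supplies injectivity of $S\to\Hom(\TT_{\le B},\Z[\chi])$; the upshot is $\TT_{\le B}\cong\Hom(S,\Z[\chi])\cong\TT$ as submodules, hence equality. (Strictly speaking, the torsion-free-cokernel step for $\TT_{\le B}$ tacitly uses Sturm's congruence theorem---your second suggested route---so the two approaches are not entirely disjoint.) Your parenthetical invoking semisimplicity is unnecessary: non-degeneracy on both sides follows directly from $a_1(T_nTf)=a_n(Tf)$ and injectivity of $q$-expansions, with no appeal to diagonalizability.

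Part (c) is exactly as in the paper.
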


\begin{proof}
For the inclusion $\Z[\chi] \hookrightarrow \TT$, we argue as follows: from the Hecke recursion 
\begin{equation}  \label{eqn:Tp2}
T_{p^2} - T_p^2 + \chi(p)p^{k-1}=0 
\end{equation}
for $p \nmid N$, we see that $\chi(p)p^{k-1} \in \TT$; choosing two distinct primes $p$ congruent modulo $N$ and applying the CRT shows that $\Z[\chi] \subseteq \TT$.  Consequently, $\TT$ is a torsion free $\Z[\chi]$-module.  Since $\Z[\chi]$ is a Dedekind domain, $\TT$ is locally free.  

Abbreviate $S \colonequals S_k(N,\chi;\Z[\chi])$.  We claim that the pairing
\begin{equation} \label{eqn:TTSchi}
\begin{aligned} 
\TT \times S &\to \Z[\chi] \\
(T,f) &\mapsto a_1(Tf)
\end{aligned}
\end{equation}
is perfect, i.e., the map
\begin{equation}
\begin{aligned} 
\varphi \colon S &\to \Hom_{\Z[\chi]}(\TT,\Z[\chi]) \\
f &\mapsto (T \mapsto a_1(Tf))
\end{aligned}
\end{equation}
is an isomorphism.  When $\Z[\chi]=\Z$, this is an argument of Ribet \cite[Theorem (2.2)]{Ribet:modp}, and we only need to make a small modification.  The map $\varphi$ is injective with torsion-free cokernel because $a_1 \circ T_n = a_n$ and the map taking a form to its $q$-expansion is injective.  Since $S$ and $\TT$ are locally free $\Z[\chi]$-modules of finite rank, it suffices to show that the rank of $\TT$ is at most the rank of $S$ (localizing at primes $\frakl$ of $\Z[\chi]$).  To this end, consider the other map induced by the pairing, namely,
\begin{equation} \label{eqn:omegaTT}
\begin{aligned} 
\omega \colon \TT &\to \Hom_{\Z[\chi]}(S,\Z[\chi]) \\
T &\mapsto (f \mapsto a_1(Tf)).
\end{aligned}
\end{equation}
We claim that $\omega$ is injective.  Indeed, if $T \in \ker \omega$, then for all $f \in S$ and all $n \geq 1$ we have
\[ 0 = \omega(T)(T_n f) = a_1(TT_n f) = a_1(T_n T f) = a_n(Tf) \]
as $\TT$ is commutative.  Since the $q$-expansion map is injective, we conclude $Tf = 0$ for all $f$, so $T=0$ as an endomorphism, proving the claim.  Finally, localizing $\omega$ at $\frakl$, the injectivity of $\omega$ implies the desired rank bound.  

We now prove (a) following Buzzard, and suppose that $f \in S_k(N,\chi;\Z[\chi])$ has $a_n(f) = 0$ for all $n \leq \Sturm(k,N)$.  Let $d$ be the order of $\chi$, let $s \colonequals \Sturm(k,N)$, and consider $f^d \in S_{dk}(\Gamma_0(N);\Z[\chi])$.  Since $f = O(q^{s+1})$, we have $f^d = O(q^{d(s+1)})$.  Moreover, $\Sturm(kd,N)=ds$, so by the Sturm bound (Theorem \ref{thm:heckesturm}) applied to $S_{dk}(\Gamma_0(N);\Z[\chi])$ we conclude $f^d = 0$, which implies $f=0$.  

To prove (b), let $\TT_{\leq n} \subseteq \TT$ be the $\Z[\chi]$-submodule generated by $T_n$ with $n \leq \Sturm(k,N)$.  By the previous paragraph, the pairing \eqref{eqn:TTSchi} restricted to $\TT_{\leq n}$ is still perfect; indeed we can simply argue with $\TT_{\leq n}$ in the injectivity of $\omega$ in \eqref{eqn:omegaTT}.  We conclude that $\TT_{\leq n} = \TT$.

For part (c), we use multiplicativity to see that $T_n$ for $n$ composite is contained in the algebra generated by the prime power operators $T_{p^e}$, and then the Hecke recurrence and induction to see that $T_{p^e}$ is contained in the algebra generated by $T_1$ and $T_p$.
\end{proof}

\subsection{Atkin--Lehner operators and eigenvalues}\label{SS:AtkinLehner}
Let $\chi$ be a Dirichlet character modulo $N$. 
For $M\mid N$ with $\gcd(M, N/M)=1$, there are unique characters $\chi_M\pmod{M}$ and $\chi_{N/M}\pmod{N/M}$ such that $\chi=\chi_M \chi_{N/M}$. 
The Atkin--Lehner--Li operator $W_M$ \cite[\S1]{Atkin-Li} maps the space $S_k(N, \chi)$ to $S_k(N, \overline{\chi_M} \chi_{N/M})$. 
In general $\overline{\chi_M}\chi_{N/M}$ is different from $\chi$, so then this operator cannot be used for splitting up spaces. 
We have $\overline{\chi_M}\chi_{N/M}=\chi$ when the character $\chi_{M}$ is trivial or quadratic, and in these cases, $W_M$ is an involution on the space $S_k(N, \chi)$, which then splits as the direct sum of $\pm 1$-eigenspaces.   \magma{} only implements Atkin-Lehner operators on spaces with  trivial character, where they commute with all Hecke operators and hence map every newform $f$ to $\pm f$, and the sign~$\pm$ is the \emph{Atkin-Lehner eigenvalue} of~$f$ with respect to~$M$.  (By a common abuse of notation and terminology, when $M$ is the power of  a prime~$q$ not dividing $N/M$, the operator $W_M$ is often denoted $W_q$.)  In our computations we only compute Atkin-Lehner eigenvalues on newforms with trivial character.

In general, the image of a normalized newform~$f$ in $S_k(N, \chi_M\chi_{N/M})$ under $W_M$ is a multiple of a normalized newform in $S_k(N, \overline{\chi_M}\chi_{N/M})$, and the multiple, not necessarily~$\pm1$, is called the \emph{pseudo-eigenvalue} of~$W_M$ on~$f$.   Atkin--Li \cite{Atkin-Li} do not give a general formula for pseudo-eigenvalues, which are not always easy to compute in practice. See also Belabas--Cohen \cite[\S\S 5--6]{BelabasCohen}.

When $M=1$ the operator $W_M$ is trivial, while when $M=N$ it is called the \emph{Fricke involution}.   The Fricke involution is the product of all $W_q$ for primes $q\mid N$ (using the convention of the previous paragraph.)  For a newform of weight $k$ and trivial character, the Fricke eigenvalue $\epsilon$ is related to the sign $\varepsilon$ that appears in the functional equation \eqref{eqn:functionalequation} via $\epsilon=(-1)^{k/2}\varepsilon$, see Miyake \cite[Cor.~4.3.7]{Miyake}. Each $W_q$-eigenvalue is similarly related to the sign of a certain local functional equation.

\subsection{Self-duality}

The coefficient field of a newform $f \in S_k(N, \chi)$ is either totally real or CM \cite[Prop 3.2]{Ribet:galreps};
we say that $f$ is \defi{self-dual} in the totally real case.
Computing the coefficient field can be time consuming,
so we use the following easier criteria when applicable.

\begin{prop}[Ribet]
Let $f \in S_k(N, \chi)$ have Hecke orbit of dimension $d$ and trace form $\sum_{n=0}^\infty t_n q^n$.  Then the following statements hold.
\begin{enumalph}
    \item If $\chi$ is trivial or $d$ is odd, then $f$ is self-dual;
    \item If $\chi$ has order larger than 2, then $f$ is not self-dual;
    \item If there exists a prime $p$ so that $t_p \ne 0$ and $\chi(p) \ne 1$, then $f$ is not self-dual. 
\end{enumalph}
\end{prop}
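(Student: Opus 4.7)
The plan is to reduce everything to the Ribet dichotomy invoked in the proposition: $K_f$ is either totally real or CM, and $f$ is self-dual iff $K_f$ is totally real. Two structural facts will do all the work: the containment $\Q(\chi)\subseteq K_f$ from the earlier remark on coefficient fields, and the conjugation identity $\overline{a_p(f)}=\overline{\chi(p)}\,a_p(f)$ for primes $p\nmid N$. The latter is a consequence of multiplicity one for newforms applied to $\bar f=\sum_n \overline{a_n(f)}\,q^n$ and to the twist $f\otimes\bar\chi$, both of which are normalized newforms of character $\bar\chi$ and the same level away from $N$.

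For part (a), the trivial-character case is classical: the Hecke operators $T_n$ with $\gcd(n,N)=1$ are self-adjoint with respect to the Petersson inner product, so their eigenvalues are real and $K_f$ is totally real. For the case $d$ odd, the degree $[K_f:\Q]=d$ is odd, but every CM field has even degree over $\Q$, so $K_f$ must be totally real.

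For part (b), $\ord(\chi)>2$ means $\chi$ takes a non-real root of unity as a value, so $\Q(\chi)$ is a nontrivial CM subfield of a cyclotomic field; via $\Q(\chi)\subseteq K_f$ the field $K_f$ cannot be totally real, so the dichotomy forces $K_f$ into the CM case and $f$ is not self-dual.

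For part (c), I argue the contrapositive. If $f$ is self-dual then $K_f$, and hence $\Q(\chi)$, is totally real, so $\chi$ takes values in $\{0,\pm1\}$. Under any complex embedding $\sigma$ of $K_f$, $\sigma(a_p(f))\in\R$; for $p\nmid N$, combining this reality with the conjugation identity gives $a_p(f)=\chi(p)\,a_p(f)$, so $\chi(p)\neq 1$ forces $a_p(f)=0$ and hence $t_p=\Tr_{K_f/\Q}(a_p(f))=0$. The main obstacle is making the conjugation identity precise and disposing of the bad primes $p\mid N$ (where $\chi(p)=0$ already satisfies $\chi(p)\neq 1$); at such primes one must invoke Atkin--Li's description of the $U_p$-eigenvalue and the associated pseudo-eigenvalue, together with self-duality, to either derive $t_p=0$ or to reduce back to the good-prime case already handled. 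Everything outside the conjugation identity is then field-theoretic bookkeeping.
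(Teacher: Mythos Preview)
Your argument is correct and is precisely Ribet's; the paper itself gives no proof beyond the citation to \cite[Propositions 3.2 and 3.3]{Ribet:galreps}, so you have in fact supplied what the paper omits.  Parts (a) and (b) are exactly the standard reductions: self-adjointness of $T_p$ for trivial $\chi$, the even-degree obstruction for CM fields, and the containment $\Q(\chi)\subseteq K_f$.

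One remark on your handling of (c).  The bad-prime case you flag as requiring Atkin--Li is not actually covered by the proposition, and indeed \emph{cannot} be: with $\chi$ taken of modulus $N$ as in the paper's conventions, one has $\chi(p)=0\ne 1$ for every $p\mid N$, yet for trivial $\chi$ there are self-dual newforms with $t_p\ne 0$ at such primes (e.g.\ \newformlink{11.2.a.a} has $a_{11}=1$), which would contradict (a).  Ribet's Proposition~3.3 is stated for $p\nmid N$, and under that reading your conjugation identity $\overline{a_p(f)}=\overline{\chi(p)}\,a_p(f)$ combined with the reality of every embedding $\sigma(a_p(f))$ already finishes the argument.  You may simply drop the Atkin--Li digression.
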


\begin{proof}
See Ribet {\cite[Propositions 3.2 and 3.3]{Ribet:galreps}}.
\end{proof}

\subsection{Efficiently recognizing irreducibility} \label{sec:irred}

Level $N=2$ is by far the most time-consuming case for \magma{} (note that this allows for the largest range of $k$ with $N\ne 1$ for any given bound on the analytic conductor). For $k>400$ with $4 \mid k$, each space takes more than 12 hours of CPU time.  However, we observed behavior analogous to the Maeda conjecture in weight $1$ up to weight $k \leq 400$.  The Atkin--Lehner operator $W_2$ splits the space as evenly as possible, and the $W_2$-eigenspaces appear to always be irreducible.

\begin{conj}
For all $k \geq 2$, the space $\Sknew{k}(\Gamma_0(2))$ decomposes under the Atkin--Lehner operator~$W_2$ into Hecke irreducible subspaces of dimensions $\lfloor d/2\rfloor$\! and $\lceil d/2 \rceil$, where $d\! \colonequals\! \dim_\C \Sknew{k}(\Gamma_0(2))$.
\end{conj}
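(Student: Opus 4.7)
The statement is a Maeda-type conjecture at level $2$, and a fully unconditional proof appears to lie beyond current methods. My plan would be to split it into (a) the dimension claim $\dim_\C S_k^{\pm}(\Gamma_0(2)) \in \{\lfloor d/2 \rfloor, \lceil d/2 \rceil\}$, where $S_k^{\pm}$ denote the $\pm 1$-eigenspaces of $W_2$, and (b) the Hecke-irreducibility claim on each of $S_k^{\pm}$.

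Claim (a) I expect to be unconditional and essentially routine. Since $\dim S_k^{\pm} = (d \pm \Tr(W_2 \,|\, S_k(\Gamma_0(2))))/2$, it suffices to show $|\Tr(W_2)| \leq 1$. The Eichler--Selberg trace formula applied to the $W_2$-twisted Hecke operator expresses this trace as a short sum of elliptic contributions coming from quadratic orders whose discriminant divides $-8$, together with boundary contributions that are easy to control. The answer is an explicit function of $k$ modulo a small period, and checking $|\Tr(W_2)| \leq 1$ case by case should finish (a).

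Claim (b) is the real content, and it is the reason the statement can only be a conjecture. The natural effective attack, following Buzzard's approach to the classical Maeda conjecture, is: for each $k$, compute the characteristic polynomial $P_k^{\pm}(x) \in \Z[x]$ of some Hecke operator $T_p$ with $p$ odd acting on $S_k^{\pm}$, and exhibit a single prime $\ell$ such that $P_k^{\pm}(x) \bmod \ell$ is irreducible in $\F_\ell[x]$; any such $\ell$ instantly certifies Hecke-irreducibility of $S_k^{\pm}$. Computationally this is highly effective and would be the route I would use to push numerical verification well past $k=400$: combine a $W_2$-equivariant modular symbol computation to split $S_k(\Gamma_0(2))$ before any expensive linear algebra with the complex-ball trace formula of \secref{sec:bober} to compute $P_k^{\pm}$ via sums of Hecke traces without ever factoring over $\Q$.

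The main obstacle is that there is no known global mechanism guaranteeing the existence of such an $\ell$ uniformly in $k$. A serious theoretical attack on (b) would have to prove a big-image theorem for the residual Galois representations attached to newforms in $S_k^{\pm}(\Gamma_0(2))$ of Ribet--Momose type, producing for each $k$ a prime $\ell$ for which the image of the mod-$\ell$ representation contains $\SL_2(\F_{\ell^e})$ with $e = \dim S_k^{\pm}$; the even-splitting prediction (dimensions differing by at most one) hints that the two halves ought to be linked by a natural symmetry, which might reduce (b) for $S_k^-$ to (b) for $S_k^+$ and thereby halve the theoretical burden, but I see no obvious candidate for such a symmetry at level $2$. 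Absent such an input, the situation is strictly no better than for the original Maeda conjecture, and the realistic contribution of any proof attempt is an extension of the range of verification rather than a uniform theorem.
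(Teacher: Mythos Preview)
Your assessment is correct and aligns with the paper: the statement is labeled as a \emph{conjecture}, and the paper offers no proof. The paper's own commentary after the conjecture matches your decomposition: it notes that the difference $\dim S_k^+ - \dim S_k^-$ can be expressed in terms of class numbers via Atkin--Lehner fixed points, suggesting that the dimension claim (your part (a)) is ``perhaps explicit enough for $N=2$ to prove,'' while explicitly conceding that this says nothing about irreducibility (your part (b)). So both you and the paper agree that (a) is plausibly routine via trace-formula arguments and (b) is the genuine Maeda-type obstruction.

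One minor point: your proposed computational verification strategy (splitting by $W_2$ first, then certifying irreducibility by finding a prime $\ell$ modulo which the Hecke polynomial is irreducible) is somewhat more concrete than what the paper sketches; the paper instead raises the related question of whether one can efficiently certify irreducibility of a characteristic polynomial without fully factoring it, and points to factoring modulo small primes as a natural heuristic. Your plan is a reasonable instantiation of exactly that idea.
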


The dimensions in the corollary follow from work of Martin \cite[Thm.\ 2.2]{Martin}, which implies that for even weights $k>2$ we have
\begin{equation}\label{eqn:Sk2split}
\dim \Sknew{k}(\Gamma_0(2))^+ - \dim \Sknew{k}(\Gamma_0(2))^- = \begin{cases}
 0 & k = 4,6\bmod 8,\\
 (-1)^{k/2} & k \equiv 0,2\bmod 8,\\
 \end{cases}
\end{equation}
it is only the irreducibility of the eigenspaces that is conjectural.
The factor $(-1)^{k/2}$ in \eqref{eqn:Sk2split} appears as $1$ in \cite[Thm.\ 2.2]{Martin} because there the Atkin-Lehner operator follows the convention of Diamond--Shurman \cite[p.\ 209]{DiamondShurman}, which includes a factor of $(-1)^{k/2}$, while we are following the convention of Miyake \cite{Miyake}, which does not include this factor.

One can find similar formulas for $\dim \Sknew{k}(\Gamma_0(N))^+ - \dim \Sknew{k}(\Gamma_0(N))^-$ for any squarefree $N$ in Martin \cite{Martin}, in which case they are a linear function of the class number $h(-4N)$.  For general $N>4$ not of the form $M^2,2M^2,3M^2,4M^2$ with $M$ squarefree, we refer the reader to Helfgott \cite[pp.\ 266--267]{helfgott}.

\begin{ques}
Given an $n \times n$ matrix with entries in $\Z[\zeta_m]$ (typically sparse), is there a fast algorithm that with positive probability correctly determines when its characteristic polynomial is irreducible?
\end{ques}

In other words, if you expect that a polynomial is irreducible, can you verify this quickly without factoring the polynomial?  Under the expectation that the Galois group of the polynomial is transitive and therefore likely to be $S_d$, one could succeed in some cases by factoring the polynomial modulo primes.  This is different than the typical factorization problems solved in computer algebra systems, which compute a factorization $p$-adically and then reconstruct the factorization over $\Z$.  (See Table~\ref{tab:hardspaces} for some difficult spaces where this would help.)  A natural generalization of this question would be to efficiently determine the degrees of the irreducible factors of its characteristic polynomial (without explicitly computing it).

\subsection{Trace form} \label{sec:traceform}

As defined in \secref{sub:decomposition}, each newform $f\in \Sknew{k}(N,[\chi])$ has an associated trace form $\Tr(f)(q) = \sum_n t_n q^n \in S_k(N,[\chi];\Z)$ equal to the sum of the distinct $\Gal(\Qbar\,|\,\Q)$ conjugates of $f$ and thereby well-defined on its Galois orbit $[f]$.  More precisely, we have $\Tr(f) \in \Sknew{k}(\Gamma)$ where
\[ \Gamma \colonequals \{\left(\begin{smallmatrix}a&b\\c&d\end{smallmatrix}\right)\in \Gamma_0(N):\chi(a)=1\}\supseteq \Gamma_1(N), \]
(but in general $\Tr(f) \not\in S_k(N,\chi)$).  One can thus apply the Sturm bound (Theorem \ref{thm:heckesturm}) for $\Gamma$: trace forms of newforms in $S_k(N,\chi)$ with the same Fourier coefficients $a_n$ for $n\le k[\SL_2(\Z):\Gamma]/12$ must coincide, but note that this will typically be larger than the Sturm bound $\Sturm(k,N)$.  
As noted above, we always have $t_1=[K:\Q]$, where $K=\Q(f)$ is the coefficient field of $f$.

Trace forms can be efficiently computed using the trace formula.  In the common case where $S_k(N,\chi)$ is irreducible, this can be done via the \pari{} function \texttt{mftraceform} \cite{Pari}, which is dramatically faster than computing the coefficients of $f$ as elements of $K$ and taking traces (indeed, it is not even necessary to determine $K$).
More generally, if one knows the decomposition of $S_k(N,[\chi])$ into newform subspaces and has computed trace forms for all but one of them, the remaining trace form can be computed by subtracting corresponding coefficients of the known trace forms from the coefficients given by \texttt{mftraceform}, which computes absolute traces of the Hecke operators $T_n$ acting on the entire newspace $S_k(N,[\chi])$.  Alternatively, one can sum complex coefficients $a_n$ of the Galois conjugates of~$f$ computed to sufficient precision to allow the sum to be identified as a unique integer.

The coefficients $t_p$ of the trace form at primes $p$ are equal to the Dirichlet coefficients of the (typically imprimitive) $L$-function $L(s)=\sum b_n n^{-s}$ with integer Dirichlet coefficients $b_n$ obtained by taking the product of the $L$-functions of the Galois conjugates of $f$.  But for nonprime values of $n$ the integer coefficients $t_n$ do not match the integer coefficients $b_n$ unless the newspace has dimension one (in which case $L(s)$ is primitive).  Indeed $t_1=[K:\Q]$ cannot coincide with $b_1=1$, and in general the coefficients $t_n$ at nonprime values of $n$ encode different information. 

\subsection{Presenting coefficients using LLL-reduction}  \label{sec:LLLbasis}

One of the most dramatic improvements we saw, both in performance and in display, is in the choice of how to represent coefficient rings.  In this section and the next, we explain two such methods.  

As explained in section \ref{sub:decomposition}, one computes a minimal polynomial for the coefficient field by factoring the characteristic polynomial of a Hecke operator.  This polynomial may be unwieldy!  So we first apply the \pari{} function \texttt{polredbest} which finds an improved minimal polynomial representing the same field by computing an LLL-reduced basis for an order with respect to the Minkowski embedding (whose underlying quadratic form is given by the $T_2$-norm)---this runs in deterministic polynomial time in the size of the input.  When possible, we improve this to \texttt{polredabs}, which applies the same technique but to the maximal order (this may require factoring a discriminant, and we frequently encounter situations where this is a bottleneck).  

\begin{remark}
The function \texttt{polredabs} changed in \pari{} 2.9.5 (Fall 2017); we use the more recent version, described in \cite{CPS}.
\end{remark}

We can make significant further improvements by optimizing the $\Z$-basis we use to represent coefficients.  Let $f \in \Sknew{k}(N,\chi)$ be a newform.  By the Hecke--Sturm bound (Proposition \ref{prop:heckesturm}), the coefficient ring of $f$ is generated over $\Z[\chi]$ by the values $a_n(f)$ for $n \leq \Sturm(k,N)$, so by extension we obtain a set of $\Z$-module generators for the ring.  We reduce this to an LLL-reduced $\Z$-basis for the coefficient ring, and we rewrite the coefficients in terms of this basis.  In our computations, we always use complex precision that is at least as large as the discriminant of the coefficient ring.  

We observe the following.

\begin{lem}
Let $F$ be a number field and let $R \subseteq F$ be a $\Z$-order in $F$.  Then the shortest vectors in $R$ with respect to the $T_2$-norm are exactly the roots of unity in $R$.  
\end{lem}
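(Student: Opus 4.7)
The plan is to compute $T_2$ on roots of unity, establish a lower bound $T_2(\alpha) \geq [F:\Q]$ for all nonzero $\alpha \in R$ via AM-GM and the integrality of $N_{F/\Q}(\alpha)$, and finally characterize the equality case using Kronecker's theorem.

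First I would recall that if $n = [F:\Q]$ and $\sigma_1,\dots,\sigma_n \colon F \hookrightarrow \C$ are the complex embeddings, then the $T_2$-norm is
\[ T_2(\alpha) = \sum_{i=1}^n |\sigma_i(\alpha)|^2. \]
If $\zeta \in R$ is a root of unity then each $\sigma_i(\zeta)$ is also a root of unity, so $|\sigma_i(\zeta)|=1$ and $T_2(\zeta)=n$. In particular every root of unity in $R$ has the same (small) $T_2$-norm.

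Next I would obtain the lower bound. For any nonzero $\alpha \in R$, the element $\alpha$ is a nonzero algebraic integer, so $|N_{F/\Q}(\alpha)| = \prod_i |\sigma_i(\alpha)| \geq 1$. The AM-GM inequality applied to the nonnegative reals $|\sigma_i(\alpha)|^2$ then gives
\[ \frac{T_2(\alpha)}{n} = \frac{1}{n}\sum_{i=1}^n |\sigma_i(\alpha)|^2 \;\geq\; \Bigl(\prod_{i=1}^n |\sigma_i(\alpha)|^2\Bigr)^{1/n} = |N_{F/\Q}(\alpha)|^{2/n} \;\geq\; 1, \]
hence $T_2(\alpha) \geq n = T_2(\zeta)$ for any root of unity $\zeta$. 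Thus the roots of unity attain the minimum of $T_2$ on $R \setminus \{0\}$.

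Finally I would analyze the equality case, which is the step where the real content sits. If $T_2(\alpha) = n$ then both inequalities above must be equalities: the AM-GM equality forces all $|\sigma_i(\alpha)|$ to be equal, and combined with $|N_{F/\Q}(\alpha)| \geq 1$ and $\prod_i |\sigma_i(\alpha)|^2 \leq 1$, we conclude $|\sigma_i(\alpha)| = 1$ for every $i$. The main (but classical) obstacle is then Kronecker's theorem: an algebraic integer all of whose conjugates lie on the unit circle must be a root of unity. Applying this to $\alpha$ finishes the proof. (One should also note the convention that $\alpha = 0$ is excluded from the notion of ``shortest vector,'' since $T_2(0)=0$.)
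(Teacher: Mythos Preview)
Your proof is correct and follows essentially the same approach as the paper: compute $T_2$ on roots of unity, use AM--GM together with the integrality of the norm to bound $T_2(\alpha)\ge n$, and invoke Kronecker's theorem in the equality case. The paper's argument is slightly terser but uses exactly the same ingredients in the same order.
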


\begin{proof}
The order $R$ contains $1$, and $T_2(1)=n \colonequals [F:\Q]$.  More generally, for any root of unity $\zeta \in R$, we have $T_2(\zeta)=n$.  Conversely, let $\alpha \in R$ have $T_2(\alpha)\le n$.  Then by the arithmetic-geometric mean inequality, we have
\[ 1 \geq \frac{T_2(\alpha)}{n} = \frac{1}{n} \sum_{i=1}^n \left|\alpha_i\right|^2 \geq \prod_{i=1}^{n} \left|\alpha_i\right|^{2/n} = \left|\Nm_{F|\Q} \alpha\right|^{2/n} \]
with equality if and only if $\left|\alpha_1\right|=\cdots=\left|\alpha_n\right|=1$.  But $\alpha \in R$ is integral, so $\left|\Nm_{F|\Q} \alpha \right| \geq 1$, so equality holds.  By Kronecker's theorem, we conclude that $\alpha$ is a root of unity.
\end{proof}

In spite of this lemma, because of nonuniqueness, we may not have $1$ as an element of an LLL-reduced basis as there may be more roots of unity than the degree, such as in a cyclotomic field.  However, using the above lemma we can recognize the roots of unity in the coefficient ring and thereby recognize when the coefficient ring is cyclotomic itself, where we may institute a canonical basis (see also the next section).  

The effect of such a representation is dramatic.  

\begin{example}
Consider the newform \newformlink{153.2.e.c}.  Its coefficient field is $\Q(\nu)$ where $\nu$ has minimal polynomial
\begin{align*}
&x^{20} - x^{19} + 3x^{18} + 2x^{17} + 13x^{16} - 12x^{15} + 54x^{14} + 27x^{13} + 93x^{12} - 54x^{11} + 693x^{10} - 162x^{9} \\
&\qquad + 837x^{8} + 729x^{7} + 4374x^{6} - 2916x^{5} + 9477x^{4} + 4374x^{3} + 19683x^{2} - 19683x + 59049.
\end{align*}
An integral basis written in terms of the powers of $\nu$ is too large to record here, and similarly the coefficients of $f$ written in a power basis are enormous!  

However, in terms of an LLL-reduced basis $\beta_{0}=1,\cdots,\beta_{19}$, we have coefficients
\begin{align*}
a_2(f) &= \beta_{16} \\
a_3(f) &= -\beta_{10} \\
a_4(f) &= -1-\beta_3-\beta_5 \\
\vdots \\
a_{57}(f) &= \beta_{2} - \beta_{3} - \beta_{4} + \beta_{5} - 3 \beta_{7} + 2 \beta_{9} - \beta_{10} \\
&\qquad + 2 \beta_{13} + \beta_{14} + \beta_{15} - \beta_{16} + 2 \beta_{17} - \beta_{18} - \beta_{19} \\
\vdots
\end{align*}
that are very small integer linear combinations of the basis elements.  Moreover, we have
\begin{align*}
1 &= \beta_0 \\
\nu &= \beta_1 \\
\nu^{2} &= \beta_{8} - \beta_{7} - \beta_{4}+ \beta_{2}  \\
\vdots \\
\nu^{19} &= 5114 \beta_{19} + 2632 \beta_{18} + \cdots - 1807\beta_1 - 6756
\end{align*}
and the powers of $\nu$ are reasonably sized $\Z$-linear combinations of our basis elements.  
\end{example}

We observe that the matrix that writes the powers of a primitive element in terms of the LLL-reduced basis is noticeably smaller than the other way around. Working with the coefficient ring itself rather than a maximal order containing it is not only more efficient (as it may be prohibitively expensive to compute such a maximal order), but it also seems to give better results.

The intuitive reason that this works is simple: by the Ramanujan--Petersson bounds, the coefficients of a newform are of small size in all complex embeddings, and so it can be expected that writing it in terms of a $\Z$-basis which is LLL-reduced with respect to size provides a small linear combination.

\begin{rmk}
In the above, we have been concentrating on the case where $f$ is a newform, representing a Galois orbit of newforms, and we write down its $q$-expansion in terms of a $\Z$-basis for the coefficient ring.  

As an alternative, we can consider the $\C$-vector space spanned by $f$ and its conjugates under $\Aut(\C)$, making a $\C$-vector space of dimension say $d$.  These conjugates will include conjugates that \emph{do not} preserve the character, so we would either be working implicitly in the direct sum of the spaces over the full Galois orbit of characters, or we need to restrict to quadratic characters, or we only consider conjugates under $\Aut(\C\,|\,\Q(\chi))$ and get a $\Q(\chi)$-vector space.  However, inside this space is a canonical $\Q$-subspace, namely, those forms whose $q$-expansion belongs to $\Q[[q]]$.  So we could instead represent the Galois orbit canonically by an echelonized basis of $d$ individual $q$-expansions with coefficients in $\Q$.  We could then write a representative newform as before as a linear combination of this basis over the coefficient field.

To go from the eigenform to the $\Q$-basis, we apply the operators $\Tr(\beta_i f)$ for $\beta_i$ any $\Q$-basis for the coefficient field.  (To go from the $\Q$-basis to an eigenform one needs to retain sufficiently many eigenvalues to do the linear algebra.  In other words, the eigenform contains more information than the $\Q$-basis.)  This generalizes the trace form, which is where we take $\beta_i=\beta_0=1$.  

We could also work integrally and take the $\Z$-module of forms whose $q$-expansions belong to $\Z$ and then take a LLL-reduced basis which minimizes a (weighted) sum of finitely many coefficients.  It is conceivable that in a world where linear algebra over $\Q$ is much faster than linear algebra over number fields that we could succeed in computing a $\Q$-basis in reasonable time but not succeed in computing an eigenform.  
\end{rmk}

\subsection{Presenting coefficients using a sparse cyclotomic representation}

When the coefficient ring of a newform is contained in a cyclotomic field $\Q(\zeta_m)$, writing coefficients in terms of an LLL-optimized basis as described in the previous section does not necessarily give the most compact representation, for two reasons.  First, when the coefficient ring is not the maximal order, it may be more compact to express coefficients as elements of the maximal order $\Z[\zeta_m]$.  Second, even when the coefficient ring is the maximal order, in which case the LLL-optimized basis will typically be the standard power basis $1,\zeta_m,\zeta_m^2,\ldots,\zeta_m^{\phi(m)-1}$, the eigenvalues $a_n$ can often be written more compactly by expressing them as sparse polynomials in $\zeta_m$ rather than integer linear combinations of the power basis.  Every integer linear combination of elements of the power basis is of course also a polynomial in $\zeta_m$; the question is whether to allow polynomials of higher degree whose terms involve powers of $\zeta_m$ that are not in the power basis (because $m>\phi(m)=[\Q(\zeta_m):\Q]$), which allows more flexibility and a potentially sparser choice of polynomial.

This added flexibility is particular relevant for \weightone{} newforms, whose coefficients always lie in a cyclotomic field.  The correspondence between \weightone{} newforms and (odd irreducible) 2-dimensional Artin representations \cite{DeligneSerre74} implies that for \weightone{} newforms the eigenvalues $a_p$ can always be written as a sum of at most two roots of unity.  For composite values of $n$ the eigenvalues $a_n$ will not be as sparse, but even if one na\"ively expands them as products of polynomials in $\zeta_m$ for each $a_{p^r}$, for most $a_n$ we obtain an expression with $O(2^{\log\log n})$ nonzero coefficients (a typical integer $n$ has $\log\log n$ distinct prime factors $p$ and $p$-adic valuation 1 at all but $O(1)$ of them), which is exponentially sparser than a generic element of $\Z[\zeta_m]$ written in the power basis.
For even values of $m$ we can improve on this na\"ive approach by using the identity $\zeta_m^{m/2}=-1$ to reduce to polynomials of degree at most $m/2-1$ in $\zeta_m$; this never increases the number of terms and may reduce it.

For example, the second Fourier coefficient of the newform \newformlink{3997.1.cz.a} is
\[
a_2 = -\zeta_{201}^{570} +\zeta_{570}^{244},
\]
but when written in terms of the standard power basis $1,\zeta_{570},\ldots,\zeta_{570}^{143}$ we instead have
\begin{align*}
a_2=1&+\zeta_{570}^{2}+\zeta_{570}^{5}+\zeta_{570}^{11}-\zeta_{570}^{12}-\zeta_{570}^{15}-\zeta_{570}^{17}+\zeta_{570}^{19}-\zeta_{570}^{20}+\zeta_{570}^{21}+\zeta_{570}^{24}+\zeta_{570}^{27}+\zeta_{570}^{30}-\zeta_{570}^{31}\\
&+\zeta_{570}^{32}-\zeta_{570}^{34}+\zeta_{570}^{35}-\zeta_{570}^{36}-\zeta_{570}^{39}-\zeta_{570}^{42}-\zeta_{570}^{45}+\zeta_{570}^{46}-\zeta_{570}^{47}+\zeta_{570}^{49}-\zeta_{570}^{50}+\zeta_{570}^{51}-\zeta_{570}^{59}\\
&+\zeta_{570}^{60}-\zeta_{570}^{61}-\zeta_{570}^{64}+\zeta_{570}^{65}-\zeta_{570}^{66}+\zeta_{570}^{74}-\zeta_{570}^{75}-\zeta_{570}^{78}+\zeta_{570}^{79}-\zeta_{570}^{80}+\zeta_{570}^{88}-\zeta_{570}^{89}+\zeta_{570}^{90}\\
&+\zeta_{570}^{93}-\zeta_{570}^{94}-\zeta_{570}^{97}-\zeta_{570}^{100}-\zeta_{570}^{103}+\zeta_{570}^{104}-\zeta_{570}^{105}-\zeta_{570}^{106}+\zeta_{570}^{107}-\zeta_{570}^{108}+\zeta_{570}^{109}+\zeta_{570}^{112}+\zeta_{570}^{115}\\
&+\zeta_{570}^{118}-\zeta_{570}^{119}+\zeta_{570}^{120}-\zeta_{570}^{122}+\zeta_{570}^{123}-\zeta_{570}^{124}-\zeta_{570}^{127}-\zeta_{570}^{130}+\zeta_{570}^{134}+\zeta_{570}^{137}+\zeta_{570}^{139}+\zeta_{570}^{142}.
\end{align*}
Among the 585 nonzero $a_n$ with $n\le 2000$, the average number of terms needed to express $a_n$ as a sparse polynomial in $\zeta_m$ is $4.1$; by contrast, when written in the power basis the average number of nonzero coefficients of $a_n$ is $42.8$.  This leads to a more than tenfold reduction in storage and a corresponding reduction in the time to transmit or render the $q$-expansion.

\begin{remark}
For modular forms of weight $k>1$ with cyclotomic coefficient fields $\Q(\zeta_m)$ there is no \emph{a priori} reason to expect the $a_p$ to be expressible as sparse polynomials in $\zeta_m$, and one can see in examples that this is often not the case.  One might instead try to apply a more general approach, which, given $\alpha\in \Z[\zeta_m]$ searches for a sparse polynomial $f(\zeta_m)$ of degree less than $m$ with small coefficients that is equivalent to $\alpha$.  We do not know an efficient solution to this problem, but we note that even if one exists, for generic values of~$\alpha$ it is unlikely to result in representations that are significantly more compact than using the power basis for purely information theoretic reasons: the number of $\alpha\in \Z[\zeta_m]$ that can be expressed as $r$-term polynomials in $\zeta_m$ using $b$ bits to represent the coefficients must be approximately the same as the number of integer vectors of length $\phi(m)$ that can be encoded in $b$ bits.  For this reason we use sparse cyclotomic coefficient representations only for $k=1$.
\end{remark}

\subsection{Hecke kernels} \label{sec:heckekernel}

Having determined the decomposition of a newspace $\Sknew{k}(N,[\chi])$ into Hecke orbits $V_f$ corresponding to newforms $f$, we can compute and store information that will allow us to reconstruct a single Hecke orbit, without having to decompose the entire newspace again.  This is particularly useful when the dimension of a particular newform $f$ of interest is much smaller than that of $\Sknew{k}(N,[\chi])$.  To achieve this we compute a list of pairs $(p,g_p(X))$, where $p$ is a prime and $g\in \Z[X]$ is the minimal polynomial of the Hecke operator $T_p$ acting on $V_f$ (viewed as a $\Q$-subspace of $\Sknew{k}(N,[\chi])$), such that $V_f$ is equal to the intersection of the kernels of the linear operators $g_p(T_p)$ acting on $\Sknew{k}(N,[\chi])$, in other words, the operators $g_p(T_p)$ generate the Hecke kernel of $V_f$.  Such a list of generators can be used to reconstruct the newform $f$ in \magma{} via the \texttt{Kernel} function.

It is computationally convenient to restrict to primes $p$ not dividing the level $N$, and to use the same list of primes $p$ for all the newforms in $\Sknew{k}(N,[\chi])$.  To this end, for a set of primes $\mathcal S$, not dividing $N$, and a newform $f$, we let $X_f(\mathcal S)$ denote the set of pairs $(p,g_p)$, where $g_p\in\Z[X]$ is the minimal polynomial of $T_p$ acting on $V_f$, and say that  $\mathcal S$ is a set of \defi{distinguishing primes} for the newspace $\Sknew{k}(N,[\chi])$ if the sets $X_f(\mathcal S)$ are distinct as $f$ varies over the newforms of $\Sknew{k}(N,[\chi])$.

We construct a set of distinguishing primes as follows.  We start by taking $\mathcal S$ to be the empty set.  If the newspace $\Sknew{k}(N,[\chi])$ consists of a single Hecke orbit, then $\mathcal S$ is a set of distinguishing primes, and otherwise we increase the size of $\mathcal S$ by adding the least prime $p\nmid N$ not contained in $\mathcal S$ for which
\[
\{X_f(\mathcal S):f\in \Sknew{k}(N,[\chi])\} \subsetneq \{X_f(\mathcal S\cup\{p\}):f\in \Sknew{k}(N,[\chi])\}.
\]
We observe that the cardinality of the set $\mathcal S$ constructed in this fashion is at most one less than the number of Hecke orbits in $\Sknew{k}(N,[\chi])$.
This greedy approach to constructing $\mathcal S$ does not necessarily minimize its cardinality, but it does minimize the largest $p$ that appears in $\mathcal S$, which may be viewed as an invariant of the newspace.
For example, we may distinguish the 8 Hecke orbits of the newspace \newformlink{2608.2.g}, where $2608 = 2^4 \cdot 163$, using $p = 3$ and $41$.
In this case $T_3$ distinguishes all the forms with the exception of the two CM forms, which both have vanishing $a_p$ for all $p$ split in $\Q (\sqrt{-163})$, hence the smallest prime $p$ such that $a_p$ could possibly distinguish them is $41$, and $41$ does in fact do so.

\begin{remark}
The largest prime $p$ that appears in $\mathcal S$ may occasionally exceed the Sturm bound, as in the case of the newforms \newformlink{66.2.b} and \newformlink{735.2.p}, for example.  This fact is relevant in the context of Theorem~\ref{thm:justcheckcoprime}, which we use to determine the group of inner twists of a newform in \secref{sec:twists}. This is one reason to compute $a_p(f)$ past the Sturm bound.
\end{remark}

\section{Computing \texorpdfstring{$L$}{L}-functions rigorously} \label{sec:Lfunctions}

In this section, we describe rigorous methods to compute $L$-functions of modular forms.

\subsection{Embedded modular forms}

To a newform $f \in S_k^\mathrm{new}(N,\chi)$, with $q$-expansion $\sum a_n q^n$, for each complex embedding of the coefficient field  $\iota: \Q(f) \hookrightarrow \C$
we may consider the embedded modular form 
\begin{equation}
\iota(f)\colonequals \sum \iota(a_n) q^n,
\end{equation}
which is a modular form over the complex numbers.

We label such forms by \textsf{N.k.s.x.c.j}, where \textsf{N.k.s.x} is the label of Hecke orbit, \textsf{N.c} is the Conrey label for the character corresponding to the embedding, and
$j$ is the index for the embedding within those with the same Dirichlet character; these embeddings are ordered by the vector $\iota(a_n)$, where we order the complex numbers first by their real part and then by their imaginary part.

To such an embedded modular form $\iota(f)$, we may associate a primitive $L$-function
of  degree~2
\begin{equation}
\begin{aligned}
L(\iota(f),s) &\colonequals \sum \iota(a_n) n^{-s} = \prod_p L_p(\iota(f), p^{-s})\\
& = \prod_{p\mid N} \left(1 - \iota(a_p) p^{-s}
\right)^{-1}\prod_{p\nmid N} \left(1 - \iota(a_p) p^{-s} + \chi(p)  p^{-2s}
\right)^{-1}.
\end{aligned}
\end{equation}

Let $\Lambda(\iota(f), s)  \colonequals N^{s/2} \Gamma_{\mathbb{C}}(s)  L(\iota(f), s)$, where $\Gamma_\C(s) \colonequals 2 (2 \pi)^{-s} \Gamma(s) $.
Then $\Lambda(\iota(f), s)$ continues to an entire function of order 1 and satisfies the functional equation
\begin{equation}
\Lambda(\iota(f), s) = \varepsilon \overline{\Lambda}(\iota(f), k-s),
\label{eqn:functionalequation}
\end{equation}
where $\varepsilon$ is the root number of $\Lambda(\iota(f), s)$, a root of unity.

The generalized Riemann hypothesis also predicts that any nontrivial zero of the $L$-function lies on the line of symmetry of its functional equation $\Re(s) = k/2$, known as the critical line. 
To study the behavior of $L(\iota(f), s)$ on the critical line, it is natural to introduce the associated $Z$-function, a smooth real-valued function of a real variable $t$ defined by
\begin{equation}
Z(\iota(f), t) \colonequals \overline{\varepsilon}^{1/2} \frac{\gamma(k/2+it)}{|\gamma(k/2+it)|} 
L(\iota(f), k/2+it),
\end{equation}
where $\gamma(s) \colonequals N^{s/2} \Gamma_{\mathbb{C}}(s)$ and the square root is chosen so that $Z(t)>0$ for sufficiently small $t>0$.
By construction, we have $|Z(\iota(f), t)|=|L(\iota(f), k/2+it)|$, the multiset of zeros of $Z(\iota(f), t)$ matches the multiset of zeros of $L(\iota(f), k/2+it)$, and $Z(\iota(f), t)$ changes sign at the zeros of $L(\iota(f), k/2+it)$ of odd multiplicity.

\subsection{Computations} \label{sec:Lfunctions-comp}
Given $\iota(f)$ we would like to compute certain invariants of $L(\iota(f), s)$.
For example, the root number $\varepsilon$, the imaginary part of the first few zeros on the critical line, an upper bound on the order of vanishing at $s =  k/2$, the leading Taylor coefficient at $s =  k/2$,  and the plot $Z(\iota(f), t)$ on some interval.
Given that a majority of these items cannot be represented exactly, we instead aim to determine a small interval in $\R$ or rectangle in $\C$.
Precisely, let $b$ denote the number of bits of target accuracy.  We would like to compute the following:
\begin{itemize}
    \item
    the root number: 
    $x_\varepsilon, y_\varepsilon \in \Z$ such that $2^{b+1}\Re(z) \in [x_\varepsilon-1, x_\varepsilon + 1]$ and $2^{b+1}\Im(z) \in [y-1, y + 1]$;
    \item the imaginary part of the first few zeros on the critical line: $t_1, \dots, t_n \in \Z$ such that $\bigcup_i [t_i -1, t_i + 1]2^{-b-1}$ covers the first $n$ zeros of $L(\iota(f), k/2+it)$;
    \item
    an upper bound on the order of vanishing at $s =  k/2$: 
    $r \colonequals \max_i \{i : |L^{(i)}(\iota(f), k/2) / i!| < 2^{-b-1} \}$;
    \item 
    the leading Taylor coefficient at $s =  k/2$: 
    $0 \neq s \in \Z$ such that $2^{p + 1}L^{(r)}(\iota(f),  k/2) / r! \in [s+1, s-1]$;
    \item  
    an approximation to the plot of $Z(\iota(f), t)$:
    approximations as doubles of $Z(\iota(f), i \delta)$ for some chosen $\delta$ and $i = 0, \dots, n$.
\end{itemize}

In order to rigorously compute the items above, we follow an approach that builds on several improvements and extensions of the algorithm from~\cite{booker} specialized to the motivic case, the details of which will appear in future work~\cite{motiviclfun}.
In practice, given the first $C_k \sqrt{N}$ embedded Dirichlet coefficients, with sufficient precision, and while carrying out all floating-point calculations using rigorous error bounds and interval arithmetic \cite{arb}, one may compute all the items above to the desired bit accuracy. 
A generic library to carry out such computations, due originally to Dave Platt \cite{lfunccode}, has been developed.

\begin{example}
For an explicit example, we encourage the reader to peruse the source file \href{https://github.com/edgarcosta/lfunctions/blob/master/examples/cmf_23.1.b.a.cpp}{\texttt{examples/cmf\_23.1.b.a.cpp}} in \cite{lfunccode}, where the authors show how to use the library to compute all of the items above for the modular form \newformlink{23.1.b.a}, which matches its unique embedding.
By running this example, one can compute that
$$\epsilon =  (1 \pm 10^{-117}) +  (0 \pm  4.7\times 10^{-59})\,i,$$
(since $f$ is self dual, we must actually have $\epsilon = 1$), and
$$L(f,1/2) = 0.174036326987934183499504592018 \pm 8.2317 \times 10^{-59},$$
as well as approximate values for the imaginary part of the first ten zeros.
Using the notation above, we can represent an approximation to the imaginary part of the first zero
$$5.11568332881511759855335642038 \pm 3.9443 \times 10^{-31}$$
by the interval $[t_1-1,t_1 + 1]2^{-101}$, where
$$t_1 = 12969798084700060914517716069360.$$
The imaginary part of the following nine zeros are approximately
$7.15926$,
$8.88140$,
$10.2820$,
$11.4300$,
$12.9344$,
$14.6625$,
$16.4982$,
$17.1013$, and 
$18.0807$.

We carried out this computation with 100 bits of target accuracy for the \numprint{14398359} embedded newforms in our database with $k \leq 200$.
In our computation we observed that it was sufficient to work with 200 bits of precision and $C_k \leq 0.08 k \log(k) + 24$.
While we did not keep track of CPU time used along the way, by rerunning some of the computations, we extrapolate that we spent at least 11 CPU years on these computations.
\end{example}

\subsection{Imprimitive \texorpdfstring{$L$}{L}-function}

Associated to a newform $f$ with coefficient field $\Q(f)$ of degree $d$, we may also consider the $L$-function of degree $2d$ associated to its Galois orbit:
\begin{equation}
\begin{aligned}
    L(f,s) &\colonequals \prod_{\iota : \Q(f) \hookrightarrow \C} L(\iota(f), s)
    &= \prod_p L_p(f, p^{-s}).
\end{aligned}
\end{equation}
This gives rise to a $\Q$-primitive $L$-function with $L_p(f,T) \in 1 + T\Z[T]$, which satisfies the functional equation
\begin{equation}
\Lambda(f, s) \colonequals N^{sd/2} \Gamma_{\mathbb{C}}(s)^d L(f, s) = \varepsilon
\overline{\Lambda}(f, k-s),
\end{equation}
where now we have $\varepsilon = \pm 1$.
Using the invariants for each $L(\iota(f), s)$ mentioned above, one can easily deduce the respective invariants for $L(f, s)$.

For these $L$-functions we would also like to compute the local factors for small $p$. 
This is straightforward if one has access to an exact representation of $a_p$ in $\Q(f)$.
Otherwise, we relied on Newton identities to compute $L_p(f, T) \in \Z[T]$ from the roots of $L_p(\iota(f), T) \in \C[T]$, while working with interval arithmetic \cite{arb}: see 
\href{http://www.lmfdb.org/L/ModularForm/GL2/Q/holomorphic/500/2/e/c/}{$L$(\textsf{500.2.e.c})} for an example.
In some cases, for example when $[\Q(f):\Q]$ or the weight is large, we were only able to compute the initial coefficients for some local factors---this occurred for \href{http://www.lmfdb.org/L/ModularForm/GL2/Q/holomorphic/20/10/e/b/}{$L$(\textsf{20.10.e.b})}, for example.

\subsection{Verifying the analytic rank} \label{sec:analyticrank}

In this section, we discuss methods for rigorously verifying the analytic rank of a modular form $L$-function.  Throughout, let $N$ and $k$ be positive integers and let $f \in S_k(\Gamma_1(N);\C)$ be a newform of weight $k$ and level $N$ (with coefficient field embedded in the complex numbers).  

\begin{defn}
Suppose $k$ is \emph{even}.  We define the \defi{analytic rank} of $f$ to be the order of vanishing of $L(f,s)$ at $k/2$.
\label{defn:analyticrank}
\end{defn}

When $L^{(n)}(f,k/2) \neq 0$ one can certify such a statement using ball arithmetic by working with enough precision.
However, if $L^{(n)}(f,k/2) = 0$, this approach does not work, as there is no known bound $\varepsilon$  such that
$|L^{(n)}(f,k/2)| < \varepsilon$ implies $L^{(n)}(f,k/2) = 0$.
Nonetheless, if the order of vanishing is small, then there are other methods to computationally verify the order of vanishing. 
Using these methods we were able to provably verify the analytic rank of all modular forms for which the $L$-functions were computed.
The way the analytic computations were verified is detailed below.
The strategy used depends on the order of vanishing, and whether the modular form is self-dual or not. The analytic rank zero case is skipped because this can just be done by computing $L(f,k/2)$ to enough precision using interval arithmetic until $0$ is no longer in the computed interval. 

\begin{equation} \label{tab:sometab}\addtocounter{equation}{1} \notag
\begin{gathered}
{\renewcommand{\arraystretch}{1.2}
\begin{tabular}{l|ccccc}
	& \multicolumn{5}{c}{Analytic rank} \\
	  & 0 & 1 & 2 & 3 & $\geq 4$ \\ \hline
	Self-dual & \numprint{83338} & \numprint{85254} & \numprint{2565} & 1 & 0 \\ 
	Not self-dual  & \numprint{63804} & \numprint{1798} & 1 & 0 & 0 \\ \hline
	Total & \numprint{147142} & \numprint{87052} & \numprint{2566} &  1&  0
\end{tabular}
} \\[4pt]
\text{Table \ref{tab:sometab}: Number of even weight newforms in the database by analytic rank}\\[4pt]
\end{gathered}
\end{equation}

\subsubsection*{Self-dual and analytic rank 1}
\label{sec:self_dual_1}

We begin by considering self-dual newforms $f$ whose analytic rank numerically appears to be $1$.  All such forms in the range of our computation have trivial character.  In this case the functional equation takes the form 
\begin{equation}
\Lambda(f, s) = \varepsilon' i^k \Lambda(f, k-s),
\end{equation}
where $\varepsilon'=\pm1$ is the eigenvalue of the Atkin Lehner involution $W_N$.  For such forms in the database, we verified that $\varepsilon' \iota^k = -1$, forcing $\Lambda(f, k/2)=0$, and the non-vanishing of $N^{s/2}\Gamma_\C(k/2)$ then implies that $L(f,k/2) = 0$. The upper bound of 1 on the analytic rank was obtained using interval arithmetic.

\subsubsection*{Non-self-dual and analytic rank 1}
\label{sec:non-self_dual_1}

Following Stein \cite[\S 8.5]{Stein}, we define a pairing between modular forms and modular symbols
$$ S_k(\Gamma_1(N)) \oplus \overline S_k(\Gamma_1(N)) \times \mathbb \ModSym_k(\Gamma_1(N)) \to \C$$
by defining
\[
\langle(f,g),P\{a,b\}\rangle \colonequals \int_a^b f(z)P(z,1)dz +  \int_a^b g(z)P(\overline z,1)d\overline z.
\]

This pairing allows one to determine the vanishing of $L$-functions, because for every integer $1 \leq j \leq k-1$ we have
\[
L(f,j) = \frac {(-2\pi i)^j}{(j-1)!}\langle(f,0), X^{j-1}Y^{k-2-(j-1)}\{0,\infty\} \rangle.
\]
The pairing is Hecke-equivariant, meaning that $\langle (T_nf,T_ng),x \rangle = \langle (f,g),T_n x \rangle$ for all integers $n$.

Let $f \in \Sknew{k}(\Gamma_1(N))$ be a newform and $V_f \subseteq \Sknew{k}(\Gamma_1(N))$ the subspace generated by its Galois conjugates.
Then by Atkin--Lehner--Li theory,  $V_f$ is a simple module over the Hecke algebra $\mathcal{T}$, and there exists a Hecke operator $t_f \in \mathcal{T}$ such that $t_f \colon  M_k(\Gamma_1(N)) \to M_k(\Gamma_1(N))$ is a projection onto $V_f$.
Because $t_f$ is a projection we have $\langle (f,0), x \rangle = \langle (t_ff,t_f0),x \rangle = \langle (f,0), t_fx \rangle$ for all $x \in \ModSym_k(\Gamma_1(N))$, and hence in particular this means that if 
\[ t_f (X^{j-1}Y^{k-2-(j-1)}\{0,\infty\}) = 0 \] 
then $L(f,j)=0$. 

A map $t_f'$ with the same kernel as $t_f$ can be obtained from $t_f(\ModSym_k(\Gamma_1(N),\Q))$ in \magma{} using the command \texttt{PeriodMapping}.
Furthermore, this \magma{} command only uses exact arithmetic over $\Q$.
For all non-self-dual modular forms whose analytic rank numerically seemed to be $1$, it was verified that indeed $t_f'(X^{k/2-1}Y^{k/2-1}\{0,\infty\})=0$, implying that $L(f,k/2)=0$.
The upper bound of 1 on the analytic rank was again obtained using interval arithmetic.

\subsubsection*{Self-dual and analytic rank 2}\label{sec:self_dual_2}
As in the preceding subsection, all newforms in the database whose analytic rank numerically seemed to be $2$ have trivial character.
This time it was verified that $\varepsilon' \iota^k = 1$ for all these modular forms.
In particular, the functional equation then forces all odd derivatives of $\Lambda(f,s)$ to vanish at $k/2$.
This forces the order of vanishing of $\Lambda(f,s)$ at $k/2$ to be even, and hence the analytic rank of $L(f,s)$ to be even as well.
The techniques of the preceding paragraph were used to prove that for all these modular forms one has that $L(f,k/2)=0$, which together with the parity argument gives a lower bound of 2 on the analytic rank.
The upper bound of $2$ was again obtained using interval arithmetic.

\subsubsection*{Non-self-dual analytic rank 2}\label{sec:non-self_dual_2}
There is exactly one Galois orbit of non-self-dual newforms in the database whose analytic rank numerically seems to be $2$.
Let $f$ denote the newform of weight $2$ and level 1154 with LMFDB label \newformlink{1154.2.e.a} with coefficient field $\Q(\zeta_3).$
This pair corresponds to an isogeny class of abelian surfaces, and our first goal is to find a representative of this isogeny class.
By searching for hyperelliptic curves over $\F_p$ that match the local factors of $L(f,s)$ for small $p$, and then by lifting their Weierstrass equations to $\Z$ we found the following genus 2 curve:
\begin{equation}
    C \colon y^2 = x^6 - 12x^5 + 34x^4 - 18x^3 - 11x^2 + 6x + 1\text.
\end{equation}
Letting $J$ denote its Jacobian, we find it is of conductor $1154^2$ as desired.
Our goal is first to show that $J$ really is in the isogeny class of abelian surfaces corresponding to the newform \textsf{1154.2.e.a}.
Using~\cite{rigendos} we were able to compute the endomorphism ring of $J$, and verify that $J$ is of $\GL_2$-type and hence is modular \cite{Ribet:modular, KW}.
Thus, $J$ is a good candidate to be a representative of the isogeny class of abelian surfaces corresponding to the newform \textsf{1154.2.e.a}.
Alternatively, one can also verify that $J$ is of $\GL_2$-type by noting that $C$ has an automorphism of order $3$ given by $x\mapsto 1-1/x$, $y \mapsto -y/x^3$ and thus showing that its Jacobian is of $\GL_2$-type.
Additionally, the Euler factor at $5$ of its $L$-function is 
$$1 + 6T + 17T^2 + 30T^3 + 25T^4$$
which is irreducible. Hence its Jacobian is simple, showing that its Jacobian corresponds to a pair of Galois conjugate newforms of level $1154$.
There is one other pair of Galois conjugate newforms whose coefficient field is $\Q(\zeta_3)$, namely that with LMFDB label \textsf{1154.2.c.a}.
So it remains to show that $J$ does not come from the newform with label \textsf{1154.2.c.a}.
However the Euler factor of the $L$-function at $5$ for that newform is $1 - 3 T + 4 T^{2} - 15 T^{3} + 25 T^{4}$ which does not match that of $J$.
This means that Jacobian of $C$ really is in the isogeny class of abelian surfaces corresponding to the newform \textsf{1154.2.e.a}.

Using the \magma{} function \texttt{RankBounds} one readily computes that $J$ has Mordell-Weil rank $4$.
In particular, it has rank 2 as a module over $\Z[\zeta_3]$.
The generalization by Kato of the work of Kolyvagin and Logachev on the Birch--Swinnerton-Dyer conjecture in the analytic rank 0 and 1 cases to all isogeny factors of $J_1(N)$ (see Kato \cite[Corollary 14.3]{Kato}) shows that the order of vanishing of $L(f,s)$ at 1 cannot be 0 or 1 since this would give $J$ rank 0 or 1 as a $\Z[\zeta_3]$-module.
So the order of vanishing is at least 2. An upper bound was again obtained using interval arithmetic.

\subsubsection*{Self-dual analytic rank 3}
The approach here is similar to that in \secref{sec:non-self_dual_2} and the result was already briefly mentioned in \cite[Section 3.4]{Cremona:database} where the analytic rank is determined for all elliptic curves of conductor $N<\numprint{130000}$. There is only one newform that numerically seems to be of analytic rank 3 in the database, namely \newformlink{5077.2.a.a} of weight $2$, level $5077$ and trivial character.  This modular form corresponds to the elliptic curve $y^2 + y = x^{3} - 7 x + 6$ which has rank $3$ and is the only one in its isogeny class.  The verification that its $L$-function has analytic rank $3$ is a famous calculation of Buhler--Gross--Zagier \cite{BGZ}, used by Gross--Zagier \cite{GZ} in their solution to the Gauss class number $1$ problem.  We confirm it quickly as follows: by known cases of the Birch--Swinnerton-Dyer conjecture, the analytic rank cannot be $0$ or $1$; by parity of the root number, the analytic rank cannot be $2$, so it must be at least $3$; and an upper bound on the analytic rank of 3 is obtained by interval arithmetic.  

\subsection{Chowla's conjecture} \label{sec:chowla}

The definition of analytic rank (Definition~\ref{defn:analyticrank}) as an order of vanishing also makes sense for $k$ odd, and by analogy one might also find it 
natural to study the central values of $L(f, s)$ at $k/2$ and their derivatives.
However, for $k$ odd the central value $s = k/2$ is not a \emph{special} value in the sense of Deligne~\cite{Deligne79} and thus there is no abelian group whose rank (as a module over an appropriate coefficient ring) is conjecturally related to its leading Taylor coefficient.  It would therefore be a stretch to call the order of vanishing at the central point an \emph{analytic rank}.
Moreover, one does not expect $L(f, k/2)$ to ever vanish, and this is a generalization of Chowla's conjecture for Dirichlet $L$-functions~\cite{Chowla}, as follows.

Let $\chi$ be a non-trivial Dirichlet character, then the functional equation associated to $L(\chi, s) \colonequals \sum \chi(n) n^{-s}$, similar to equation~\ref{eqn:functionalequation}, relates
$L(\chi, s)$ to $L(\overline{\chi}, 1- s)$.
The value of $L(\chi, 1)$ is quite well understood.
For example, the fact that $L(1, \chi) \neq 0$ gives us Dirichlet's theorem on arithmetic progressions, and for primitive real characters the value $L(\chi, 1)$ gives us Dirichlet's class number formula.
As mentioned above, inspired by Definition~\ref{defn:analyticrank}, one might also find it natural to study the order of vanishing of $L(\chi, s)$ at $s = 1/2$ and its derivatives.
However, it is believed that $L(\chi, 1/2) \neq 0$; this was first conjectured by Chowla~\cite{Chowla} for primitive real characters and later generalized to other characters.
One of the reasons behind such a belief is that for primitive real characters the root number of such $L$-functions is always $1$~\cite{fiorilli}, and thus there is no simple reason for $L(\chi, 1/2)$ to vanish.
Although Chowla's conjecture remains open, it has been numerically verified for all real characters $\chi$ of modulus less than $10^{10}$
\cite{Omar}, and substantial progress towards showing the non-vanishing of $L(\chi,  1/2)$ has also been made, see~\cite{fiorilli} for a short overview.

A generalization of Chowla's conjecture is that $L(f, k/2) \neq 0$ for $k$ odd.
As in the case of Dirichlet $L$-functions for primitive real characters, we also have that the root number of $L(f, k/2)$ can never be $-1$ when $f$ is self-dual.
This is in stark contrast to the case of self dual even weight modular forms, where the root numbers are split approximately 50-50 between $-1$ and $1$.
We verified this generalization of Chowla's conjecture, as we computed $L(f,k/2)$ for every newform in our database with $k \leq 200$, and found that this was nonzero for all the odd weight newforms.

\section{An overview of the computation}
\label{sec:overview}

In this section, we provide an overview of the computations we performed, the results of which are now available in the LMFDB \cite{LMFDB}. These were accomplished using a combination of \magma{}, \pari{}, and \sage{} scripts, as well as hand written C code for some of the more computationally-intensive tasks.  In aggregate, these computations consumed more than 100 years of CPU time.

\subsection{Data extent} \label{sec:dataextent}

Our database consists of four overlapping sets of newforms described in Table~\ref{table:extent}.  These datasets were chosen both for reasons of mathematical interest, and to ensure that the database included all modular forms contained in existing datasets such as the Stein tables of modular forms \cite{Stein:Tables}, the Buzzard-Lauder tables of \weightone{} newforms~\cite{BuzzardLauder}, and the previous database of modular forms contained in the LMFDB\@.
More detailed statistics on the newforms in the database can be found at the \href{http://www.lmfdb.org/ModularForm/GL2/Q/holomorphic/stats}{statistics page}.

\begin{equation} \label{table:extent}\addtocounter{equation}{1} \notag
\begin{gathered}
{\renewcommand{\arraystretch}{1.2}
\begin{tabular}{l|rrr}
         Constraints on $\Sknew{k}(N,\chi)$ & Newspaces & Newforms & Embeddings\\\hline
         (1) $Nk^2\le \numprint{4000}$ & $\numprint{30738}$ & $\numprint{67180}$ & $\numprint{9966498}$\\  
         (2) $Nk^2 \le \numprint{40000}$, $|\chi|=1$ & $\numprint{16277}$ & $\numprint{170611}$ & $\numprint{3092301}$\\
         (3) $Nk^2 \le \numprint{40000}$, $k>1$, $\dim \Sknew{k}(N,\chi)\le 100$ & $\numprint{30345}$ & $\numprint{131540}$ & $\numprint{1648617}$\\
         (4) $Nk^2\le \numprint{40000}, N\le 24$ or\\  \quad\ \  $Nk^2\le \numprint{100000}, N\le 10$ or\\ \quad\ \  $N\le 100, k\le 12$ & $\numprint{7627}$ & $\numprint{12237}$ & $\numprint{676574}$\\
         \hline
         Union of sets above & \numprint{62142} & \numprint{281219} & \numprint{14398359}
    \end{tabular}
} \\[4pt]
\text{Table \ref{table:extent}: Extent of the newform database (only nonzero newspaces are included)}\\[4pt]
\end{gathered}
\end{equation}

For the first dataset (1), we used three independent sources of newform data:
\begin{itemize}
    \item Complex eigenvalue data for each embedded newform of weight $k>1$ computed by the \texttt{mflib} software package \cite{bober}, which uses \arb{} \cite{arb} to rigorously implement the trace formula (as described in \cite{SV91}, for example) to obtain approximate complex values to a precision of 200 decimal digits.
    \item Exact algebraic eigenvalue data for each newform of weight $k>1$ and dimension $d\le 20$ computed using \magma{}'s \cite{Magma} modular symbols package (originally written by William Stein);
    \item Exact algebraic eigenvalue data for each newform of weight $k>1$ and dimension $d\le 20$ were computed using the modular forms implementation in \pari{} \cite{Pari} described in \cite{BelabasCohen}, which was also applied to all newforms of weight $k=1$.
\end{itemize}
For $k>1$ and $Nk^2\le 4000$ the decomposition of every newspace $\Sknew{k}(N,\chi)$ was computed in all three cases and compared for consistency.  Exact algebraic data was computed only for newforms of dimension $d\le 20$, except for $k=1$ where exact algebraic data was computed in every case.
For newforms of weight $k>1$ and dimension $d\le 20$, the algebraic data independently computed by \magma{} and \pari{} were checked for consistency (this was not a completely trivial task, as it generally required determining an appropriate automorphism of the coefficient field in order to compare sequences of Fourier coefficients).  We also compared the trace forms using all three methods and compared the results for consistency, and for newforms of weight $k=1$ and level $N\le 1500$ we compared the \pari{} computations with the Buzzard-Lauder database \cite{BuzzardLauder}.

Datasets (2) and (3) were computed entirely in \magma{}, as was dataset (4), except for~12 spaces of high dimension where complex analytic methods were used.  For the portions of these datasets that overlap with the Stein database of modular forms \cite{Stein:Tables}, we compared the results for consistency.

For newforms $f=\sum a_nq^n$ of level $N\le 1000$ we computed $1000$ coefficients $a_n$, while for newforms of level $1001\le N \le 4000$ we computed $2000$ coefficients, and for newforms of level $4001\le N\le \numprint{10000}$ we computed $3000$ coefficients.  This substantially exceeds the Sturm bound in every case, and also exceeds the bound $30\sqrt{N}$ required for the $L$-function calculations described in \secref{sec:Lfunctions}.  For every newform in the database we computed complex coefficients to a precision of at least 200 bits.  In cases where we compute algebraic coefficient data we computed an optimized representation using an LLL-basis as described in \secref{sec:LLLbasis}, along with a set of generators for the coefficient ring.

For each newform we determined any non-trivial self-twists admitted by the newform (CM, RM, or both), and for newforms with algebraic eigenvalue data available, we computed all inner twists as described in \secref{sec:twists}.  We also computed the analytic rank of every newform, as described in \secref{sec:analyticrank}, and for \weightone{} newforms we computed the image of the associated projective Artin representation and a defining polynomial for its kernel, as described in \secref{sec:weight1}.
These computations have now all been rigorously verified.

In addition to the newform database, we computed dimension tables for all newspaces in the range $Nk^2\le \numprint{40000}$ with $k>1$, and we computed trace forms for all newspaces of level $N\le 4000$ in this range using the \texttt{mftraceform} function in \pari{}.

\subsection{Statistics} \label{sec:statistics}

In addition to the ability to browse and to search for examples with specific
properties, the modular forms database allows for an investigation of
arithmetic statistics.  The LMFDB \cite{LMFDB} includes precomputed tables
displaying how various quantities vary across the database, some of which we
have duplicated here in Tables \ref{tab:anrank}, \ref{tab:projim},
\ref{tab:intwist}, and \ref{tab:selftwist}.

In addition to these static tables, we have added \emph{dynamic statistics}
\[
\text{\url{http://www.lmfdb.org/ModularForm/GL2/Q/holomorphic/dynamic\_stats}}
\]
which allow users to customize which variables to view and
any constraints to impose.  For example, a researcher might create a table
displaying how the weight and level vary among forms with complex multiplication.
We hope that this new feature will enable examination of large-scale patterns,
both in the modular form data and elsewhere in the LMFDB\@.

\begin{remark}
The statistics and examples presented in this article reflect the dataset defined in \secref{sec:dataextent}, which represents the state of the LMFDB as of January 2020.  As new data is added to the LMFDB these statistics may no longer match those displayed in the LMFDB, and the number of newforms returned by some of the example queries listed below may increase.
\end{remark}

\begin{equation} \label{tab:anrank}\addtocounter{equation}{1} \notag
\begin{gathered}
{\renewcommand{\arraystretch}{1.2}
\begin{tabular}{l|rrrr}
         analytic rank & 0 & 1 & 2 & 3 \\
         \hline
         count & \href{http://www.lmfdb.org/ModularForm/GL2/Q/holomorphic/?analytic_rank=0}{\numprint{191520}} & \href{http://www.lmfdb.org/ModularForm/GL2/Q/holomorphic/?analytic_rank=1}{\numprint{87052}} & \href{http://www.lmfdb.org/ModularForm/GL2/Q/holomorphic/?analytic_rank=2}{\numprint{2566}} & \href{http://www.lmfdb.org/ModularForm/GL2/Q/holomorphic/?analytic_rank=3}{1} \\
         proportion & 68.12\% & 30.96\% & 0.91\% & 0.00\% \\
         example & \newformlink{23.1.b.a} & \newformlink{37.2.a.a} & \newformlink{389.2.a.a} & \newformlink{5077.2.a.a}
    \end{tabular}
} \\[4pt]
\text{Table \ref{tab:anrank}: Distribution of analytic ranks}\\[4pt]
\end{gathered}
\end{equation}

\begin{equation} \label{tab:projim}\addtocounter{equation}{1} \notag
\begin{gathered}
{\renewcommand{\arraystretch}{1.2}
    \begin{tabular}{l|rrrrr}
    projective image & $A_4$ & $S_4$ & $A_5$ & $D_2$ & $D_n$ \\
    \hline
    count & \href{http://www.lmfdb.org/ModularForm/GL2/Q/holomorphic/?projective_image=A4}{\numprint{458}} & \href{http://www.lmfdb.org/ModularForm/GL2/Q/holomorphic/?projective_image=S4}{\numprint{1033}} & \href{http://www.lmfdb.org/ModularForm/GL2/Q/holomorphic/?projective_image=A5}{\numprint{202}} & \href{http://www.lmfdb.org/ModularForm/GL2/Q/holomorphic/?projective_image=D2}{\numprint{1311}} & \href{http://www.lmfdb.org/ModularForm/GL2/Q/holomorphic/?projective_image_type=Dn}{\numprint{17613}} \\
    proportion & 2.37\% & 5.35\% & 1.05\% & 6.79\% & 91.23\% \\
    example & \newformlink{124.1.i.a} & \newformlink{148.1.f.a} & \newformlink{1763.1.p.b} & \newformlink{3600.1.e.a} & \newformlink{3997.1.cz.a}
    \end{tabular}
} \\[4pt]
\text{Table \ref{tab:projim}: Distribution of projective images}\\[4pt]
\end{gathered}
\end{equation}

\begin{equation} \label{tab:intwist}\addtocounter{equation}{1} \notag
\begin{gathered}
{\renewcommand{\arraystretch}{1.2}
\begin{tabular}{l|rrrrrrrrrrrrrrr}
    Inner twists & Unknown & 1 & 2 & 4 & 6 & 8 & 10 & 12 \\
    \hline
    count & \href{http://www.lmfdb.org/ModularForm/GL2/Q/holomorphic/?inner_twist_count=-1}{\numprint{73993}} & \href{http://www.lmfdb.org/ModularForm/GL2/Q/holomorphic/?inner_twist_count=1}{\numprint{129197}} & \href{http://www.lmfdb.org/ModularForm/GL2/Q/holomorphic/?inner_twist_count=2}{\numprint{47492}} & \href{http://www.lmfdb.org/ModularForm/GL2/Q/holomorphic/?inner_twist_count=4}{\numprint{25803}} & \href{http://www.lmfdb.org/ModularForm/GL2/Q/holomorphic/?inner_twist_count=6}{24} & \href{http://www.lmfdb.org/ModularForm/GL2/Q/holomorphic/?inner_twist_count=8}{\numprint{4295}} & \href{http://www.lmfdb.org/ModularForm/GL2/Q/holomorphic/?inner_twist_count=10}{6} & \href{http://www.lmfdb.org/ModularForm/GL2/Q/holomorphic/?inner_twist_count=12}{51} \\
    proportion & 26.31\% & 45.94\% & 16.89\% & 9.18\% & 0.01\% & 1.53\% & 0.00\% & 0.02\% \\[0.15in]
    
    Inner twists & 16 & 20 & 24 & 32 & 40 & 44 & 56 \\
    \hline
    count & \href{http://www.lmfdb.org/ModularForm/GL2/Q/holomorphic/?inner_twist_count=16}{311} & \href{http://www.lmfdb.org/ModularForm/GL2/Q/holomorphic/?inner_twist_count=20}{3} & \href{http://www.lmfdb.org/ModularForm/GL2/Q/holomorphic/?inner_twist_count=24}{14} & \href{http://www.lmfdb.org/ModularForm/GL2/Q/holomorphic/?inner_twist_count=32}{20} & \href{http://www.lmfdb.org/ModularForm/GL2/Q/holomorphic/?inner_twist_count=40}{7} & \href{http://www.lmfdb.org/ModularForm/GL2/Q/holomorphic/?inner_twist_count=44}{1} & \href{http://www.lmfdb.org/ModularForm/GL2/Q/holomorphic/?inner_twist_count=56}{2} \\
    proportion & 0.11\% & 0.00\% & 0.00\% & 0.01\% & 0.00\% & 0.00\% & 0.00\%
    \end{tabular}
} \\[4pt]
\text{Table \ref{tab:intwist}: Distribution of inner twists}\\[4pt]
\end{gathered}
\end{equation}

\begin{equation} \label{tab:selftwist}\addtocounter{equation}{1} \notag
\begin{gathered}
{\renewcommand{\arraystretch}{1.2}
\begin{tabular}{l|rrrrrr}
    & \multicolumn{6}{c}{weight} \\
    & 1 & 2 & 3 & 4 & 5-316 & total \\
    \hline
    \multirow{2}{*}{neither} & \href{http://www.lmfdb.org/ModularForm/GL2/Q/holomorphic/?cm=no&rm=no&weight=1}{\numprint{1693}} & \href{http://www.lmfdb.org/ModularForm/GL2/Q/holomorphic/?cm=no&rm=no&weight=2}{\numprint{174853}} & \href{http://www.lmfdb.org/ModularForm/GL2/Q/holomorphic/?cm=no&rm=no&weight=3}{\numprint{11117}} & \href{http://www.lmfdb.org/ModularForm/GL2/Q/holomorphic/?cm=no&rm=no&weight=4}{\numprint{27877}} & \href{http://www.lmfdb.org/ModularForm/GL2/Q/holomorphic/?cm=no&rm=no&weight=5-}{\numprint{40278}} & \href{http://www.lmfdb.org/ModularForm/GL2/Q/holomorphic/?cm=no&rm=no}{\numprint{255818}} \\
    & 8.77\% & 98.27\% & 87.85\% & 98.02\% & 93.91\% & 90.97\% \\
    \multirow{2}{*}{CM only} & \href{http://www.lmfdb.org/ModularForm/GL2/Q/holomorphic/?cm=yes&rm=no&weight=1}{\numprint{15841}} & \href{http://www.lmfdb.org/ModularForm/GL2/Q/holomorphic/?cm=yes&rm=no&weight=2}{\numprint{3074}} & \href{http://www.lmfdb.org/ModularForm/GL2/Q/holomorphic/?cm=yes&rm=no&weight=3}{\numprint{1538}} & \href{http://www.lmfdb.org/ModularForm/GL2/Q/holomorphic/?cm=yes&rm=no&weight=4}{\numprint{563}} & \href{http://www.lmfdb.org/ModularForm/GL2/Q/holomorphic/?cm=yes&rm=no&weight=5-}{\numprint{2613}} & \href{http://www.lmfdb.org/ModularForm/GL2/Q/holomorphic/?cm=yes&rm=no}{\numprint{23629}} \\
    & 82.05\% & 1.73\% & 12.15\% & 1.98\% & 6.09\% & 8.40\% \\
    \multirow{2}{*}{RM only} & \href{http://www.lmfdb.org/ModularForm/GL2/Q/holomorphic/?cm=no&rm=yes&weight=1}{461} & & & & & \href{http://www.lmfdb.org/ModularForm/GL2/Q/holomorphic/?cm=no&rm=yes}{461} \\
    & 2.39\% & & & & & 0.16\% \\
    \multirow{2}{*}{both} & \href{http://www.lmfdb.org/ModularForm/GL2/Q/holomorphic/?cm=yes&rm=yes&weight=1}{\numprint{1311}} & & & & & \href{http://www.lmfdb.org/ModularForm/GL2/Q/holomorphic/?cm=yes&rm=yes}{\numprint{1311}} \\
    & 6.79\% & & & & & 0.47\% \\
    \end{tabular}
} \\[4pt]
\text{Table \ref{tab:selftwist}: Distribution of self twist types by weight}\\[4pt]
\end{gathered}
\end{equation}

\subsection{Data reliability} \label{sec:reliability}
All of our modular form data was computed or verified using rigorous algorithms that do not depend on any unproved assumptions or conjectures.

\begin{itemize}
\item Self-twists were either verified via Theorem \ref{thm:innertwist} and Proposition \ref{prop:intwistribet} using exact algebraic Fourier coefficients $a_n$ or ruled out using complex approximations of sufficient precision to rigorously distinguish zero and nonzero values of $a_n$ and checking for self-twists by all primitive quadratic characters $\psi$ of conductor dividing the level (a newform that admits a self-twist by $\psi$ must have $a_n=0$ whenever $\psi(a_n)\ne 1$).

\item We computed and verified inner twists for all newforms in our dataset that are either of \weightone{} or have dimension at most $20$ by computing sufficiently many algebraic Fourier coefficients and applying Theorem \ref{thm:innertwist} and Proposition \ref{prop:intwistribet}.

\item Analytic ranks were computed using complex approximations as described in \secref{sec:Lfunctions} and then rigorously verified using the symbolic methods described in \secref{sec:analyticrank}.

\item For \weightone{} newforms the  classification of projective images as $D_n$, $A_4$, $S_4$, $A_5$ was rigorously verified by explicitly computing the number field fixed by the kernel of the associated projective Galois representation.  As described in \secref{sec:weight1}, this was accomplished using a combination of the ray class field functionality provided by \pari{} and \magma{}, the rigorous tabulation of all $A_4$, $S_4$, and $A_5$ number fields with compatible ramification, and the explicit computation of quotients of ring class fields of orders in imaginary quadratic fields via the theory of complex multiplication.
\end{itemize}

In addition to using mathematically rigorous algorithms, we performed a variety of consistency checks intended to catch any errors in the software packages used to compute modular forms data, or any errors that might have been introduced during post-processing.  The following checks have been performed:

\begin{itemize}
\item All newforms of weight $k > 1$ and level $N$ satisfying $Nk^2 \le 2000$ have been independently computed using \magma{} and \pari{}.  By comparing the results of these computations we have verified that the decompositions of each newspace $\Sknew{k}(N,\chi)$ into  Galois orbits agree (with matching coefficient fields), that the first 1000 coefficients of the trace forms for each Galois orbit agree, and for newforms of dimension $d\le 20$, that there is an automorphism of the coefficient field that relates the sequences of algebraic eigenvalues $(a_1,\ldots,a_{1000})$ computed by \pari{} and \magma{}.

\item For all newforms of weight $k>1$ and level $N$ satisfying $Nk^2 \le 4000$ we have verified that the trace forms computed by \magma{} (using modular symbols) agree with the trace forms obtained from complex analytic data computed using the explicit trace formula.  This also verifies the dimensions of the coefficient fields.

\item For newforms of weight $k=1$ and level $N\le 1000$ we have matched the data computed using \pari{} with the tables computed by Buzzard and Lauder \cite{BuzzardLauder}.

\item For all dihedral newforms of weight $k=1$ and level $N\le 4000$ we have matched trace forms with data computed using the explicit trace formula in \pari{} with data independently computed using the ray class field functionality implemented in \pari{} and \magma{}.
\end{itemize}

As a consistency check for our $L$-function computations, after computing a provisional list of all non-trivial zeros on the critical line up to a chosen height bound $b$ we confirmed that no zeros are missing, in other words, that the Riemann Hypothesis holds for each $L$-function up to height $b$.
We use the method described in \cite{buthe2015} based on the Weil--Barner explicit formula.  If an $L$-function also arises from another object in the LMFDB for which we already had computed its $L$-function we verified that these computations match.

\subsection{Interesting, extreme behavior and examples from the literature} \label{sec:interestingandextreme}

When putting modular forms in  a database it is easy to view them as an aggregate, but of course each modular form is distinct and many have unique interesting properties.

We take this opportunity to recall the rich history and special properties of several forms in this database. We also provide links between these forms and the literature and note several forms that have naturally arisen in previous work.  We focus on weight $k \geq 2$ in this section; see \secref{sec:wt1cool} for interesting behavior in weight $k=1$. 
\begin{itemize} 
\item The most well known, and the prototypical, example of a modular form is the Ramanujan $\Delta$ function, of weight 12 and level 1; its label is \newformlink{1.12.a.a}.
This is the lowest weight in which a cusp form appears for the full modular group, so many properties of more general newforms were first noticed for $\Delta$. Similarly, $\Delta$ has served as a testing ground for techniques and results before they were known more generally.
For instance, the Ramanujan--Petersson conjecture was first made by Ramanujan for $\Delta$ but later extended to all newforms.
Additionally, computation of the $q$-expansion coefficients of $\Delta$, traditionally denoted by $\tau (n)$ and known as Ramanujan's $\tau$ function, is the    subject of the monograph~\cite{EdixhovenCouveignes}.

\item By the modularity theorem, newforms of weight 2 with rational coefficients correspond to isogeny classes of elliptic curves over $\Q$.  The smallest level in which a weight 2 form appears is $11$, corresponding to the smallest conductor of an elliptic curve over $\Q$.  Here we necessarily have trivial character and the label is \newformlink{11.2.a.a}; this form has $q$-expansion 
\[
q \prod_{k\ge 1} (1-q^k)^2(1-q^{11k})^2.
\]

\item The weight~2 newforms with CM by fields with the largest absolute discriminants in the database are \newformlink{2169.2.d.a} with CM by $\Q(\sqrt{-723})$, \newformlink{8388.2.e.c} and \newformlink{2097.2.d.a} with CM by $\Q(\sqrt{-699})$, \newformlink{2061.2.c.c} with CM by $\Q(\sqrt{-687})$, and \newformlink{7524.2.l.b} with CM by $\Q(\sqrt{-627})$---the last of these has 8 inner twists.

\item The weight~2 newform \newformlink{867.2.i.a} with CM by $\Q(\sqrt{-51})$ has 32 inner twists, and the weight~1 newform \newformlink{3481.1.d.a} with CM by $\Q(\sqrt{-59})$ has 56 inner twists.

\item The weight~3 newform \newformlink{7.3.b.a} has CM by $\Q(\sqrt{-7})$, making it the first (by analytic conductor) newform of weight $\ge 3$ with CM. 

\item Watkins \cite[\S9.1.3]{Watkins} discusses several examples  of modular forms of analytic rank 2.  The query \url{http://www.lmfdb.org/ModularForm/GL2/Q/holomorphic/?weight=4-&analytic_rank=2-} returns 130 forms of weight at least 4 and analytic rank at least 2, many of which are mentioned by Watkins, including 2 of weight 8.
\item Watkins also discusses modular forms of weight 2 with which are non-self-dual yet have positive analytic rank, particularly examples with quadratic character, such as \newformlink{122.2.b.a}. The query \url{http://www.lmfdb.org/ModularForm/GL2/Q/holomorphic/?weight=2&char_order=2&is_self_dual=no&analytic_rank=1-} produces 567 such examples.
In larger weight we have \newformlink{8.14.b.a} which is non-self-dual and has analytic rank 1, as does \newformlink{162.12.c.i}.

\item The index of the coefficient ring in the ring of integers of the coefficient field can get quite large, as in the case of the newform \newformlink{8.21.d.b} where the index is at least $ 2^{153}\cdot 3^{15}\cdot 5^{4}\cdot 7^{2} $.  In weight~2, the largest index we computed was $2^{26}\cdot 3^4$ for \newformlink{2016.2.k.b} and \newformlink{4032.2.k.h}.

\item Many newforms in our database have very large Hecke orbits.  For example, the newform \newformlink{983.2.c.a} has relative dimension 81 over its character field $\Q(\zeta_{491})$ and $\Q$-dimension \numprint{39690}.
\end{itemize}

\subsection{Pictures}

For every newform $f$, every nonempty newspace $S_k^{\mathrm{new}}(N,\chi)$, and $S_{k}^{\mathrm{new}}(\Gamma_1(N))$ for which we have all the newforms, we have created a portrait based on their trace forms, a total of \numprint{641562} portraits.
The picture is generated by plotting the absolute value of the trace form in the Poincar\'e disk, obtained as the image of $(1 - i z)/(z - i)$ in $\calH$, where the color hue represents the absolute value modulo $1$ (with blue being zero, and increasing through purple, red, orange, yellow, \dots).
For example, as the trace form is always zero at $\infty$, the top center is always blue, see Figure \ref{fig:23.1.b.a}.  
\begin{equation} \label{fig:23.1.b.a}\addtocounter{equation}{1} \notag
\begin{gathered}
\includegraphics[width=0.44\textwidth]{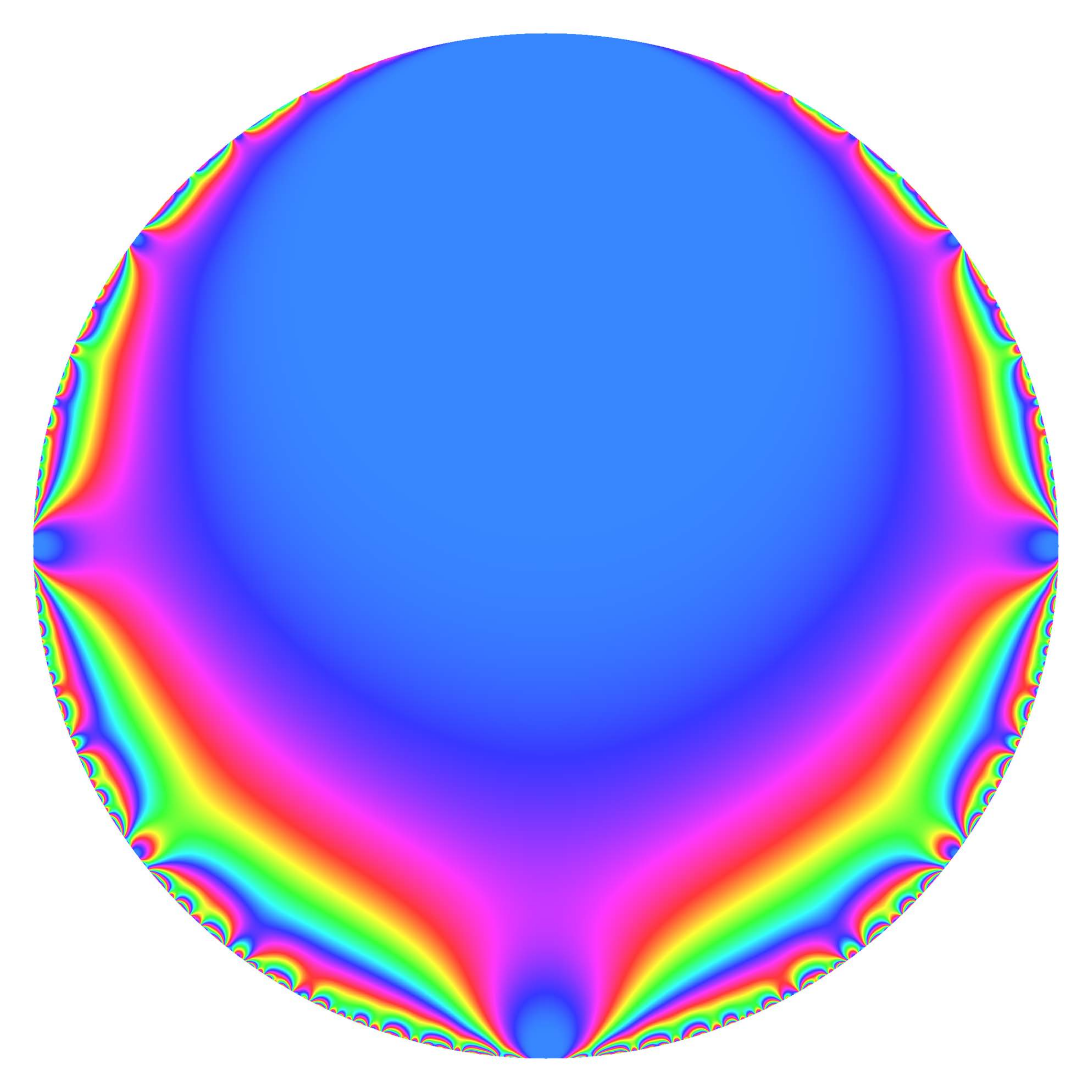}
 \\[4pt]
\text{Figure \ref{fig:23.1.b.a}: Portrait of \newformlink{23.1.b.a}}
\end{gathered}
\end{equation}

We deviated from the normal approach, used by \texttt{complex\_plot} in \sage{}, of representing magnitude by brightness (with zero being black and infinity being white) and the argument by hue, as this often leads to an overexposed or underexposed picture, see Figure~\ref{fig:complex_plot}.
\begin{equation} \label{fig:complex_plot}\addtocounter{equation}{1} \notag
\begin{gathered}
    \includegraphics[width=0.24\textwidth]{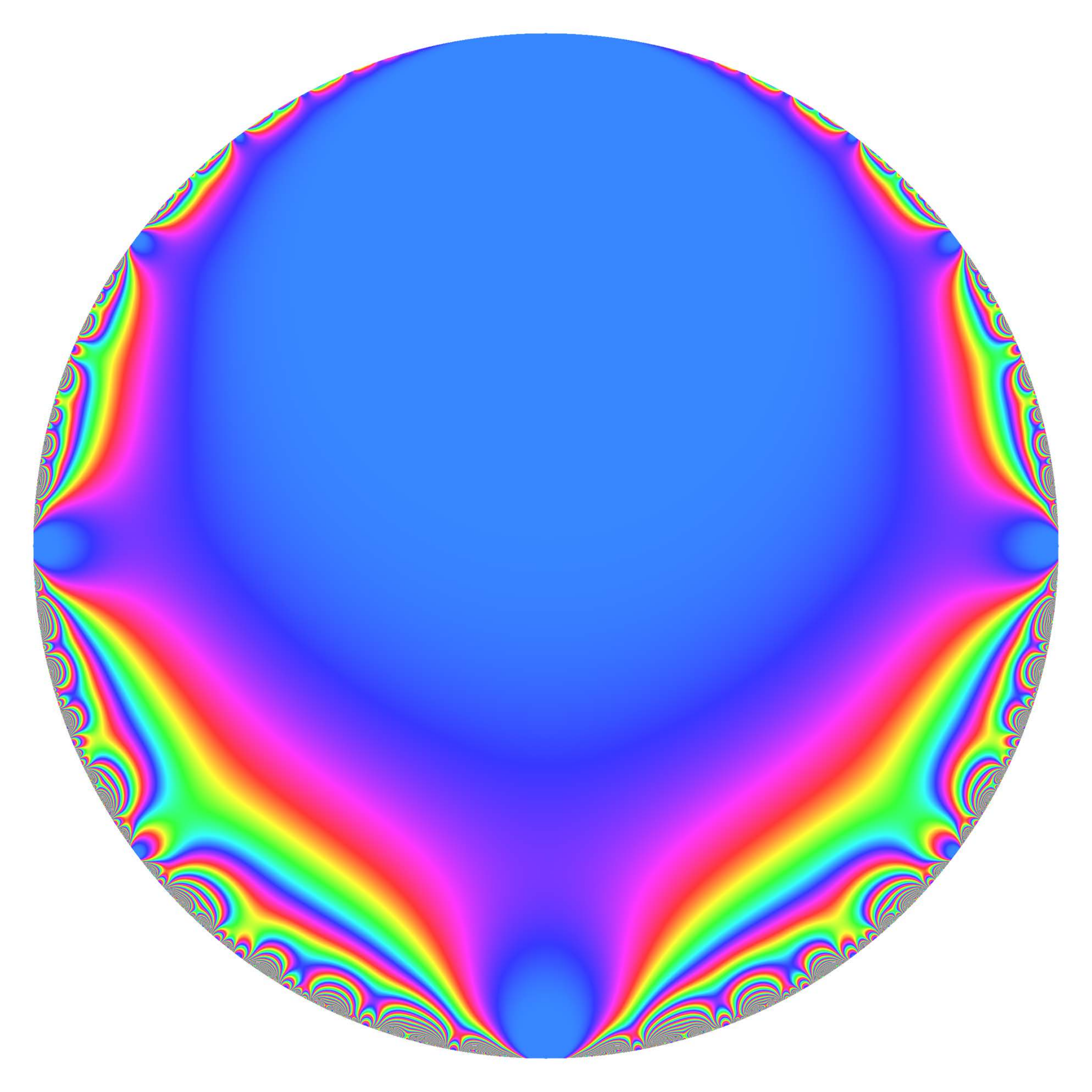}
    \includegraphics[width=0.24\textwidth]{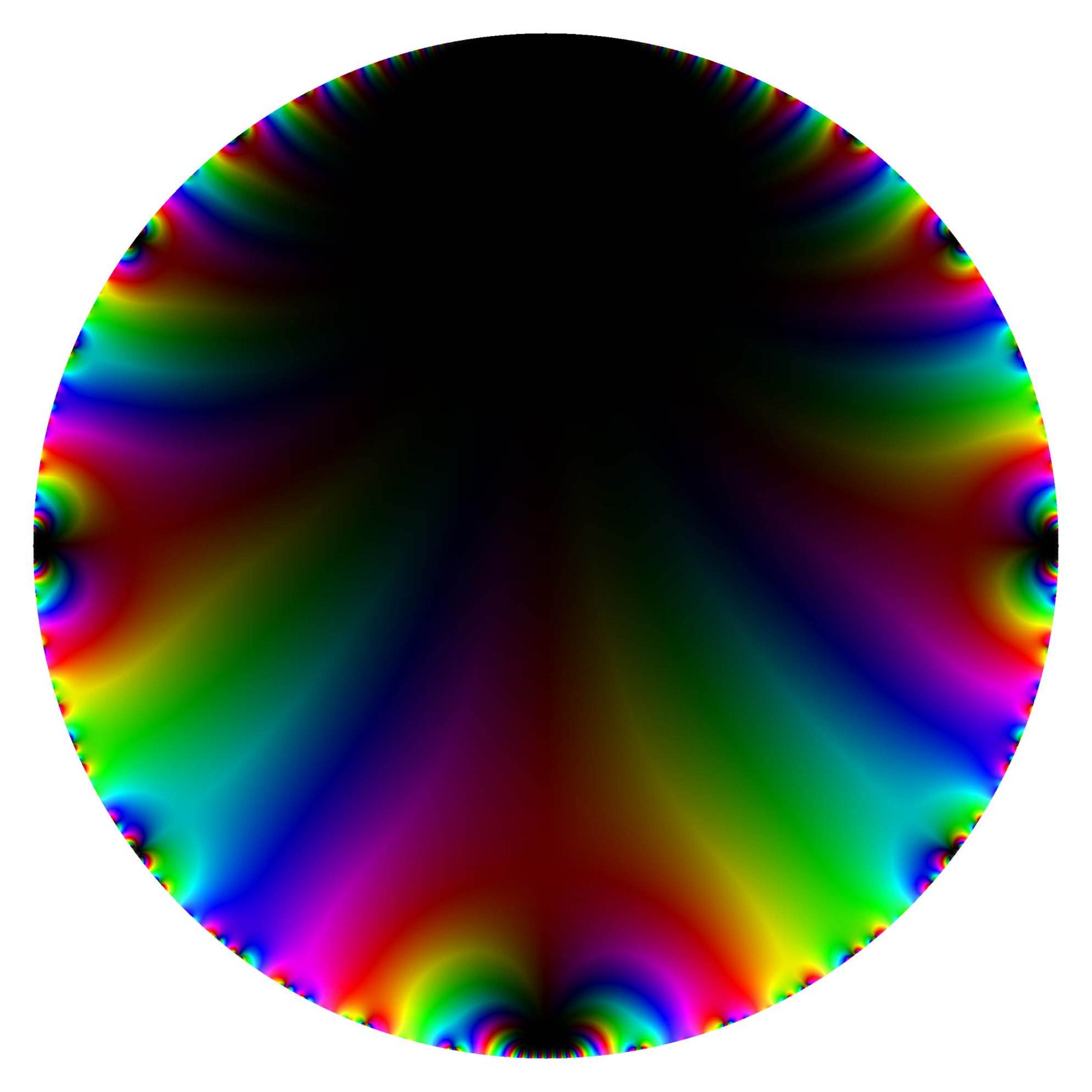}
    \includegraphics[width=0.24\textwidth]{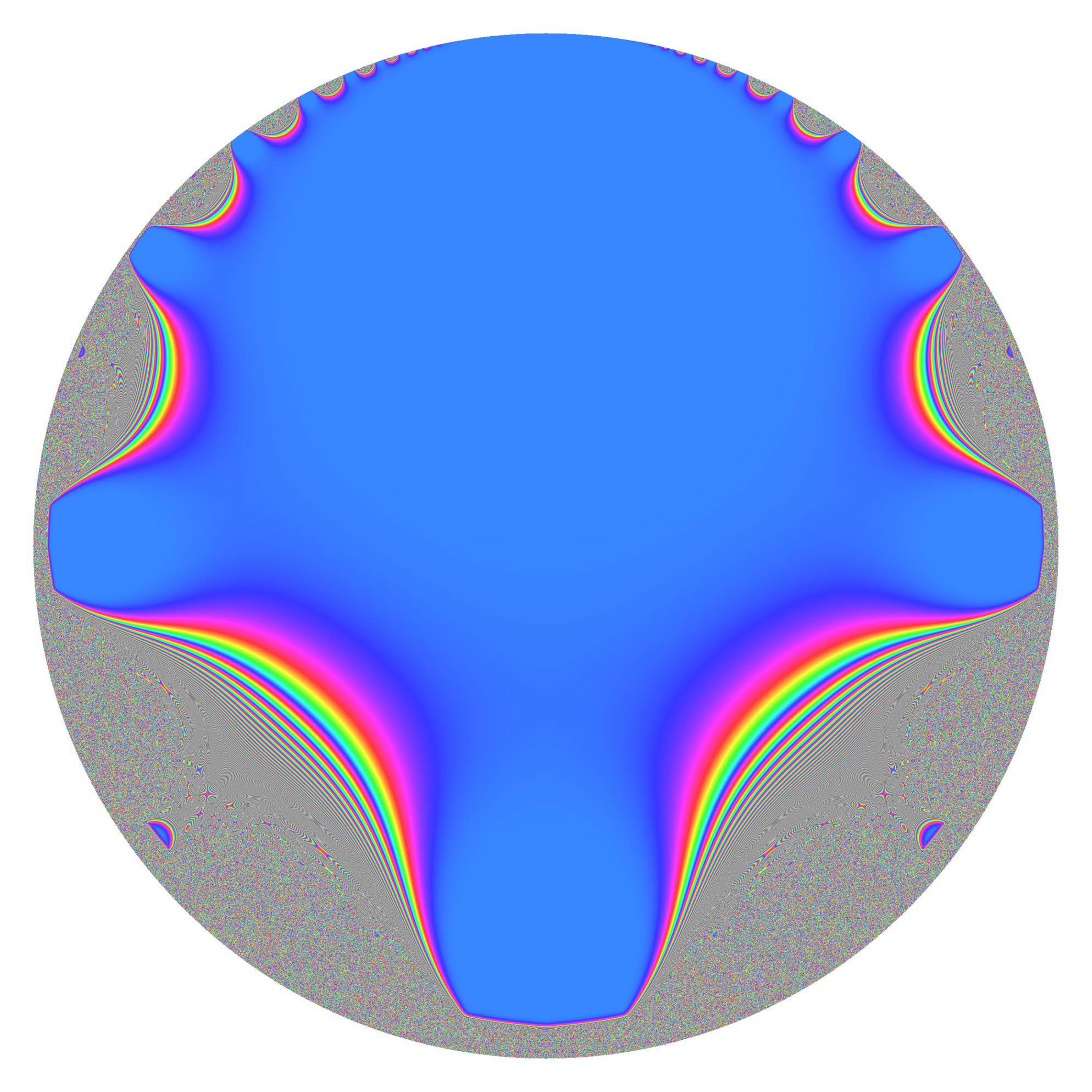}
    \includegraphics[width=0.24\textwidth]{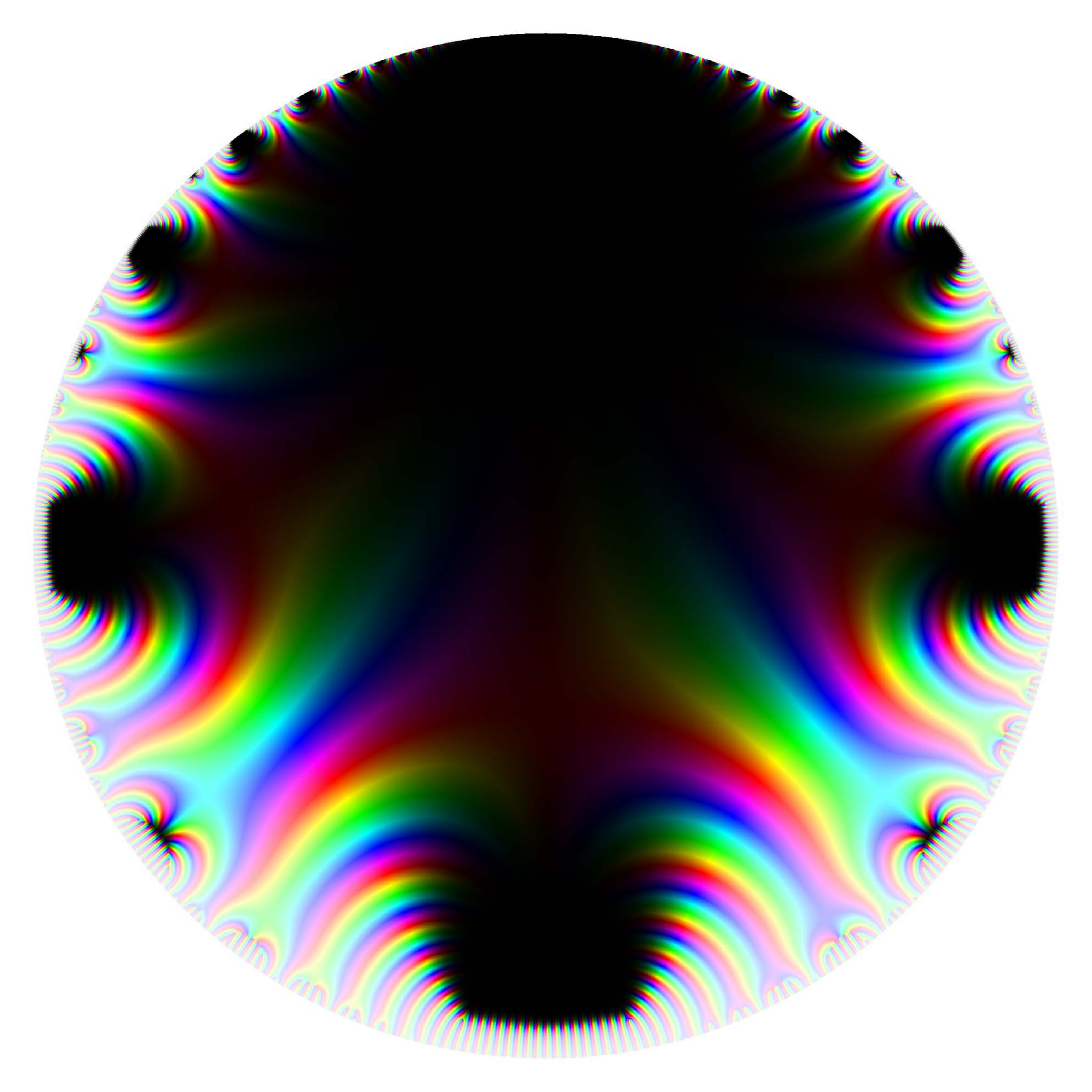}
\\[4pt]
\text{Figure \ref{fig:complex_plot}: Portraits of
    \newformlink{11.2.a.a} and 
    \newformlink{1.12.a.a}
    and their plots using \texttt{complex\_plot} in \sage{}}
\end{gathered}
\end{equation}

Given the number of portraits needed, we limited ourselves to the first 100 Dirichlet coefficients of the trace form, working with  200 bits of precision, evaluating it in a $300 \times 300$ grid in $[-1,1]^2$, and storing the picture as a $184 \times 184$ PNG.
Overall this consumed about 100 CPU days, and their disk footprint is 45 GB.
For aesthetic reasons, the portraits presented here were computed to a higher quality, which creates some discrepancies with the online version, especially in higher weight newforms.

Even though we opted for a plot with less information, it still captures some mathematically interesting features.
For example, the behavior on the edge of the disk is a good indicator for level and weight, see Figures~\ref{fig:level} and \ref{fig:weight}.
\begin{equation} \label{fig:level}\addtocounter{equation}{1} \notag
\begin{gathered}
    \includegraphics[width=0.24\textwidth]{pics/11_2_a_a.pdf}
    \includegraphics[width=0.24\textwidth]{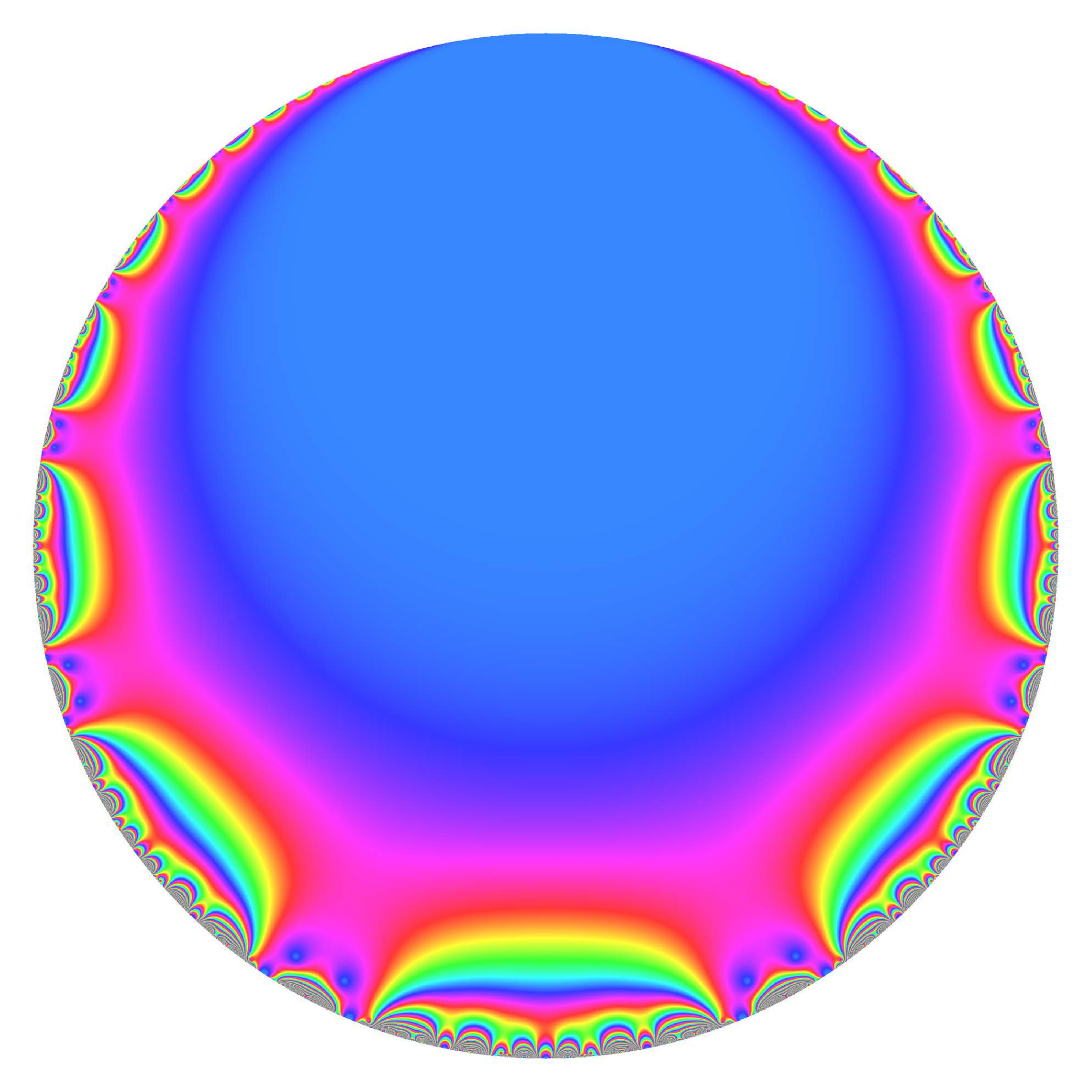}
    \includegraphics[width=0.24\textwidth]{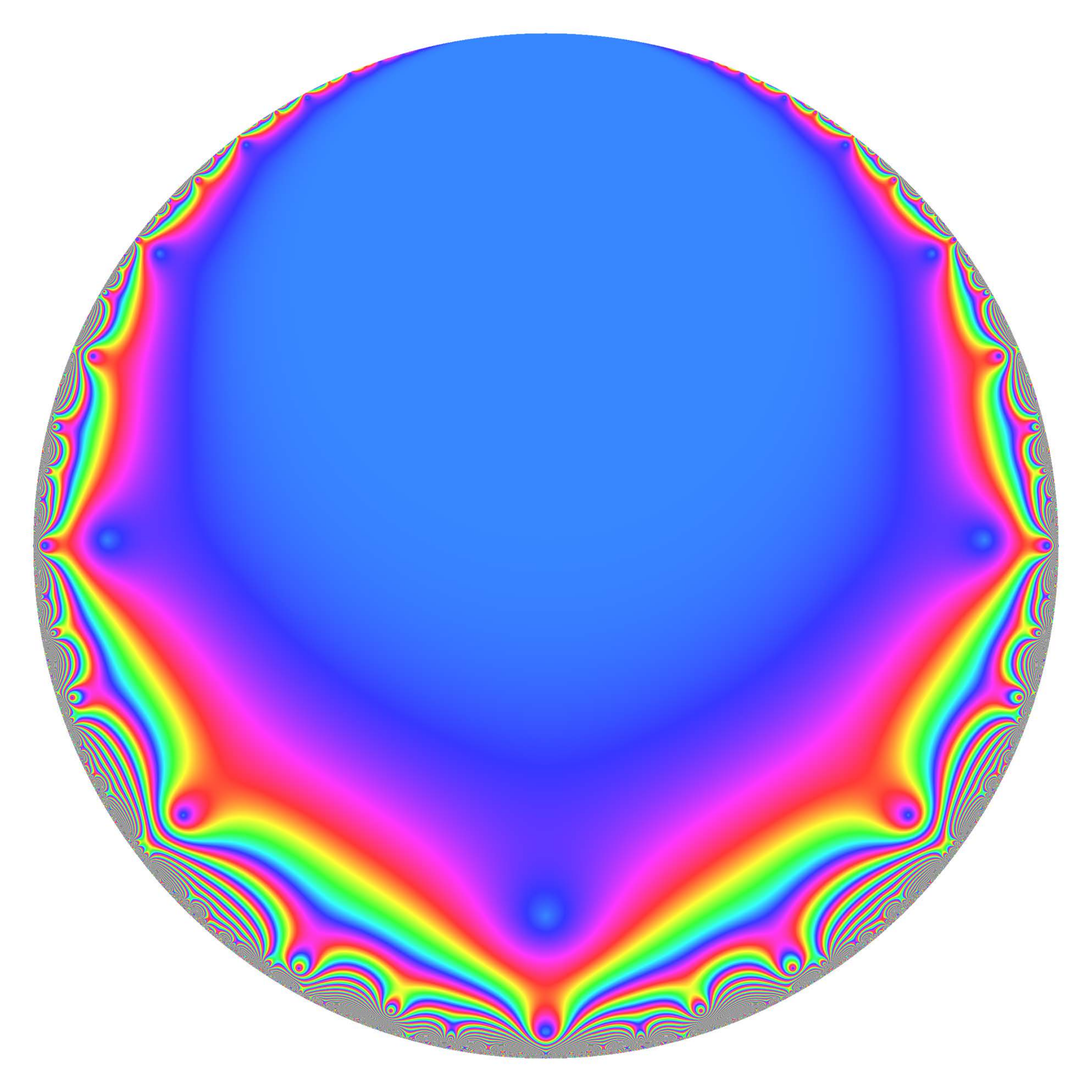}
    \includegraphics[width=0.24\textwidth]{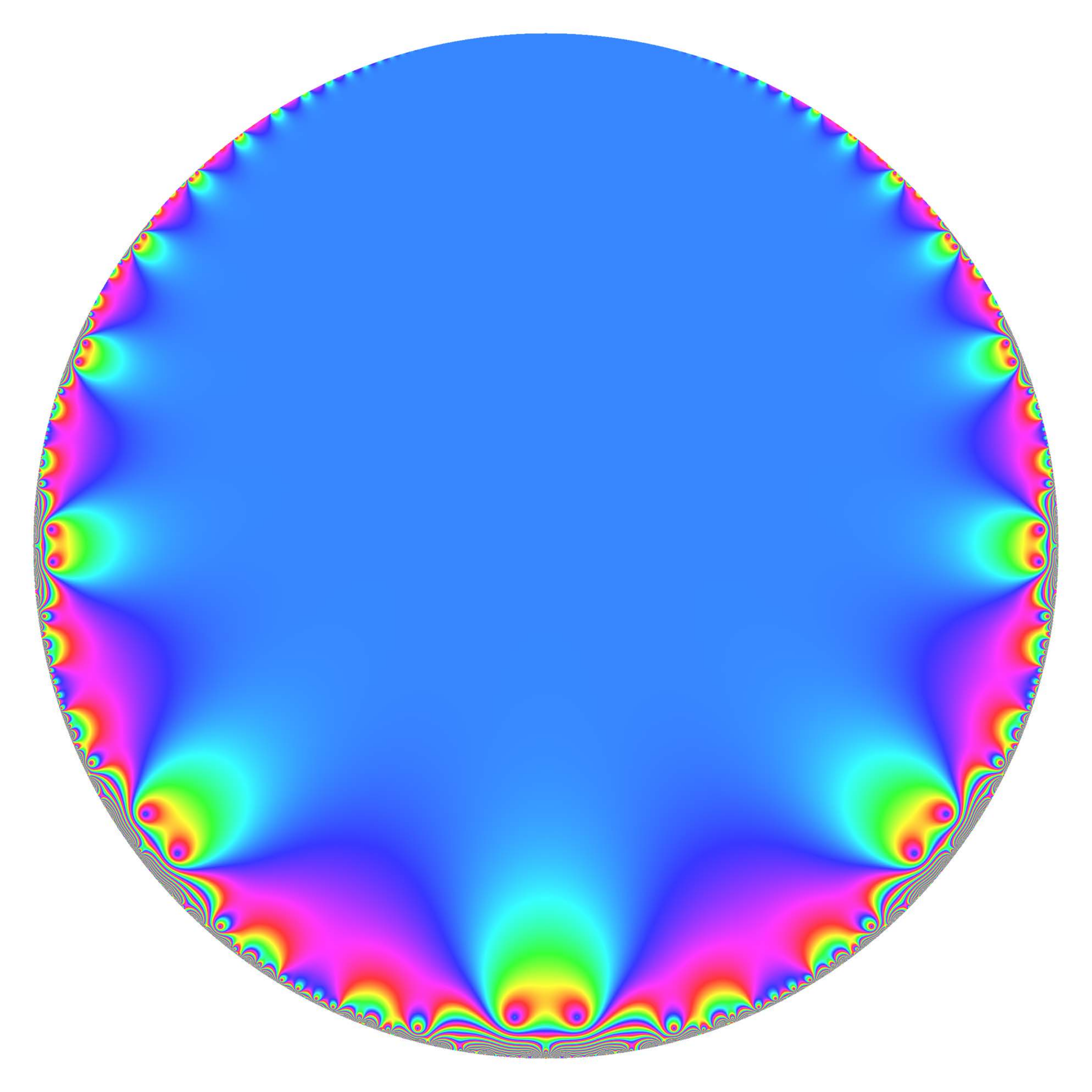}
\\[4pt]
\text{Figure \ref{fig:level}: The portraits for 
    \newformlink{11.2.a.a},
    \newformlink{100.2.a.a},
    \newformlink{1001.2.a.a}, and 
    \newformlink{9996.2.a.a}}
\end{gathered}
\end{equation}
\begin{equation} \label{fig:weight}\addtocounter{equation}{1} \notag
\begin{gathered}
    \includegraphics[width=0.24\textwidth]{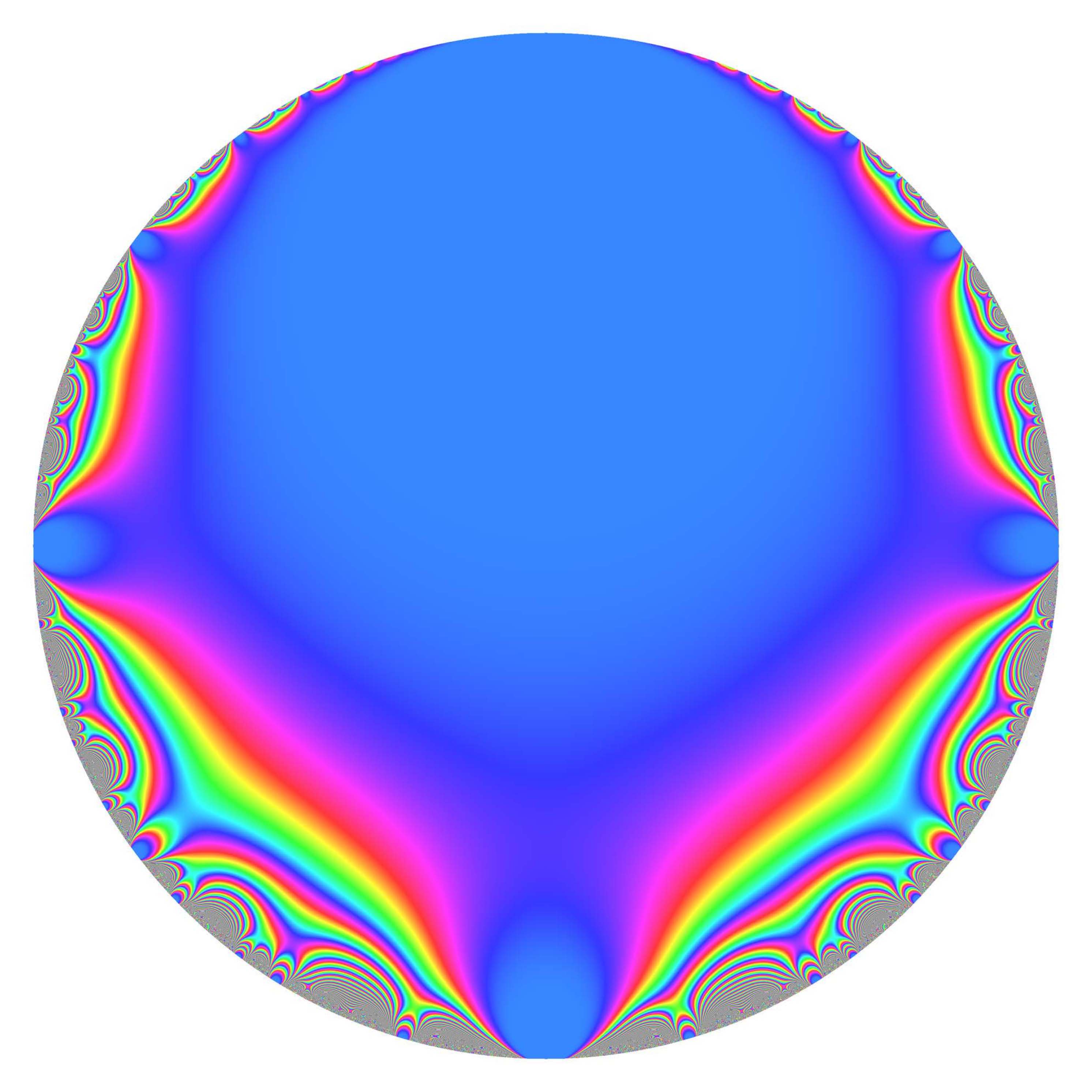}
    \includegraphics[width=0.24\textwidth]{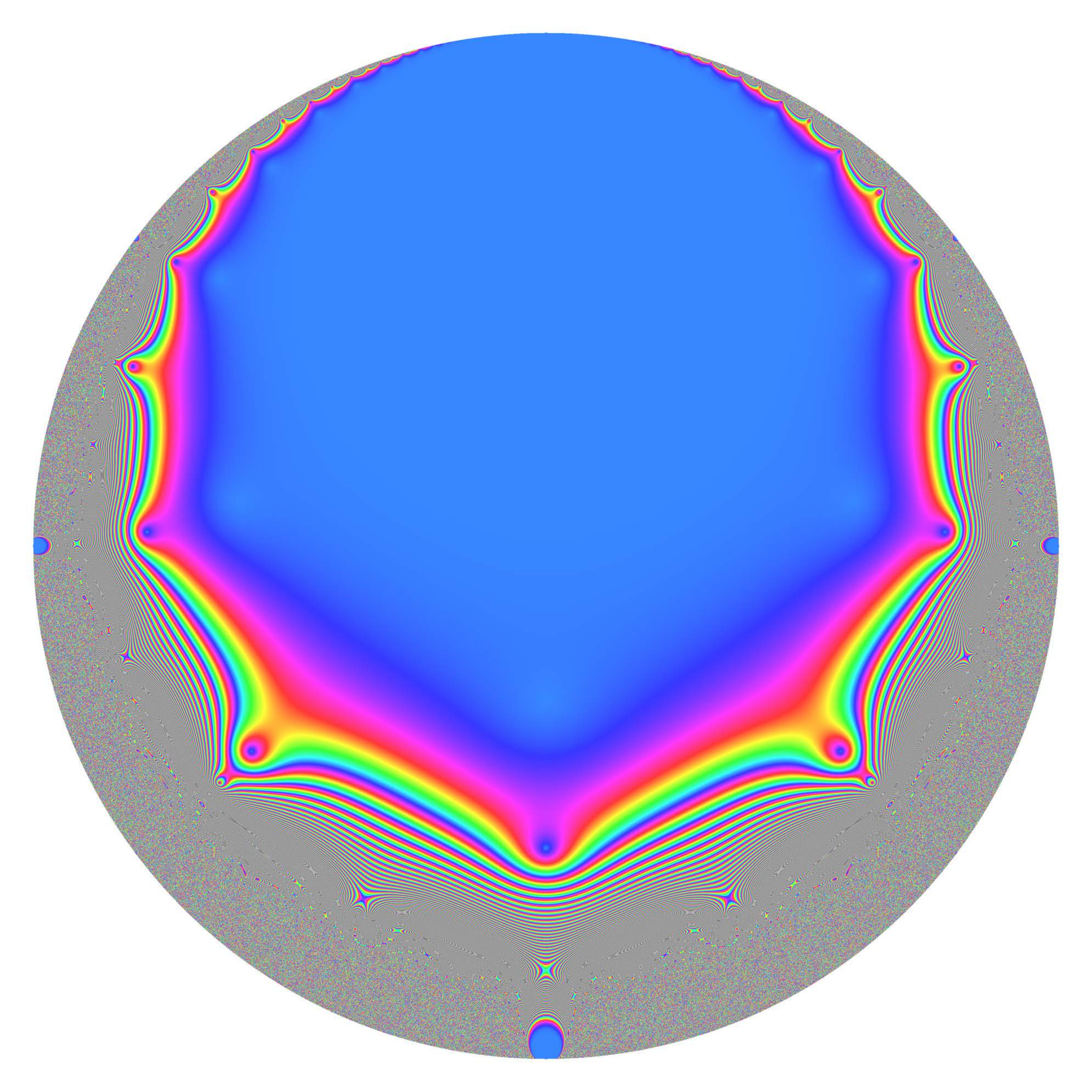}
    \includegraphics[width=0.24\textwidth]{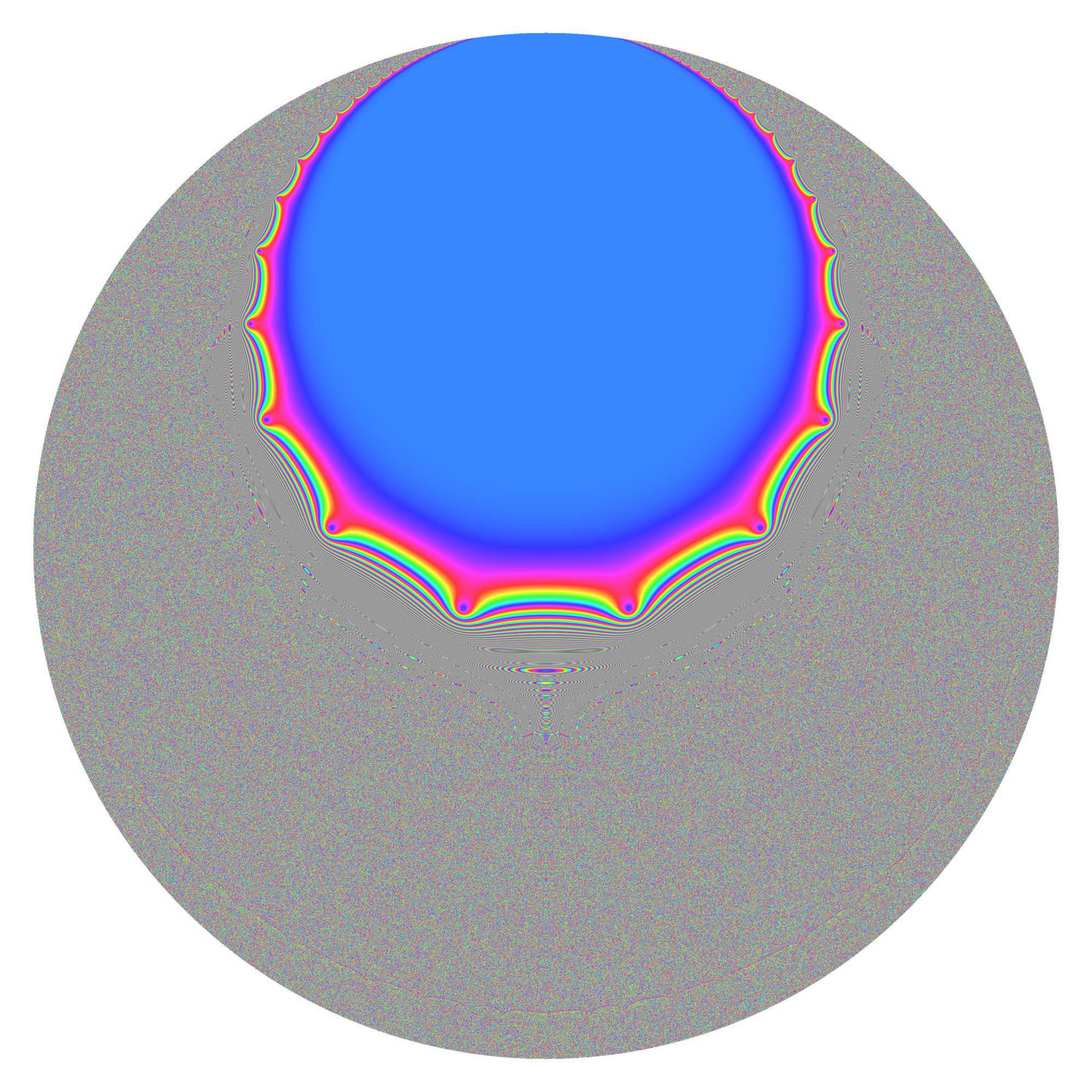}
    \includegraphics[width=0.24\textwidth]{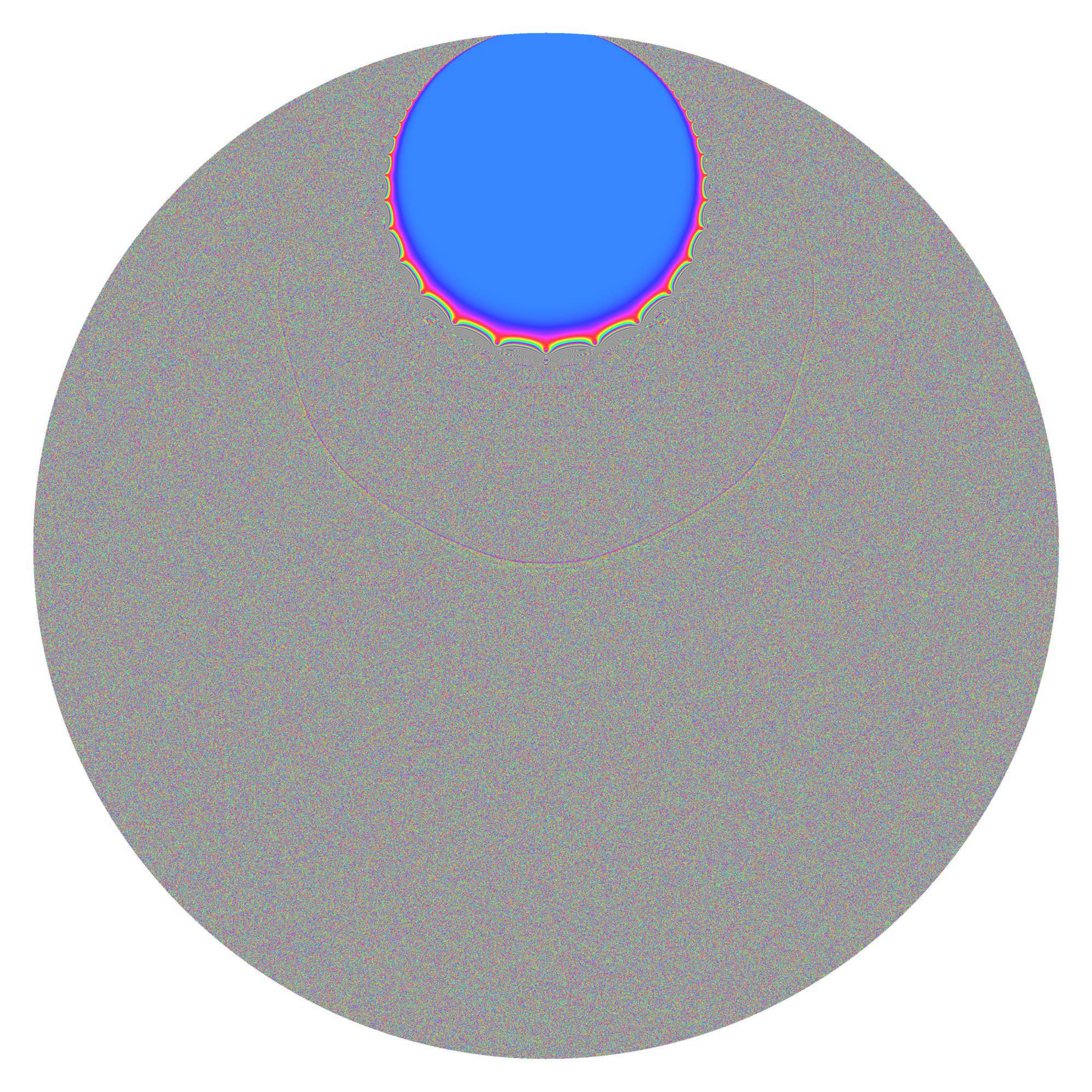}
\\[4pt]
\text{Figure \ref{fig:weight}: The portraits for 
    \newformlink{7.3.b.a},
    \newformlink{7.9.b.a},
    \newformlink{7.27.b.a}, and
    \newformlink{7.81.b.a}}
\end{gathered}
\end{equation}

The size of the blue spot on top center is inversely correlated with the growth of the trace form away from $\infty$, thus for fixed weight this is a good indicator for the dimension, see Figures~\ref{fig:dimension}: their dimensions are 1, 4, 33, and 120, respectively.
\begin{equation} \label{fig:dimension}\addtocounter{equation}{1} \notag
\begin{gathered}
    \includegraphics[width=0.24\textwidth]{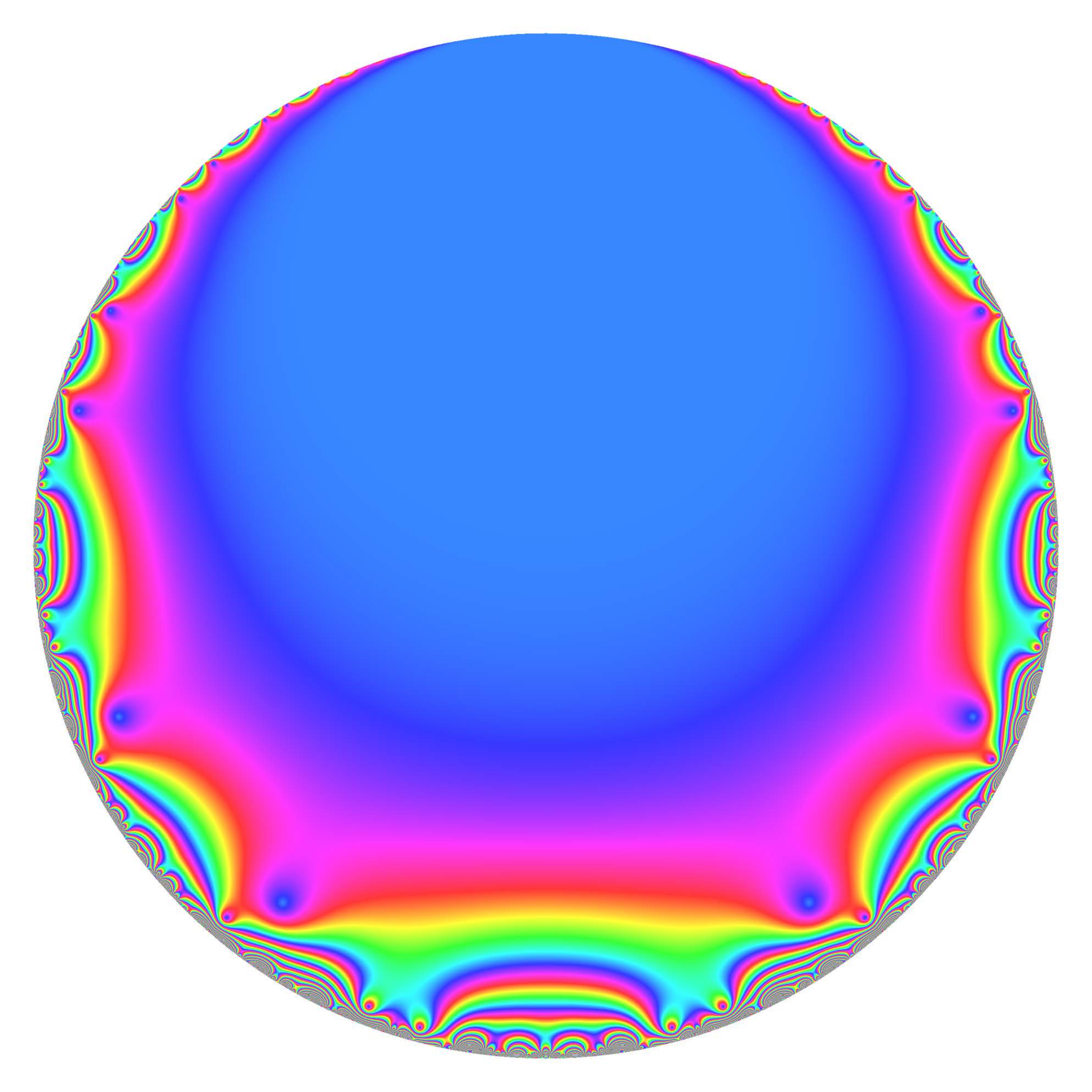}
    \includegraphics[width=0.24\textwidth]{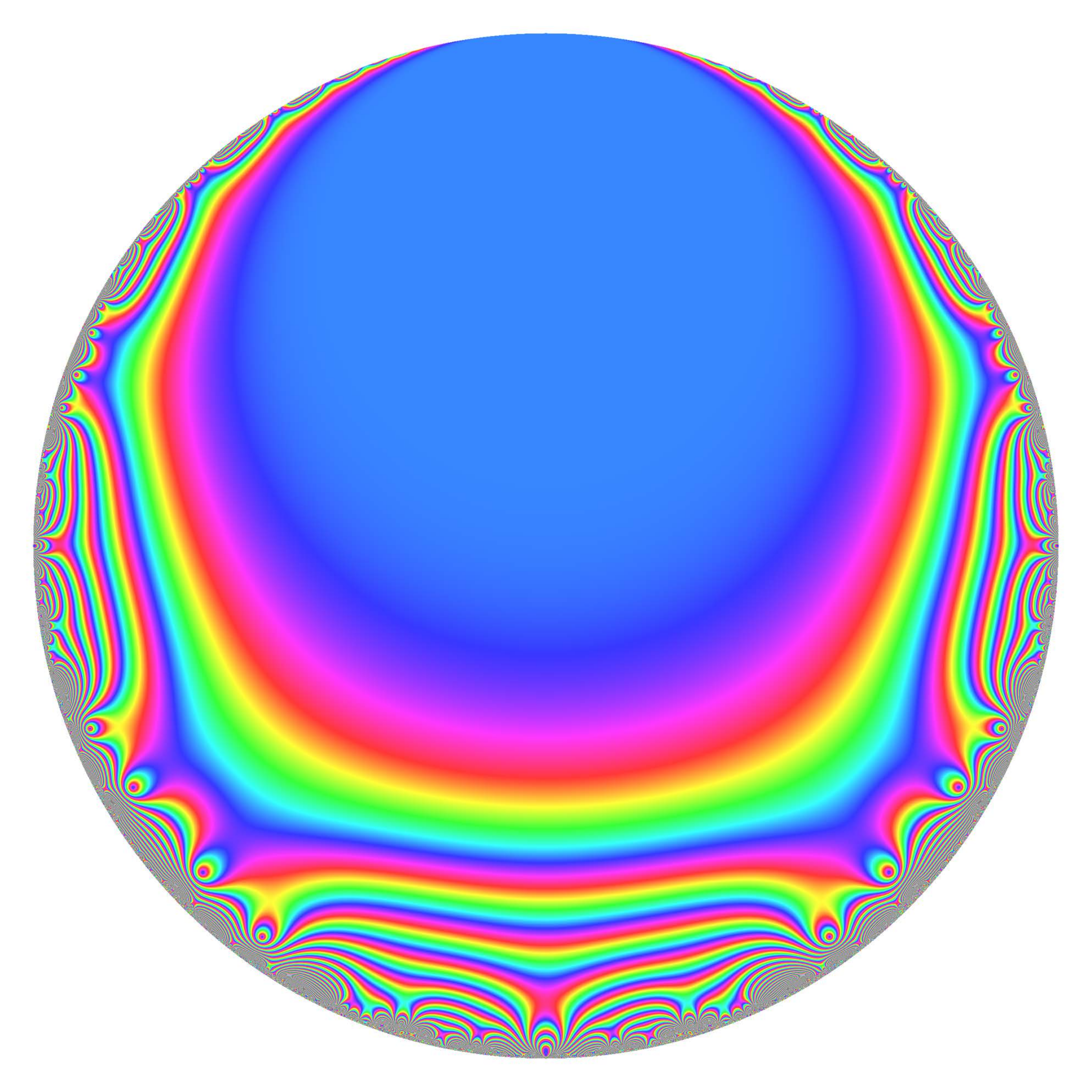}
    \includegraphics[width=0.24\textwidth]{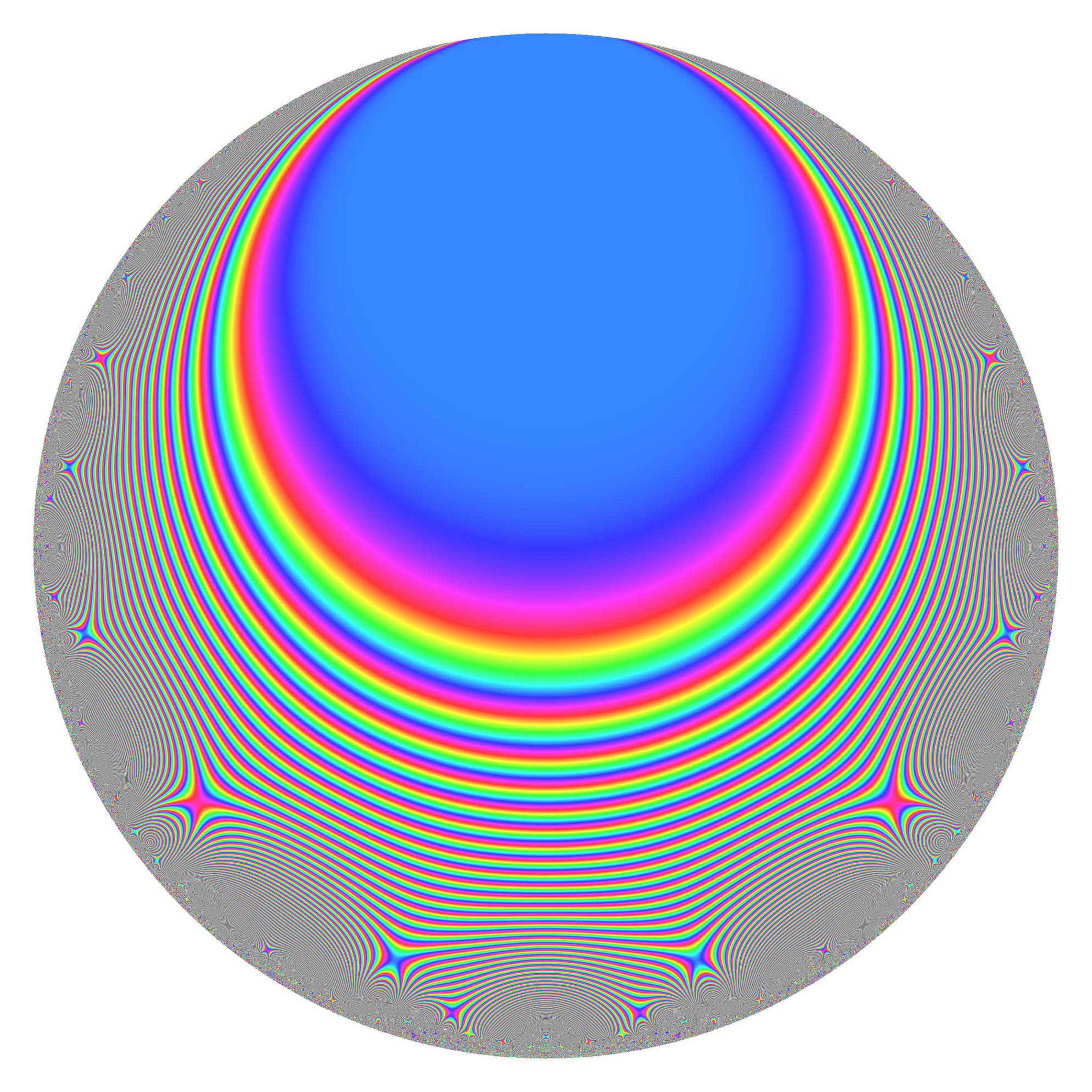}
    \includegraphics[width=0.24\textwidth]{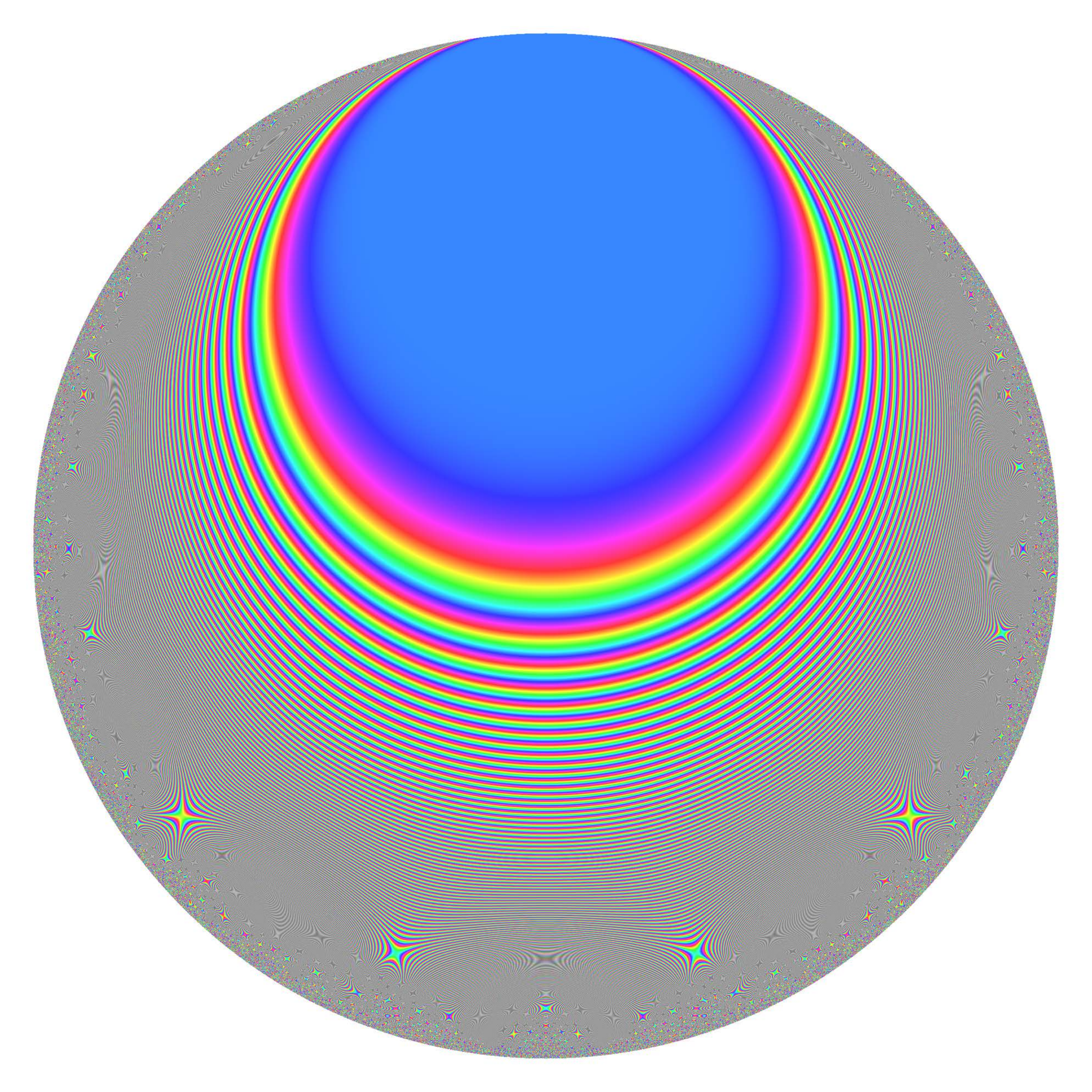}
\\[4pt]
\text{Figure \ref{fig:dimension}: The portraits for 
    \newformlink{9359.2.a.a},
    \newformlink{9359.2.a.e},
    \newformlink{9359.2.a.k}, and
    \newformlink{9359.2.a.r}}
\end{gathered}
\end{equation}

Finally, one could also be tempted to infer the self-twists of a newform by comparing it with other forms in $\Sknew{k} (\Gamma_1 (N))$, 
see Figures~\ref{fig:selt_twists} for $\Sknew{1} (\Gamma_1(164))$.
\begin{equation} \label{fig:selt_twists}\addtocounter{equation}{1} \notag
\begin{gathered}
    \includegraphics[width=0.24\textwidth]{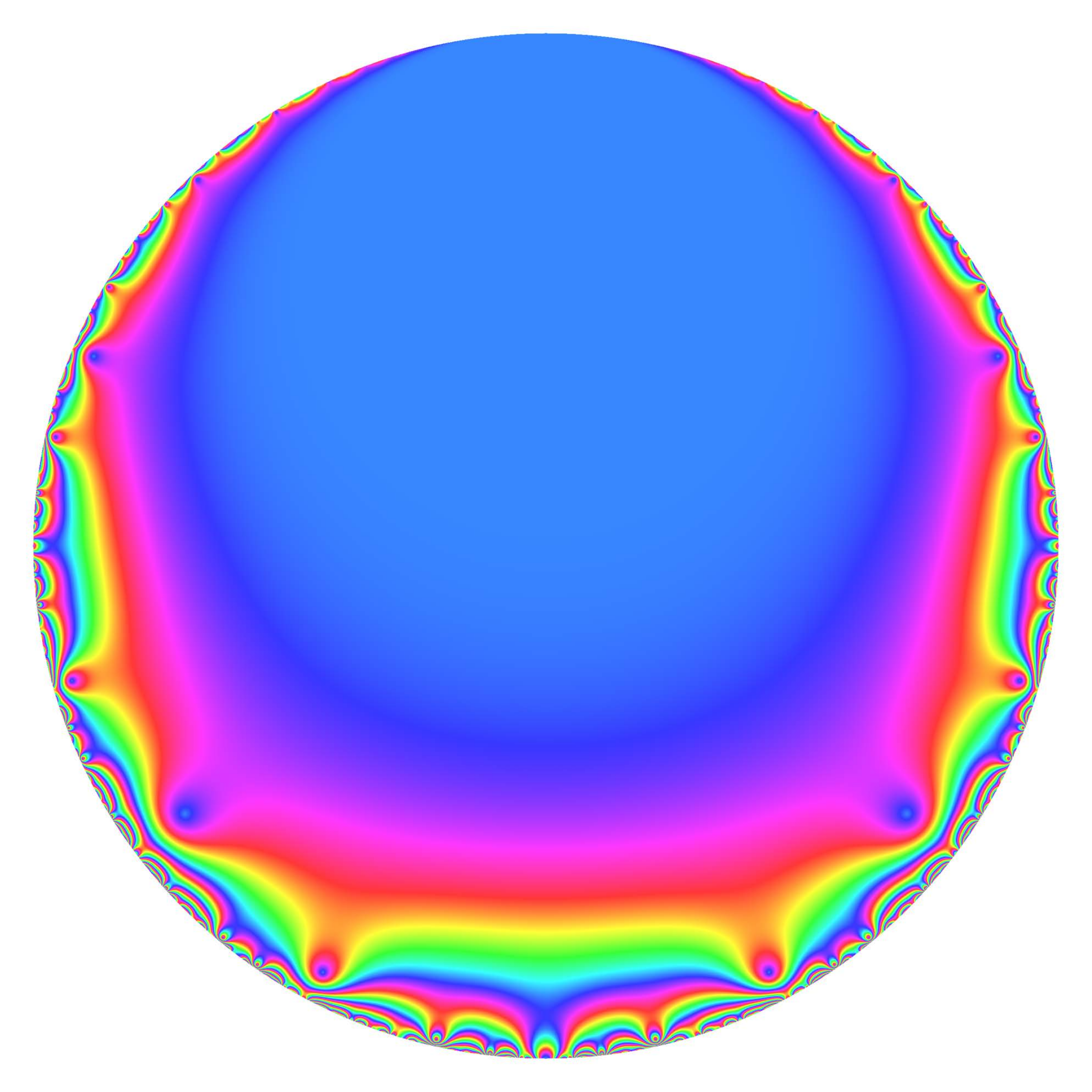}
    \includegraphics[width=0.24\textwidth]{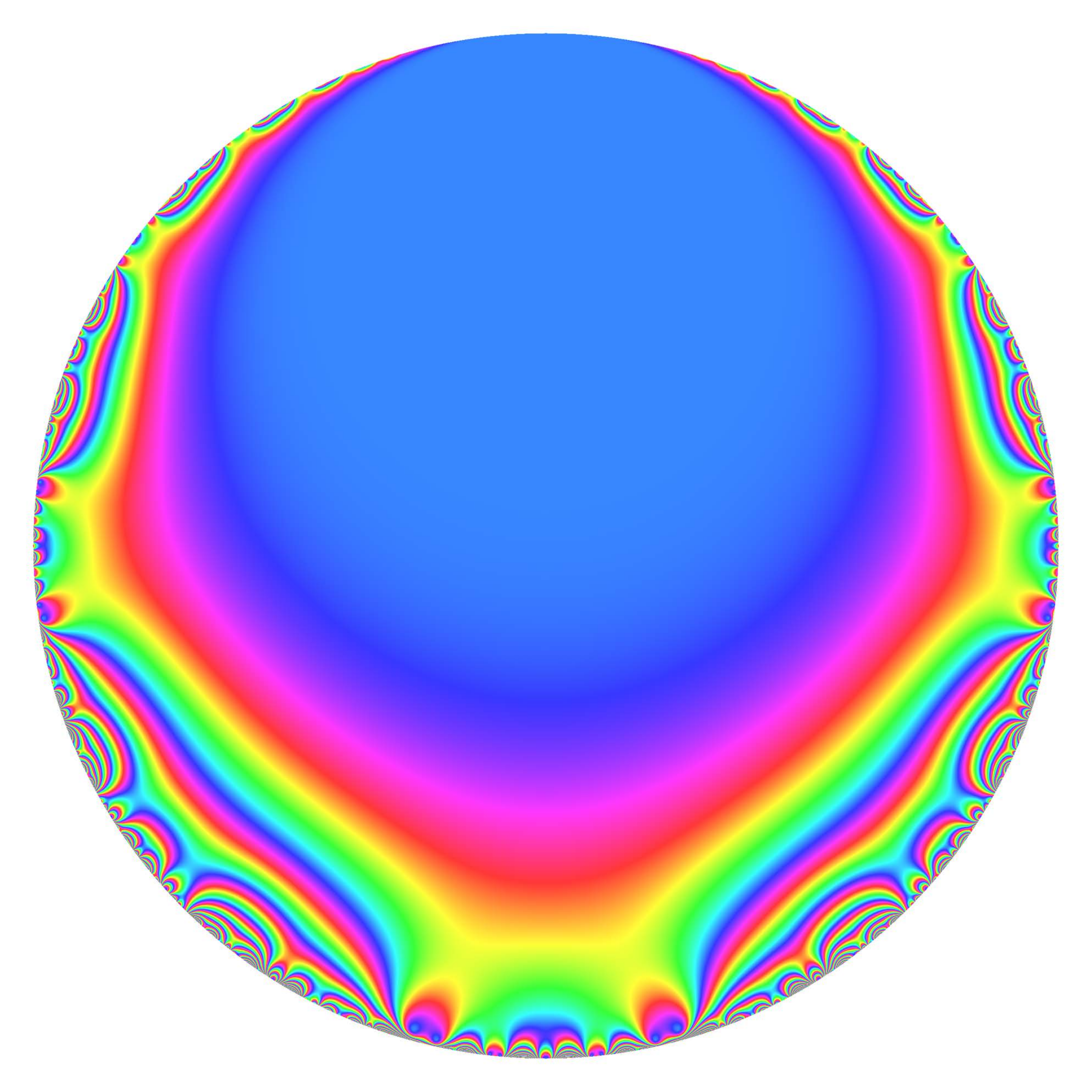}
    \includegraphics[width=0.24\textwidth]{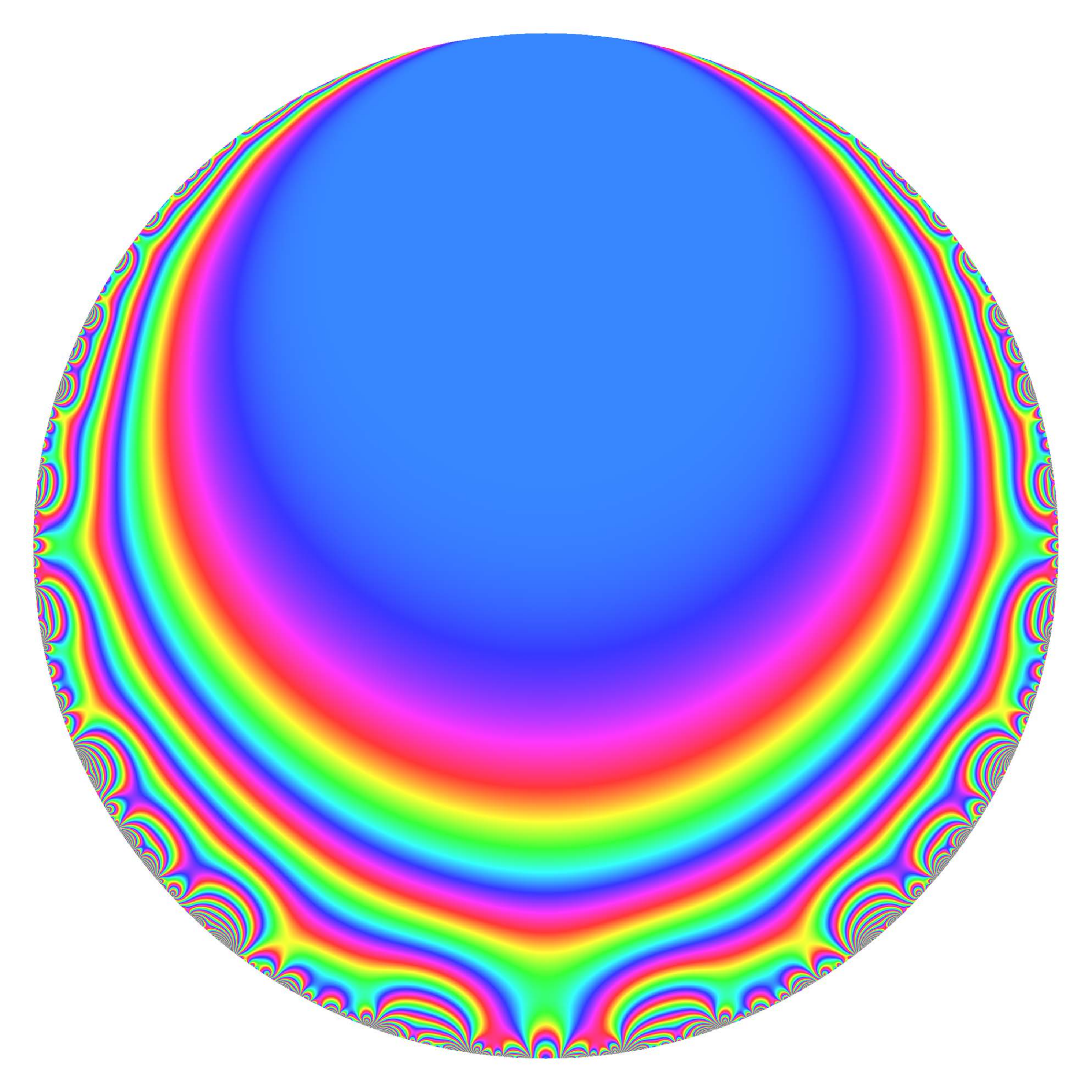}
    \includegraphics[width=0.24\textwidth]{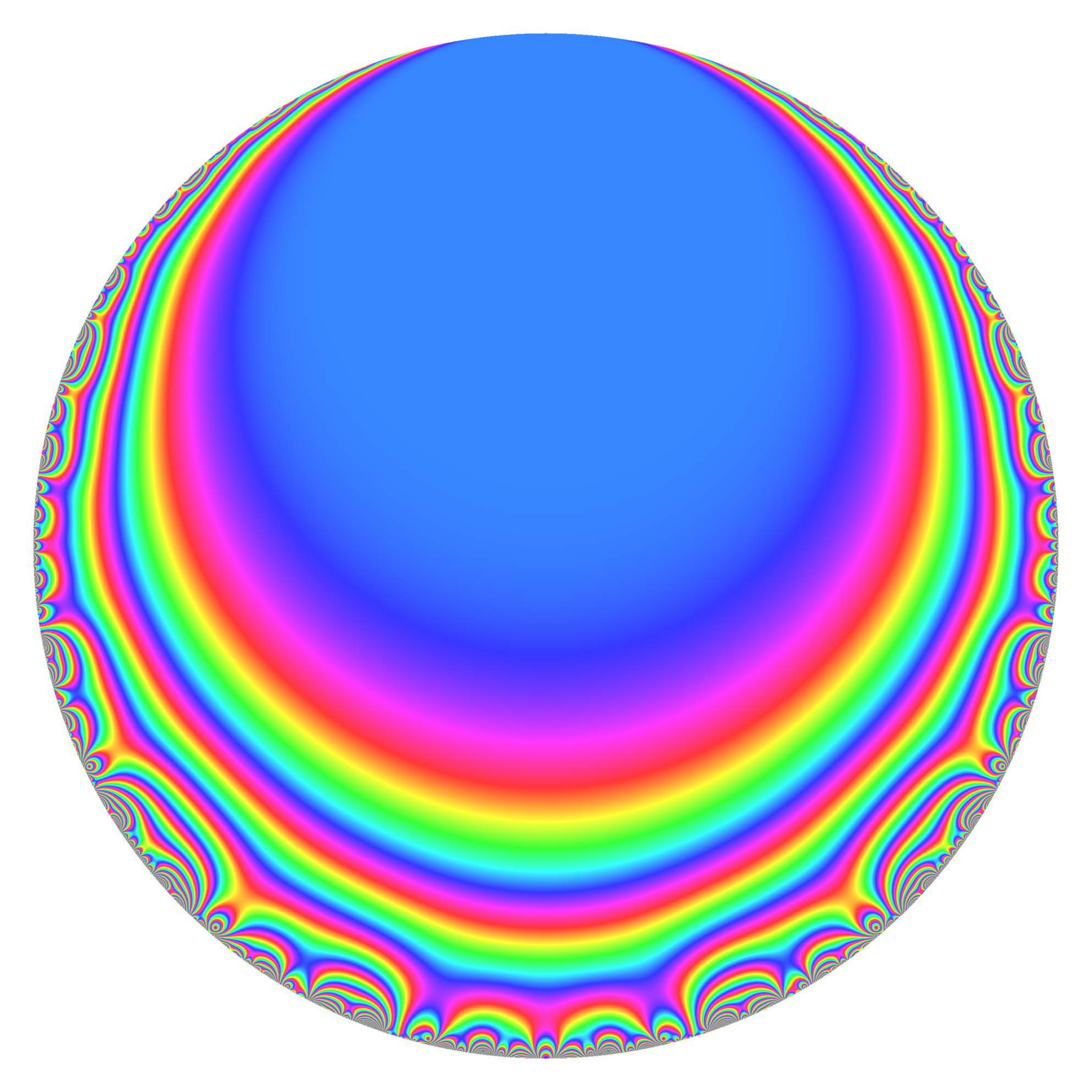}
\\[4pt]
\text{Figure \ref{fig:selt_twists}: The portraits for 
    \newformlink{164.1.d.a},
    \newformlink{164.1.d.b},
    \newformlink{164.1.j.a},
    and
    \newformlink{164.1.l.a}}
\end{gathered}
\end{equation}

\begin{remark}
These portraits differ from those now used in the LMFDB.
Between writing and publishing this article we chose to instead use the
pure phase portraits describe in §2.2.5 of \cite{Lowry-Duda}.
\end{remark}

\subsection{Features}

In parallel to carrying out the computations described elsewhere in this paper,
we rewrote the user interface to the database.  We highlight some of the
more prominent new features in this section, some of which are being extended
to other sections of the LMFDB\@.

The search interface includes multiple modes for viewing results.  After
entering constraints such as weight, level and dimension, there are
four different search buttons available.  In addition to the standard
list of results, a user can choose to go straight to a randomly chosen newform.
Alternatively, there are dimension tables available which display
the dimension of the spaces of newforms as a function of weight and level.
Finally, a table of traces allows for searching on specific Fourier coefficients,
including specifying a particular class modulo an arbitrary integer.  This
feature can be used to find modular forms matching geometric objects
via point-counting.

All of these search modes are also available for newspaces.  For newspaces,
the list mode shows the dimensions of the corresponding newforms as well
as the Atkin-Lehner dimensions in the case of trivial character.
For both newforms and newspaces, users can customize the order of the search
results.

The homepage for an individual newform has also been completely restructured.
Newforms can be downloaded and reconstructed in \magma{}, allowing for further
computations if desired.  
We include complex eigenvalues for embedded modular forms even when
exact Fourier coefficients are not feasible to compute.

One of the key motivations for our extensive computations of (exact or inexact)
Fourier coefficients of newforms is to allow their $L$-functions to be computed.
In addition to providing additional mathematical information about the newform,
such as its analytic rank and special values, this allows us to automatically
connect newforms to other objects in the LMFDB.  Examples include:

\begin{itemize}
\item The $L$-function \href{http://www.lmfdb.org/L/ModularForm/GL2/Q/holomorphic/256/2/a/e/}{$L$(\textsf{256.2.a.e})}
lists both the Bianchi modular form \href{https://www.lmfdb.org/ModularForm/GL2/ImaginaryQuadratic/2.0.4.1/4096.1/b/}{\textsf{2.0.4.1-4096.1-b}}
and the Hilbert modular form \href{https://www.lmfdb.org/ModularForm/GL2/TotallyReal/2.2.8.1/holomorphic/2.2.8.1-1024.1-m}{\textsf{2.2.8.1-1024.1-m}} as origins (both arise as base changes of \newformlink{256.2.a.e}), as well as the corresponding elliptic curve isogeny classes \href{https://www.lmfdb.org/EllipticCurve/2.0.4.1/4096.1/b/}{\textsf{2.0.4.1-4096.1-b}} over $\Q(i)$ and
\href{https://www.lmfdb.org/EllipticCurve/2.2.8.1/1024.1/m/}{\textsf{2.2.8.1-1024.1-m}} over $\Q(\sqrt{2})$.

\item The $L$-function \href{http://www.lmfdb.org/L/ModularForm/GL2/Q/holomorphic/72/2/d/a/}{$L$(\textsf{72.2.d.a})} has (at least) three additional origins: the Hilbert modular form \href{https://www.lmfdb.org/ModularForm/GL2/TotallyReal/2.2.8.1/holomorphic/2.2.8.1-81.1-b}{\texttt{2.2.8.1-81.1-b}}, the elliptic curve isogeny class \href{https://www.lmfdb.org/EllipticCurve/2.2.8.1/81.1/b/}{\texttt{2.2.8.1-81.1-b}}, and the isogeny class \href{https://www.lmfdb.org/Genus2Curve/Q/5184/a/}{\textsf{5184.a}} of the Jacobian of the genus 2 curve \href{https://www.lmfdb.org/Genus2Curve/Q/5184/a/46656/1}{\texttt{5184.a.46656.1}}.

\item The $L$-function \href{http://www.lmfdb.org/L/ModularForm/GL2/Q/holomorphic/1948/1/b/a/}{$L$(\textsf{1948.1.b.a})} also arises as the $L$-function of (the Galois orbit of) the icosahedral Artin representation 
\href{https://www.lmfdb.org/ArtinRepresentation/2.2e2_487.120.1}{\textsf{2.1948.24T576.1}}.  The $L$-functions home page also lists the four conjugate Artin representations (and four embedded \weightone{} newforms) whose $L$-functions are primitive factors of this imprimitive $L$-function of degree 8.
\end{itemize}

\section{Twisting} \label{sec:twists}

In this section, we discuss twists of modular forms and related computational issues.  For background and further reading, we refer the reader to the foundational articles by Ribet \cite{Ribet:galreps,Ribet:endos}.

\subsection{Definitions}

We begin with definitions, followed by some examples.  Throughout this section, let $f \in \Sknew{k}(N,\chi)$ be a newform of weight $k\in \Z_{\ge 1}$, level $N\in\Z_{\ge 1}$, and character $\chi$, and let $K_f \colonequals \Q(\{a_n(f)\}_n) \subseteq \C$ be its coefficient field.  Let $\psi$ be a Dirichlet character of conductor $\cond(\psi)$, and let $\psi_0$ be the primitive Dirichlet character inducing $\psi$ (with $\cond(\psi_0)=\cond(\psi)$).  Then there is a unique newform $g \colonequals f \otimes \psi$ characterized  by the property that 
\begin{equation} \label{eqn:angfpsi0}
a_n(g)=\psi_0(n)a_n(f) \quad \text{ for all $n$ coprime to $N \cond(\psi)$};
\end{equation}
we call $g$ the \defi{twist} of $f$ by $\psi$.  However, more is true: in fact, we have
\begin{equation} \label{eqn:angfpsi}
a_n(g)=\psi_0(n)a_n(f) \quad \text{ for all $n$ coprime to $ \cond(\psi)$}
\end{equation}
including those $n$ that are not necessarily coprime to $N\cond(\psi)$:  see Atkin--Li \cite[Theorem 3.2]{Atkin-Li}.
By the recurrence satisfied by the Hecke operators, \eqref{eqn:angfpsi} is equivalent to the condition 
\begin{equation}
a_p(g)=\psi(p)a_p(f) \quad \text{for all $p \nmid \cond(\psi)$}.
\end{equation}
The newform $g$ has character $\chi\psi^2$ (by \eqref{eq:twistchar} below) and level dividing $\lcm(N,\cond(\psi)\cond(\chi\psi))$ (by Lemma~\ref{lem:MNchipsi} below).
We call the newform $g$ \defi{the twist of $f$ by $\psi$} and say that $g$ is a \defi{twist} of $f$.

As above, the group $\Aut(\C)$ acts on the set of newforms in $\Sknew{k}(N,\chi)$, with $a_n(\sigma(f)) = \sigma(a_n(f))$ for all $n \geq 1$.  We have $\sigma(f) \in \Sknew{k}(N,\sigma(\chi))$, where $\sigma(\chi)(n)=\sigma(\chi(n))$ for all $n \geq 1$. If $g=f\otimes\psi$, then $\sigma(g)=\sigma(f)\otimes\sigma(\psi)$ for all $\sigma\in\Aut(\C)$.  Accordingly, the set
\begin{equation} [f] \otimes [\psi] \colonequals \{f' \otimes \psi' : f' \in [f], \psi' \in [\psi]\} 
\end{equation} 
has an action of $\Aut(\C)$ and so consists of finitely many $\Aut(\C)$-orbits (possibly more than one). Accordingly, we say that $[g]$ is \defi{a twist of $[f]$ by $[\psi]$} if there exist $f' \in [f]$, $\psi' \in [\psi]$, $g' \in [g]$ such that $g' = f' \otimes \psi'$, or equivalently, $[g]\subseteq [f]\otimes[\psi]$.

\begin{example}
The newform orbits \newformlink{3380.1.v.e} and \newformlink{3380.1.v.g} are both twists of \newformlink{3380.1.g.c} by \dircharlink{13.f} (and by \dircharlink{260.bc}). 
\end{example}

With this Galois digression out of the way, we return to the treatment of twists of (embedded) newforms.  

\begin{defn}
Let $\psi$ be a Dirichlet character and $\sigma \colon K_f \hookrightarrow \C$ be a field embedding.  We say that $f$ admits an \defi{inner twist} by the pair $(\psi,\sigma)$ if $f \otimes \psi = \sigma(f)$.  In the special case that $\sigma=\id|_{K_f}$, we say that $f$ admits a \defi{self-twist} by $\psi$.  
\end{defn}

Let $\InnTw(f)$ denote the set of inner twists of $f$ and $\SelfTw(f) \subseteq \InnTw(f)$ the subset of self-twists.  Then projection onto the first component identifies $\SelfTw(f)$ with a subgroup of Dirichlet characters.  By \eqref{eqn:angfpsi}, the form $f$ has an inner twist by $(\psi,\sigma)$ if and only if $\sigma(a_n)=\psi(n)a_n$ for almost all $n$.  The twist is said to be \defi{inner} because such twists stay ``within'' the Galois orbit of $f$ (a nontrivial inner twist is sometimes also referred to as an ``extra twist'').
Every newform has a trivial self-twist by $(\textsf{1.a},\id|_{K_f})$. 

\begin{prop}[{Ribet \cite{Ribet:endos}, Momose \cite{Momose}}] \label{prop:intwistribet}
The following statements hold.
\begin{enumalph}
\item If $(\psi,\sigma) \in \InnTw(f)$, then 
\[ \sigma(\chi)=\chi\psi^2; \]
so if $\psi \in \SelfTw(f)$ then $\psi$ is quadratic.  
\item If $(\psi,\sigma) \in \InnTw(f)$ then $\sigma \in \Aut(K_f)$.
\item $\InnTw(f)$ naturally forms a group under 
\[ (\psi,\sigma) \cdot (\psi',\sigma') \colonequals (\psi\,\sigma(\psi'), \sigma\sigma'). \]
\item There is an exact sequence of groups
\begin{align*} 
1 \to \SelfTw(f) \to \InnTw(f) &\xrightarrow{\pi} \Aut(K_f) \\
(\psi,\sigma) &\mapsto \sigma. 
\end{align*}
Let $A \colonequals \pi(\InnTw(f))$.  Then $\InnTw(f) \simeq \SelfTw(f) \times A$ is a \emph{direct} product.
\item The projection $(\psi,\sigma) \mapsto \psi$ from $\InnTw(f)$ to the set of Dirichlet characters is an injective map of sets.  
\item The group $A$ is abelian.
\item Suppose $\SelfTw(f)$ is trivial.  Then $\pi$ is an isomorphism and the assignment $\sigma \mapsto \psi_\sigma$ if and only if $(\psi_\sigma,\sigma) \in \InnTw(f)$ is a well-defined $1$-cocycle, i.e., 
\[ \psi_{\sigma\sigma'}=\psi_\sigma \sigma(\psi_{\sigma'}). \]
\end{enumalph}
\end{prop}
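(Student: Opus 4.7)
The proof strategy leverages two basic facts: the Fourier coefficient characterization $\sigma(a_n(f)) = \psi(n)a_n(f)$ for $\gcd(n, \cond(\psi))=1$ that encodes an inner twist $(\psi,\sigma)$, and the transformation formulas for how twisting and Galois conjugation affect the Nebentypus character. For part (a), I will compare Nebentypus characters on both sides of $\sigma(f) = f\otimes\psi$: the twist $f\otimes\psi$ has character $\chi\psi^2$ (a standard identity), while $\sigma(f)$ has character $\sigma\circ\chi$, since applying $\sigma$ to the modular transformation law for $f$ replaces $\chi$ by $\sigma\circ\chi$. Equating yields $\sigma(\chi)=\chi\psi^2$; specializing to $\sigma = \id|_{K_f}$ forces $\psi^2$ trivial, hence $\psi$ quadratic. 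Part (c) is a direct verification: closure follows from the calculation $\sigma\sigma'(f) = \sigma(f\otimes\psi') = \sigma(f)\otimes\sigma(\psi') = f\otimes(\psi\,\sigma(\psi'))$, which also determines the group operation; associativity and existence of inverses are then formal, with $(\psi,\sigma)^{-1} = (\sigma^{-1}(\psi^{-1}), \sigma^{-1})$.

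The main obstacle is part (b): showing $\sigma\in\Aut(K_f)$ rather than merely an embedding of $K_f$ into $\C$. The identity from part (a) gives $\sigma(\chi) = \chi\psi^2$, and since $\Q(\chi) \subseteq K_f$, this forces $\psi^2$ to take values in the compositum $K_f \cdot \sigma(K_f)$. Extracting $\psi$ itself (and not merely $\psi^2$) requires the deeper input that the field of definition of the associated Galois representation is exactly $K_f$; this is Ribet's result \cite[\S 3]{Ribet:endos}, from which one deduces $\psi(p) \in K_f$ for $p$ not dividing $N\cond(\psi)$, and hence $\sigma(a_p) = \psi(p) a_p \in K_f$, giving $\sigma(K_f) \subseteq K_f$, with equality by dimension count.

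For parts (d)--(f), the kernel of $\pi$ is by definition $\SelfTw(f)$, yielding the exact sequence. Since self-twists take values in $\{0,\pm 1\}\subseteq\Q$ (by (a)), each such $\epsilon$ satisfies $\sigma(\epsilon)=\epsilon$ for every $\sigma\in A$; the direct computation $(\psi,\sigma)(\epsilon, \id)(\psi,\sigma)^{-1} = (\epsilon, \id)$ then shows $\SelfTw(f)$ is central in $\InnTw(f)$. The commutativity of $A$ in (f) is due to Momose \cite{Momose}, who analyzes the endomorphism algebra of the abelian variety attached to $[f]$. The direct product assertion in (d) then follows from Ribet \cite{Ribet:endos} by constructing a consistent section $A \to \InnTw(f)$, exploiting the elementary abelian $2$-group structure of $\SelfTw(f)$ together with its centrality.

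Parts (e) and (g) follow quickly. For (e), if $(\psi,\sigma), (\psi,\sigma')\in\InnTw(f)$, then $\sigma(f) = f\otimes\psi = \sigma'(f)$, and a newform is determined by its $q$-expansion, so $\sigma = \sigma'$. For (g), triviality of $\SelfTw(f)$ reduces the exact sequence of (d) to an isomorphism $\pi\colon\InnTw(f)\xrightarrow{\sim}A$, so each $\sigma\in A$ determines a unique $\psi_\sigma$. The cocycle condition $\psi_{\sigma\sigma'} = \psi_\sigma\,\sigma(\psi_{\sigma'})$ is read off directly from the group law in (c): expanding $(\psi_\sigma,\sigma)(\psi_{\sigma'},\sigma') = (\psi_\sigma\sigma(\psi_{\sigma'}), \sigma\sigma')$ and comparing with $(\psi_{\sigma\sigma'},\sigma\sigma')$ gives the identity.
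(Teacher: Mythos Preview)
Your overall structure is sound and parts (a), (c), (e), (g) match the paper's arguments closely. The substantive differences are in (b) and (f), where you reach for heavier tools than needed.

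For (b), you invoke Ribet's result on the field of definition of the Galois representation to conclude $\psi(p)\in K_f$. The paper instead gives a purely elementary cyclotomic argument: from $\psi^2=\sigma(\chi)\chi^{-1}$, write $\chi(n)=\zeta$ a primitive $d$th root of unity and $\sigma(\zeta)=\zeta^k$; if $d$ is odd then $\langle\zeta^2\rangle=\langle\zeta\rangle$, while if $d$ is even then $k$ is odd so $k-1$ is even. In either case $\sigma(\zeta)/\zeta\in\langle\zeta^2\rangle$, so $\psi(n)\in\Q(\chi)\subseteq K_f$. Then $\sigma(a_n)=\psi(n)a_n\in K_f$ for almost all $n$, giving $\sigma(K_f)\subseteq K_f$. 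This avoids any appeal to Galois representations and is entirely self-contained; your route through the field of definition is correct in spirit but the deduction of $\psi(p)\in K_f$ from that input is left vague.

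For (f), you defer to Momose's analysis of endomorphism algebras. The paper instead does a direct computation: with $\chi(n)=\zeta$, $\sigma(\chi)(n)=\zeta^k$, $\sigma'(\chi)(n)=\zeta^{k'}$, and $\psi(n)=\zeta^{(k-1)/2}$, one finds $\sigma'(\psi)/\psi=\zeta^{(k-1)(k'-1)/2}$, which is symmetric in $(k,k')$, hence equals $\sigma(\psi')/\psi'$. This gives $\psi\,\sigma(\psi')=\psi'\,\sigma'(\psi)$, so the first coordinates of $(\psi,\sigma)(\psi',\sigma')$ and $(\psi',\sigma')(\psi,\sigma)$ agree, and then (e) forces $\sigma\sigma'=\sigma'\sigma$. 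This is far more elementary than going through the endomorphism algebra of an abelian variety.

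For (d), both your argument and the paper's are somewhat brisk; the paper simply observes that the semidirect product action of $A$ on $\SelfTw(f)$ is trivial (quadratic characters are Galois-fixed), so the product is direct. Your centrality observation is the same content.
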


\begin{proof}
These results originate with Ribet \cite[\S 3]{Ribet:endos} and Momose \cite[Lemma (1.5)]{Momose}, but they work under the hypothesis that $f$ has no self-twists.  For clarity, we repeat these arguments to show this hypothesis is unnecessary.  Let $f(q)=\sum_n a_n q^n$.

Part (a) follows by looking at (Nebentypus) characters using the Hecke recurrence (or the determinant of the associated Galois representations).  Explicitly, on the one hand, the character of $\sigma(f)$ is $\sigma(\chi)$; on the other, if $\epsilon$ is the character of $f \otimes \psi$ then for all good primes $p$ the Hecke recurrence reads
\begin{equation}\label{eq:twistchar}
\begin{aligned}
\eps(p)p^{k-1} = a_{p}(f \otimes \psi)^2 - a_{p^2}(f \otimes \psi)^2 = \psi(p)^2(a_p(f)^2 - a_{p^2}(f)) = \psi(p)^2\chi(p)p^{k-1}  
\end{aligned}
\end{equation}
so $\eps=\chi\psi^2$.  Consequently, a self-twist by $\psi$ gives $\chi=\chi\psi^2$, so $\psi^2$ is the trivial character. 

For part (b), by (a) we have $\psi^2 = \sigma(\chi)\chi^{-1}$, and we claim $\psi$ takes values in $\Q(\chi)$: indeed, if $\chi(n)=\zeta$ is a primitive $d$th root of unity, then checking cases based on the parity of $d$ reveals that $\sigma(\zeta)/\zeta \in \langle \zeta^2 \rangle$.  Since $\Q(\psi) \subseteq K_f$, we conclude $\sigma(a_n)= \psi(n) a_n \in K_f$ for almost all $n$, so $\sigma(K_f) \subseteq K_f$ as desired.

For part (c), we start with $\sigma'(a_n) = \psi'(n) a_n$ and apply $\sigma$ to get 
\[ (\sigma\sigma')(a_n) = \sigma(\psi')(n) \sigma(a_n) = \sigma(\psi')(n)\psi(n)a_n \] 
for almost all $n$, so $(\psi\sigma(\psi'),\sigma\sigma') \in \InnTw(f)$.  This product is associative: the identity element in $\InnTw(f)$ is $(\textsf{1.a},\id|_{K_f})$, and inverses are given by $(\psi,\sigma)^{-1} = (\sigma^{-1}(\psi),\sigma^{-1})$.   

In part (d), the exact sequence is evident from (c).  The group $\InnTw(f)$ visibly has the structure of a semidirect product $\InnTw(f) \simeq \SelfTw(f) \rtimes A$ via $A \to \Aut(\SelfTw(f))$ by $\sigma \mapsto (\psi \mapsto \sigma(\psi))$.  However, by (b) $\SelfTw(f)$ consists only of quadratic characters, so $\sigma(\psi)=\psi$ for all $\sigma$ so the product is direct.

Part (e) follows from the fact that $\psi$ uniquely determines $\sigma$.  

Part (f) is claimed by Ribet \cite[Proposition (3.3)]{Ribet:endos}: we prove it as follows.  As in (a), let $\chi(n)=\zeta$ and $\sigma(\chi)(n)=\zeta^k$.  Then again $\psi(n)=\zeta^{(k-1)/2}$ (for some choice of square root of $\zeta$).  Write similarly $\sigma'(\chi)(n)=\zeta^{k'}$.  Then 
\[ \frac{\sigma'(\psi)}{\psi}(n) = \frac{\zeta^{k'(k-1)/2}}{\zeta^{(k-1)/2}} = \zeta^{(k-1)(k'-1)/2} \] 
is well-defined, and by symmetry this is equal to $(\sigma(\psi)/\psi)(n)$, giving $\psi\,\sigma(\psi')=\psi'\,\sigma'(\psi)$,  
and similarly $\sigma'(\chi)(n)=\zeta^{k'}$.  This calculation shows the projection of the products $(\psi,\sigma)(\psi',\sigma')=(\psi\sigma(\psi'),\sigma\sigma')$ and $(\psi',\sigma')(\psi,\sigma)=(\psi'\sigma'(\psi),\sigma\sigma')$ agree.  By part (e), it follows that $\sigma\sigma'=\sigma'\sigma$ and $A$ is abelian.

Finally, part (g) is immediate from (c).
\end{proof}

\begin{exm}
Consider the (embedded) newform \newformlink{180.1.m.a.107.2}; it represents the unique newform orbit in the space \newformlink{180.1.m} of weight $1$ and level $180$ with character orbit $\textsf{180.m}$, whose $q$-expansion begins
\[
f(q)=  q  - \zeta_{8}^{3} q^{2}   - \zeta_{8}^{2} q^{4}   + \zeta_{8}^{3} q^{5}   - \zeta_{8} q^{8}  +O(q^{10}),
\]
where $\zeta_8=\exp(2\pi i/8)=(1+i)/\sqrt{2}$ is the primitive eighth root of unity in the upper quadrant and $K_f=\Q(\zeta_8)$.  

The group $\SelfTw(f)$ of self-twists is of order $2$ with nontrivial character \href{http://www.lmfdb.org/Character/Dirichlet/4/3}{\textsf{4.3}}, the quadratic character of conductor $4$ associated to the field $\Q(\sqrt{-1})$.  The group of inner twists has order $\#\InnTw(f)=8$, and we compute $\InnTw(f) \simeq (\Z/2\Z)^3$, generated by the elements 
\[ (\href{http://www.lmfdb.org/Character/Dirichlet/4/3}{\textsf{4.3}}, \id), (\href{http://www.lmfdb.org/Character/Dirichlet/3/2}{\textsf{3.2}}, \zeta_8 \mapsto -\zeta_8), (\href{http://www.lmfdb.org/Character/Dirichlet/5/3}{\textsf{5.3}}, \zeta_8 \mapsto \zeta_8^3). \]
The character $\psi_5$ with label \href{http://www.lmfdb.org/Character/Dirichlet/5/3}{\textsf{5.3}} has order $4$, so letting $\sigma_3 \in \Aut(\Q(\zeta_8))$ by $\sigma_3(\zeta_8) = \zeta_8^3$, we have 
\[ (\psi_5,\sigma_3)^2 = (\psi_5\,\sigma_3(\psi_5), \sigma_3^2)= (\psi_5 \psi_5^{-1}, \id) = 1. \]

The projection of $\InnTw(f)$ onto the set of characters yields characters with conductors $1$, $3$, $4$, $5$, $12$, $15$, $20$, $60$.
\end{exm}

\begin{exm}
For $f$ with label \newformlink{361.2.e.d} and $K_f = \Q(\zeta_{18})$, we have no nontrivial self-twists and $\pi \colon \InnTw(f) \to \Aut(K_f)$ is an isomorphism onto its image.   In fact, we compute that $\pi$ is surjective, so $\InnTw(f) \simeq \Z/6\Z$.  More precisely, the elements of order $3$ in $\InnTw(f)$ correspond to the characters
\href{http://www.lmfdb.org/Character/Dirichlet/19/7}{\textsf{19.7}} and \href{http://www.lmfdb.org/Character/Dirichlet/19/11}{\textsf{19.11}} of order $3$, and in the character orbit \dircharlink{19.e} there are three characters whose elements match with automorphisms of order $2$ and two of order $6$.  
\end{exm}

\begin{exm}
Among the forms of weight $k=2$, trivial character, and dimension $2$, we can \href{http://www.lmfdb.org/ModularForm/GL2/Q/holomorphic/?weight=2&char_order=1&dim=2&has_inner_twist=yes&count=50&search_type=List}{search for forms with inner twist}, and we should see a table that matches Cremona \cite[Table 3]{Cremona:abextratwist} up to level $N \leq 300$.  The lists match with one exception: we found one form \newformlink{169.2.a.a} that was missed by Cremona.
\end{exm}

Newforms of weight $k\ge 2$ that admit nontrivial self-twists are commonly said to have \emph{complex multiplication}, for reasons we now explain.

\begin{prop}[Ribet] \label{prop:ribetcm}
The following statements hold.
\begin{enumalph}
\item If $k \geq 2$ and $f$ has nontrivial self-twist by $\psi$, then $\psi$ is associated to an imaginary quadratic field and is unique, i.e., $\SelfTw(f) \simeq \Z/2\Z$.  
\item If $k=1$, then $f$ has nontrivial self-twist by $\psi$ if and only if $f$ has dihedral projective image.  If so, then $\psi$ may be real or imaginary and $\SelfTw(f)$ is a subgroup of $(\Z/2\Z)^2$.
\end{enumalph}
\end{prop}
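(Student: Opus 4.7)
The plan is to reformulate the self-twist condition via the attached Galois representation and then analyze the weights $k\ge 2$ and $k=1$ separately. By Deligne (for $k\ge 2$) or Deligne--Serre (for $k=1$), attached to $f$ is a continuous irreducible two-dimensional Galois representation $\rho_f$ of determinant $\chi\chi_{\mathrm{cyc}}^{k-1}$, which is odd because $f$ is a cusp form, and $f\otimes\psi$ corresponds to $\rho_f\otimes\psi$. By Proposition~\ref{prop:intwistribet}(b), any $\psi\in\SelfTw(f)$ is quadratic, so a nontrivial self-twist is equivalent to $\rho_f\otimes\psi\cong\rho_f$, which by the standard Mackey/induction criterion holds precisely when $\rho_f\cong\mathrm{Ind}_{G_K}^{G_\Q}\eta$ for some character $\eta$ of $G_K$, where $K$ is the quadratic field cut out by $\psi$.

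For part (a), I will rule out $K$ real when $k\ge 2$. If $K$ were real, then complex conjugation $c$ lies in $G_K$, so $\rho_f(c)=\mathrm{diag}(\eta(c),\eta^\tau(c))$; inspecting the Hodge--Tate filtration at an archimedean place split in $K$ (or invoking Ribet \cite[\S4]{Ribet:galreps} directly), one sees that the two Hodge--Tate weights of such an induced representation must coincide, contradicting the fact that the Hodge--Tate weights of $\rho_f$ are $\{0,k-1\}$ with $k-1\ge 1$. Hence $K$ must be imaginary quadratic. For uniqueness, I will use that $\SelfTw(f)$ is a subgroup of $\InnTw(f)$ by Proposition~\ref{prop:intwistribet}(d): if it contained two distinct nontrivial elements $\psi_1,\psi_2$, then $\psi_1\psi_2$ would be a nontrivial quadratic self-twist, forced to be imaginary by the previous step, but $(\psi_1\psi_2)(-1)=(-1)(-1)=+1$ shows $\psi_1\psi_2$ to be an even character, cutting out a real quadratic field---a contradiction. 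Therefore $\SelfTw(f)\simeq\Z/2\Z$.

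For part (b), when $k=1$ the representation $\rho_f$ is an odd irreducible Artin representation with finite image; since it is irreducible, its projective image in $\PGL_2(\C)$ is non-cyclic, hence one of $D_n$ ($n\ge 2$), $A_4$, $S_4$, or $A_5$. The induction criterion of the first paragraph shows that $\rho_f$ is induced from a character of a quadratic extension if and only if its projective image is contained in some $D_n$, which immediately gives the equivalence between nontrivial self-twist and dihedral projective image. When the projective image equals $D_n$, nontrivial self-twists correspond bijectively to index-$2$ (normal) subgroups of $D_n$: exactly one when $n$ is odd (the rotation subgroup $C_n$), and exactly three when $n$ is even ($C_n$ together with the two conjugacy classes of reflection subgroups $D_{n/2}$), so $\SelfTw(f)$ embeds as a subgroup of $(\Z/2\Z)^2$. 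Because $\rho_f$ has finite image there is no Hodge--Tate constraint, and (taking $n$ even) the cyclic and the two non-cyclic index-$2$ subgroups generally cut out fields of different signatures, so both real and imaginary $K$ can occur.

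The main technical obstacle is the Hodge--Tate/Ribet input in part (a) ruling out real quadratic self-twists in weight $k\ge 2$; every other step is either a direct consequence of the induction criterion or an elementary group-theoretic inspection of the dihedral groups.
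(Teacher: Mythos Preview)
There is a genuine error in your argument for part~(b). You assert that nontrivial self-twists correspond bijectively to index-$2$ subgroups of the projective image $D_n$, but this is false. A quadratic character $\psi$ (with fixed field $K$) is a self-twist if and only if $\rho_f$ is induced from $G_K$, which requires $\rho_f|_{G_K}$ to be reducible, equivalently $\bar\rho_f(G_K)\subset\PGL_2(\C)$ to be \emph{cyclic}. When $n>2$ is even, the two index-$2$ subgroups of $D_n$ isomorphic to $D_{n/2}$ are \emph{not} cyclic, so they do not yield self-twists; only the rotation subgroup $C_n$ does. Hence for every $n>2$ there is exactly one nontrivial self-twist, and only for $n=2$ (where all three index-$2$ subgroups are $C_2$) are there three. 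Your final bound $\SelfTw(f)\subseteq(\Z/2\Z)^2$ survives by accident, but your discussion of ``the cyclic and the two non-cyclic index-$2$ subgroups'' and their signatures rests on a false premise. The paper reaches the correct dichotomy---unique for $n>2$, three for $n=2$---by counting quadratic subfields of the projective field.

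In part~(a), the phrase ``Hodge--Tate filtration at an archimedean place'' is a category error: Hodge--Tate theory is $p$-adic. The argument you are reaching for is that an algebraic Hecke character of a totally real field has parallel infinity type (finite order times a power of the norm), so if $\rho_f\cong\mathrm{Ind}_{G_K}^{G_\Q}\eta$ with $K$ real quadratic then the two Hodge--Tate weights of $\rho_f$ at any prime would coincide, contradicting $\{0,k-1\}$ for $k\ge 2$. Since you offer the citation to Ribet as a fallback this is a minor issue; and your parity argument for uniqueness (if $\psi_1,\psi_2$ are distinct imaginary self-twists then $\psi_1\psi_2$ is an even, hence real, nontrivial self-twist---contradiction) is correct and more explicit than the paper, which simply cites Ribet for all of~(a).
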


\begin{proof}
For part (a), see Ribet \cite[Theorem (4.5)]{Ribet:galreps}, a consequence of the theory of complex multiplication.

For part (b), we recall \secref{sec:weight1} and observe that $f$ has self-twist by $\psi$ if and only if $a_p(f)=0$ for all $p$ inert in $\Q(\psi)$ and by classification this happens if and only if the image of the projective Galois representation is dihedral.  In this case, let $L$ be the fixed field of the kernel of the projective Galois representation associated to $f$, so $\Gal(L\,|\,\Q) \simeq D_n$, the dihedral group of order $2n$.  Then for each quadratic subfield $F \subseteq L$, the form $f$ has self-twist by the character associated to $F$.  Accordingly, when $n>2$ the subfield $F$ and associated self-twist character are unique, and when $n=2$ (so $K$ is biquadratic) there are three distinct subfields and corresponding characters and there is a real quadratic subfield.  
\end{proof}

Example \ref{exm:D2} shows that forms in Proposition \ref{prop:ribetcm}(b) indeed occur.  In light of Proposition \ref{prop:ribetcm}, we make the following definition.

\begin{defn}
We say $f$ has \defi{real multiplication (RM)} if $f$ has self-twist by a character attached to a real quadratic field and \defi{complex multiplication (CM)} if $f$ has self-twist by a character attached to an imaginary quadratic field.
\end{defn}

\begin{rmk}
It is common in the literature to just replace the term \emph{self-twist} by \emph{complex multiplication}.  By Proposition \ref{prop:ribetcm}(a), there is no harm in this for weight $k \geq 2$, 
but for weight $k=1$ we think this is potentially confusing, and we want to avoid saying ``$f$ has complex multiplication by $\Q(\sqrt{5})$.''  
\end{rmk}

\begin{exm}
As in the proof of Proposition \ref{prop:ribetcm}(b), weight $1$ forms can have \href{http://www.lmfdb.org/ModularForm/GL2/Q/holomorphic/?hst=List&weight=1&cm=no&rm=yes&search_type=List}{RM} or \href{http://www.lmfdb.org/ModularForm/GL2/Q/holomorphic/?hst=List&weight=1&cm=yes&rm=no&search_type=List}{CM} or \href{http://www.lmfdb.org/ModularForm/GL2/Q/holomorphic/?hst=List&weight=1&cm=yes&rm=yes&search_type=List}{both}.  Forms with RM correspond precisely to ray class characters of real quadratic fields that are of mixed signature (i.e., even at one real place and odd at another).  
\end{exm}

\begin{exm}
CM modular forms may also have an inner twist that is not a self-twist: the smallest example by analytic conductor is \newformlink{52.1.j.a}, having CM by $\Q(\sqrt{-1})$ and two inner twists that are not self-twists.  This phenomenon is not restricted to weight $1$, for example the same is true of the form with label \newformlink{20.2.e.a}.
\end{exm}

Continuing with the theme of working with newforms that have not yet been embedded, we conclude this section by showing that the inner twist group is well-defined on the Galois orbit.   

\begin{lem}
For all $\tau \in \Aut(\C)$, we have an isomorphism of groups
\begin{equation}
\begin{aligned}
\InnTw(f) &\xrightarrow{\sim} \InnTw(\tau(f)) \\
(\psi,\sigma) &\mapsto (\tau\psi,\tau\sigma\tau^{-1}).
\end{aligned}
\end{equation}
\end{lem}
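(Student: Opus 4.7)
The plan is to verify directly, using the definition and the group law in Proposition \ref{prop:intwistribet}(c), that the proposed map transports the inner twist structure along conjugation by $\tau$. There is really one piece of content (well-definedness) and two routine verifications (homomorphism property and bijectivity), so I will organize the proof accordingly.

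First, I would unpack what needs to be checked for well-definedness. Since $(\psi,\sigma)\in\InnTw(f)$ means $\sigma(a_n(f))=\psi(n)a_n(f)$ for almost all $n$, and since $a_n(\tau(f))=\tau(a_n(f))$ by definition of the $\Aut(\C)$-action, applying $\tau$ to both sides of the inner twist relation gives
\[
(\tau\sigma\tau^{-1})\bigl(a_n(\tau(f))\bigr)=\tau\sigma(a_n(f))=\tau(\psi(n))\,\tau(a_n(f))=(\tau\psi)(n)\,a_n(\tau(f)),
\]
where $\tau\psi$ denotes the Dirichlet character $n\mapsto \tau(\psi(n))$ (well-defined because $\psi$ takes values in a cyclotomic field). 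By Proposition \ref{prop:intwistribet}(b), $\sigma$ is an automorphism of $K_f$, so $\tau\sigma\tau^{-1}$ is an automorphism of $\tau(K_f)=K_{\tau(f)}$, and the coefficient embedding condition is preserved. Thus $(\tau\psi,\tau\sigma\tau^{-1})\in\InnTw(\tau(f))$.

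Next I would check that the map respects the group law of Proposition \ref{prop:intwistribet}(c). A direct computation gives
\[
(\tau\psi,\tau\sigma\tau^{-1})\cdot(\tau\psi',\tau\sigma'\tau^{-1})=\bigl(\tau\psi\cdot(\tau\sigma\tau^{-1})(\tau\psi'),\,\tau\sigma\sigma'\tau^{-1}\bigr)=\bigl(\tau\psi\cdot\tau\sigma(\psi'),\,\tau\sigma\sigma'\tau^{-1}\bigr),
\]
which matches the image of $(\psi\cdot\sigma(\psi'),\sigma\sigma')=(\psi,\sigma)(\psi',\sigma')$ under the map. Identity and inverse are similarly preserved.

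Finally, bijectivity is immediate: the same construction applied with $\tau^{-1}$ in place of $\tau$ produces a two-sided inverse, since $\tau^{-1}(\tau\psi)=\psi$ and $\tau^{-1}(\tau\sigma\tau^{-1})\tau=\sigma$. No step here is a genuine obstacle; the main thing to be careful about is simply notational — distinguishing the action of $\tau$ on Dirichlet characters (by post-composition on values) from conjugation on automorphisms of the coefficient field, and confirming that both live naturally in the respective structures attached to $\tau(f)$.
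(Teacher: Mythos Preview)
Your proof is correct and follows essentially the same approach as the paper: the key computation showing $(\tau\sigma\tau^{-1})(\tau(a_n))=(\tau\psi)(n)\,\tau(a_n)$ is identical, and your ``apply $\tau^{-1}$'' for bijectivity is what the paper compresses into the single word ``conversely.'' You additionally verify the homomorphism property explicitly, which the paper omits; this is a welcome bit of care but not a different idea.
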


\begin{proof}
From $\sigma(a_n)=a_n \psi(n)$ for almost all $n$ we conclude
\[ (\tau\sigma\tau^{-1})(\tau(a_n)) = \tau(a_n) (\tau\psi)(n) \]
for almost all $n$, and conversely.
\end{proof}

\subsection{Detecting inner twists}

With definitions out of the way, we now drill down to precisely understand the level of twists.  We keep notation from the previous section, in particular $f(q)=\sum_n a_n(f) q^n \in \Sknew{k}(N,\chi)$ is a newform and $\psi$ is a Dirichlet character of conductor $\cond(\psi)$.

\begin{lem} \label{lem:MNchipsi}
Let $M$ be the level of $f \otimes \psi$, so $f \otimes \psi \in \Sknew{k}(M,\chi\psi^2)$.  Then the following statements hold:
\begin{enumalph}
\item For all primes $p$, we have the inequality
\[
\ord_p(M) \leq \max \bigl( \ord_p(N),\ord_p(\cond(\psi)\cond(\chi\psi))\bigr),
\]
with equality if 
$\ord_p(N)\ne \ord_p(\cond(\psi)\cond(\chi\psi))$.  In particular, the level $M$ divides $\lcm(N,\cond(\psi)\cond(\chi\psi))$.  
\item For all primes $p$ we have
\[
\ord_p(\cond(\psi))\le \ord_p(\cond(\psi)\cond(\chi\psi)) \le \max(\ord_p(N),\ord_p(M)).
\]
In particular, $\cond(\psi)\cond(\chi\psi)\mid \lcm(M,N)$, and if $M \mid N$, then $\cond(\psi)\cond(\chi\psi) \mid N$.
\end{enumalph}
\end{lem}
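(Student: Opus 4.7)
The lemma is local at each prime $p$, since both level and conductor factor as products over primes. Fix $p$ and set $a \colonequals \ord_p(N)$, $b \colonequals \ord_p(M)$, $c_\psi \colonequals \ord_p(\cond(\psi))$, $c_{\chi\psi} \colonequals \ord_p(\cond(\chi\psi))$, and $c \colonequals c_\psi + c_{\chi\psi}$. The first inequality in (b) is immediate from $c_{\chi\psi}\geq 0$; the remaining claims reduce to (a) $b \leq \max(a,c)$, with equality if $a \neq c$, and the second inequality of (b), namely $c \leq \max(a,b)$.

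The plan is first to deduce the second inequality of (b) from (a) by exploiting the symmetry of twisting. The form $g \colonequals f \otimes \psi \in \Sknew{k}(M,\chi\psi^2)$ satisfies $g \otimes \psi^{-1} = f$, and the invariant $c$ is unchanged under this inversion, since $\cond(\psi^{-1}) = \cond(\psi)$ and $\cond((\chi\psi^2)\psi^{-1}) = \cond(\chi\psi)$. Applying (a) to $g$ (with the twist $\psi^{-1}$) yields $a \leq \max(b,c)$. If the second inequality of (b) failed, so $c > \max(a,b)$, then in particular $a \neq c$, and the equality clause of (a) for $f$ would force $b = \max(a,c) = c > b$, a contradiction.

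It remains to prove (a), which I would handle via the local Langlands description. Attach to $f$ the cuspidal automorphic representation $\pi$ of $\mathrm{GL}_2(\mathbb{A}_\Q)$; its local component $\pi_p$ has conductor exponent $a(\pi_p) = a$ and central character $\chi_p$, and $f \otimes \psi$ corresponds to $\pi \otimes \psi$, with $a(\pi_p \otimes \psi_p) = b$. I then split into the three standard cases for $\pi_p$, using throughout the non-archimedean triangle inequality for characters of $\Q_p^\times$, namely $a(\alpha\beta) \leq \max(a(\alpha), a(\beta))$ with equality whenever $a(\alpha) \neq a(\beta)$. For a principal series $\pi_p = \pi(\mu_1,\mu_2)$ we have $\mu_1\mu_2 = \chi_p$, $a = a(\mu_1) + a(\mu_2)$, and $b = a(\mu_1\psi_p) + a(\mu_2\psi_p)$; the triangle inequality applied factor by factor, combined with the identity $c = a(\psi_p) + a(\mu_1\mu_2\psi_p)$, yields (a) after a short case split. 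For a twist of Steinberg $\pi_p = \mu \otimes \mathrm{St}$ with $\mu^2 = \chi_p$, one has $a = \max(1, 2a(\mu))$ and $b = \max(1, 2a(\mu\psi_p))$, and a similar calculation works. For a supercuspidal $\pi_p$, the classical twist formula gives $b = \max(a, 2c_\psi)$ whenever $a \neq 2c_\psi$; together with $a(\chi_p\psi_p) \leq \max(a(\chi_p), c_\psi)$ and $a(\chi_p) \leq a$, this produces (a).

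The main obstacle will be the supercuspidal case at the critical threshold $2c_\psi = a$, where $b$ is not determined by $a$ and $c_\psi$ alone and may drop strictly below $\max(a,c)$. There the tight equality clause of (a) must be recovered by showing that any such drop is compensated by a corresponding collapse of $c_{\chi\psi}$, forcing $a = c$ so that the equality hypothesis $a \neq c$ is vacuous; tracking this cleanly will require a more delicate analysis of the depth filtration on $\pi_p$ and its interaction with $\chi_p$.
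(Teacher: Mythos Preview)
Your derivation of (b) from (a) matches the paper's: both use the equality clause of (a) to rule out $c>\max(a,b)$. (Your symmetry observation about $g\otimes\psi^{-1}=f$ is correct but is not actually used in the contradiction you give; the paper's argument is simply: if $c>a$ then $a\ne c$, so (a) forces $b=\max(a,c)=c$, whence $c\le\max(a,b)$.)

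For (a) the paper does not give a proof but cites Booker--Lee--Str\"ombergsson \cite[Lemma~1.4]{bookerleestrombergsson}, recording that it sharpens the classical Atkin--Li bound \eqref{eqn:atkinlibound}. Your local-Langlands case analysis is a natural route to a direct proof, and the principal series and special representation cases do go through along the lines you indicate (the principal series ``short case split'' is longer than you suggest, but routine).

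The supercuspidal argument, however, has a genuine gap: the bound $a(\chi_p)\le a$ you invoke is too weak. In the sub-case $a>2c_\psi$ (so $b=a$) the equality clause requires $c\le a$; with only $a(\chi_p)\le a$ and $c_\psi<a/2$, the estimate $c\le c_\psi+\max\bigl(a(\chi_p),c_\psi\bigr)$ does not force this (take $a(\chi_p)$ close to $a$). What you actually need is the supercuspidal-specific bound $2a(\chi_p)\le a$, which follows immediately from the very twist formula you quote, applied to $\pi_p\otimes\chi_p^{-1}\cong\pi_p^\vee$ (same conductor $a$, so $2a(\chi_p)>a$ is impossible).

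Once you have $2a(\chi_p)\le a$, the ``critical threshold'' $2c_\psi=a$ you flag for a depth-filtration analysis also dissolves without any such machinery. If $c<a$ then $a(\chi_p\psi_p)<c_\psi$, so by the ultrametric inequality $a(\chi_p\psi_p^2)=\max\bigl(a(\chi_p\psi_p),c_\psi\bigr)=c_\psi$; but $\chi_p\psi_p^2$ is the central character of the supercuspidal $\pi_p\otimes\psi_p$, and the same bound applied there yields $b\ge 2c_\psi=a$, hence $b=a$, which is exactly the equality you need.
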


\begin{proof}
Statement (a) can be found in Booker--Lee--Str\"ombergsson \cite[Lemma 1.4]{bookerleestrombergsson}: this improves the upper bound of Shimura \cite[Proposition 3.64]{Shimura:intro} and Atkin--Li \cite[Proposition 3.1]{Atkin-Li} that 
\begin{equation} \label{eqn:atkinlibound}
M \mid \lcm(N,\cond(\psi)^2,\cond(\chi)\cond(\psi)),
\end{equation}
which can be proven directly.  

For statement (b), we prove the contrapositive.  Let $p \mid \cond(\psi)\cond(\chi\psi)$ and suppose that $\ord_p(\cond(\psi)\cond(\chi\psi))> \ord_p(N)$.  Then by (b) we have 
\[ \ord_p(M)=\ord_p(\cond(\psi)\cond(\chi\psi))> \ord_p(N). \qedhere \]
\end{proof}

\begin{lem}\label{aplemma}
If $a_p(f)\ne0$ for some prime number $p$, then $\ord_p(N)\in\{1,\ord_p\cond(\chi)\}$.
\end{lem}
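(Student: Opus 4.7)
The plan is to case-split on $v \colonequals \ord_p(N)$, using the shorthand $c \colonequals \ord_p(\cond(\chi))$, and noting that $0 \leq c \leq v$ since $\cond(\chi) \mid N$. The cases $v = 0$ and $v = 1$ are immediate: if $v = 0$ then $\cond(\chi) \mid N$ forces $c = 0$, hence $v = c$; and $v = 1$ trivially satisfies the conclusion $v \in \{1, c\}$.

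The substantive case is $v \geq 2$, where I must show that $a_p(f) \neq 0$ forces $c = v$. For this, I would invoke the Atkin--Li classification of the local Euler factor of a newform at a prime dividing the level (Atkin--Li \cite{Atkin-Li}, Theorem 3): writing $\chi = \chi_p \chi^{(p)}$ with $\chi_p$ of $p$-power conductor, that result gives the trichotomy
\begin{itemize}
\item if $\cond(\chi_p) = p^{v}$, then $|a_p(f)|^2 = p^{k-1}$;
\item if $\cond(\chi_p) < p^{v}$ and $v = 1$, then $|a_p(f)|^2 = p^{k-2}$;
\item if $\cond(\chi_p) < p^{v}$ and $v \geq 2$, then $a_p(f) = 0$.
\end{itemize}
In the third bullet (which is the only one relevant when $v \geq 2$ and $c < v$), the coefficient vanishes, contradicting $a_p(f) \neq 0$. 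Hence $c = v$, as desired.

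The main obstacle is the vanishing statement (the third bullet), but this is classical: in representation-theoretic language it says that the local component $\pi_{f,p}$ is supercuspidal whenever its conductor exceeds that of its central character and the latter gap is nontrivial, and supercuspidals have $U_p$-eigenvalue $0$. Since the result is standard and already used throughout this section (e.g.\ in Lemma~\ref{lem:MNchipsi}), I would simply cite Atkin--Li rather than reprove it. A self-contained alternative would be to twist $f$ by a character of conductor $p^{v-c}$ engineered so that $\chi\psi^2$ has trivial $p$-part, reducing via Lemma~\ref{lem:MNchipsi} to the Steinberg or unramified case; but this is essentially Atkin--Li's own argument in disguise.
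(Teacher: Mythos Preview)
Your proof is correct and follows essentially the same approach as the paper: both case-split on $\ord_p(N)$, dispatch the trivial cases $v\in\{0,1\}$, and then invoke the classical vanishing result for $a_p(f)$ when $v\ge 2$ and $c<v$. The only cosmetic difference is that the paper cites Li \cite[Theorem~3]{Li} for the key vanishing statement, whereas you cite Atkin--Li \cite{Atkin-Li} and spell out the full trichotomy; the content is the same.
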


\begin{proof}
If $\ord_p(N)=0$, then $\ord_p(\cond(\chi))=0$; if $\ord_p(N)=1$, also done (without using any hypothesis).  Finally, if $\ord_p \cond(\chi) \neq \ord_p(N)$, i.e., $\chi$ is a character modulo $N/p$, then $a_p(f) \neq 0$ implies $\ord_p(N)=1$ by a result of Li \cite[Theorem~3]{Li}.
\end{proof}

We recall by Proposition \ref{prop:intwistribet}(b) that if $(\psi,\sigma) \in \InnTw(f)$, then $\sigma \in \Aut(K_f)$.  But since we do not need this in the proof, we state the following theorem more generally.  

\begin{thm} \label{thm:innertwist}
Let $f(q) = \sum_n a_n(f) q^n \in \Sknew{k}(N,\chi)$, and let $\sigma\in\Gal(\widetilde{K}_f\,|\,\Q)$ where $\widetilde{K}_f \subseteq \C$ is the Galois closure of $K_f$.  
Let $\psi$ be a primitive Dirichlet character, and let
$\psi'$ be the primitive character that induces $\chi\psi$.
Then $f\otimes\psi=\sigma(f)$ if and only if all of the following conditions hold:
\begin{enumroman}
\item $\cond(\psi)\cond(\psi')\mid N$;
\item $\chi\psi^2=\sigma(\chi)$; and
\item $\sigma(a_p(f))\in\bigl\{a_p(f)\psi(p),\overline{a_p(f)}\psi'(p)\bigr\}$ 
for all primes $p\le \Sturm(k,N)$.
\end{enumroman}
\end{thm}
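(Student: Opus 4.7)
The forward direction is straightforward. Assuming $f\otimes\psi=\sigma(f)$, condition (ii) follows by comparing Nebentypus characters---that of $f\otimes\psi$ is $\chi\psi^2$ by \eqref{eq:twistchar}, and that of $\sigma(f)$ is $\sigma(\chi)$. Condition (i) follows from Lemma~\ref{lem:MNchipsi}(b) applied with $M=N$ (since $\sigma(f)$ has level $N$), together with the identification $\cond(\psi')=\cond(\chi\psi)$ built into the definition of $\psi'$. Condition (iii) follows from the identity $\sigma(a_p(f))=a_p(f\otimes\psi)$: for $p\nmid\cond(\psi)$, equation~\eqref{eqn:angfpsi} gives $a_p(f\otimes\psi)=\psi(p)a_p(f)$, whereas for $p\mid\cond(\psi)$ the Atkin--Li analysis of twists \cite[Theorem~3.2]{Atkin-Li} yields $a_p(f\otimes\psi)=\overline{a_p(f)}\psi'(p)$; either way $\sigma(a_p(f))$ lies in the claimed two-element set.

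For the converse, the plan is to reduce the identity $f\otimes\psi=\sigma(f)$ to a Sturm-bound comparison. By (i) and Lemma~\ref{lem:MNchipsi}(a), $f\otimes\psi$ has level dividing $\lcm(N,\cond(\psi)\cond(\psi'))=N$; combined with (ii), both $f\otimes\psi$ and $\sigma(f)$ lie in $S_k(N,\sigma(\chi))$ as normalized Hecke eigenforms. Proposition~\ref{prop:heckesturm}(a) then reduces the desired equality to matching $a_n$ for $n\le\Sturm(k,N)$, and the Hecke recurrence together with multiplicativity further reduces this to matching $\sigma(a_p(f))=a_p(f\otimes\psi)$ at primes $p\le\Sturm(k,N)$.

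For primes $p\nmid N$ the two alternatives in (iii) coincide: (i) forces $p\nmid\cond(\psi)\cond(\psi')$, and $p\nmid\cond(\chi)$ holds since $\cond(\chi)\mid N$, so $\psi'(p)=\chi(p)\psi(p)$. Deligne's bound $|\alpha_p|=|\beta_p|=p^{(k-1)/2}$ on the Satake parameters combined with $\alpha_p\beta_p=\chi(p)p^{k-1}$ gives $\overline{a_p(f)}=\overline{\chi(p)}a_p(f)$, whence $\overline{a_p(f)}\psi'(p)=\psi(p)a_p(f)=a_p(f\otimes\psi)$ via~\eqref{eqn:angfpsi}. Thus either alternative in (iii) yields $\sigma(a_p(f))=a_p(f\otimes\psi)$ at unramified primes.

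The main obstacle is the case $p\mid N$, where the two alternatives in (iii) can genuinely differ and \eqref{eqn:angfpsi} no longer applies directly. I plan to handle this by a case analysis on the triple of valuations $(\ord_p(N),\ord_p(\cond(\chi)),\ord_p(\cond(\psi)))$, invoking the refined Atkin--Li formulas \cite[Theorem~3.2]{Atkin-Li} (with sharpenings as in \cite{bookerleestrombergsson}) to compute $a_p(f\otimes\psi)$ in each subcase and identify it as exactly one of the two alternatives. The delicate subcases are those in which the two alternatives in (iii) are distinct and both are \emph{a priori} plausible values of $\sigma(a_p(f))$; in these, one exploits the local structure of $f$ at $p$ (for instance $a_p(f)=0$ when $p^2\mid N$ and $p\nmid\cond(\chi)$, or $|a_p(f)|^2=p^{k-1}$ when $\ord_p(\cond(\chi))=\ord_p(N)$) together with conditions (i) and (ii) to either force the two alternatives to coincide or to rule out the incorrect one.
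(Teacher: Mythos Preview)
Your forward direction is essentially right but incomplete at one point: when $p\mid\gcd(\cond(\psi),\cond(\psi'))$, neither alternative in (iii) is informative (both $\psi(p)$ and $\psi'(p)$ vanish), and your invocation of Atkin--Li does not cover this case. The paper handles it by showing that (i) forces $\ord_p(N)\ge 2$ and $\ord_p(N)>\ord_p\cond(\chi)$ at such primes, whence $a_p(f)=0$ by Lemma~\ref{aplemma}; then $\sigma(a_p(f))=0$ lies in the set trivially.

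The real gap is in your converse. Your plan is to match $\sigma(a_p(f))=a_p(f\otimes\psi)$ for all primes $p\le\Sturm(k,N)$ and then apply the Sturm bound. But at primes $p\mid N$ with $a_p(f)=0$ and $a_p(f\otimes\psi)\ne 0$, condition (iii) tells you only that $\sigma(a_p(f))\in\{0,0\}$, which is vacuous; it gives no control over $a_p(f\otimes\psi)$. No case analysis on valuations can extract $a_p(f\otimes\psi)=0$ from (i)--(iii) alone, because (iii) constrains $\sigma(a_p(f))$, not $a_p(f\otimes\psi)$. So your direct Sturm comparison of $\sigma(f)$ with $f\otimes\psi$ breaks down exactly here.

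The paper's device is to replace $f\otimes\psi$ by an auxiliary form $g$ obtained by zeroing out the coefficients $a_n(f\otimes\psi)$ at $n$ divisible by any ``bad'' prime $p\mid N$ (those where the level of $f\otimes\psi$ drops below $N$, or where $a_p(f)=0$ but $a_p(f\otimes\psi)\ne 0$). One then shows, via Atkin--Li's oldform criterion \cite[Proposition~3.1]{Atkin-Li} and a careful conductor analysis using (i), that $g\in S_k(N,\sigma(\chi))$. Now (iii) \emph{does} suffice to match $a_p(g)=\sigma(a_p(f))$ at every prime $p\le\Sturm(k,N)$: at the bad primes both sides are zero by construction, and at the remaining $p\mid N$ with $a_p(f)\ne 0$ the nonvanishing of $\sigma(a_p(f))$ forces $p\nmid\cond(\psi)$ or $p\nmid\cond(\psi')$, and in either case the relevant alternative in (iii) equals $a_p(f\otimes\psi)=a_p(g)$. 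Sturm then gives $g=\sigma(f)$. Finally, since $\sigma(f)$ and $f\otimes\psi$ are both newforms and agree with $g$ at almost all primes, strong multiplicity one yields $f\otimes\psi=\sigma(f)$. This last step is what absorbs the discrepancy at the bad primes that your direct approach cannot handle.
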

\begin{proof}
Let $\bar{f}\in \Sknew{k}(N,\overline{\chi})$
denote the dual of $f$, with
coefficients $a_n(\bar{f})=\overline{a_n(f)}$.
Thus $\bar{f}=f\otimes \overline{\chi}$ (cf.\ Atkin--Li \cite[Proposition~1.5]{Atkin-Li} or Ribet \cite[\S 1, p.\ 21]{Ribet:galreps}) and consequently $f\otimes\psi=\bar{f}\otimes\psi'$ as 
\[ a_n(\bar{f})\psi'(n) = a_n(f)\bar{\chi}(n)(\chi\psi)(n) = a_n(f) \psi(n) \]
whenever $\gcd(n,N)=1$.

First we prove $(\Rightarrow)$, and suppose that $f\otimes\psi=\sigma(f)$.  By Proposition \ref{prop:intwistribet} we have $\chi\psi^2=\sigma(\chi)$. Since $\cond(\sigma(f))=\cond(f)=N$,
we have $\cond(\psi)\cond(\psi')\mid N$ by Lemma~\ref{lem:MNchipsi}(c).
Let $D \colonequals \gcd(\cond(\psi),\cond(\psi'))$.  Then
\begin{equation}\label{eqn:condchi}
\cond(\chi) = \cond(\psi'\overline{\psi})\mid\lcm(\cond(\psi),\cond(\psi'))\\
= (\cond(\psi)\cond(\psi')/D) \mid (N/D).
\end{equation}

Let $p$ be prime.  If $p\nmid\cond(\psi)$ then
$\sigma(a_p(f))=a_p(f\otimes\psi)=a_p(f)\psi(p)$.
Similarly, if $p\nmid\cond(\psi')$ then
$\sigma(a_p(f))=a_p(\bar{f}\otimes\psi')=\overline{a_p(f)}\psi'(p)$.
Hence we may suppose that $p\mid D$, so by \eqref{eqn:condchi} we have $\ord_p(N)>\max\{1,\ord_p\cond(\chi)\}$.
By Lemma~\ref{aplemma}, it follows that $a_p(f)=0$, and thus
$\sigma(a_p(f))=a_p(f)\psi(p)$.

Now we prove the converse $(\Leftarrow)$, and suppose that conditions (i)--(iii) hold. 
Let
$M$ be the level of $f\otimes\psi$. Let $Q$ denote the product of
primes $p\mid N$ such that either 
\begin{itemize}
\item $p\nmid M$, or 
\item $a_p(f)=0$ and $a_p(f\otimes\psi)\ne0$. 
\end{itemize}
Let $\xi$ denote
the trivial character modulo $Q$, and define
\begin{equation} \label{eqn:gqxi}
g(q) \colonequals \sum_{n=1}^\infty a_n(f\otimes\psi)\xi(n)q^n.
\end{equation}
We claim that conditions (i)--(ii) imply that $g\in S_k(N,\chi\psi^2)$.
By Atkin--Li \cite[Proposition 3.1]{Atkin-Li} 
it suffices to show that
\begin{equation}\label{lcmcondition}
\lcm\bigl(M,\cond(\psi\psi')Q,Q^2\bigr)\mid N.
\end{equation}
By Lemma~\ref{lem:MNchipsi}(a) and the fact that
$\cond(\psi)\cond(\psi')\mid N$, we have
$$ 
\cond(\psi\psi')\mid M\mid\lcm\{N,\cond(\psi)\cond(\psi')\}=N,
$$
so to prove \eqref{lcmcondition} it suffices to show that
$\ord_p(N)\ge1+\max\{1,\ord_p\cond(\psi\psi')\}$
for all primes $p\mid Q$.  

Let $p$ be such a prime. Then either $p\nmid M$ or
$a_p(f)=\overline{a_p(f)}=0$ and
$a_p(f\otimes\psi)=a_p(\bar{f}\otimes\psi')\ne0$.
In either case we must have $p\mid \gcd(\cond(\psi),\cond(\psi'))$
and, by Lemma~\ref{aplemma},
$\ord_p(M)\in\{1,\ord_p\cond(\psi\psi')\}$. It follows that
$$
\max\{1,\ord_p\cond(\psi\psi'),\ord_p(M)\}
\le\max\{\ord_p\cond(\psi),\ord_p\cond(\psi')\}.
$$
Since $p\mid \gcd(\cond(\psi),\cond(\psi'))$, we have
$\min\{\ord_p\cond(\psi),\ord_p\cond(\psi')\}\ge1$, and hence
$$
\ord_p(\cond(\psi)\cond(\psi'))\ge
1+\max\{1,\ord_p\cond(\psi\psi'),\ord_p(M)\}.
$$
By Lemma~\ref{lem:MNchipsi}(b) we have
$$
\ord_p(N)=\ord_p(\cond(\psi)\cond(\psi'))
\ge1+\max\{1,\ord_p\cond(\psi\psi')\}.
$$
This concludes the proof that $g\in S_k(N,\chi\psi^2)$.

Next,  
we claim that $a_n(g)=\sigma(a_n(f))$ for all
$n\le \Sturm(k,N)$.
Since both sequences are multiplicative and $\chi\psi^2=\sigma(\chi)$,
it suffices to verify this
equality at primes, $p$. There are three cases to consider:
\begin{itemize}
\item If $p\nmid N$ then $a_p(f)\psi(p)=\overline{a_p(f)}\psi'(p)$,
so that $\sigma(a_p(f))=a_p(g)$.
\item If $p\mid N$ and $a_p(f)=0$ then $a_p(g)=0$ by construction, and
$\sigma(a_p(f))=0$.
\item If $p\mid N$ and $a_p(f)\ne0$ then
$0\ne\sigma(a_p(f))\in\{a_p(f)\psi(p),\overline{a_p(f)}\psi'(p)\}$.
\begin{itemize}
\item If $\sigma(a_p(f))=a_p(f)\psi(p)$ then $p\nmid\cond(\psi)$, so
$a_p(f)\psi(p)=a_p(f\otimes\psi)$.
\item If $\sigma(a_p(f))=\overline{a_p(f)}\psi'(p)$ then $p\nmid\cond(\psi')$, so
\[ \overline{a_p(f)}\psi'(p)=a_p(\bar{f}\otimes\psi')=a_p(f\otimes\psi). \]
\end{itemize}
In either case, we conclude that $\sigma(a_p(f))=a_p(f\otimes\psi)=a_p(g)$.
\end{itemize}

By the Hecke--Sturm bound (Proposition \ref{prop:heckesturm}), it follows that $g=\sigma(f)$. Finally, since $f$ is
a newform, $\sigma(f)$ is as well, and thus $\sigma(f)=g=f\otimes\psi$,
by strong multiplicity one.
\end{proof}

We conclude with a variant, similarly useful for algorithmic purposes.  We recall the notion of \emph{distinguishing primes} from \secref{sec:heckekernel}.

\begin{thm} \label{thm:justcheckcoprime}
With the same hypotheses as in Theorem \textup{\ref{thm:innertwist}}, we have $f \otimes \psi = \sigma(f)$ if and only if conditions hold:
\begin{enumroman}
\item $\cond(\psi)\cond(\chi\psi)\mid N$;
\item $\chi\psi^2=\sigma(\chi)$; 
\item $\sigma(a_p(f))=a_p(f)\psi(p)$ for all primes $p \leq \Sturm(k,N)$ with $p \nmid N$; and
\item $\sigma(a_p(f))=a_p(f)\psi(p)$ for $p$ in a set of distinguishing primes for $f$.  
\end{enumroman}
\end{thm}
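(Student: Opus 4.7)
The forward direction is immediate from Theorem~\ref{thm:innertwist}. Assuming $f \otimes \psi = \sigma(f)$, conditions (i) and (ii) coincide with the corresponding conditions there. For (iii): since $\cond(\psi) \mid N$ by (i), any prime $p \nmid N$ is coprime to $\cond(\psi)$, and hence $a_p(f \otimes \psi) = \psi(p)a_p(f)$ by \eqref{eqn:angfpsi}, which equals $\sigma(a_p(f))$. Condition (iv) follows by the same reasoning, since distinguishing primes are coprime to $N$ by construction.

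For the reverse direction, assume (i)--(iv). The plan is to reduce to Theorem~\ref{thm:innertwist} by verifying its condition (iii), namely $\sigma(a_p(f)) \in \{a_p(f)\psi(p), \overline{a_p(f)}\psi'(p)\}$ for all primes $p \leq \Sturm(k,N)$. When $p \nmid N$, our (iii) yields $\sigma(a_p(f)) = a_p(f)\psi(p)$ directly. When $p \mid N$ and $a_p(f) = 0$, both alternatives in the set vanish and $\sigma(a_p(f)) = 0$. The only remaining case—primes $p \mid N$ with $p \leq \Sturm(k,N)$ and $a_p(f) \neq 0$—is the main obstacle, and is precisely where condition (iv) is essential.

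The plan for dispatching this last case is to establish $f \otimes \psi = \sigma(f)$ directly (rather than trying to verify Theorem~\ref{thm:innertwist}'s condition (iii) at primes $p \mid N$). Setting $g := f \otimes \psi$, Lemma~\ref{lem:MNchipsi} and (i) imply $g$ is a newform at some level $M \mid N$ with Nebentypus induced from $\chi\psi^2$, while (ii) gives $\sigma(f) \in \Sknew{k}(N,\chi\psi^2)$. From (iii), (iv), and multiplicativity, $g$ and $\sigma(f)$ share Hecke eigenvalues at every prime in $\mathcal T := \{p \leq \Sturm(k,N) : p \nmid N\} \cup \mathcal S$, which in particular contains all primes $p \leq \Sturm(k,N)$ coprime to $N$.

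The central claim, and the crux of the argument, is that this agreement suffices to force $g = \sigma(f)$ exactly. By Proposition~\ref{prop:heckesturm}(c), the Hecke algebra on $S_k(N,\chi\psi^2)$ is generated by $T_1$ and $T_p$ for primes $p \leq \Sturm(k,N)$; restricting to $p \nmid N$, this controls the action of the sub-algebra $\mathbb{T}_N$ generated by Hecke operators away from the level, and strong multiplicity one then implies that the $\mathbb{T}_N$-eigensystem uniquely determines a newform at any divisor level of $N$. The defining property of the distinguishing set $\mathcal S$ ensures that agreement at $\mathcal S$ pins down the newform orbit even in the occasional case where $\mathcal S$ must extend past $\Sturm(k,N)$. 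Combining these, the shared eigensystem of $g$ and $\sigma(f)$ at primes in $\mathcal T$ forces $g = \sigma(f)$ and in particular $M = N$. Once this equality is established, the outstanding case of Theorem~\ref{thm:innertwist}'s condition (iii) holds automatically at all $p \mid N$, completing the proof.
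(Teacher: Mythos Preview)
Your forward direction is fine. The reverse direction, however, has a genuine gap at exactly the point you flag as the crux.

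You claim that agreement of $g=f\otimes\psi$ and $\sigma(f)$ at primes in $\mathcal{T}=\{p\le\Sturm(k,N):p\nmid N\}\cup\mathcal{S}$ forces $g=\sigma(f)$. Your justification rests on two assertions, both of which are unsupported. First, Proposition~\ref{prop:heckesturm}(c) says the \emph{full} Hecke algebra is generated by $T_p$ for $p\le\Sturm(k,N)$; it does \emph{not} say the prime-to-$N$ subalgebra $\mathbb{T}_N$ is generated by those $T_p$ with $p\nmid N$. Indeed, Example~\ref{ex:sturmnotenough} exhibits two Galois-conjugate newforms in the same Hecke orbit whose $a_p$ agree for all $p\le\Sturm(k,N)$ with $p\nmid N$ yet differ; so knowing the $\mathbb{T}_N$-eigensystem only up to the Sturm bound does not pin down the full eigensystem, and your appeal to strong multiplicity one (which requires agreement at almost all primes) does not apply. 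Second, the distinguishing primes $\mathcal{S}$ are defined to separate Hecke \emph{orbits} inside $\Sknew{k}(N,[\chi])$; they neither separate individual Galois conjugates within an orbit nor distinguish $\sigma(f)$ from a newform $g$ that might be new at a strictly smaller level $M\mid N$. So the sentence ``the shared eigensystem \dots\ forces $g=\sigma(f)$ and in particular $M=N$'' is precisely the statement to be proved, not a consequence of what you have established.

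The paper's proof fills this gap with real work: it passes to the auxiliary form $g$ of \eqref{eqn:gqxi}, sets $h=g-\sigma(f)\in S_k(N,\sigma(\chi))$, and uses degeneracy (trace) operators $\pi_p$ to form $h'=\prod_{p\mid N}(1-\pi_p)\iota_N(h)$, whose Fourier coefficients are supported on $n$ coprime to $N$. Condition (iii) then kills $h'$ by the Sturm bound, forcing $h$ to be a sum of oldforms $\sum_{p\mid N}h_p(q^p)$. A prime-by-prime case analysis using Lemma~\ref{aplemma} and condition (i) then shows each $h_p=0$. This machinery---the degeneracy operators and the case analysis at bad primes---is the missing content of your argument, and there is no evident shortcut around it.
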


\begin{proof}
The implication $(\Rightarrow)$ is clear, so we prove $(\Leftarrow)$.  

As in the proof of $(\Leftarrow)$ of Theorem \ref{thm:innertwist}, we again consider the form $g$ as in \eqref{eqn:gqxi} with $\xi$ the trivial character modulo $Q$.  Let $N_g$ be the level of $g$.  Then in the proof we showed that $N_g \mid N$ and $h \colonequals g - \sigma(f) \in S_k(N,\sigma(\chi))$.  By (iii) and Hecke recursion, we have $a_n(h)=0$ for all $n \leq \Sturm(k,N)$ coprime to $N$.  

If $N_g=N$, then by (iv), we have $\sigma(f)=f \otimes \psi$.  So we may assume that $N_g$ is a proper divisor of $N$.  We now employ degeneracy operators to upgrade (iii).  It is convenient to switch from lower-triangular to upper-triangular matrices.  Let
\[ \Gamma^1(N) \colonequals \left\{ \gamma \in \SL_2(\Z) : \gamma \equiv \begin{pmatrix} 1 & 0 \\ * & 1 \end{pmatrix} \psmod{N} \right\} \]
and similarly $\Gamma^0(N)$, and define spaces of modular forms on these groups similarly.  We refer to Diamond--Shurman \cite[\S 5.7]{DiamondShurman} for the results we need.  The groups $\Gamma_1(N)$ and $\Gamma^1(N)$ are conjugate by the matrix $\begin{pmatrix} N & 0 \\ 0 & 1 \end{pmatrix}$, giving an isomorphism $\iota_N \colonequals S_k(\Gamma_1(N)) \to S_k(\Gamma^1(N))$ whose effect on Fourier expansions is $\sum_n b_n q^n \mapsto \sum_n b_n q_N^n$ where $q_N \colonequals \exp(2\pi i z/N)$.  Moreover, this map preserves the Nebentypus character.  For any $d \mid N$, the trace operator defines a map 
\[ \pi_d \colon S_k(\Gamma^1(N)) \to S_k(\Gamma_d) \subseteq S_k(\Gamma^1(N)) \]
where $\Gamma_d \colonequals \Gamma_1(N) \cap \Gamma^0(N/d)$: its effect on Fourier expansions is
\[ \sum_{n=1}^{\infty} b_n q_N^n \mapsto \sum_{\substack{n=1 \\ d \mid n}}^\infty b_n q_N^n. \]  
The operator $\pi_d$ is a projection operator, and for $d,d' \mid N$ with $\gcd(d,d')=1$ we have $\pi_{d}\pi_{d'}=\pi_{d'}\pi_d$.  

Consider 
\[ h' \colonequals \prod_{p \mid N}(1-\pi_p)\iota_N(h) \in S_k(\Gamma^0(N),\chi). \] 
By construction, multiplicativity, and (iii), we have $a_n(h')=0$ for all $n \leq \Sturm(k,N)$.  Then by the Hecke--Sturm bound (Proposition \ref{prop:heckesturm}), we conclude $h'=0$.  Thus 
\begin{equation}
h(q) = \sum_{\substack{n=1 \\ \gcd(n,N) \neq 1}}^{\infty} a_n(h)q^n.
\end{equation}
We have realized $h$ as a sum of oldforms.  Turning this back to $\Gamma_1(N)$, we conclude that
\begin{equation} \label{eqn:supportatp}
h(q) = \sum_{p \mid N} h_p(q^p)
\end{equation}
with $h_p(q) \in S_k(\Gamma_p, \sigma(\chi)_p)$, as in the oldform theory of Atkin--Lehner \cite[Theorem 1]{AtkinLehner} and Li \cite[Corollary 1]{Li}; moreover, $h_p=0$ if and only if $h$ is new at $p$.

We now show that $h=0$.  Let $p \mid N$.  If $\chi$ is not a character modulo $N/p$, then $S_k(\Gamma_p,\sigma(\chi)_p)=0$ so $h_p=0$.  So suppose $\chi$ is a character modulo $N/p$. 
\begin{itemize}
\item Suppose that $a_p(f) \neq 0$.  Then by Lemma \ref{aplemma}, we have $p \parallel N$.  Thus $\ord_p(\cond(\chi))=0$, so by (i) we have $\ord_p(N) \geq 2\ord_p(\cond(\psi))$.  If $\ord_p(\cond(\psi))=0$, then we have twisted by a character trivial at $p$, so $\ord_p(M)=\ord_p(N)$ by Lemma \ref{lem:MNchipsi}(b). Therefore $f \otimes \psi$ is new at $p$, so $g$ is new at $p$ and $a_p(g)=a_p(f \otimes \psi)$ so $h_p=0$.  If instead $\ord_p(\cond(\psi)) \geq 1$, then $p^2 \mid N$, a contradiction.  
\item Suppose $a_p(f)=0$.  If $a_p(f \otimes \psi) \neq 0$, then by construction, $a_p(g)=0$ so by multiplicativity $a_{n}(f)=a_n(g)$ for all $p \mid n$; therefore $h_p=0$.
\end{itemize}

We have shown that $\sigma(f) = g$.  We then conclude as in the end of the proof of Theorem~\ref{thm:innertwist}.  
\end{proof}

\begin{example}\label{ex:sturmnotenough}
Consider the space \newformlink{24.2.f.a}.  There are two Galois-conjugate newforms with the same Nebentypus character. The Sturm bound is $8$, but the smallest $p\nmid N$ where the Fourier coefficients differ is $11$.  In particular, this shows that in the Hecke--Sturm bound (Proposition \ref{prop:heckesturm}) we cannot ignore the primes $p \mid N$.  
\end{example}

The virtue of Theorems \ref{thm:innertwist} and \ref{thm:justcheckcoprime} is that they give explicit criteria to certify inner twists, with care taken concerning primes dividing the level.  

\subsection{Computing inner twists}

We used Theorem~\ref{thm:justcheckcoprime} to compute the complete group of inner twists for all the modular forms in our dataset.
Specifically, we enumerate the finite set $X$ of Dirichlet characters $\psi$ satisfying condition (i) of Theorem~\ref{thm:justcheckcoprime} for which $\chi\psi^2$ is conjugate to $\chi$.  Note that the set $X$ does not depend on $f$ or its coefficient field, only the character $\chi$ and level $N$.  We then determine the subset of $X$ that satisfy conditions (iii) and (iv) for some $\sigma\in \Gal(\widetilde{K}_f)$ as follows:

\begin{enumerate}
\item  We first remove from $X$ all characters $\psi$ for which there is a prime $p\le \Sturm(k,N)$ not dividing $N$ such that $a_p(f)\psi(p)$ is not conjugate to $a_p(f)$; this is accomplished by comparing the minimal polynomials of $a_p(f)\psi(p)$ and $a_p(f)$.
\item For all remaining $\psi\in X$, set $T\colonequals\Gal(\widetilde{K}_f)$ and for successive primes $p\le \Sturm(k,N)$ with $p\nmid N$, replace $T$ with $\{\sigma\in T:\sigma(a_p(f)) = a_p(f)\psi(p)\}$, stopping if $T$ becomes empty.  This yields a list of candidate inner twists $(\psi,\sigma)$ containing $\InnTw(f)$.
\item Finally, for each candidate $(\psi,\sigma)$ we check whether (iv) holds; if so then Theorem~\ref{thm:justcheckcoprime} implies that $(\psi,\sigma)$ is an inner twist of $f$.
\end{enumerate}

As shown by Example~\ref{ex:sturmnotenough}, the third step above is potentially necessary, but in our computation we never encountered a case where a candidate inner twist that survived step (2) was discarded in step (3).

\begin{rmk}
The \magma{} function \texttt{InnerTwists} implements a weaker form of Theorem \ref{thm:innertwist}.  It requires checking eigenvalues up to the Sturm bound for level $\lcm(N,\cond(\psi)^2,\cond(\psi)\cond(\chi))$, and it performs eigenvalue comparisons using complex approximations that do not guarantee a rigorous result.
Indeed, even when the optional parameter \texttt{Proof} is set to \texttt{True}, \magma{} version 2.24-7 displays the following message:
\bigskip

\parbox{457pt}{
\texttt{WARNING: Even if Proof is True, the program does not prove that every 
twist return\-ed is in fact an inner twist (though they are up to 
precision 0.00001).}
}

\end{rmk}

\section{Weight one} \label{sec:weight1}

Modular forms of \weightone{} are of particular interest due to the connection with Artin representations, provided by a theorem of Deligne and Serre \cite{DeligneSerre74}: one can associate to each \weightone{} newform $f$ an odd irreducible 2-dimensional Galois representations $\rho_f\colon G_\Q\to \GL_2(\C)$ for which $L(f,s)=L(\rho_f,s)$ (recall that a Galois representation is \defi{odd} if complex conjugation has determinant $-1$).  Following the proof of Serre's conjecture by Khare and Wintenberger \cite{KW}, we now know that the map $f\mapsto \rho_f$ is in fact a bijection.
This connection allows one to attach several additional arithmetic invariants to \weightone{} newforms that we would like to compute, including:
\begin{itemize}
\item The \defi{projective image} of $\rho_f$ in $\PGL_2(\C)$, which by Klein's classification is isomorphic to either $D_n$ (dihedral of order $2n$, including $D_2\colonequals\Z/2\Z\times\Z/2\Z$), or one of the exceptional groups $A_4$ (tetrahedral), $S_4$ (octahedral), or $A_5$ (icosahedral).
\item The \defi{projective field} of $\rho_f$: the fixed field of the kernel of $G_\Q\overset{\rho_f}{\longrightarrow}\GL_2(\C)\twoheadrightarrow\PGL_2(\C)$.
\item The \defi{Artin image} of $\rho_f$: the finite group $\rho_f(G_\Q)\leq \GL_2(\C)$.
\item The \defi{Artin field} of $\rho_f$: the fixed field of $\ker\rho_f$, with Galois group isomorphic to $\rho_f(G_\Q)$.
\end{itemize}

One can also consider the projective representation $\bar\rho_f\colon G_\Q\to \PGL_2(\C)$ induced by $\rho_f$ as an invariant in its own right: it uniquely determines the \defi{twist class} of $f$.  Two newforms $f$ and $g$ are said to be \defi{twist equivalent} if $g=f\otimes \psi$ for some Dirichlet character $\psi$, and in weight $1$ this occurs if and only if $\bar\rho_f=\bar\rho_g$.

\subsection{Computational observations}\label{subsection:wt1dihedral}

The Deligne--Serre theorem also has important computational implications.  In the typical case where $\rho_f$ is a dihedral representation (meaning that its projective image is dihedral), the Artin $L$-function $L(\rho_f,s)$ is also the Weber $L$-function $L(\omega,s)$ of a ray class character $\omega$ of the quadratic field $K$ fixed by the preimage of $C_n\subseteq D_n\simeq \bar\rho_f(G_\Q)$.  (For $n=2$ there are three choices for $C_2\subseteq D_2$; we can use any one of the three.)
The quadratic field $K$ and the ray class character $\omega$ necessarily satisfy
\begin{equation}
\left|d_K\right|\Nm(\cond(\omega))=\cond(\rho_f)=N,
\end{equation}
where $d_K$ is the discriminant of $K$ and $N$ is the level of $f$.  In order to obtain an odd representation $\rho_f$ we also require that if $K$ is a real quadratic field then the modulus for $\omega$ should include exactly one of the real places of $K$.

For any given level $N$, it is straightforward to enumerate all quadratic fields $K$ of discriminant $d_K \mid N$, all $\mathcal O_K$-ideals of absolute norm dividing $N/\left|d_K\right|$, and all ray class characters $\omega$ of $K$ for the modulus with finite part $I$ and infinite part compatible with an odd representation.  This makes it feasible to explicitly compute Fourier expansions of all dihedral newforms of level $N$ to any desired precision; to compute $a_p(f)$ for $p\nmid N$ this simply amounts to evaluating the corresponding ray class character $\omega$ at the prime ideals of $\mathcal O_K$ above $p$.

\pari{} contains extensive support for computing with ray class characters that are particularly efficient in the case of quadratic fields.
We used this to compute all dihedral newforms of level $N\le \numprint{40000}$ with Fourier coefficients $a_n(f)$ computed for $n\le 6000$ (well past the Sturm bound).
This yielded a total of \numprint{572462} dihedral newforms, corresponding to \numprint{14634052} embedded newforms.  The largest dimension we found was 2818, which arises for a dihedral newform of level 39473, and the largest projective image we found was $D_{2846}$ for a newform of level \numprint{39851}.

These computations go far beyond the extent of our database described in \S\ref{sec:dataextent}, which only covers levels $N\le 4000$ in \weightone{}.  For comparison, the largest dimension arising for $N\le 4000$ is 232 and the largest projective image is $D_{285}$.
The reason for this discrepancy is that while it is computationally very easy to compute dihedral newforms, to obtain a complete enumeration of all the newforms in a given \weightone{} newspace, one must also enumerate the tetrahedral, octahedral, and icosahedral newforms, which is more difficult---particularly in the icosahedral case.  Interestingly, the main difficulty often lies not in enumerating these exceptional newforms, but in verifying that one has actually found them all.  In contrast to the case $k>1$ where there are well known dimension formulas, while there are computational tricks that work well in special cases, to our knowledge no efficient method for computing $\dim \Sknew{1}(N)$ for general $N$ is currently known.

\subsection{Classifying the projective image}

The \pari{} function \verb|mfgaloistype| can be used to classify the projective image, but given that we actually computed the projective field in every case (which of course determines the projective image), we did not exploit this feature.

\begin{remark}
Buzzard--Lauder \cite{BuzzardLauder} describe an approach to classifying the projective image by computing projective orders of elements that they applied to all \weightone{} newforms of level up to 1500. They note in their paper that their approach relies on the convenient fact that there are no \weightone{} newforms of level $N\le 1500$ with projective image $A_4$ whose coefficient field contains $\Q(\sqrt{5})$.  Five such examples arise in our dataset, the first of which is \newformlink{2299.1.w.a}.
\end{remark}

\subsection{Computing the projective field}
Our strategy for computing the projective field is to exhaustively compute a complete set of candidates and then rule out all but one.  As noted in \secref{subsection:wt1dihedral}, we can effectively determine all the dihedral forms at each level, so we know in advance exactly which forms are dihedral (and the exact order of the projective image in each of these cases).  In cases where a dihedral image has moderate degree---less than 100, say---it is feasible to use the ray class field functionality in \pari{} to compute the projective field.  This notably includes all of the dihedral projective fields whose distinguished quadratic subfield is real: the largest such example in our database is \newformlink{2605.1.bd.a} with projective image $D_{40}$.

The dihedral fields in which the distinguished subfield is imaginary quadratic can be much larger: the largest example  \newformlink{3997.1.cz.a} has projective image $D_{285}$.  In these cases, we exploit the fact that every dihedral field whose distinguished quadratic subfield is imaginary can be realized as a subfield of a ring class field that can be explicitly computed using the theory of complex multiplication.  There is a well-developed theory for efficiently computing these ring class fields, even in cases where the degree may be in the millions, motivated by applications to cryptography and elliptic curve primality proving (the CM method for constructing elliptic curves over finite fields).

Given a dihedral \weightone{} newform $f\in \Sknew{1}(N,\chi)$ with dihedral image $D_n$ and distinguished imaginary quadratic field $K$, there is a finite set of possible suborders $\mathcal O$ of $\mathcal O_K$ and conductors $c$ such that the projective field of $f$ arises as a cyclic degree-$n$ extension of $K$ of conductor $c$ contained in the ring class field $K$ of $\mathcal O$.  The enumeration of these dihedral fields was achieved using an algorithm based on the techniques developed by Enge-Sutherland \cite{EngeSutherland} and Sutherland  \cite{Sutherland:HCPbyCRT,Sutherland:AcceleratingCM} that will be described in a forthcoming paper.

Having enumerated a complete list of candidate fields $L\colonequals \Q[x]/(g_L(x))$, for successive primes $p\nmid N$ we can compute the order of $\rho_f(\Frob_p)$ in $\PGL_s(\C)$ by determining the positive integer $n$ for which $a_p(f)^2/\chi(p) = \zeta_n + \zeta_n^{-1}+2$ and compare this to the inertia degree of the primes above $p$ in $\mathcal O_L$.  This will eventually eliminate all but one candidate field, since the sequence of inertia degrees uniquely determines a Galois number field, and in practice this happens very quickly.
To accelerate the computation we precompute defining polynomials for the real cyclotomic fields we may encounter and use $p$ coprime to the discriminants of the defining polynomials $g_L$ so that we can compute the inertia degree as the degree of the irreducible factors of $g_L(x)$ in $\F_p[x]$.

For the non-dihedral projective images we used the methods of Cohen--Diaz y Diaz--Olivier \cite{CDO1,CDO2} to enumerate all $A_4$ and $S_4$ fields unramified outside a given set of primes, and for the $A_5$ fields we used existing tables of fields in the Jones--Roberts database and the LMFDB combined with a targeted Hunter search for some missing cases, as described by Jones--Roberts \cite{targetedhunter}.  This allowed us to construct complete lists of candidate fields for each non-dihedral \weightone{} form from which we then ruled out all but one candidate by comparing orders of Frobenius elements with inertia degrees as described above.

\subsection{Computing the Artin image, the Artin field, and the associated Artin representation}
As of January 2020 the LMFDB contained 5116 odd 2-dimensional Artin representations of conductor $N\le 4000$, all of which we were able to uniquely match to a corresponding  newform of \weightone{}.  For each of these Artin representations the LMFDB provides the Artin image, the Artin field, and a complete description of the Artin representation given values on each conjugacy class of Frobenius elements.
We were also able to compute the Artin image and Artin field for 833 additional \weightone{} newforms that are twists of a \weightone{} newform for which we know the corresponding Artin representation by taking the compositum of the known Artin field with an appropriate cyclotomic field.

There is work in progress to add as many of the Artin representations corresponding to the remaining \numprint{14190} \weightone{} newforms as possible; these will be linked to the corresponding weight~1 newforms as they become available.

\subsection{Interesting and extreme behavior} \label{sec:wt1cool}

Weight one modular forms behave rather differently than those of higher weight.  As seen in \secref{sec:weight1}, one important invariant of \weightone{} forms is the projective image of the associated Galois representation.
We will discuss some forms with dihedral projective image first.

Hecke also constructed \weightone{} modular forms starting from imaginary quadratic fields with odd class number at least 3. The first examples of such fields come from $\Q(\sqrt{-23})$, $\Q(\sqrt{-31})$, $\Q(\sqrt{-39})$, and the corresponding modular forms are the three smallest level \weightone{} newforms; these have labels   \newformlink{23.1.b.a}, \newformlink{31.1.b.a} and \newformlink{39.1.d.a}, respectively. \cite{hecke} 
\begin{exm} \label{exm:D2}
The last of these, \newformlink{39.1.d.a}, is the $D_2$ form of lowest level and has CM by both $\Q(\sqrt{-3})$ and $\Q(\sqrt{-39})$, and RM by $\Q(\sqrt{13})$. 
This form appears in work of Darmon--Lauder--Rotger \cite[Example 2.5]{DLR:def}.
\end{exm}

The first examples of newforms with RM but no CM occur in level 145 with \newformlink{145.1.f.a} (RM by $\Q(\sqrt 5)$, \cite[Example 3.3]{DLR:def}, \cite[Example 4.1]{DLR:stark}) and \newformlink{145.1.h.a} (RM by $\Q(\sqrt{29})$, \cite[Example 1.2]{DLR:over}).  

The problem of constructing \weightone{} forms whose projective image is not dihedral was considered by Tate and Serre in the 1970s. These forms are sometimes called \emph{non-banal} or \emph{exotic}. Such forms divide up into 3 cases based on their projective image, which can be one of $A_4, S_4, A_5$: the forms are then known as tetrahedral, octahedral and icosahedral, respectively.

Tate together with his students, Flath, Kottwitz, Tunnell, and Weisinger, and additionally Atkin, exhibited a form of level $133$, with projective image $A_4$ 
described in a letter to Atkin \cite[p.~713]{Tate:letter}; this form is \newformlink{133.1.m.a} in our database. The smallest level example is actually in level 124, given by \newformlink{124.1.i.a}.

In the octahedral case, the smallest level example is in level $4\cdot 37 = 148$ with label \newformlink{148.1.f.a}; this newform is discussed by Buzzard \cite[\S 2.3]{BuzzardWtOne} and Darmon--Lauder--Rotger \cite[Example 5.6]{DLR:stark}.

Many modular forms previously considered in the literature with interesting Galois representations can now be found in our database.
Ogasawara \cite{Ogasawara} takes the mod-3 Galois representations attached to certain elliptic curves and constructs a $\GL_2(\F_3)$ Artin representation: for example, the elliptic curve of conductor 11 with label 
\href{https://www.lmfdb.org/EllipticCurve/Q/11/a/3}{\textsf{11.a3}} is used and the corresponding octahedral modular form of \weightone{} over $\Q(\sqrt {-2})$ is constructed.  Using the $q$-expansion coefficients given there, we can use the trace search functionality to locate a (unique) matching form in our database: \newformlink{3267.1.b.d}. We then verify that it has the right Artin field: a degree 8 extension over which \href{https://www.lmfdb.org/EllipticCurve/Q/11/a/3}{\textsf{11.a3}} gains 3-torsion.

Buhler \cite{BuhlerIcos,BuhlerMono} constructs the icosahedral Galois representation of level 800, labeled \newformlink{800.1.bh.a}. Kiming--Wang \cite{KimingWang} gave several more instances of icosahedral newforms of \weightone{} with characters of order 2, showing their existence in order to verify the Artin conjecture in these cases.  The new database now contains all but one of these: \newformlink{2083.1.b.b}, \newformlink{1948.1.b.a}, \newformlink{3004.1.b.a}, \newformlink{3548.1.d.a}, \newformlink{3676.1.c.a}, \newformlink{2336.1.c} (two newforms).
The only newspace discussed in loc. cit. with level outside our range would have label \newformlink{6176.1.b}.
The database also contains the icosahedral newforms \newformlink{1376.1.r.a}, \newformlink{2416.1.p.a}, \newformlink{3184.1.t.a}, \newformlink{3556.1.ba.a} and \newformlink{3756.1.q.b} which were all shown to satisfy Artin's conjecture by Buzzard--Stein \cite{BuzzardStein}.   The proof of Serre's conjecture \cite{KW} established Artin's conjecture for all odd irreducible 2-dimensional representations, including all of the icosahedral cases.
The smallest level example of an icosahedral newform is \newformlink{633.1.m.b}.

Constructing exotic forms of prime level with specific projective image is also a much studied problem.  Such forms do not exist in the tetrahedral case \cite[Thm.~7, p.~245]{Serre-wt1},
leaving only octahedral and icosahedral forms with the possibility of prime level.

In the octahedral case the smallest prime level is 229, and the space of newforms \newformlink{229.1.d} splits into two Galois orbits, (see Serre  \cite[p.~265]{Serre-wt1}). The second smallest level is 283, where we have the newform \newformlink{283.1.b.b} that appears also in work of Serre \cite{Serre:jordan}.

In the icosahedral case, we have seen above the first example of such a form: the one with level $2083$ of Kiming--Wang.
In fact the query for forms with projective image $A_5$ shows that there are 4 such forms with prime level $\le 4000$:  \newformlink{2083.1.b.b}, \newformlink{2707.1.b.b}, \newformlink{3203.1.b.a}, \newformlink{3547.1.b.c}.  It is conjectured that these forms are rare.  

\begin{conj} For any \(\epsilon>0\), the number of exotic newforms of prime level \(N\) is \(O_{\epsilon}\left(N^{\epsilon}\right)\).
\end{conj}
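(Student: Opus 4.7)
The plan is to translate the question to a counting problem for certain number fields via the Deligne--Serre correspondence and then apply (largely conjectural, but partially known) upper bounds on the number of quartic/quintic fields of restricted ramification type.

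First, I would use the Deligne--Serre bijection $f \mapsto \rho_f$ together with the Khare--Wintenberger theorem to reduce the conjecture to the following assertion: the number of odd, irreducible, continuous Galois representations $\rho\colon G_\Q \to \GL_2(\C)$ with conductor equal to $N$ and non-dihedral projective image (so $S_4$ or $A_5$, noting that $A_4$ is excluded at prime level by Serre \cite[Thm.~7]{Serre-wt1}) is $O_\eps(N^\eps)$. Passing to the projective representation $\bar\rho_f$, exotic newforms of prime level $N$ group into twist classes, each corresponding to a Galois number field $L/\Q$ with $\Gal(L/\Q)\simeq S_4$ or $A_5$, ramified only at $N$ (and $\infty$). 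Within a single twist class the number of newforms of level exactly $N$ is $O(1)$ (it is controlled by the number of characters of conductor $1$ or $N$ that keep the conductor equal to $N$, using Lemma \ref{lem:MNchipsi}), so up to a harmless bounded factor it suffices to bound the number of such fields $L$.

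Second, I would exploit that for prime $N$ the ramification at $N$ is tame (for $N\ge 7$), so the inertia subgroup $I_N \leq \Gal(L/\Q)$ is cyclic. Combined with the classification of cyclic subgroups of $S_4$ and $A_5$, this drastically limits the inertia type: only finitely many conjugacy classes of tame inertia are possible, and the conductor-discriminant formula then gives $\disc(L) = N^c$ for a bounded constant $c$ depending only on the projective image and the inertia class. Thus every such $L$ has discriminant at most $N^{C}$ for an absolute constant $C$, and the problem becomes: count $S_4$- and $A_5$-number fields of discriminant $\le N^C$ that are ramified only at $N$. For $S_4$-fields one has the unconditional asymptotic $\#\{S_4\text{-fields of disc}\le X\} = O(X)$ due to Bhargava; for $A_5$-fields the current record is $O(X^{1+\eps})$ by work of Bhargava--Shankar--Tsimerman and related authors. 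These give $O(N^C)$ bounds, which is far weaker than $N^\eps$.

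The hard part is getting from a polynomial bound $N^C$ down to $N^\eps$; this is precisely the ``few fields ramified at one prime'' phenomenon, analogous to the Harbater/Serre philosophy. The approach I would pursue is to refine the Bhargava-style count by imposing the single-prime ramification condition via local conditions at every prime $p\ne N$: at each such $p$ the quartic (resp.\ quintic) resolvent ring must be maximal, and summing these local densities heuristically saves a factor of $\prod_{p\ne N}(1+O(p^{-1}))^{-1}$ outside a bounded set, trading the global count $O(X)$ for something closer to $O((\log X)^{?})$ or even $O(X^\eps)$ by sieve arguments. One could also attempt a direct discriminant-tower analysis: a non-solvable $S_4$ or $A_5$ field ramified only at $N$ contains a cubic or quintic subfield also ramified only at $N$, and bounds of Ellenberg--Venkatesh, Lemke Oliver--Thorne, and others on $\ell$-torsion in class groups plus class field theory could in principle be leveraged to count such subfields. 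I expect the genuine obstacle to be precisely the absence of sharp point-counting results for $A_5$-fields under single-prime ramification, which would require a substantial new input (for instance, improvements on $5$-torsion in class groups of quintic fields); accordingly the conjecture, as stated, appears to lie beyond reach of current techniques, and any honest ``proof proposal'' must rest on conjectural Malle-type inputs.
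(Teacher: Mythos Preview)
The statement in question is labeled a \emph{Conjecture} in the paper, not a theorem, and the paper offers no proof of it. The only remark following the conjecture is that Bhargava--Ghate have established an averaged version in the octahedral case. So there is no ``paper's own proof'' to compare your proposal against.

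Your write-up is accordingly not a proof but a sketch of a strategy, and you yourself correctly identify the genuine gap: after reducing (via Deligne--Serre, twist classes, and tame inertia at a prime level) to counting $S_4$- and $A_5$-fields ramified only at $N$, the available field-counting results (Bhargava for quartics, weaker bounds for quintics) give at best polynomial bounds in $N$, not $N^\eps$. The step where you propose to ``refine the Bhargava-style count by imposing the single-prime ramification condition'' and sieve down to $O(X^\eps)$ is precisely the missing idea; no such sieve is currently known to produce sub-polynomial savings in this setting, and your final paragraph concedes this. In short, your outline is a reasonable heuristic framing of why the conjecture is plausible and what inputs would be needed, but it is not a proof, and the paper does not claim one either.
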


Bhargava--Ghate \cite{BhargavaGhate} have shown an averaged version of this conjecture in the octahedral case.


\begin{thebibliography}{999}

\bibitem{AtkinLehner} 
A.\,O.\,L.~Atkin and Joseph~Lehner, \href{https://eudml.org/doc/161948}{\emph{Hecke operators on $\Gamma_0(m)$}}, Math.~Ann.\ \textbf{185} (1970), 134--160.

\bibitem{Atkin-Li} 
A.\,O.\,L.~Atkin and Wen-Ch'ing Winnie Li, \href{https://link.springer.com/article/10.1007/BF01390245}{\emph{Twists of newforms and pseudo-eigenvalues of $W$-operators}}, Invent.~Math.\ \textbf{48} (1978), no.\ 3, 221--243. 

\bibitem{BanwaitCremona}
B.\ Banwait and J.\ Cremona,  \href{https://msp.org/ant/2014/8-5/p07.xhtml}{\emph{Tetrahedral elliptic curves and the local-to-global principle for isogenies}}, Algebra \& Number Theory \textbf{8} (2014), no. 5, 1201--1229.

\bibitem{BelabasCohen} 
Karim Belabas and Henri Cohen, \href{https://link.springer.com/article/10.1007/s40687-018-0155-z}{\emph{Modular forms in Pari/GP}}, Res.~Math.~Sci.\ \textbf{5} (2018), no.~3, Paper No.~37, 19 pp.

\bibitem{BhargavaGhate}
Manjul Bhargava and Eknath Ghate, \href{https://link.springer.com/article/10.1007/s00208-008-0322-4}{\emph{On the average number of octahedral newforms of prime level}}, Math.~Ann.~\textbf{344} (2009), no.~4, 749--768.

\bibitem{Birch}
B.~J.~Birch, \href{http://www.ams.org/books/pspum/020/}{\emph{Elliptic curves over $\Q$: A progress report}}, 1969 Number Theory Institute (State Univ.~New York, Stony Brook, N.Y., 1969), Proc.~Sympos.~Pure Math., vol.~20, Amer.~Math.~Soc., Providence, 1971, 396--400. 

\bibitem{Birch:ternary}
B.~J.~Birch, \href{https://doi.org/10.1515/9783110865950.191}{\emph{Hecke actions on classes of ternary quadratic forms}}, Computational number theory (Debrecen, 1989), de Gruyter, Berlin, 1991, 191--212.

\bibitem{bober} 
Jonathan Bober, \href{https://github.com/jwbober/mflib}{\texttt{mflib}} software library, available at \url{https://github.com/jwbober/mflib}, 2019.

\bibitem{motiviclfun} 
Jonathan W. Bober, Andrew R. Booker, Edgar Costa, Min Lee, David J. Platt, and Andrew Sutherland, \emph{Computing motivic L-functions}, in preparation.

\bibitem{bookerleestrombergsson} 
Andrew R.~Booker, Min Lee, and Andreas Str\"ombergsson, \href{https://doi.org/10.1112/jlms.12349}{\emph{Twist-minimal trace formulas and the Selberg eigenvalue conjecture}}, J. Lond. Math. Soc. \textbf{102} (2020) no. 3, 1067--1134.

\bibitem{booker} 
Andrew R. Booker, \href{https://projecteuclid.org/euclid.em/1175789775}{\emph{Artin's conjecture, Turing's method, and the Riemann hypothesis}}, Exp.~Math.\ \textbf{15} (2006), no. 4, 385--408.

\bibitem{Magma} 
Wieb~Bosma, John~Cannon, and Catherine~Playoust, \href{https://magma.maths.usyd.edu.au/magma/handbook/}{\emph{The Magma algebra system.\ I.\ The user language}}, J.\ Symbolic Comput.\ \textbf{24} (1997) no.~3--4, 235--265.

\bibitem{Bruin}
Peter Bruin, \href{https://www.math.leidenuniv.nl/~pbruin/coefficients.pdf}{\emph{Computing coefficients of modular forms}}, Actes de la Conférence ``Théorie des Nombres et Applications'', 19–36, Publ. Math. Besançon Algèbre Théorie Nr., 2011, Presses Univ. Franche-Comté, Besançon, 2011.

\bibitem{BuhlerIcos}
Joe Buhler, \href{https://link.springer.com/chapter/10.1007/BFb0063952}{\emph{An icosahedral modular form of weight one}}, Modular functions of one variable V, eds.\ Jean-Pierre Serre and Don Bernard Zagier, Lecture Notes in Math., vol.~601, Springer, Berlin-Heidelberg, 1977, 289--294. 

\bibitem{BuhlerMono}
Joe P.\ Buhler, \href{https://link.springer.com/book/10.1007/BFb0070418}{\emph{Icosahedral Galois representations}}, Lecture Notes in Math., vol.~654, Springer-Verlag, Berlin-New York, 1978.

\bibitem{BGZ}
Joe P.\ Buhler, Benedict H.\ Gross, and Don B.\ Zagier, \href{https://doi.org/10.1090/S0025-5718-1985-0777279-X}{\emph{On the conjecture of Birch and Swinnerton-Dyer for an elliptic curve of rank $3$}}, Math.~Comp.\ \textbf{44} (1985), 473--481.

\bibitem{buthe2015} 
Jan B\"{u}the, \href{https://www.ams.org/journals/mcom/2015-84-295/S0025-5718-2015-02922-6/}{\emph{A  method  for  proving  the  completeness  of  a  list  of  zeros  of  certain  L-functions}}, Math. Comp. \textbf{84} (2015), no. 295, 2413--2431.

\bibitem{Buzzard} 
Kevin Buzzard, \href{https://wwwf.imperial.ac.uk/~buzzard/maths/research/notes/dimension_of_spaces_of_eisenstein_series.pdf}{\emph{Dimension of spaces of Eisenstein series}}, preprint available at \url{http://wwwf.imperial.ac.uk/~buzzard/maths/research/notes/dimension_of_spaces_of_eisenstein_series.pdf}, 2012.

\bibitem{BuzzardWtOne}
Kevin Buzzard, \href{https://link.springer.com/chapter/10.1007/978-3-319-03847-6_5}{\emph{Computing weight one modular forms over $\C$ and $\overline\F_p$}}, Computations with modular forms, Contrib.~Math.~Comput.~Sci., vol.~6, Springer, Cham, 2014, 129--146.

\bibitem{BuzzardLauder} 
Kevin Buzzard and Alan Lauder, \href{https://link.springer.com/article/10.1007/s40316-016-0072-8}{\emph{A computation of modular forms of weight one and small level}}, Ann.~Math.~Qu\'e.\ \textbf{41} (2017), no.~2, 213--219.

\bibitem{BuzzardStein}
Kevin Buzzard and William A.~Stein, \href{https://msp.org/pjm/2002/203-2/p02.xhtml}{\emph{A mod five approach to modularity of icosahedral Galois representations}}, Pacific J.~Math.\ \textbf{203} (2002), no.~2, 265--282.

\bibitem{Chowla} 
Sarvadaman Chowla, \href{https://londmathsoc.onlinelibrary.wiley.com/doi/pdf/10.1112/jlms/s1-41.1.567}{\emph{The Riemann hypothesis and Hilbert's tenth problem}}, Mathematics and Its Applications, vol.~4, Gordon and Breach, New York, 1965.

\bibitem{CDO1}
Henri Cohen, Francisco Diaz y Diaz, and Michel Olivier, \href{https://doi.org/10.1007/10722028_14
}{\emph{Construction of tables of quartic fields}}, Construction of tables of quartic number fields, Algorithmic number theory (Leiden, 2000), Lecture Notes in Comput.~Sci., vol.~1838, Springer, Berlin, 2000, 257-268.

\bibitem{CDO2}
Henri Cohen, Francisco Diaz y Diaz, and Michel Olivier, \href{https://www.ams.org/journals/mcom/2003-72-242/S0025-5718-02-01452-7/}{\emph{Constructing complete tables of quartic fields using Kummer theory}}, 
Math.~Comp.\ \textbf{72} (2003), no.~242, 941--951.

\bibitem{CohenOesterle} 
H.~Cohen and J.~Oesterl\'e, \href{https://link.springer.com/chapter/10.1007/BFb0065297}{\emph{Dimensions des espaces de formes modulaires}}, Modular functions of one variable~VI, eds.~Jean-Pierre Serre and Don Zagier, Lecture Notes in Math., vol.~627, Springer, Berlin, 1977, 69--78.

\bibitem{CohenStroemberg} 
Henri Cohen and Fredrik Str\"omberg, \href{https://bookstore.ams.org/gsm-179}{\emph{Modular forms: a classical approach}}, Grad.\ Studies in Math., vol.~179, Amer.\ Math.\ Soc., Providence, 2017.

\bibitem{rigendos} 
Edgar Costa, Nicolas Mascot, Jeroen Sijsling, and John Voight, \href{https://www.ams.org/journals/mcom/2019-88-317/S0025-5718-2018-03373-7/home.html}{\emph{Rigorous computation of the endomorphism ring of a Jacobian}}, Math.\ Comp.\ \textbf{88} (2019), 1303--1339.

\bibitem{lfunccode} 
Edgar Costa and David Platt, \href{https://github.com/edgarcosta/lfunctions}{\emph{A generic L-function calculator for motivic L-functions}}, available at \url{https://github.com/edgarcosta/lfunctions}, 2019.

\bibitem{Cremona:abextratwist} 
John E.\ Cremona, \href{https://londmathsoc.onlinelibrary.wiley.com/doi/abs/10.1112/jlms/s2-45.3.404}{\emph{Abelian varieties with extra twist, cusp forms, and elliptic curves over imaginary quadratic fields}}, J.\ London Math.\ Soc.\ (2) \textbf{45} (1992), no.\ 3, 404--416.

\bibitem{Cremona:mec} 
John E.\ Cremona, \href{https://homepages.warwick.ac.uk/~masgaj/book/fulltext/index.html}{\emph{Algorithms for modular elliptic curves}}, 2nd ed., Cambridge University Press, Cambridge, 1997.

\bibitem{Cremona:database} 
John E.\ Cremona, \href{https://link.springer.com/chapter/10.1007/11792086_2}{\emph{The elliptic curve database for conductors to 130000}}, Algorithmic number theory, eds. Florian Hess, Sebastian Pauli, and Michael Pohst, Lecture Notes in Comput.~Sci., vol.~4076, Springer, Berlin, 2006, 11--29. 

\bibitem{LMFDB:Cremona} 
John E.\ Cremona, \href{https://link.springer.com/article/10.1007/s10208-016-9306-z}{\emph{The $L$-functions and modular forms database project}}, Found.\ Comput.\ Math.\ \textbf{16} (2016), no.\ 6, 1541--1553.

\bibitem{CPS}
John E.\ Cremona, Aurel Page, Andrew V.\ Sutherland, \href{https://arxiv.org/abs/2005.09491}{\emph{Sorting and labeling integral ideals in a number field}}, arXiv:2005.09491v1.

\bibitem{DLR:stark}
Henri Darmon, Alan Lauder, and Victor Rotger, \href{https://doi.org/10.1017/fmp.2015.7}{\emph{Stark points and $p$-adic iterated integrals attached to modular forms of weight one}}, Forum Math.\ Pi\ \textbf{3} (2015), e8, 95 pp.

\bibitem{DLR:def}
Henri Darmon, Alan Lauder, and Victor Rotger, \href{https://doi.org/10.1007/978-3-319-69712-3_4}{\emph{First order $p$-adic deformations of weight one newforms}}, L-functions and automorphic forms, Contrib.\ Math.\ Comput.\ Sci., vol.~10, Springer, Cham, 2017, 39--80.

\bibitem{DLR:over}
Henri Darmon, Alan Lauder, and Victor Rotger, \href{https://doi.org/10.1016/j.aim.2015.07.007}{\emph{Overconvergent generalised eigenforms of weight one and class fields of real quadratic fields}}, Adv.~Math.\ \textbf{283} (2015), 130--142.

\bibitem{DeligneSerre74} 
Pierre Deligne and Jean-Pierre Serre, \href{http://www.numdam.org/item/?id=ASENS_1974_4_7_4_507_0}{\emph{Formes modulaires de poids $1$}}, Ann.\ Sci.\ \'Ec.\ Norm.\ Sup.~(4) \textbf{7} (1974), no.~4, 507--530.

\bibitem{Deligne79} 
P.~Deligne, \href{https://publications.ias.edu/node/379}{\emph{Valeurs de fonctions $L$ et p\'{e}riodes d'int\'{e}grales}}, appendix by N.~Koblitz and A.~Ogus, \href{https://bookstore.ams.org/pspum-33-2}{Automorphic forms, representations and L-functions} (Oregon State Univ., Corvallis, Ore., 1977), Part 2, eds.\ A.~Borel, W.~Casselman, Proc.\ Sympos.\ Pure Math., vol.~33, 
Amer.~Math.~Soc., Providence, 1979, 313--346.

\bibitem{DiamondShurman} 
Fred Diamond and Jerry Shurman, \href{https://www.springer.com/gp/book/9780387232294}{\emph{A first course in modular forms}}, Grad.\ Texts in Math., vol.~228, Springer-Verlag, New York, 2005. 

\bibitem{EdixhovenCouveignes}
Bas Edixhoven and Jean-Marc Couveignes, 
\href{https://princeton.universitypressscholarship.com/view/10.23943/princeton/9780691142012.001.0001/upso-9780691142012}{\emph{Computational aspects of modular forms and Galois representations: how one can compute in polynomial time the value of Ramanujan's tau at a prime}}, Annals of Math.\ Studies, vol.~176, Princeton University Press, Princeton, NJ, 2011.

\bibitem{Eichler}
M.~Eichler, \href{https://link.springer.com/article/10.1007/BF01361548}{\emph{Einige Anwendungen der Spurformel im Bereich der Modularkorrespondenzen}}, Math.~Ann.\ \textbf{168} (1967), 128--137.

\bibitem{Eichler:basis}
M.~Eichler, \href{https://link.springer.com/chapter/10.1007/978-3-540-38509-7_4}{\emph{The basis problem for modular forms and the traces of the Hecke operators}}, Modular functions of one variable~I (Proc.\ Internat.\ Summer School, Univ.\ Antwerp, Antwerp, 1972), ed.\ W.~Kuijk, Lecture Notes in Math., vol.~320, Springer, Berlin, 1973, 75--151.

\bibitem{LMFDB10}
Stephan Ehlen and Fredrik Str\"omberg, \href{https://github.com/edgarcosta/lfunctions}{\texttt{modforms-db}} software package, available at \url{https://github.com/sehlen/modforms-db/tree/refactor}, 2014.

\bibitem{EngeSutherland}
Andreas Enge and Andrew V.~Sutherland, \href{https://www.springerlink.com/content/47673q6804582r4t/}{\emph{Class invariants by the CRT method}}, Algorithmic number theory, Lecture Notes in Comput.~Sci., vol.~6197, Springer, Berlin, 2010, 142--156.

\bibitem{fiorilli} 
Daniel Fiorilli, \href{https://academic.oup.com/qjmath/article-abstract/66/2/517/1592512}{\emph{On the non-vanishing of Dirichlet L-functions at the central point}}, Q.\ J.\ Math.\ \textbf{66} (2015), no. 2, 517--528.

\bibitem{GZ}
B.H.\ Gross and D.B.\ Zagier, \href{https://doi.org/10.1007/BF01388809}{\emph{Heegner points and derivatives of $L$-series}}, Invent.~Math.\ \textbf{84} (1986), 225--320.

\bibitem{hecke}
Erich Hecke, \emph{Mathematische Werke}, G\"ottingen, Vandenhoeck \& Ruprecht, 1959.

\bibitem{helfgott}
Harald A. Helfgott, \href{https://arxiv.org/abs/math/0305435}{\emph{Root numbers and the parity problem}}, Ph.D. Thesis, Princeton University, 2003.

\bibitem{Hijikata}
Hiroaki Hijikata, \href{https://projecteuclid.org/euclid.jmsj/1240435364}{\emph{Explicit formula of the traces of Hecke operators for $\Gamma_0(N)$}}, J.~Math.~Soc.~Japan \textbf{26} (1974), 56--82.

\bibitem{HPS}
Hiroaki Hijikata, Arnold K.~Pizer, and Thomas R.~Shemanske, \href{https://bookstore.ams.org/memo-82-418}{\emph{The basis problem for modular forms on $\Gamma_0(N)$}}, Mem.~Amer.~Math.~Soc.\ \textbf{82} (1989), no.~418.

\bibitem{IK}
Henryk Iwaniec and Emmanuel Kowalski, \href{https://bookstore.ams.org/coll-53}{\emph{Analytic number theory}}, Amer.\ Math.\ Soc.\ Colloquium Publications, vol.~53, Amer.\ Math.\ Soc., Providence, 2004.

\bibitem{IS} 
H.~Iwaniec and P.~Sarnak, \href{https://link.springer.com/chapter/10.1007/978-3-0346-0425-3_6}{\emph{Perspectives on the analytic theory of L-functions}}, GAFA 2000 (Tel Aviv, 1999), Geom.\ Funct.\ Anal.\ 2000, Special Volume, Part II, 705--741.

\bibitem{arb} 
Fredrik Johansson, \href{https://doi.org/10.1145/2576802.2576828}{\emph{Arb: a C library for ball arithmetic}},  ACM Communications in Computer Algebra \textbf{47} (2013), no.~4, 166--169.

\bibitem{targetedhunter} 
John W.~Jones and David P.~Roberts, \emph{Timing analysis of targeted Hunter searches}, Algorithmic number theory (Portland, OR, 1998), 
Lecture Notes in Comput.~Sci., vol.~1423, Springer, Berlin, 1998, 412--423.

\bibitem{Kato} 
Kazuya Kato, \href{http://www.numdam.org/item/?id=AST_2004__295__117_0}{\emph{$p$-adic Hodge theory and values of zeta functions of modular forms}}, Ast\'erisque \textbf{295} (2004), 117--290.

\bibitem{Kilford} 
Lloyd~J.~P.~Kilford, \href{https://www.worldscientific.com/worldscibooks/10.1142/p965}{\emph{Modular forms: A classical and computational introduction}}, 2nd ed., Imperial College Press, London, 2015.

\bibitem{KimingWang}
Ian Kiming and Xiang Dong Wang, \href{https://link.springer.com/chapter/10.1007/BFb0074112}{\emph{Examples of $2$-dimensional, odd Galois representations of $A_5$-type over $\Q$ satisfying the Artin conjecture}}, On Artin's conjecture for odd 2-dimensional representations, Lecture Notes in Math., vol.~1585, Springer, Berlin, 1994, 109--121.

\bibitem{KW} 
Chandrashekhar~Khare and Jean-Pierre Wintenberger,  \href{https://link.springer.com/article/10.1007/s00222-009-0205-7}{\emph{Serre's modularity conjecture (I)}}, Invent.\ Math. \textbf{178} (2009), no. 3, 485--504.

\bibitem{KnightlyLi}
Andrew Knightly and Charles Li, \href{https://www.ams.org/publications/authors/books/postpub/surv-133}{\emph{Traces of Hecke operators}}, Math.~Surveys Monogr., vol.~133, Amer.\ Math.\ Soc., Providence, 2006. 

\bibitem{Li} 
Wen-Ch'ing Winnie Li, \href{https://link.springer.com/article/10.1007/BF01344466}{\emph{Newforms and functional equations}}, Math.~Ann.\ \textbf{212} (1975), no. 4, 285--315. 

\bibitem{Lowry-Duda}
David Lowry-Duda, \href{https://arxiv.org/abs/2002.05234v2}{\emph{Visualizing modular forms}}, arXiv:2002.05234v2, 2020.

\bibitem{LMFDB} 
The LMFDB Collaboration, \href{http://www.lmfdb.org}{\emph{The L-functions and Modular Forms Database}}, \url{http://www.lmfdb.org}, 2019.

\bibitem{Manin}
Ju.~I.~Manin, \href{https://doi.org/10.1142/9789812830517_0010}{\emph{Parabolic points and zeta functions of modular curves}}, Izv.~Akad.~Nauk SSSR Ser.~Mat., \textbf{36} (1972), 19--66, translated in Math.-USSR Izvestija, \textbf{6} (1972), no.~1, 19--64.

\bibitem{Martin}
Kimball Martin, \href{https://arxiv.org/abs/1804.04234v2}{\emph{The basis problem revisited}}, arXiv:1804.04234v2, 2019.

\bibitem{Mascot}
Nicolas Mascot, \href{https://www.ams.org/journals/mcom/2018-87-309/S0025-5718-2017-03215-4/}{\emph{Certification of modular Galois representations}}, Math. Comp. \textbf{87} (2018), 381--423.

\bibitem{Merel}
Lo\"ic Merel, \href{https://link.springer.com/chapter/10.1007/BFb0074110}{\emph{Universal Fourier expansions of modular forms}}, On Artin's conjecture for odd 2-dimensional representations, Lecture Notes in Math., vol.~1585, Springer, Berlin, 1994, 59--94.

\bibitem{Mestre}
J.-F.~Mestre, \emph{La m\'ethode des graphes. Exemples et applications}, Proceedings of the international conference on class numbers and fundamental units of algebraic number fields (Katata, 1986), Nagoya Univ., Nagoya, 1986, 217--242.

\bibitem{Meyer1} 
Christian Meyer, \emph{Newforms of weight two for $\Gamma_0(N)$ with rational coefficients}, \\ \href{http://meyer-idstein.de/weight2.pdf}{\texttt{http://meyer-idstein.de/weight2.pdf}}, 2005.

\bibitem{Meyer2} 
Christian Meyer, \emph{Newforms of weight four for $\Gamma_0(N)$ with rational coefficients}, \\ \href{http://meyer-idstein.de/weight4.pdf}{\texttt{http://meyer-idstein.de/weight4.pdf}}, 2005.

\bibitem{Miyake} 
Toshitsune Miyake, \href{https://link.springer.com/book/10.1007/3-540-29593-3}{\emph{Modular forms}}, Springer Monographs in Math., Springer-Verlag, Berlin, 2006.

\bibitem{Momose}
Fumiyuki Momose, \emph{On the $l$-adic representations attached to modular forms}, 
J.~Fac.~Sci.~Univ.~Tokyo Sect.~IA Math.\ \textbf{28} (1981), no.~1, 89--109.

\bibitem{Ogasawara}
Takeshi Ogasawara, \href{https://doi.org/10.7169/facm/2013.49.1.7}{\emph{Octahedral newforms of weight one associated to three-division points of elliptic curves}}, Funct.\ Approx.\ Comment.\ Math.\ \textbf{49} (2013), no.~1, 103--109.

\bibitem{Omar} 
Sami Omar, \href{https://link.springer.com/chapter/10.1007/978-3-540-79456-1_30}{\emph{Non-vanishing of {D}irichlet $L$-functions at the central point}}, 
Algorithmic Number Theory (ANTS 2008), Lecture Notes in Comp.\ Sci.~\textbf{5011} (2008) 443--453.

\bibitem{AntwerpIV} 
\href{https://link.springer.com/book/10.1007/BFb0097580}{\emph{Modular functions of one variable IV}}, Proceedings of the International Summer School on Modular Functions of One Variable and Arithmetical Applications, RUCA, University of Antwerp, Antwerp, July 17--August 3, 1972, eds.\ Bryan~J.~Birch and Willem~Kuyk, Lecture Notes in Math., vol.~476, Springer-Verlag, Berlin, 1975.

\bibitem{Pari} 
The PARI-group, \href{http://pari.math.u-bordeaux.fr/}{\emph{\pari{}}} (versions 2.11 and 2.12), Univ. Bordeaux, available at \url{http://pari.math.u-bordeaux.fr}, 2019.

\bibitem{Pizer}
Arnold Pizer, \href{https://www.sciencedirect.com/science/article/pii/0021869380901519}{\emph{An algorithm for computing modular forms on $\Gamma_0(N)$}}, J.\ Algebra \textbf{64} (1980), no.~2, 340--390.

\bibitem{Popa}
Alexandru Popa, \href{https://link.springer.com/article/10.1007/s40687-018-0125-5}{\emph{On the trace formula for Hecke operators on congruence subgroups, II}}, Res.~Math.~Sci.\ \textbf{5} (2018), no.~1, Paper No. 3, 24 pp.

\bibitem{Ribet:galreps} 
Kenneth A.\ Ribet, \href{https://link.springer.com/chapter/10.1007/BFb0063943}{\emph{Galois representations attached to eigenforms with Nebentypus}},  Modular functions of one variable V, eds.\ Jean-Pierre Serre and Don Bernard Zagier, Lecture Notes in Math., vol.~601, Springer, Berlin-Heidelberg, 1977, 17--51.

\bibitem{Ribet:endos} 
Kenneth A.\ Ribet, \href{https://link.springer.com/article/10.1007/BF01457819}{\emph{Twists of modular forms and endomorphisms of abelian varieties}}, Math.\ Ann.\ \textbf{253} (1980), no.~1, 43--62.

\bibitem{Ribet:modp} 
Kenneth A.\ Ribet, \href{https://link.springer.com/article/10.1007/BF01393341}{\emph{Mod $p$ Hecke operators and congruences between modular forms}}, Invent.~Math.~\textbf{71} (1983), no.~1, 193--205.

\bibitem{Ribet:modular} 
Kenneth A.\ Ribet, \href{https://link.springer.com/chapter/10.1007/978-3-0348-7919-4_15}{\emph{Abelian varieties over $\Q$ and modular forms}}, Modular curves and abelian varieties, eds.\ John E.\ Cremona, Joan-C.\ Lario, Jordi Quer, and Kenneth A.\ Ribet, Progress in Math. \textbf{224}, Birkh\"auser, Basel, 2004, 241--261.

\bibitem{Sage}
The Sage Developers, \href{https://www.sagemath.org}{\emph{\sage{}}} (version \texttt{8.8}), available at \url{https://www.sagemath.org}, 2019.
   
\bibitem{Schaeffer} 
George J.~Schaeffer, \href{https://link.springer.com/article/10.1007/s00209-015-1477-9}{\emph{Hecke stability and weight $1$ modular forms}}, Math.~Z.~\textbf{281} (2015), no.~1--2, 159--191.

\bibitem{Selberg}
A.~Selberg, \href{https://mathscinet.ams.org/mathscinet-getitem?mr=88511}{\emph{Harmonic analysis and discontinuous groups in weakly symmetric Riemannian spaces with applications to Dirichlet series}}, J.~Indian~Math.~Soc.~(N.S.) \textbf{20} (1956), 47--87.

\bibitem{Serre:course} 
Jean-Pierre Serre, \href{https://link.springer.com/book/10.1007/978-1-4684-9884-4}{\emph{A course in arithmetic}}, Grad.~Texts in Math.\ \textbf{7}, Springer-Verlag, New York, 1973.

\bibitem{Serre-wt1}
J.~P.~Serre, \href{https://mathscinet.ams.org/mathscinet-getitem?mr=450201}{\emph{Modular forms of weight one and Galois representations}}, Algebraic number fields: L-functions and Galois properties (Proc. Sympos., Univ. Durham, Durham, 1975), ed.\ A.~Fr\"ohlich, Academic Press, London, 1977, 193--268.

\bibitem{Serre:jordan}
Jean-Pierre Serre, \href{https://doi.org/10.1090/S0273-0979-03-00992-3}{\emph{On a theorem of Jordan}}, Bull.~Amer.~Math.~Soc.~\textbf{40} (2003), no.~4, 429--440.

\bibitem{SV91} 
Rene Schoof and Marcel van der Vlugt, \href{https://www.sciencedirect.com/science/article/pii/009731659190016A}{\emph{Hecke operators and the weight distribution of certain codes}}, J. Combin. Ser. A. \textbf{57} (1991) no.~2, 163--186.

\bibitem{Shimura:intro} 
Goro Shimura, \href{https://press.princeton.edu/books/paperback/9780691080925/introduction-to-arithmetic-theory-of-automorphic-functions}{\emph{Introduction to the arithmetic theory of automorphic functions}}, Kano Memorial Lectures, No.~1, Publications of the Mathematical Society of Japan, no.~1, Princeton Univ. Press, Princeton, N.J., 1971.

\bibitem{SkoruppaZagier} 
Nils-Peter Skoruppa and Don Zagier, \href{https://eudml.org/doc/143620}{\emph{Jacobi forms and a certain space of modular forms}}, Invent.\ Math.\ \textbf{94} (1988), no.~1, 113--146.

\bibitem{Stein:Tables} 
William Stein, \href{http://wstein.org/Tables/}{\emph{The Modular Forms Database}}, available at \url{http://wstein.org/Tables/}.

\bibitem{Stein} 
William A.\ Stein, \href{https://wstein.org/books/modform/stein-modform.pdf}{\emph{Modular forms, a computational approach}}, with an appendix by Paul E.\ Gunnells, Graduate Studies in Math., vol.~79, Amer.\ Math.\ Soc., Providence, 2007.

\bibitem{Stein2} 
William A.\ Stein, \href{http://library.msri.org/books/Book44/files/20stein.pdf}{\emph{An introduction to computing modular forms using modular symbols}}, Algorithmic number theory: lattices, number fields, curves and cryptography, Math.\ Sci.\ Res.\ Inst.\ Publ., vol.\ 44, Cambridge Univ. Press, Cambridge, 2008, 641--652.

\bibitem{Sturm}
Jacob Sturm, \href{https://link.springer.com/chapter/10.1007/BFb0072985}{\emph{On the congruence of modular forms}}, Number theory (New York, 1984-1985), Lecture Notes in Math., vol.~1240, Springer, Berlin, 1987, 275--280.

\bibitem{Sutherland:HCPbyCRT}
Andrew V.~Sutherland, \href{https://www.ams.org/journals/mcom/2011-80-273/S0025-5718-2010-02373-7/}{\emph{Computing Hilbert class polynomials with the Chinese remainder theorem}}, Math.~Comp.\ \textbf{80} (2011), no.~273, 501--538.

\bibitem{Sutherland:AcceleratingCM}
Andrew V.~Sutherland, \href{https://doi.org/10.1112/S1461157012001015}{\emph{Accelerating the CM method}}, LMS J.~Comput.~Math.\ \textbf{15} (2012), 172--204.

\bibitem{Tate:letter}
John Tate, \href{https://bookstore.ams.org/cworks-24-1/}{\emph{Collected works of John Tate, Part I (1951-1975)}}, eds.\ Barry Mazur and Jean-Pierre Serre, Amer.\ Math.\ Soc., Providence, 2016.

\bibitem{Tingley:thesis} 
Dave J.\ Tingley, \emph{Elliptic curves uniformized by modular functions}, 
Ph.D.~thesis, University of Oxford, 1975.

\bibitem{VZB}
John Voight and David Zureick-Brown, \href{https://arxiv.org/abs/1501.04657v3}{\emph{The canonical ring of a stacky curve}}, accepted to Mem.~Amer.~Math.~Soc., preprint available at \url{https://arxiv.org/abs/1501.04657v3}, 2019.

\bibitem{Wada1} 
Hideo Wada, \emph{Tables of Hecke operators.\ I}, Seminar on Modern Methods in Number Theory (Inst.\ Statist.\ Math., Tokyo, 1971), Paper No.\ 39, Inst.\ Statist.\ Math., Tokyo, 1971, 1--10.

\bibitem{Wada2} 
Hideo Wada, \href{https://projecteuclid.org/euclid.pja/1195519287}{\emph{A table of Hecke operators.\ II}}, Proc.\ Japan Acad.\ \textbf{49} (1973), no.~6, 380--384. 

\bibitem{Watkins}
Mark Watkins, \href{https://magma.maths.usyd.edu.au/~watkins/papers/DISCURSUS.pdf}{\emph{A discursus on $21$ as a bound for ranks of elliptic curves over $\Q$, and sundry related topics}}, available at \url{https://magma.maths.usyd.edu.au/~watkins/papers/DISCURSUS.pdf}, 2015.

\end{thebibliography}
\end{document}